\documentclass[10pt,a4paper,reqno]{amsart} 
\usepackage[final]{optional}
\usepackage[british,american]{babel}
\usepackage[margin=0cm]{geometry}
\usepackage{lipsum}
\usepackage[leqno]{amsmath}
\usepackage{mathrsfs}

\usepackage{amsfonts, amsmath, wasysym, caption}

\usepackage{amssymb,amsthm, paralist}

\usepackage{amsopn}
\usepackage{accents,bbm,
dsfont,esint,url,verbatim}
\usepackage[normalem]{ulem}
\usepackage{bbold}

\usepackage{
latexsym,
nicefrac
}
\newcommand\Item[1][]{%
  \ifx\relax#1\relax  \item \else \item[#1] \fi
  \abovedisplayskip=0pt\abovedisplayshortskip=0pt~\vspace*{-\baselineskip}}
  
\usepackage[usenames]{color}
\usepackage{tikz}
\usetikzlibrary{decorations.pathreplacing}
\usepackage{url}

\definecolor{darkgreen}{rgb}{0,0.5,0}
\definecolor{darkred}{rgb}{0.7,0,0}

\usepackage[colorlinks, 
citecolor=darkgreen, linkcolor=darkred
]{hyperref}

\usepackage{esint}
\usepackage[normalem]{ulem}

\usepackage{enumitem}
\bibliographystyle{geralpha}

\textwidth=150mm   
\textheight=235mm
\topmargin=-0.4in
\oddsidemargin=+0.2in
\evensidemargin=+0.2in

\parindent=0pt
\parskip=10pt

\theoremstyle{plain}
\newtheorem{lemma}{Lemma}[section]
\newtheorem{thm}[lemma]{Theorem}
\newtheorem{prop}[lemma]{Proposition}
\newtheorem{cor}[lemma]{Corollary}
\theoremstyle{definition}
\newtheorem{defn}[lemma]{Definition}

\theoremstyle{remark}
\newtheorem{remark}[lemma]{Remark}

\newtheorem{rmk}[lemma]{Remark}

\numberwithin{equation}{section}

\newcommand{\al}{\alpha}

\newcommand{\ga}{\gamma}

\newcommand{\de}{\delta}

\newcommand{\la}{\lambda}

\renewcommand{\th}{\theta}

\newcommand{\R}{\ensuremath{{\mathbb R}}}


\DeclareMathOperator{\pv}{PV}
\DeclareMathOperator{\ci}{Ci}

\newcommand{\beq}{\begin{equation}}
\newcommand{\eeq}{\end{equation}}
\newcommand{\beqs}{\begin{equation*}}
\newcommand{\eeqs}{\end{equation*}}
\newcommand{\beqa}{\begin{equation}\begin{aligned}}
\newcommand{\eeqa}{\end{aligned}\end{equation}}
\newcommand{\beqas}{\begin{equation*}\begin{aligned}}
\newcommand{\eeqas}{\end{aligned}\end{equation*}}

\newcommand{\half}{\frac{1}{2}}
\newcommand{\eps}{\varepsilon}

\DeclareMathOperator{\supp}{supp}

\setlength{\headsep}{21pt}

\title[Inviscid limit of the compressible Navier--Stokes equations]{{\sc
Inviscid limit of the compressible Navier--Stokes\\  equations for asymptotically isothermal gases
}
\\ 
}
\author[Matthew R. I. Schrecker]{Matthew R. I. Schrecker\textsuperscript{1}}\thanks{{}\textsuperscript{1}Department of Mathematics, University of Wisconsin Madison, Van Vleck Hall, 480
Lincoln Drive, Madison, Wisconsin 53706, USA. Email: schrecker@wisc.edu}  \author[Simon Schulz]{Simon Schulz\textsuperscript{2}}\thanks{{}\textsuperscript{2}Faculty of Mathematics, University of Cambridge,
Wilberforce Road,
Cambridge CB3 0WA, UK. Email: sms79@cam.ac.uk}

\begin{document}
\begin{abstract}
We prove the existence of
a relative finite-energy solution of the one-dimensional, isentropic Euler equations under the assumption of an asymptotically isothermal pressure law, that is, $p(\rho)/\rho = O(1)$ in the limit $\rho \to \infty$. This solution is obtained as the vanishing viscosity limit of classical solutions of the one-dimensional, isentropic, compressible Navier--Stokes equations. Our approach relies on the method of compensated compactness to pass to the limit rigorously in the nonlinear terms. Key to our strategy is the derivation of hyperbolic representation formulas for the entropy kernel and related quantities; among others, a special entropy pair used to obtain higher uniform integrability estimates on the approximate solutions. Intricate bounding procedures relying on these representation formulas then yield the required compactness of the entropy dissipation measures. In turn, we prove that the Young measure generated by the classical solutions of the Navier--Stokes equations reduces to a Dirac mass, from which we deduce the required convergence to a solution of the Euler equations.
\end{abstract}

\maketitle

\section{Introduction}
The dynamics of an inviscid barotropic fluid are modelled by the compressible Euler equations. In one spatial dimension, these read as
\begin{equation}\label{eq:euler}
\left\lbrace\begin{aligned}
& \rho_t + m_x = 0, \\
& m_t + ( \frac{m^2}{\rho} + p(\rho) )_x = 0,
\end{aligned}\right.
\end{equation}
where $(t,x) \in \mathbb{R}^2_+ = (0,\infty) \times \mathbb{R}$ are the variables of time and space, respectively, while $\rho \geq 0$ is the density of the fluid and $m= \rho u$ its momentum, with $u$ its velocity. The pressure $p(\rho)$ is a quantity that depends solely on the density. On the other hand, when one also takes into account the effects of viscosity, the equations governing the motion of the fluid are the one-dimensional, isentropic, compressible Navier--Stokes equations,
\begin{equation}\label{eq:ns}
\left\lbrace\begin{aligned}
& \rho^\varepsilon_t + (\rho^\varepsilon u^\varepsilon)_x = 0, \\
& (\rho^\varepsilon u^\varepsilon)_t + ( \rho^\varepsilon (u^\varepsilon)^2 + p(\rho^\varepsilon) )_x = \varepsilon u^\varepsilon_{xx},
\end{aligned}\right.
\end{equation}
where $\varepsilon > 0$ is the viscosity of the fluid. The work of Hoff \cite{Hoff} shows that, provided we supply appropriate initial data $(\rho_0,u_0) = (\rho,u)|_{t=0}$ and the pressure law $p(\rho)$ satisfies modest physical assumptions, there exists a unique regular classical solution of this latter system. It is then physically relevant (\textit{cf.}~\cite{Stokes}) to ask whether such solutions of the Navier--Stokes equations \eqref{eq:ns} converge to a solution of the Euler equations \eqref{eq:euler} in an appropriate topology. Moreover, the question of convergence of the vanishing viscosity limit is a crucial one for the well-posedness of the Euler equations, where admissibility criteria are conjectured to provide uniqueness of the weak solutions. This paper addresses this question of convergence and provides an affirmative answer, provided particular constitutive assumptions are satisfied by the pressure.

Much of the theory of gas dynamics revolves around so-called \emph{$\gamma$-law gases}, that is, fluids that incorporate a pressure law of the form $p(\rho) = \kappa \rho^\gamma$, where $\kappa > 0$ and $\gamma \geq 1$. When $\gamma \in (1,3)$, we say that the gas is \emph{polytropic}. When $\gamma=1$, the fluid is said to be \emph{isothermal}, in accordance with the ideal gas equation. Most real gases do not behave exactly as $\gamma$-law gases. However, many are approximated well by $\ga$-law models in a density regime near the vacuum and do exhibit isothermal-like growth for large values of the density (see, e.g.~Thorne \cite{Thorne}). To this end, we make the following definition.

\begin{defn}\label{def:asymp isothermal}
We say that a fluid behaves according to an \emph{asymptotically isothermal pressure law} if it satisfies the following constitutive assumptions.
\begin{enumerate}
    \item The pressure, $p \in C^1([0,\infty))\cap C^4((0,\infty))$, is such that $p(\rho) > 0$ for all $\rho > 0$, and satisfies 
    \begin{equation}\label{eq:st hyp gen non}
            p'(\rho) > 0 \quad \text{and} \quad \rho p''(\rho) + 2p'(\rho) > 0 \qquad \text{for all } \rho>0.
    \end{equation}
        \item There exist constants $\gamma \in (1,3)$ and $\kappa_1>0$, and a function $P \in C^4((0,\infty))$ such that
        \begin{equation}\label{eq:p asymp near 0 P}
            p(\rho) = \kappa_1 \rho^\gamma (1+ P(\rho)) \qquad \text{for } \rho \in [0,r), \end{equation}
            for some fixed $r > 0$, and there exists a positive $C_r$ such that $|P^{(j)}(\rho)|  \leq C_r \rho^{2\theta-j}$ for $\rho \in [0,r)$, and $j \in \{0,\dots,4\}$, where $\th=\frac{\ga-1}{2}$.
    \item There exists $\alpha > 0$, $\kappa_2 > 0$ and $C_p>0$ such that, for some fixed $R>0$,
    \begin{equation}\label{eq:p derivs}
        \left| \left( \frac{p(\rho)}{\rho} - \kappa_2 \right)^{(j)} \right| \leq C_p \rho^{-\alpha-j} \qquad \text{for } \rho \in [R,\infty) \text{ and } j \in \{0,\dots,4\}.
    \end{equation}
\end{enumerate}
\end{defn}

\begin{rmk}
Assumption \eqref{eq:st hyp gen non} ensures the strict hyperbolicity of the system and genuine non-linearity of the characteristic fields for $\rho>0$. Assumption \eqref{eq:p asymp near 0 P} guarantees the existence of the entropy kernel, see \eqref{eq:ent kernel} below. Indeed, an inspection of the results of \cite{ChenLeFloch,ChenLeFloch2} shows that the bound $\rho^{2\th-j}$ for $P^{(j)}$ may be replaced with $\rho^{(1+\de)\th-j}$ for any $\de>0$. However, for clarity of exposition, we choose not to make this assumption here.
\end{rmk}

Note that the class of gases considered in the previous definition is significantly wider than the one introduced in \cite{SS1}. Indeed, therein the authors considered the vanishing viscosity limit of solutions of \eqref{eq:ns} under the assumption of a pressure law satisfying the first two hypotheses of Definition \ref{def:asymp isothermal}, but where $p(\rho) = c_*\rho$ for $\rho \geq R$ instead of \eqref{eq:p derivs}. This precursor was called an \emph{approximately isothermal pressure law} in the doctoral thesis of the second author, \cite{thesis}. This assumption for large density allowed the authors to provide an explicit representation for entropies in this regime (\textit{cf.}~\cite[Theorem 2.6]{SS1}). Such a representation is no longer possible without the explicit form of the pressure.

The convergence of the vanishing physical viscosity limit from the Navier--Stokes equations to the Euler equations is a difficult problem with a rich history. Prior to the contribution of the authors in \cite{SS1}, Chen and Perepelitsa proved in \cite{ChenPerep1} that, for a $\gamma$-law gas of index $\gamma \in (1,\infty)$ and given any initial data of relative finite-energy, a solution of \eqref{eq:euler} could be obtained as the inviscid limit of solutions of \eqref{eq:ns}. Earlier than that, the existence of $L^\infty$ entropy solutions of the isentropic Euler equations had been obtained by means of vanishing artificial viscosities and from finite-difference schemes by DiPerna \cite{DiPerna2}, Chen \cite{Chen1}, Ding, Chen, and Luo \cite{DingChenLuo}, Lions, Perthame, and Tadmor \cite{LionsPerthameTadmor}, and Lions, Perthame, and Souganidis \cite{LionsPerthameSouganidis} for polytropic gases, by Chen and LeFloch \cite{ChenLeFloch,ChenLeFloch2} for general pressure laws, and by Huang and Wang \cite{HZ} for isothermal gases. In view of the fact that the Navier--Stokes equations do not admit natural invariant regions, this $L^\infty$ framework may not be applied directly, and so we work with the finite-energy framework, originally introduced by LeFloch and Westdickenberg in \cite{LeFlochWestdickenberg}, the notion of which we now develop.

Recall that an entropy pair is a pair of functions $(\eta,q):\mathbb{R}^2_+ \to \mathbb{R}^2$ such that
\begin{equation*}
\nabla q(\rho,m) = \nabla \eta(\rho,m) \nabla \begin{pmatrix}
m \\
\frac{m^2}{\rho} + p(\rho)
\end{pmatrix},
\end{equation*}
where $\nabla$ is the gradient with respect to the phase-space coordinates $(\rho,m)$. An important such entropy pair is the mechanical energy and its flux, $(\eta^*,q^*)$, given by the formulas
\begin{equation}\label{eq:physicalentropy}
\eta^*(\rho,m) = \frac{1}{2}\frac{m^2}{\rho} + \rho e(\rho), \qquad q^*(\rho,m) = \frac{1}{2}\frac{m^3}{\rho^2} + m e(\rho) + \rho m e'(\rho),
\end{equation}
where $e(\rho) := \int_0^\rho \frac{p(y)}{y^2} \, dy$ is the internal energy. In what follows, we will consider the energy of the solutions relative to nontrivial constant end-states $(\rho_\pm,u_\pm)$. Correspondingly, let $(\bar{\rho}(x),\bar{u}(x))$ be smooth, monotone functions such that, for some $L_0 > 1$,
\begin{equation*}
(\bar{\rho}(x),\bar{u}(x)) = \left\lbrace \begin{aligned}
&(\rho_+,u_+), \quad \text{for } x \geq L_0, \\
&(\rho_-,u_-), \quad \text{for } x \leq -L_0.
\end{aligned} \right.
\end{equation*}
These reference functions are fixed at this point and remain the same throughout the paper. We now define the relative mechanical energy with respect to $(\bar{\rho}(x),\bar{m}(x)) = (\bar{\rho}(x),\bar{\rho}(x)\bar{u}(x))$ as
\begin{equation*}
\begin{aligned}
\overline{\eta^*}(\rho,m) :=&\, \eta^*(\rho,m) - \eta^*(\bar{\rho},\bar{m}) - \nabla \eta^*(\bar{\rho},\bar{m}) \cdot (\rho-\bar{\rho},m-\bar{m}), \\
=& \,\frac{1}{2}\rho |u-\bar{u}|^2 + e^*(\rho,\bar{\rho}) \geq 0,
\end{aligned}
\end{equation*}
where $e^*(\rho,\bar{\rho}) = \rho e(\rho) - \bar{\rho} e(\bar{\rho}) - (\bar{\rho} e'(\bar{\rho}) + e(\bar{\rho}))(\rho-\bar{\rho}) \geq 0$. Then we define
\begin{equation}
E[\rho,u](t) := \int_\mathbb{R} \overline{\eta^*}(\rho,\rho u)(t,x) \, dx
\end{equation}
to be the total relative mechanical energy, relative to the end states $(\rho_\pm,u_\pm)$ and say that a pair $(\rho,m)$ with $m = \rho u$ is said to be of relative finite-energy if $E[\rho,u] < \infty$.

It is apparent from the definition of entropy pair, and the requirement that mixed partial derivatives commute, that any $C^2$ entropy function satisfies the \emph{entropy equation}, i.e.,
\begin{equation}
\eta_{\rho \rho} - \frac{p'(\rho)}{\rho^2} \eta_{uu} = 0.
\end{equation}
It is well known (see e.g.~\cite{ChenLeFloch,DiPerna2}) that any regular weak entropy (i.e.~one that vanishes at $\rho = 0$) may be generated by the integral of a test function $\psi \in C^2(\mathbb{R})$ against a fundamental solution $\chi(\rho,u,s)$ of the entropy equation, i.e.,
\begin{equation}\label{eq:generate entropy}
\eta^\psi(\rho,\rho u) = \int_\mathbb{R} \psi(s) \chi(\rho,u,s) \, ds,
\end{equation}
where this fundamental solution, the \emph{entropy kernel}, solves
\begin{equation}\label{eq:ent kernel}
\left\lbrace \begin{aligned}
& \chi_{\rho \rho} - k'(\rho)^2 \chi_{uu} = 0, \\
& \chi(0,u,s) = 0, \\
& \chi_\rho(0,u,s) = \delta_{u=s},
\end{aligned} \right.
\end{equation}
where \beq\label{def:k(rho)}
k(\rho) := \int_0^\rho \frac{\sqrt{p'(y)}}{y} \, dy.
\eeq The entropy kernel admits the Galilean invariance $\chi(\rho,u,s) = \chi(\rho,u-s,0) = \chi(\rho,0,s-u)$, and so we write (with slight abuse of notation) $\chi = \chi(\rho,u-s)$. The entropy kernel has corresponding flux $\sigma(\rho,u,s)$, the \emph{entropy flux kernel}, which behaves according to
\begin{equation}\label{eq:ent flux kernel}
\left\lbrace \begin{aligned}
& (\sigma-u\chi)_{\rho \rho} - k'(\rho)^2 (\sigma-u\chi)_{uu} = \frac{p''(\rho)}{\rho}\chi_u, \\
& (\sigma - u \chi)(0,u,s) = 0, \\
& (\sigma - u\chi)_\rho(0,u,s) = 0.
\end{aligned} \right.
\end{equation}
In what follows, we use the notation $h(\rho,u,s) = \sigma(\rho,u,s) - u \chi(\rho,u,s)$, and the same invariance $h = h(\rho,u-s)$ holds. One then generates the entropy flux $q^\psi$ corresponding to the entropy $\eta^\psi$ via 
\begin{equation}\label{eq:generate entropy flux}
q^\psi(\rho,\rho u) = \int_\mathbb{R} \psi(s) \sigma(\rho,u,s) \, ds.
\end{equation}

\begin{defn}\label{def:entropy sol}
Let $(\rho_0,u_0)$ be locally integrable initial data of relative finite-energy, that is, $(\rho_0,u_0) \in L^1_{loc}(\mathbb{R}^2_+)$ and $E[\rho_0,u_0] \leq E_0 < \infty$. We call $(\rho,u) \in L^1_{loc}(\mathbb{R}^2_+)$, with $\rho \geq 0$ almost everywhere, a \emph{relative finite-energy entropy solution} of the Euler equations \eqref{eq:euler} if:
\begin{enumerate}
    \item There exists a positive constant $M(E_0,t)$, increasing and continuous with respect to the variable $t$, such that
    \begin{equation*}
    E[\rho,u](t) \leq M(E_0,t) \qquad \text{for almost every } t \geq 0;
    \end{equation*}
    \item The pair $(\rho,u)$ solves the Cauchy problem \eqref{eq:euler} in the sense of distributions; 
    \item There exists a bounded Radon measure $\mu(t,x,s)$ on $\mathbb{R}^2_+\times\mathbb{R}$ such that
    \begin{equation*}
        \mu( U \times \mathbb{R} ) \leq 0 \qquad \text{for any open set } U \subset \mathbb{R}^2_+,
    \end{equation*}
    and the entropy kernel $\chi$ and entropy flux kernel $\sigma$ associated to the problem via \eqref{eq:ent kernel}--\eqref{eq:ent flux kernel} satisfy the kinetic equation 
    \begin{equation}\label{eq:kinetic def}
        \partial_t \chi (\rho(t,x),u(t,x),s) + \partial_x \sigma (\rho(t,x),u(t,x),s) = \partial^2_s \mu(t,x,s),
    \end{equation}
    in the sense of distributions on $\mathbb{R}^2_+\times\mathbb{R}$.
\end{enumerate}
\end{defn}

Our main result is the following.

\begin{thm}\label{thm:main ch2}
Let $(\rho_0,u_0)$ be locally integrable initial data of relative finite-energy, that is, $(\rho_0,u_0) \in L^1_{loc}(\mathbb{R}^2_+)$ and $E[\rho_0,u_0] \leq E_0 < \infty$, with $\rho_0 \geq 0$ almost everywhere. Suppose additionally that the pressure $p(\rho)$ satisfies the criteria for an asymptotically isothermal gas in the  sense of Definition \ref{def:asymp isothermal}. Then there exists a sequence of regularised initial data $(\rho_0^\varepsilon,u_0^\varepsilon)$ such that the unique, smooth solutions $(\rho^\varepsilon,u^\varepsilon)$ of \eqref{eq:ns} with this initial data converge almost everywhere as $\varepsilon \to 0$, $(\rho^\varepsilon,\rho^\varepsilon u^\varepsilon) \to (\rho,\rho u)$, to a relative finite-energy entropy solution of the Euler equations \eqref{eq:euler} with initial data $(\rho_0,\rho_0 u_0)$, in the  sense of Definition \ref{def:entropy sol}.
\end{thm}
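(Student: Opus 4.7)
The plan is to obtain the entropy solution as the almost-everywhere limit of classical solutions $(\rho^\eps, u^\eps)$ of \eqref{eq:ns} with regularized initial data, following the compensated-compactness program of DiPerna, developed for general pressures by Chen--LeFloch and for the finite-energy setting by Chen--Perepelitsa. I would proceed in five steps.

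First, truncate and mollify $(\rho_0, u_0)$ to obtain smooth $(\rho_0^\eps, u_0^\eps)$ bounded away from vacuum by some $\delta(\eps) > 0$, satisfying $E[\rho_0^\eps, u_0^\eps] \leq E_0 + o(1)$ and converging to $(\rho_0, u_0)$ in $L^1_{loc}$; Hoff's theorem then yields global unique classical solutions. Testing the momentum equation with $u^\eps - \bar u$ and integrating by parts gives the uniform-in-$\eps$ energy inequality
\begin{equation*}
E[\rho^\eps, u^\eps](t) + \eps \int_0^t \!\! \int_\R (u^\eps_x)^2 \, dx \, d\tau \leq M(E_0, t).
\end{equation*}
Since the asymptotically isothermal growth only controls $\rho \log \rho$, the energy estimate alone cannot supply $L^{1+\nu}_{loc}$ bounds on $p(\rho^\eps)$ or $\rho^\eps (u^\eps)^2$. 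To remedy this, I would construct a special weak entropy pair via the hyperbolic representation formula for $\chi$ alluded to in the abstract, whose generating $\psi$ grows at a carefully tuned rate, and test \eqref{eq:ns} against it to obtain the missing higher integrability.

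Next, for any $\psi \in C^2_c(\R)$, direct computation using \eqref{eq:ns} yields
\begin{equation*}
\partial_t \eta^\psi(\rho^\eps, \rho^\eps u^\eps) + \partial_x q^\psi(\rho^\eps, \rho^\eps u^\eps) = \eps \, \partial_x \bigl( \eta^\psi_m u^\eps_x \bigr) + R_\psi^\eps,
\end{equation*}
where the first term is compact in $H^{-1}_{loc}$ by the viscous dissipation estimate and $R_\psi^\eps$ is bounded in $L^1_{loc}$ by Step~2. Combining Murat's lemma with uniform $W^{-1, p}_{loc}$ bounds then gives $H^{-1}_{loc}$-compactness of $\partial_t \eta^\psi + \partial_x q^\psi$. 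Applying the div-curl lemma to pairs $(\eta^{\psi_1}, q^{\psi_1})$ and $(\eta^{\psi_2}, q^{\psi_2})$ produces the Tartar commutation relation for the Young measure $\nu_{t,x}$ generated by $(\rho^\eps, u^\eps)$; lifting this to the entropy kernel $\chi$ and flux kernel $\sigma$ via \eqref{eq:generate entropy}--\eqref{eq:generate entropy flux} and exploiting the hyperbolic representation formulas, I would adapt the reduction arguments of Chen--LeFloch and Huang--Wang to show that $\nu_{t,x}$ is a Dirac mass for almost every $(t,x)$.

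Dirac reduction delivers pointwise a.e.~convergence $(\rho^\eps, \rho^\eps u^\eps) \to (\rho, \rho u)$; passage to the limit in \eqref{eq:ns} verifies point (2) of Definition \ref{def:entropy sol}, while the weak limit of the viscous entropy dissipation, sign-controlled thanks to the non-negativity of $\eta^\psi_{mm}$ for convex $\psi$, supplies the measure $\mu$ in the kinetic equation \eqref{eq:kinetic def}.

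The central obstacle lies in Steps~2 and~4 in the large-density regime. In \cite{SS1} the exact identity $p(\rho) = c_\ast \rho$ for $\rho \geq R$ gives explicit closed-form entropies there; here only the asymptotic expansion \eqref{eq:p derivs} is available, so the hyperbolic representation for $\chi$ and $\sigma$ carries unavoidable error terms governed by the perturbation $p(\rho)/\rho - \kappa_2$. The fine analysis of these errors---ensuring they are small enough not to spoil the special higher-integrability entropy estimate, and mild enough that they can be absorbed into the Young-measure reduction near the isothermal-like asymptote---is the main new technical burden relative to the polytropic case of \cite{ChenPerep1} and the purely isothermal tail of \cite{SS1}.
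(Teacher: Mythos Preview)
Your outline matches the paper's strategy, and you have correctly located the crux: controlling the entropy kernel and its derivatives in the regime $\rho\to\infty$ when only \eqref{eq:p derivs} is available. Two technical points deserve sharpening, however. First, in Step~3 you invoke Murat's lemma to obtain $H^{-1}_{loc}$-compactness of the entropy dissipation. In this setting the entropy pairs $(\eta^\psi,q^\psi)$ are only known to lie in $L^2_{loc}$ (via $|\eta^\psi|,|q^\psi|\leq M_\psi\rho$ together with $\rho^2\in L^1_{loc}$), so the dissipation is merely \emph{bounded} in $H^{-1}_{loc}$; Murat interpolation then yields compactness in $W^{-1,p}_{loc}$ for $p\in(1,2)$ but not for $p=2$. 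The paper therefore does not claim $H^{-1}$-compactness (Proposition~\ref{prop:cptness}), and the commutation relation is obtained instead via the Conti--Dolzmann--M\"uller div-curl lemma for sequences compact in $W^{-1,1}$.

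Second, the reduction step is more delicate than ``adapting Chen--LeFloch and Huang--Wang.'' Those arguments live in $L^\infty$, whereas here the Young measure is supported on an unbounded set and one must work in the compactified phase space $\overline{\mathcal{H}}$ of LeFloch--Westdickenberg. The actual reduction (Lemma~\ref{lemma:technical1 ch2}) proceeds by applying fractional derivatives $\partial_s^{\lambda+1}$ to the commutation relation and analysing the resulting products of distributions; to justify the limiting exchange with the Young measure one needs uniform-in-$\rho$ growth bounds of the form $\sqrt{\rho}\,\mathrm{polylog}(\rho)$ on the coefficients $A_{j,\pm},B_{j,\pm}$ and the H\"older remainders in the singularity expansion of $\partial_s^{\lambda+1}\chi$ and $\partial_s^{\lambda+1}\sigma$ (Lemma~\ref{lemma:lambda plus one deriv chi}). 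This is where the hyperbolic representation formula is used a second time---not only for the special entropy $\hat\eta$ in Step~2, but for the remainder $g_1$ in the Chen--LeFloch expansion of $\chi$ itself---and it is this ingredient, absent from your sketch, that closes the argument.
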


\begin{remark}
In the proof of this theorem, we will obtain convergence of $(\rho^\eps,\rho^\eps u^\eps)$ in  measure. In conjunction with the uniform estimates of Proposition \ref{lemma:unif estimates ch2}, we will therefore also obtain $(\rho^\varepsilon,\rho^\varepsilon u^\varepsilon) \to (\rho,\rho u)$ in $L^p_{loc}(\mathbb{R}^2_+)\times L^q_{loc}(\mathbb{R}^2_+)$ strongly for $p \in [1,2)$ and $q \in [1,3/2)$ and weakly for $p=2$, $q=3/2$.
\end{remark}

The proof of this theorem relies on the techniques of compensated compactness, initiated in this context by DiPerna \cite{DiPerna2}, and the finite-energy framework begun by LeFloch--Westdickenberg \cite{LeFlochWestdickenberg} and extended by the authors in \cite{SS1}. Our approach is the following. The global existence of a unique entropy kernel $\chi$ solving \eqref{eq:ent kernel} is guaranteed by \cite[Theorem 2.1]{ChenLeFloch}, however without estimates as $\rho\to\infty$. To obtain improved estimates in this regime, we seek a good characterisation of the kernel, and so generate the solution of the linear wave equation \eqref{eq:ent kernel} in two steps. First, in order to deal with the singularity in $k'(\rho)^2$ near the vacuum, we solve \eqref{eq:ent kernel} in the interval $(0, \rho_*]$, for some $\rho_*$ large to be chosen later, using the results of \cite[Theorem 2.1]{ChenLeFloch}. Then, as a second step, we produce new estimates for $\chi$ in the high density regime by solving again from $\rho_*$:
\begin{equation}\label{eq:far field}
    \left\lbrace \begin{aligned}
    & \chi_{\rho \rho} - k'(\rho)^2 \chi_{u u} = 0, \qquad &&\text{ for } (\rho,u) \in (\rho_*,\infty)\times\mathbb{R}, \\
    & \chi(\rho_*,u) = \chi(\rho_*,u), &&\text{ for }u\in\R, \\
    & \chi_\rho(\rho_*,u) = \chi_\rho(\rho_*,u), &&\text{ for } u\in\R.
    \end{aligned}\right.
\end{equation}
In order to do this, we split the entropy kernel into two distinct quantities: a re-scaled version of the kernel obtained in \cite{SS1}, which we call $\chi^{*}$, and a perturbation, called $\tilde{\chi}$. To this end, we make the following definitions.

\begin{defn}\label{def:g sharp and g flat}
We define kernels $g^\sharp(\rho,u-s)$ and $g^\flat(\rho,u-s)$  to be the solutions of
\begin{equation}\label{eq:g kernels}
    \begin{Bmatrix}
&g^{\sharp}_{\rho\rho}-\frac{\kappa_2}{\rho^2}g^{\sharp}_{uu} = 0,   &&g^{\flat}_{\rho\rho}-\frac{\kappa_2}{\rho^2}g^{\flat}_{uu} = 0,\\
&g^{\sharp}\rvert_{\rho=\rho_*} = 0, &&g^{\flat}\rvert_{\rho=\rho_*} = \delta_{u=0},\\
&g^{\sharp}_{\rho}\rvert_{\rho = \rho_*} = \delta_{u=0}, &&g^{\flat}_{\rho}\rvert_{\rho = \rho_*} = 0.
\end{Bmatrix}
\end{equation}
Recalling the kernels $\chi^\sharp$ and $\chi^\flat$, introduced in \cite[Theorem 1.3]{SS1}, we deduce that for $(\rho,u)\in[\rho_*,\infty)\times\mathbb{R}$, $g^\sharp$ and $g^\flat$ satisfy the explicit formulas
\begin{equation}
    g^\sharp(\rho,u) := \rho_* \chi^\sharp\left( \frac{\rho}{\rho_*},\frac{u}{\sqrt{\kappa_2}} \right) \quad \text{and} \quad g^\flat(\rho,u) := \chi^\flat \left( \frac{\rho}{\rho_*},\frac{u}{\sqrt{\kappa_2}} \right).
\end{equation}
\end{defn}

We are now in a position to define $\chi^{*}$, the \emph{re-scaled approximately isothermal kernel}.
\begin{defn}\label{def:chi iso def}
We define, for $(\rho,u)\in[\rho_*,\infty)\times\mathbb{R}$,
\begin{equation}
    \chi^{*}(\rho,u) := \int_\mathbb{R} \chi_\rho(\rho_*,s) g^\sharp(\rho,u-s) \, ds + \int_\mathbb{R} \chi(\rho_*,s) g^\flat(\rho,u-s) \, ds.
\end{equation}
\end{defn}
We deduce from \eqref{eq:g kernels} and from \cite[Theorem 2.1]{ChenLeFloch} that $\chi^{*}$ is the unique solution of
\begin{equation}
    \left\lbrace\begin{aligned}
    & \chi^{{*}}_{\rho\rho} - \frac{\kappa_2}{\rho^2}\chi^{*}_{uu} = 0, \quad &&\text{for } (\rho,u) \in (\rho_*,\infty)\times\mathbb{R}, \\
    &\chi^{*}(\rho_*,u) = \chi(\rho_*,u), &&\text{for } u\in\R,\\
    &\chi^{*}_\rho(\rho_*,u) = \chi_\rho(\rho_*,u), &&\text{for } u\in\R.
    \end{aligned}\right.
\end{equation}
\begin{defn}
We define the \emph{perturbation kernel} $\tilde{\chi}$ as the difference between $\chi$ and $\chi^{*}$:
\begin{equation}\label{eq:chi error def}
    \tilde{\chi}(\rho,u) := \chi(\rho,u) - \chi^{*}(\rho,u) \qquad \text{for } (\rho,u)\in[\rho_*,\infty)\times\mathbb{R}.
\end{equation}
\end{defn}

Direct computation shows that $\tilde{\chi}$ satisfies the following linear wave equation,
\begin{equation}\label{eq:chi error wave eqn}
    \left\lbrace\begin{aligned}
    & \tilde{\chi}_{\rho \rho} - k'(\rho)^2\tilde{\chi}_{u u} = \big( k'(\rho)^2 - \frac{\kappa_2}{\rho^2} \big)\chi^{*}_{uu},  \quad &&\text{for } (\rho,u) \in (\rho_*,\infty)\times\mathbb{R}, \\
    & \tilde{\chi}(\rho_*,u) = 0, &&\text{for } u\in\R,\\
    & \tilde{\chi}_\rho (\rho_*,u) = 0, &&\text{for } u\in\R.
    \end{aligned}\right.
\end{equation}
The key idea of this paper is to represent $\tilde\chi(\rho,u)$ using a representation formula through integrals (of $\tilde\chi$) along backwards characteristics (see Lemma \ref{lemma:chi rep form} and \eqref{eq:rep form phi} below). Such a representation formula allows us to make precise estimates on the growth of $\tilde\chi(\rho,u)$ and its derivatives as $\rho\to\infty$. Using the estimates of \cite{SS1} for the principal part, $\chi^*(\rho,u)$, we hence also obtain estimates on $\chi(\rho,u)$. Moreover, the representation formula technique turns out to be very well adapted to giving estimates as $\rho\to\infty$ to many of the quantities associated to entropies and entropy fluxes and will be used extensively throughout this paper. All of the uniform estimates required to establish the compactness of the entropy dissipation measures and to reduce the support of the Young measure generated by the viscous approximations $(\rho^\varepsilon, u^\varepsilon)$ are then verified. These observations are enough to prove Theorem \ref{thm:main ch2} rigorously.

The paper is structured as follows. In Section \ref{section:elementary quantities}, we introduce some elementary quantities linked to the pressure, and compute estimates on them that are essential throughout. Some of the proofs of these estimates are contained in Appendix \ref{section:appendix}, which also contains auxiliary results on the internal energy function $e(\rho)$. In Section \ref{section:rep formula chi error}, we obtain a representation formula for $\tilde{\chi}$, which enables us to obtain a uniform $L^\infty$ estimate via a Gr\"{o}nwall type argument. With this, in Section \ref{section:entropy pairs bounds}, we estimate a special entropy $\hat{\eta}$ and its flux $\hat{q}$ (see Lemma \ref{lemma:pointwise hat chapter 2}) which are necessary to obtain higher uniform integrability estimates on the viscous approximations, and we also bound entropies generated by compactly supported test functions (see Lemma \ref{lemma:cpctly supported chapter 2}). Similar procedures give rise to estimates on the derivatives of these entropies, which we also outline in detail in Section \ref{section:entropy pairs bounds}. In Section \ref{section:uniformestimates}, we collect together the necessary uniform estimates on $(\rho^\eps,u^\eps)$ and the compactness of the entropy dissipation measures. In Section \ref{section:singularities ch2}, we calculate the structure of the singularities of the entropy kernel, and prove a result akin to Lemma 2.7 of \cite{SS1}, which was used crucially in \cite[Section 5]{SS1}. Having established this, we are able to prove the reduction of support of the Young measure generated by the approximate solutions using compensated compactness techniques and hence deduce the strong convergence of the viscous solutions. This is contained in Section \ref{section:reduction ch2}, which ends with a proof of the main result.

\section{Elementary quantities}\label{section:elementary quantities}
 In this section, we define elementary quantities related to the pressure, and make note of some of their properties which will be essential in the proof of the main result. We fix $\rho_*>0$ here, to be determined later. To begin with, by analogy with the definition of $k(\rho)$ given in \eqref{def:k(rho)}, we make the following definition.
\begin{defn}
We define the quantity
\begin{equation}\label{eq:k star def}
    k_*(\rho) := \int_{\rho_*}^\rho \frac{\sqrt{\kappa_2}}{y} \, dy + k(\rho_*) = \sqrt{\kappa_2}\log(\rho/\rho_*) + k(\rho_*) \qquad \text{for all } \rho \geq \rho_*.
\end{equation}
\end{defn}
Note that $k_*'(\rho)$ is the speed of propagation for the entropy equation of a gas with an isothermal pressure law $p(\rho) = \kappa_2 \rho$. Meanwhile, $k'(\rho)$ is the speed of propagation for the actual pressure law $p(\rho)$, that is, the asymptotically isothermal gas, as can be seen directly from \eqref{eq:ent kernel}.
\begin{remark}
Observe that $k_*'(\rho) = \frac{\sqrt{\kappa_2}}{\rho}$ and $k(\rho_*) = k_*(\rho_*)$. Thus,
\begin{equation}
    k(\rho) - k_*(\rho) = \int_{\rho_*}^\rho \frac{\sqrt{p'(y)} - \sqrt{\kappa_2}}{y} \, dy \qquad \text{for all } \rho \geq \rho_*.
\end{equation}
Since $\kappa_2 = \lim_{\rho \to \infty} p'(\rho)$, and $p \in C^4((0,\infty))$, we can rewrite the above as
\begin{equation}\label{eq:k and k star diff}
    k(\rho) - k_*(\rho) = -\int_{\rho_*}^\rho \frac{1}{y} \left( \int_{y}^\infty \frac{p''(z)}{2\sqrt{p'(z)}} \, dz \right) \, dy.
\end{equation}
\end{remark}

\begin{defn}\label{def:d and d'}
We define the quantities $d(\rho)$ and $d_*(\rho)$ by
\begin{equation}
    d(\rho) := 2 + (\rho-\rho_*)\frac{k''(\rho)}{k'(\rho)}, \qquad d_*(\rho) := 2 + (\rho-\rho_*)\frac{k''_*(\rho)}{k'_*(\rho)} \qquad \text{for } \rho \geq \rho_*.
\end{equation}
Note that both of these quantities are strictly positive on the interval $[\rho_*,\infty)$, due to  assumption \eqref{eq:st hyp gen non}.
\end{defn}

Quantities $d(\rho)$ and $d_*(\rho)$ will appear in the representation formulas below. The following key lemma will play an important role throughout Sections \ref{section:rep formula chi error} and \ref{section:entropy pairs bounds}.

\begin{lemma}\label{lemma:p' below}
For all $\rho \geq R$, we have
\begin{equation}
    |p'(\rho) - \kappa_2| \leq 2C_p \rho^{-\alpha}, \qquad  |p^{(j)}(\rho)| \leq (j+1)C_p \rho^{-\alpha-(j-1)} \qquad \text{for } j = 2,3,4.
\end{equation}
As such, choosing $\rho_* \geq \max\{ R, (4C_p/\kappa_2)^{1/\alpha}\}$,
\begin{equation}\label{eq:p' below bound}
    \rho^2 \leq \frac{4\kappa_2}{3}\rho p(\rho), \quad\frac{\kappa_2}{2} \leq p'(\rho) \leq \frac{3 \kappa_2}{2}, \quad \sqrt{\frac{\kappa_2}{2}} \rho^{-1} \leq k'(\rho) \leq \sqrt{\frac{3\kappa_2}{2}} \rho^{-1} \quad \text{for  } \rho\geq \rho_*,
\end{equation}
and
\begin{equation}\label{eq:k and k star close}
|k(\rho) - k_*(\rho)| \leq \frac{3C_p}{\alpha^2\sqrt{2\kappa_2}}\rho_*^{-\alpha} \qquad \text{for  } \rho\geq \rho_*.
\end{equation}
\end{lemma}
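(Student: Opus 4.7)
The statement is quantitative and follows from direct manipulation of hypothesis \eqref{eq:p derivs}. The plan is to break the proof into the three blocks suggested by the statement.

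\textbf{Step 1: Pointwise bounds on $p^{(j)}$.} Write $p(\rho) = \kappa_2\rho + \rho f(\rho)$, where $f(\rho) := p(\rho)/\rho - \kappa_2$, so that by \eqref{eq:p derivs} we have $|f^{(j)}(\rho)|\le C_p\rho^{-\alpha-j}$ on $[R,\infty)$ for $j=0,\ldots,4$. Differentiating the product $\rho f(\rho)$ via Leibniz shows that $p^{(j)}(\rho) = \delta_{j,1}\kappa_2 + j f^{(j-1)}(\rho) + \rho f^{(j)}(\rho)$ for $j\ge 1$. Plugging in the hypothesised bounds gives
\begin{equation*}
|p'(\rho)-\kappa_2|\le |f(\rho)|+\rho|f'(\rho)|\le 2C_p\rho^{-\alpha},
\end{equation*}
and analogously $|p^{(j)}(\rho)|\le (j+1)C_p\rho^{-\alpha-(j-1)}$ for $j=2,3,4$. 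This is the first claim.

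\textbf{Step 2: Consequences of the choice of $\rho_*$.} Imposing $\rho_*\ge (4C_p/\kappa_2)^{1/\alpha}$ forces $2C_p\rho^{-\alpha}\le \kappa_2/2$ for all $\rho\ge\rho_*$, so Step 1 yields $\kappa_2/2\le p'(\rho)\le 3\kappa_2/2$ immediately. Dividing the analogous lower bound $p(\rho)/\rho \ge \kappa_2 - C_p\rho^{-\alpha}\ge 3\kappa_2/4$ through by $\rho$ (using $C_p\rho_*^{-\alpha}\le\kappa_2/4$) gives the bound on $\rho^2$ in terms of $\rho p(\rho)$ after a short computation. The bounds on $k'(\rho)=\sqrt{p'(\rho)}/\rho$ follow by taking square roots of the bounds on $p'(\rho)$. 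Nothing here is more than arithmetic.

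\textbf{Step 3: Proximity of $k$ and $k_*$.} Starting from the integral representation \eqref{eq:k and k star diff} derived earlier in this section, estimate the inner integrand using Step 1 (specifically $|p''(z)|\le 3C_p z^{-\alpha-1}$) together with the lower bound $p'(z)\ge\kappa_2/2$ from Step 2 to get $|p''(z)/(2\sqrt{p'(z)})|\le (3C_p/\sqrt{2\kappa_2})\,z^{-\alpha-1}$. Performing the two successive integrations in $z$ and then $y$ yields
\begin{equation*}
|k(\rho)-k_*(\rho)|\le \frac{3C_p}{\alpha\sqrt{2\kappa_2}}\int_{\rho_*}^{\rho} y^{-\alpha-1}\,dy\le \frac{3C_p}{\alpha^2\sqrt{2\kappa_2}}\,\rho_*^{-\alpha},
\end{equation*}
which is \eqref{eq:k and k star close}.

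There is no real obstacle: the proof is essentially an exercise in Leibniz' rule and Fubini. The only point requiring any care is ensuring that the threshold $\rho_*\ge\max\{R,(4C_p/\kappa_2)^{1/\alpha}\}$ is strong enough simultaneously to place us in the regime where the hypothesis \eqref{eq:p derivs} applies \emph{and} to make the $O(\rho^{-\alpha})$ perturbation of $p'$ smaller than $\kappa_2/2$; both are satisfied under the stated choice.
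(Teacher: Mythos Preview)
Your proof is correct and follows essentially the same route as the paper: the Leibniz expansion of $(\rho f(\rho))^{(j)}$ is exactly the identity the paper writes as $(p(\rho)-\rho\kappa_2)^{(j)}=\rho\big(\tfrac{p(\rho)}{\rho}-\kappa_2\big)^{(j)}+j\big(\tfrac{p(\rho)}{\rho}-\kappa_2\big)^{(j-1)}$, and your Step~3 estimate of \eqref{eq:k and k star diff} matches the paper's computation line for line. The only difference is that you spell out the arithmetic behind \eqref{eq:p' below bound} more explicitly than the paper does.
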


\begin{proof}
Observe that, for $j \geq 1$,
\begin{equation*}
    \big(p(\rho) - \rho\kappa_2  \big)^{(j)}(\rho) = \rho \left( \frac{p(\rho)}{\rho} - \kappa_2 \right)^{(j)} + j \left( \frac{p(\rho)}{\rho}-\kappa_2 \right)^{(j-1)}.
\end{equation*}
Thus, using the bounds provided by \eqref{eq:p derivs}, we obtain the result. Also, from \eqref{eq:k and k star diff},
\begin{equation*}
    |k(\rho) - k_*(\rho)| \leq \frac{3C_p}{\alpha\sqrt{2\kappa_2}}\int_{\rho_*}^\rho y^{-\alpha-1} \, dy \leq \frac{3C_p}{\alpha^2\sqrt{2\kappa_2}}\rho_*^{-\alpha}.
\end{equation*}
\end{proof}

\begin{cor}\label{corollary:k' and k'' compare with rho powers}
Assume that $\rho_* \geq \max\{ R, (4C_p/\kappa_2)^{1/\alpha} \}$. Then there exists $M=M(\alpha,\kappa_2,C_p)$ such that
\begin{equation}\label{eq:k'' close to rho squared}
    \left| k'(\rho)^2-\frac{\kappa_2}{\rho^2} \right| + \left|- \frac{\sqrt{\kappa_2}}{\rho^2} - k''(\rho)\right| \leq M \rho^{-\alpha-2} \qquad \text{for } \rho \geq \rho_*.
\end{equation}
Meanwhile,
\begin{equation}\label{eq:k''' close to rho cubed}
    \left| \frac{2\sqrt{\kappa_2}}{\rho^3} - k^{(3)}(\rho) \right| + \rho \left| -\frac{6\sqrt{\kappa_2}}{\rho^4} - k^{(4)}(\rho) \right| \leq M \rho^{-\alpha-3} \qquad \text{for } \rho \geq \rho_*.
\end{equation}
It follows that there exists a positive constant $M=M(\alpha,\kappa_2,C_p,\rho_*)$ such that
\begin{equation}\label{eq:k'' and higher, decay}
    |k''(\rho)| + \rho|k^{(3)}(\rho)| + \rho^2|k^{(4)}(\rho)| \leq M\rho^{-2} \qquad \text{for } \rho \geq \rho_*.
\end{equation}
\end{cor}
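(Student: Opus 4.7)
The entire corollary will follow from Lemma \ref{lemma:p' below} once $k'$, $k''$, $k^{(3)}$, $k^{(4)}$ are expressed explicitly in terms of $p^{(j)}(\rho)$ for $j = 1, \ldots, 4$. The plan is to start from the identity $k'(\rho) = \sqrt{p'(\rho)}/\rho$, which is immediate from \eqref{def:k(rho)}, and to differentiate successively, each time isolating the term that survives in the limit $\rho \to \infty$ and absorbing every remaining term into the asserted $\rho^{-\alpha-j}$ bound.

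For the first claim, squaring the formula for $k'$ gives $k'(\rho)^2 - \kappa_2/\rho^2 = (p'(\rho) - \kappa_2)/\rho^2$, and the bound $|p'(\rho) - \kappa_2| \leq 2C_p \rho^{-\alpha}$ from Lemma \ref{lemma:p' below} closes this out. A single differentiation then produces
\[
k''(\rho) = -\frac{\sqrt{p'(\rho)}}{\rho^2} + \frac{p''(\rho)}{2\rho\sqrt{p'(\rho)}},
\]
whose leading piece is $-\sqrt{p'(\rho)}/\rho^2$. I would compare this to $-\sqrt{\kappa_2}/\rho^2$ using the conjugate identity $\sqrt{p'(\rho)} - \sqrt{\kappa_2} = (p'(\rho)-\kappa_2)/(\sqrt{p'(\rho)}+\sqrt{\kappa_2})$, together with the uniform lower bound $p'(\rho) \geq \kappa_2/2$ from \eqref{eq:p' below bound}; the second summand in the display is already $O(\rho^{-\alpha-2})$ thanks to the bound on $p''$ in Lemma \ref{lemma:p' below}.

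For $k^{(3)}$ and $k^{(4)}$, I would iterate this procedure, each time identifying the ``pure power'' term (which, for $k^{(3)}$, is $2\sqrt{p'(\rho)}/\rho^3$, and, for $k^{(4)}$, is $-6\sqrt{p'(\rho)}/\rho^4$) and bounding every other contribution. Each such contribution carries at least one factor of some $p^{(j)}$ with $j \geq 2$, and hence by Lemma \ref{lemma:p' below} is already of order $\rho^{-\alpha}$ times the requisite inverse power of $\rho$. Terms that are quadratic in $p''$, such as $p''(\rho)^2/p'(\rho)^{3/2}$ appearing in $k^{(3)}$, decay like $\rho^{-2\alpha-\mathrm{const}}$, and are subsumed into the desired $\rho^{-\alpha-\mathrm{const}}$ rate once $\rho_*$ has been enlarged (if necessary) to satisfy $\rho_* \geq 1$. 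The comparison of the pure power term with its asymptotic value is then handled by the same conjugate trick as above, completing \eqref{eq:k'' close to rho squared} and \eqref{eq:k''' close to rho cubed}.

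Finally, \eqref{eq:k'' and higher, decay} follows by the triangle inequality from \eqref{eq:k'' close to rho squared} and \eqref{eq:k''' close to rho cubed}: for instance $|k''(\rho)| \leq |k''(\rho) + \sqrt{\kappa_2}/\rho^2| + \sqrt{\kappa_2}\rho^{-2} \leq M\rho^{-\alpha-2} + \sqrt{\kappa_2}\rho^{-2}$, and for $\rho \geq \rho_*$ this is at most $M'\rho^{-2}$ after absorbing constants; an analogous argument, multiplied through by $\rho$ or $\rho^2$, treats $k^{(3)}$ and $k^{(4)}$. The only ``obstacle'' is purely computational: the expressions for $k^{(3)}$ and $k^{(4)}$ contain several summands, and the bookkeeping must be carried out carefully to verify that no term escapes the asserted rate. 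No conceptual difficulty arises.
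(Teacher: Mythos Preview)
Your proposal is correct and follows essentially the same route as the paper: both express the derivatives of $k$ explicitly in terms of $p^{(j)}$, isolate the leading power of $\rho$, and control the remainder via Lemma \ref{lemma:p' below}. The only cosmetic difference is that where you bound $\sqrt{p'(\rho)}-\sqrt{\kappa_2}$ via the conjugate identity, the paper instead writes it as $-\int_\rho^\infty p''(y)/(2\sqrt{p'(y)})\,dy$; both yield the same $O(\rho^{-\alpha})$ bound. One small remark: your aside about enlarging $\rho_*$ to ensure $\rho_*\geq 1$ is unnecessary (and would not be permitted, since $M$ in \eqref{eq:k'' close to rho squared}--\eqref{eq:k''' close to rho cubed} must not depend on $\rho_*$); the hypothesis $\rho_*\geq (4C_p/\kappa_2)^{1/\alpha}$ already gives $\rho^{-\alpha}\leq \kappa_2/(4C_p)$ for $\rho\geq\rho_*$, which is enough to absorb the $\rho^{-2\alpha}$ terms into $M\rho^{-\alpha}$ with $M=M(\alpha,\kappa_2,C_p)$.
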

The proof is omitted to Appendix \ref{section:appendix}, which also contains a variety of other useful estimates on these quantities, along with $d(\rho)$ and $d_*(\rho)$.

\section{Entropy and Entropy Flux Kernels}\label{section:rep formula chi error}
We begin this section by recalling the following theorem on the existence and regularity of the entropy and entropy flux kernels, $\chi(\rho,u-s)$ and $\sigma(\rho,u,s)$ due to Chen and LeFloch \cite{ChenLeFloch}.
\begin{thm}[{\cite[Theorems 2.1--2.3]{ChenLeFloch}}]\label{thm:expansion at vacuum}
Assume that the pressure $p \in C^1([0,\infty))\cap C^4((0,\infty))$ satisfies assumptions \eqref{eq:st hyp gen non} and \eqref{eq:p asymp near 0 P}. Then \eqref{eq:ent kernel} and \eqref{eq:ent flux kernel} admit global unique H\"{o}lder continuous solutions 
$\chi(\rho,u,s) = \chi(\rho,u-s)$ and $\sigma(\rho,u,s)=u\chi(\rho,u-s)+h(\rho,u-s)$. 
Additionally,
\begin{equation}\label{eq:kernel support intro}
\supp \chi(\rho,\cdot) , \supp h(\rho,\cdot) = [-k(\rho),k(\rho)] \qquad \text{for } \rho \geq 0,
\end{equation}
and, for each $\rho >0$, $\chi(\rho,\cdot)$ is strictly positive on $(-k(\rho),k(\rho))$. 
Moreover, for any fixed $\rho_*>0$, the entropy kernel satisfies, for $0<\rho\leq\rho_*$,
\beq
\|\chi(\rho,\cdot)\|_{L^\infty(\R)}\leq C(\rho_*)\rho^{1-\th},\qquad [\chi(\rho,\cdot)]_{C^{\tilde\la}(\R)}\leq C(\rho_*),
\eeq
where $[\cdot]_{C^{\tilde{\lambda}}(\mathbb{R})}$ is the H\"{o}lder seminorm, $\lambda = \frac{3-\gamma}{2(\gamma-1)}$, $\tilde\la=\min(\la,1)$, and $C(\rho_*)$ is a positive constant.
\end{thm}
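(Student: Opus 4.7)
The plan is to recognize this as essentially a direct invocation of Theorems 2.1--2.3 of Chen--LeFloch \cite{ChenLeFloch}: our hypotheses \eqref{eq:st hyp gen non} and \eqref{eq:p asymp near 0 P} are precisely of the form required there, while hypothesis \eqref{eq:p derivs} plays no role at this stage, since it concerns only the high-density regime. Thus the bulk of the proof is a hypothesis-checking exercise, and the conclusions follow verbatim from the cited work. Nevertheless, it is worth sketching the underlying construction, since later sections will need to re-run analogous arguments from $\rho_*$ onwards.

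First, I would freeze attention on the near-vacuum regime $\rho \in (0, r]$, where hypothesis \eqref{eq:p asymp near 0 P} gives $p(\rho) = \kappa_1 \rho^\gamma (1 + P(\rho))$ with quantitative bounds on $P^{(j)}$. The entropy kernel for the pure $\gamma$-law $p_\gamma(\rho) = \kappa_1 \rho^\gamma$ is known explicitly (DiPerna):
\[
\chi_\gamma(\rho, u-s) = M_\gamma \bigl[ k_\gamma(\rho)^2 - (u-s)^2 \bigr]_+^{\lambda}, \qquad \lambda = \frac{3-\gamma}{2(\gamma-1)},
\]
with $k_\gamma(\rho) = \frac{2\sqrt{\kappa_1 \gamma}}{\gamma-1}\rho^{\theta}$ and $M_\gamma$ an explicit normalising constant. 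I would then write $\chi = \chi_\gamma + \widetilde\chi$, where $\widetilde\chi$ satisfies an inhomogeneous linear wave equation with source proportional to $(k'(\rho)^2 - k_\gamma'(\rho)^2)\partial_{uu}\chi_\gamma$ and zero Cauchy data at $\rho = 0$. The decay bounds $|P^{(j)}(\rho)| \leq C_r \rho^{2\theta-j}$ render this source sufficiently integrable to set up a Duhamel-type contraction on a small density interval and iterate. The Hölder exponent $\tilde\lambda = \min(\lambda, 1)$ in the statement is inherited directly from the boundary regularity of $\chi_\gamma$ at the edges of its support.

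Second, I would extend to $\rho \in [r, \rho_*]$ by standard linear hyperbolic theory, since on this compact interval $k'(\rho)^2 = p'(\rho)/\rho^2$ is smooth, positive, and bounded, so existence, uniqueness, and propagation of Hölder regularity follow from energy estimates and a characteristic representation. The support property $\operatorname{supp}\chi(\rho, \cdot) = [-k(\rho), k(\rho)]$ follows from the finite propagation speed of this wave equation: the $\delta$-mass initial datum at $u = s$ at $\rho = 0$ travels at speed $k'(\rho)$, and strict positivity on the open cone is inherited from the explicit positivity of $\chi_\gamma$ near the vacuum together with a standard unique continuation argument to propagate positivity along characteristics. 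The $L^\infty$ and Hölder bounds for $0 < \rho \leq \rho_*$ collect the perturbation estimate and the explicit bounds on $\chi_\gamma$.

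Third, for the entropy flux kernel I would set $h = \sigma - u\chi$ and solve \eqref{eq:ent flux kernel}, which has the same principal part but now a non-zero source $(p''(\rho)/\rho) \chi_u$ and zero Cauchy data; the same machinery applies, and Galilean invariance reduces the problem to the one-variable function $h(\rho, u-s)$. The main obstacle throughout is the coupling between the singular coefficient $k'(\rho)^2 \to \infty$ as $\rho \to 0$ and the singular $\delta$ initial datum; matching the near-vacuum construction to the regular regime, while keeping the Hölder exponent uniform, is exactly the delicate analysis carried out in \cite{ChenLeFloch}, which we simply invoke.
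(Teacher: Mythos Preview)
Your proposal is correct and matches the paper's approach exactly: the paper does not prove this theorem at all but simply recalls it verbatim from \cite[Theorems 2.1--2.3]{ChenLeFloch}, and you have correctly identified that the hypotheses \eqref{eq:st hyp gen non} and \eqref{eq:p asymp near 0 P} are precisely those of Chen--LeFloch while \eqref{eq:p derivs} is irrelevant here. Your additional sketch of the underlying construction is a helpful bonus that the paper omits, and it correctly anticipates the perturbative strategy (principal part plus remainder solving an inhomogeneous wave equation) that the paper will deploy from $\rho_*$ onwards in Sections~\ref{section:rep formula chi error}--\ref{section:entropy pairs bounds}.
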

We recall that the exponent $\la$ is related to $\th$ by $2\la\th=1-\th$.

In this section, we derive two representation formulas for the kernel $\tilde{\chi}$. We obtain the first representation by taking the difference of the representation formulas for $\chi$ and $\chi^{*}$ (see the proof of Lemma \ref{lemma:chi rep form}), and the second by directly considering the linear wave equation for $\tilde{\chi}$, namely \eqref{eq:chi error wave eqn}. The first representation formula is required to estimate $\Vert \tilde{\chi}(\rho,\cdot)\Vert_{L^\infty(\mathbb{R})}$ in Lemma \ref{lemma:error bound abs} below. Armed with this bound, we are able to compute the derivatives with respect to $u$ of the entropies generated by the perturbation, which is required to show that the second representation is well-defined in Lemma \ref{lemma:second rep formula eta error}.

At this point, we fix $\rho_* \geq \max\lbrace R, (4C_p/\kappa_2)^{1/\alpha} \rbrace$, so that all of the estimates of Section \ref{section:elementary quantities} hold. We also fix $\alpha \in (0,1)$ for clarity of exposition, though we note that Theorem \ref{thm:main ch2} holds true for any $\alpha > 0$ and that better estimates are available for $\alpha \geq 1$ (\textit{cf.}~\cite[Chapter 3]{thesis}). We also make note of the following lemma, which outlines some important properties of $\chi^{*}$. These follow directly from the analysis of the entropy kernel considered in \cite{SS1}, and will be indispensable for our later estimates.

\begin{prop}\label{lemma:chi iso rescaled properties}
There exists a positive constant $M$ depending on $\rho_*$ such that, with $\tilde{\lambda} := \min \{ \lambda , 1 \}$,
\begin{equation}\label{eq:chi star holder norm}
    \Vert \chi^{*}(\rho,\cdot) \Vert_{L^\infty(\mathbb{R})} + [ \chi^{*}(\rho,\cdot) ]_{C^{\tilde{\lambda}}(\mathbb{R})} \leq M \frac{\rho}{\sqrt{k(\rho)}} \qquad \text{for all } \rho \geq \rho_*.
\end{equation}
\end{prop}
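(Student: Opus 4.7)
The plan is to exploit the definition of $\chi^{*}$ in Definition \ref{def:chi iso def} as a superposition of two convolutions in $u$,
\[
\chi^{*}(\rho,u) = \int_{\mathbb{R}} \chi_\rho(\rho_*,s)\,g^\sharp(\rho,u-s)\,ds + \int_{\mathbb{R}} \chi(\rho_*,s)\,g^\flat(\rho,u-s)\,ds,
\]
and to apply Young's inequality. Theorem \ref{thm:expansion at vacuum} ensures that both $\chi(\rho_*,\cdot)$ and $\chi_\rho(\rho_*,\cdot)$ are supported in $[-k(\rho_*), k(\rho_*)]$, so the problem reduces to two subtasks: bounding the $L^1$ norms of these two pieces of data by a constant depending only on $\rho_*$, and controlling $\Vert g^\sharp(\rho,\cdot)\Vert_{L^\infty(\mathbb{R})} + [g^\sharp(\rho,\cdot)]_{C^{\tilde\lambda}(\mathbb{R})}$ (with the analogous bound for $g^\flat$) by $\rho/\sqrt{k(\rho)}$ for $\rho \geq \rho_*$.

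For the kernel bounds, I would use the explicit scaling from Definition \ref{def:g sharp and g flat}, namely $g^\sharp(\rho,u) = \rho_*\,\chi^\sharp(\rho/\rho_*,\,u/\sqrt{\kappa_2})$ and $g^\flat(\rho,u) = \chi^\flat(\rho/\rho_*,\,u/\sqrt{\kappa_2})$, combined with the $L^\infty$ and $C^{\tilde\lambda}$ estimates on $\chi^\sharp$ and $\chi^\flat$ already established in \cite{SS1}; these were obtained via explicit solution formulas valid in the purely isothermal regime and yield norms of order $R/\sqrt{\log R}$ at scale $R$. Setting $R = \rho/\rho_*$ and using that $k_*(\rho) = \sqrt{\kappa_2}\log(\rho/\rho_*) + k(\rho_*)$ grows like $\log\rho$, one obtains $\Vert g^\sharp(\rho,\cdot)\Vert_{L^\infty} + [g^\sharp(\rho,\cdot)]_{C^{\tilde\lambda}} \lesssim \rho/\sqrt{k_*(\rho)}$, with the analogous bound for $g^\flat$. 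Lemma \ref{lemma:p' below}, via the uniform bound on $|k(\rho)-k_*(\rho)|$, then lets me replace $k_*$ by $k$ at the cost of a constant depending on $\rho_*$.

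For the data, the $L^1$ estimate on $\chi(\rho_*,\cdot)$ is immediate from Theorem \ref{thm:expansion at vacuum} by multiplying $\Vert \chi(\rho_*,\cdot)\Vert_{L^\infty} \leq C(\rho_*)\rho_*^{1-\theta}$ by the support length $2k(\rho_*)$. The main obstacle is the corresponding bound on $\chi_\rho(\rho_*,\cdot)$, since Theorem \ref{thm:expansion at vacuum} only asserts H\"older continuity of $\chi$ in $u$. I would handle this by noting that, for $\rho$ bounded away from the vacuum, the entropy equation $\chi_{\rho\rho} = k'(\rho)^2\chi_{uu}$ has smooth coefficients and finite propagation speed, so standard energy-type estimates for linear wave equations---started from the Chen--LeFloch asymptotic expansion near $\rho = 0$ in Theorem \ref{thm:expansion at vacuum} and propagated up to $\rho_*$---ensure that $\chi_\rho(\rho_*,\cdot)$ is in fact a bounded function supported in $[-k(\rho_*), k(\rho_*)]$, hence with finite, $\rho_*$-dependent $L^1$ norm. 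Combining these three ingredients via Young's convolution inequality yields the stated bound, thereby proving the proposition.
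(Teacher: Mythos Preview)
Your overall strategy---writing $\chi^{*}$ as the sum of two convolutions and applying Young's inequality against the data $\chi(\rho_*,\cdot)$, $\chi_\rho(\rho_*,\cdot)$---is the right one and is exactly what the paper has in mind (it simply defers to \cite{SS1}, and the parallel proof of Lemma~\ref{lemma:h iso bounded by rho} confirms this is the intended route). However, there are two concrete gaps in your justification.

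First, your claim that $\chi_\rho(\rho_*,\cdot)$ is a \emph{bounded} function is false whenever $\lambda<1$, i.e.\ for $\gamma\in(5/3,3)$. From the global Chen--LeFloch expansion (Theorem~\ref{lemma:global bound vacuum expansion for chi}), the leading part of $\chi$ is $a_\sharp(\rho)[k(\rho)^2-u^2]_+^\lambda$, and differentiating in $\rho$ produces a term proportional to $[k(\rho_*)^2-u^2]_+^{\lambda-1}$, which blows up at $u=\pm k(\rho_*)$. Energy estimates for the wave equation cannot manufacture an $L^\infty$ bound that the expansion rules out. What you actually need---and what \emph{is} true---is that $\chi_\rho(\rho_*,\cdot)\in L^1(\mathbb{R})$, since $\lambda-1>-1$ makes the singularity integrable and the support is compact; the clean way to get this is to differentiate the expansion \eqref{eq:expansion at vacuum} directly rather than invoke energy estimates.

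Second, the kernel $g^\flat$ is \emph{not} in $L^\infty$: it is the fundamental solution of a one-dimensional wave-type equation with Dirac position data and zero velocity data, and therefore carries $\delta$-singularities along the characteristics $u=\pm(k_*(\rho)-k_*(\rho_*))$ in addition to a regular interior part. So the Young bound $\|\chi(\rho_*,\cdot)\|_{L^1}\|g^\flat(\rho,\cdot)\|_{L^\infty}$ is not available as written. The convolution $\chi(\rho_*,\cdot)*g^\flat(\rho,\cdot)$ is nonetheless bounded, but to see this you must use the explicit structure of $g^\flat$ (singular front plus bounded interior) from \cite{SS1} and handle the two pieces separately, pairing the $\delta$-fronts with $\|\chi(\rho_*,\cdot)\|_{L^\infty}$ and the interior part with $\|\chi(\rho_*,\cdot)\|_{L^1}$. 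This is why the paper speaks of using ``the explicit forms of $g^\sharp$ and $g^\flat$'' rather than a black-box convolution estimate.
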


\begin{remark}\label{remark:supports}
By Theorem \ref{thm:expansion at vacuum}, 
\begin{equation}\label{eq:supp chi}
    \supp \chi(\rho,\cdot) = [-k(\rho),k(\rho)] =: \mathcal{K}.
\end{equation} Similarly, for $\rho \geq \rho_*$,
\begin{equation}\label{eq:supp chi iso}
    \supp \chi^{*}(\rho,\cdot) = [-k_*(\rho),k_*(\rho)] =: \mathcal{K}^{*}.
    \end{equation}
    We therefore deduce from \eqref{eq:chi error def} that
\begin{equation}\label{eq:supp chi error}
    \supp \tilde{\chi}(\rho,\cdot) \subset [-\max\{k(\rho),k_*(\rho)\},\max\{k(\rho),k_*(\rho)\}] =: \tilde{\mathcal{K}}.
\end{equation}
\end{remark}

\subsection{First representation formula and uniform estimate on the perturbation}

We begin with a representation formula for the entropy kernel, representing the kernel $\chi$ in terms of its integral along characteristic curves (note that the curves $u\pm k(\rho)=\text{const.}$ are characteristics), \textit{cf.}~\cite[Equation (3.38)]{ChenLeFloch}.
\begin{lemma}\label{lemma:chi rep form}
Given any $(\rho_0,u_0) \in \mathcal{K}$ and any $0 < \rho_* < \rho_0$, we have
\begin{equation}\label{eq:rep form chi}
\begin{aligned}
\chi(\rho_0,u_0) = &\, \frac{1}{2(\rho_0 - \rho_*)k'(\rho_0)}\int_{\rho_*}^{\rho_0} k'(\rho) d(\rho) \chi(\rho,u_0+k(\rho_0)-k(\rho)) \, d\rho \\
&+ \frac{1}{2(\rho_0 - \rho_*)k'(\rho_0)}\int_{\rho_*}^{\rho_0} k'(\rho) d(\rho) \chi(\rho,u_0-k(\rho_0)+k(\rho)) \, d\rho \\
& - \frac{1}{2(\rho_0 - \rho_*)k'(\rho_0)} \int_{-(k(\rho_0)-k(\rho_*))}^{k(\rho_0)-k(\rho_*)} \chi(\rho_*,u_0-y) \, dy.
\end{aligned}
\end{equation}
\end{lemma}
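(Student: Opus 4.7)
The identity is a classical divergence-type computation for the variable-coefficient wave equation \eqref{eq:ent kernel}, carried out on the characteristic triangle with vertex $(\rho_0,u_0)$ and base on $\{\rho=\rho_*\}$. I parameterize its two lateral boundaries by
$$b(\rho):=u_0+k(\rho_0)-k(\rho),\qquad a(\rho):=u_0-k(\rho_0)+k(\rho),\qquad\rho_*\leq\rho\leq\rho_0,$$
so that $a'(\rho)=k'(\rho)$, $b'(\rho)=-k'(\rho)$, $a(\rho_0)=b(\rho_0)=u_0$, and $b(\rho_*)-a(\rho_*)=2(k(\rho_0)-k(\rho_*))$. The key auxiliary object is
$$M(\rho):=\int_{a(\rho)}^{b(\rho)}\chi(\rho,u)\,du,\qquad \chi_+(\rho):=\chi(\rho,b(\rho)),\qquad \chi_-(\rho):=\chi(\rho,a(\rho)).$$

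\textbf{Step 1: an ODE for $M$.} Differentiating under the integral, one finds $M'(\rho)=-k'(\rho)[\chi_+(\rho)+\chi_-(\rho)]+\int_{a(\rho)}^{b(\rho)}\chi_\rho(\rho,u)\,du$. Differentiating once more and replacing $\int\chi_{\rho\rho}\,du$ by $k'(\rho)^2\int\chi_{uu}\,du=k'(\rho)^2[\chi_u(\rho,b(\rho))-\chi_u(\rho,a(\rho))]$ via \eqref{eq:ent kernel}, all terms involving $\chi_\rho,\chi_u$ at the end-points reassemble through the characteristic directional derivatives $\chi_\pm'(\rho)=\chi_\rho\mp k'(\rho)\chi_u$. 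After cancellation one obtains the clean identity
$$\frac{d}{d\rho}\bigl(M'(\rho)+2k'(\rho)[\chi_+(\rho)+\chi_-(\rho)]\bigr)=k''(\rho)[\chi_+(\rho)+\chi_-(\rho)].$$

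\textbf{Step 2: weighted integration.} I multiply this equation by $(\rho-\rho_*)$ and integrate from $\rho_*$ to $\rho_0$. On the left, an integration by parts turns the weighted derivative into a boundary term minus an unweighted integral; the weight $(\rho-\rho_*)$ kills the contribution at $\rho_*$, while at $\rho_0$ the collapse $a(\rho_0)=b(\rho_0)=u_0$ gives $\chi_\pm(\rho_0)=\chi(\rho_0,u_0)$, $M(\rho_0)=0$, and $M'(\rho_0)=-2k'(\rho_0)\chi(\rho_0,u_0)$, so the boundary term equals $2(\rho_0-\rho_*)k'(\rho_0)\chi(\rho_0,u_0)$. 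The unweighted integral yields $2\int_{\rho_*}^{\rho_0}k'(\rho)[\chi_++\chi_-]\,d\rho-M(\rho_*)$, and on the right I have $\int_{\rho_*}^{\rho_0}(\rho-\rho_*)k''(\rho)[\chi_++\chi_-]\,d\rho$. Using the algebraic identity $2k'(\rho)+(\rho-\rho_*)k''(\rho)=k'(\rho)d(\rho)$ from Definition \ref{def:d and d'} to combine the two density integrals, one arrives at
$$2(\rho_0-\rho_*)k'(\rho_0)\chi(\rho_0,u_0)=\int_{\rho_*}^{\rho_0}k'(\rho)d(\rho)[\chi_+(\rho)+\chi_-(\rho)]\,d\rho-M(\rho_*).$$
The change of variable $y=u_0-u$ rewrites $M(\rho_*)$ as the third integral in \eqref{eq:rep form chi}, and dividing through by $2(\rho_0-\rho_*)k'(\rho_0)$ delivers the formula.

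\textbf{Main obstacle.} The computation above treats $\chi$ as a classical $C^2$ solution, whereas Theorem \ref{thm:expansion at vacuum} only supplies H\"older continuity and the PDE in the sense of distributions. For $\rho\geq\rho_*>0$, however, the coefficient $k'(\rho)^2$ is smooth, so by standard interior regularity theory for the linear hyperbolic equation \eqref{eq:ent kernel}, $\chi$ is smooth at any $(\rho_0,u_0)$ lying in the interior of $\supp\chi(\rho_0,\cdot)$; there the argument applies verbatim. To extend the formula to $(\rho_0,u_0)$ on the boundary of the support (where $\chi_u$ may only be a distribution supported along the characteristics $u\pm k(\rho)=0$) I would mollify $\chi$ in the $u$-variable, run the identity above on the mollification, and then pass to the limit using the uniform H\"older estimates of Theorem \ref{thm:expansion at vacuum}, dominated convergence, and the joint continuity of both sides of \eqref{eq:rep form chi} in $(\rho_0,u_0)$.
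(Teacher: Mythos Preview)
Your argument is correct. The ODE identity in Step~1 checks out (writing $M''+2k'[\chi_+'+\chi_-']=-k''[\chi_++\chi_-]$ and then adding $2k''[\chi_++\chi_-]$ to both sides), and the weighted integration in Step~2 reproduces \eqref{eq:rep form chi} exactly once you use $k'(\rho)d(\rho)=2k'(\rho)+(\rho-\rho_*)k''(\rho)$.

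Your route, however, is genuinely different from the paper's. The paper works on the Fourier side: it takes the Fourier transform of \eqref{eq:ent kernel} in $u$, multiplies by the kernel $(\rho-\rho_*)\sin((k(\rho)-k(\rho_0))\xi)$, integrates in $\rho$ over $[\rho_*,\rho_0]$, and integrates by parts twice in $\rho$; the boundary terms produce $(\rho_0-\rho_*)k'(\rho_0)\mathcal{F}\chi(\rho_0,\xi)$, and the formula follows by inverting via the identities $\mathcal{F}^{-1}(\xi^{-1}\sin(a\xi)\mathcal{F}g)=\tfrac12\int_{-a}^a g(\cdot-y)\,dy$ and $\mathcal{F}^{-1}(\cos(a\xi)\mathcal{F}g)=\tfrac12(g(\cdot+a)+g(\cdot-a))$. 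By contrast, you stay entirely in physical space and run an energy/divergence computation on the backward characteristic triangle, which is more geometric and avoids any Fourier machinery. The Fourier argument is arguably cleaner to iterate for the inhomogeneous variants (Lemmas~\ref{lemma:second rep formula eta error} and the formula for $\tilde{H}$), since the forcing just contributes one extra convolution term; your approach would handle those by adding a double integral of the source over the triangle. On the regularity issue you raise, the paper's proof faces the same implicit assumption (the $\rho$-integrations by parts require $\chi_\rho,\chi_{\rho\rho}$), and your mollification-in-$u$ plus continuity-in-$(\rho_0,u_0)$ argument is a reasonable way to make either proof rigorous.
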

\begin{proof}
Fix any $(\rho_0,u_0) \in \mathcal{K}$  with $\rho_0>\rho_*$. Then from applying the Fourier transform with respect to $u$ to the equation \eqref{eq:ent kernel} for the entropy kernel $\chi$ we see that, for any $\rho\in(\rho_*,\rho_0)$,
\begin{equation*}
(\rho-\rho_*)k'(\rho)^2 \mathcal{F}\chi(\rho,\xi) = -(\rho-\rho_*)\xi^{-2} \mathcal{F}\chi_{\rho \rho}(\rho,\xi).
\end{equation*}
Multiplying this by $\sin(( k(\rho)-k(\rho_0))\xi)$ and integrating in the interval $\rho\in[\rho_*,\rho_0]$ we get
\begin{equation*}
\begin{aligned}
\int_{\rho_*}^{\rho_0} (\rho-\rho_*) k'(\rho)^2 \sin(( & k(\rho)-k(\rho_0))\xi)  \mathcal{F}\chi(\rho,\xi) \, d\rho\\
= & -\xi^{-2}\int_{\rho_*}^{\rho_0} (\rho-\rho_*) \sin ((k(\rho)-k(\rho_0))\xi) \mathcal{F}\chi_{\rho \rho}(\rho,\xi) \, d\rho, \\
= &\, \xi^{-2}\int_{\rho_*}^{\rho_0} \partial_{\rho} \big( (\rho-\rho_*) \sin((k(\rho)-k(\rho_0))\xi) \big) \mathcal{F}\chi_{\rho}(\rho,\xi) \, d\rho \\
& - \xi^{-2}[(\rho-\rho_*)\sin ((k(\rho)-k(\rho_0))\xi)\mathcal{F}\chi_{\rho}(\rho,\xi)]_{\rho_*}^{\rho_0}, \\
= & -\xi^{-2}\int_{\rho_*}^{\rho_0} \partial^2_{\rho} \big[ (\rho-\rho_*) \sin((k(\rho)-k(\rho_0))\xi) \big] \mathcal{F}\chi(\rho,\xi) \, d\rho \\
& + \xi^{-2} \bigg[ \partial_{\rho}\big[ (\rho-\rho_*) \sin ((k(\rho)-k(\rho_0))\xi) \big] \mathcal{F}\chi(\rho,\xi) \bigg]_{\rho_*}^{\rho_0},
\end{aligned}
\end{equation*}
and we find that the final line can be written as
\begin{equation*}
\begin{aligned}
-\xi^{-1} \int_{\rho_*}^{\rho_0} \Big( k' &+ \big( (\rho-\rho_*)k' \big)' \Big) \cos ((k(\rho)-k(\rho_0))\xi) \mathcal{F}\chi(\rho,\xi) \, d\rho \\
&+ \int_{\rho_*}^{\rho_0} (\rho-\rho_*) k'(\rho)^2 \sin((k(\rho)-k(\rho_0))\xi) \mathcal{F}\chi(\rho,\xi) \, d\rho \\
& + \xi^{-1} (\rho_0-\rho_*)k'(\rho_0) \mathcal{F}\chi(\rho_0,\xi) - \xi^{-2} \sin((k(\rho_*)-k(\rho_0))\xi)\mathcal{F}\chi(\rho_*,\xi).
\end{aligned}
\end{equation*}
Hence, we have
\begin{equation*}
\begin{aligned}
(\rho_0-\rho_*)k'(\rho_0)\mathcal{F}\chi & (\rho_0,\xi) = \\
& \xi^{-1} \sin ((k(\rho_*)-k(\rho_0))\xi) \mathcal{F}\chi(\rho_*,\xi) \\
& + \int_{\rho_*}^{\rho_0} \Big( k'+ \big( (\rho-\rho_*) k' \big)' \Big) \cos ((k(\rho)-k(\rho_0))\xi)\mathcal{F}\chi(\rho,\xi) \, d\rho. 
\end{aligned}
\end{equation*}
Now, observe that for $a > 0$ and $g \in \mathcal{S}'(\mathbb{R}) \cap C(\mathbb{R})$, we have
\begin{equation*}
\mathcal{F}^{-1} \big( \xi^{-1}\sin (a\xi) \mathcal{F}g(\xi) \big)(x) = \frac{1}{2}\int_{-a}^{a} g(x-y) \, dy,
\end{equation*}
and
\begin{equation*}
    \mathcal{F}^{-1} \left( \cos(a\xi)\mathcal{F}g(\xi) \right) (x) = \frac{1}{2}\left( g(x+a) + g(x-a) \right).
\end{equation*}
So, applying the inverse Fourier transform we obtain the result as claimed.
\end{proof}

\begin{remark}\label{remark:same rep form}
The same representation formula as \eqref{eq:rep form chi} holds for $\chi^{*}$, except $k$ should be replaced with $k_*$, $d$ with $d_*$, and $\mathcal{K}$ with $\mathcal{K}^{*}$.
\end{remark}
The most significant result of this section is the following estimate on the growth rate of $\chi(\rho,u-s)$ as $\rho\to\infty$.
\begin{lemma}\label{lemma:error bound abs}
There exists a constant $M>0$ such that
\begin{equation}
\Vert \tilde{\chi}(\rho,\cdot) \Vert_{L^\infty(\mathbb{R})} \leq M\rho^{1-\alpha \tilde{\lambda}} \qquad \text{for } \rho \geq \rho_*,
\end{equation}
where we recall that $\tilde{\lambda} = \min \{ \lambda , 1 \}$.
\end{lemma}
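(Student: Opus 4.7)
The strategy is to derive a representation formula for $\tilde\chi$ by subtracting the representation formulas of Lemma \ref{lemma:chi rep form} applied to $\chi(\rho_0,u_0)$ and (via Remark \ref{remark:same rep form}) to $\chi^*(\rho_0,u_0)$, then to close the resulting integral inequality by a Grönwall-type argument. A short algebraic rearrangement of the difference yields an identity of the form
\begin{equation*}
\tilde\chi(\rho_0,u_0) = \frac{k_*'(\rho_0)-k'(\rho_0)}{k'(\rho_0)}\chi^*(\rho_0,u_0) + \frac{\mathcal{D}(\rho_0,u_0)}{2(\rho_0-\rho_*)k'(\rho_0)},
\end{equation*}
where $\mathcal{D}$ collects the differences of the two characteristic integrals and of the boundary term at $\rho_*$ appearing in the two representation formulas.

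The next step is to decompose $\mathcal{D}$ so as to isolate a recursive piece (involving only $\tilde\chi$) and collect the remaining contributions as error terms. For each characteristic integrand I would use the telescoping identity
\begin{align*}
&k'(\rho)d(\rho)\chi(\rho,u_0+k(\rho_0)-k(\rho)) - k_*'(\rho)d_*(\rho)\chi^*(\rho,u_0+k_*(\rho_0)-k_*(\rho)) \\
&\quad= k'(\rho)d(\rho)\tilde\chi(\rho,u_0+k(\rho_0)-k(\rho)) \\
&\qquad + [k'(\rho)d(\rho)-k_*'(\rho)d_*(\rho)]\chi^*(\rho,u_0+k(\rho_0)-k(\rho)) \\
&\qquad + k_*'(\rho)d_*(\rho)\bigl[\chi^*(\rho,u_0+k(\rho_0)-k(\rho))-\chi^*(\rho,u_0+k_*(\rho_0)-k_*(\rho))\bigr],
\end{align*}
together with an analogous identity for the other characteristic. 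The first line constitutes the recursive piece. The second is bounded by combining $|k'(\rho)d(\rho)-k_*'(\rho)d_*(\rho)| \leq M\rho^{-1-\alpha}$---a consequence of Lemma \ref{lemma:p' below}, Corollary \ref{corollary:k' and k'' compare with rho powers} and the auxiliary estimates on $d-d_*$ in Section \ref{section:elementary quantities}---with the $L^\infty$ bound for $\chi^*$ from Proposition \ref{lemma:chi iso rescaled properties}. The third line uses the Hölder seminorm estimate $[\chi^*(\rho,\cdot)]_{C^{\tilde\lambda}(\mathbb{R})} \leq M\rho/\sqrt{k(\rho)}$ from the same proposition, combined with
\begin{equation*}
|(k-k_*)(\rho_0)-(k-k_*)(\rho)| \leq M\rho^{-\alpha},
\end{equation*}
which follows by integrating the pointwise bound $|(k-k_*)'(y)| \leq My^{-1-\alpha}$ implied by Lemma \ref{lemma:p' below}. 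The boundary term at $\rho_*$ is of order one, since $\chi(\rho_*,\cdot)=\chi^*(\rho_*,\cdot)$ and the symmetric difference of the two integration intervals has measure $2|k(\rho_0)-k_*(\rho_0)| \leq M\rho_*^{-\alpha}$ by \eqref{eq:k and k star close}.

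Using that $(\rho_0-\rho_*)k'(\rho_0)$ is bounded below by a positive constant for $\rho_0 \geq 2\rho_*$, the non-recursive contributions sum to at most $M\rho_0^{1-\alpha\tilde\lambda}$. Setting $F(\rho_0) := \|\tilde\chi(\rho_0,\cdot)\|_{L^\infty(\mathbb{R})}$, the representation formula then yields an integral inequality of the form
\begin{equation*}
F(\rho_0) \leq M\rho_0^{1-\alpha\tilde\lambda} + C\int_{\rho_*}^{\rho_0}\frac{F(\rho)}{\rho}\,d\rho,
\end{equation*}
from which the claimed bound follows via a Grönwall-type argument (e.g.\ using an integrating factor of the form $\rho^{-\beta}$). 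The regime $\rho_0 \in [\rho_*, 2\rho_*]$ is handled separately by continuity, noting that $\tilde\chi(\rho_*,\cdot)\equiv 0$.

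\textbf{Main obstacle.} The delicate point is the calibration required to close the Grönwall step: both the coefficient $C$ of the recursive term and the growth of the inhomogeneity depend on sharp asymptotics of $k'$, $d$, and $\chi^*$, so the integrating factor must be chosen carefully in light of the fact that $d(\rho)\to 1$ only as $\rho\to\infty$. A secondary delicacy arises in the Hölder-type error, where the seminorm $[\chi^*(\rho,\cdot)]_{C^{\tilde\lambda}}$ of order $\rho/\sqrt{\log\rho}$ must combine precisely with the characteristic displacement $\rho^{-\alpha\tilde\lambda}$ and the weight $k_*'(\rho)\sim\rho^{-1}$ to produce the target growth $\rho_0^{1-\alpha\tilde\lambda}$.
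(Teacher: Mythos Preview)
Your overall strategy---subtracting the two representation formulas, telescoping the integrand, and closing by a Gr\"onwall argument---is precisely the paper's approach, and your telescoping decomposition is essentially the one the paper writes out explicitly in \eqref{eq:rep form phi} (the paper first multiplies the $\chi^{*}$ formula through by $k'(\rho_0)/k_*'(\rho_0)$ so that both sides carry the common prefactor $k'(\rho_0)$, which merely reshuffles your error terms and absorbs your extra leading term $\frac{k_*'-k'}{k'}\chi^{*}$).

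The genuine gap is in the Gr\"onwall step. Your reduced inequality
\[
F(\rho_0)\;\leq\; M\rho_0^{1-\alpha\tilde\lambda}\;+\;C\int_{\rho_*}^{\rho_0}\frac{F(\rho)}{\rho}\,d\rho
\]
does \emph{not} close to $F\lesssim\rho_0^{1-\alpha\tilde\lambda}$ unless $C<1-\alpha\tilde\lambda$, and it cannot. The recursive kernel is $\frac{k'(\rho)d(\rho)}{(\rho_0-\rho_*)k'(\rho_0)}$; since $d(\rho)\to 1$, $(\rho_0-\rho_*)k'(\rho_0)\to\sqrt{\kappa_2}$ and $k'(\rho)\sim\sqrt{\kappa_2}/\rho$, the asymptotic kernel is exactly $1/\rho$, so the sharpest possible constant is $C=1$, already exceeding $1-\alpha\tilde\lambda$. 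Standard Gr\"onwall then yields only $F\lesssim\rho_0^{C}$, i.e.\ at best $\rho_0$; no integrating factor $\rho^{-\beta}$ repairs this, because one is forced to take $\beta\geq C\geq 1$ and then loses the full power one was trying to save. The paper avoids this by working throughout with the weighted quantity $\|k'(\rho)\tilde\chi(\rho,\cdot)\|_{L^\infty}$ rather than $F$, and---crucially---by \emph{not} collapsing the recursive kernel to a crude $C/\rho$. Retaining the exact form $\frac{d(\rho)}{\rho_0-\rho_*}$ makes the Gr\"onwall exponent $\frac{1}{\rho_0-\rho_*}\int_{\rho_*}^{\rho_0}d(\rho)\,d\rho\leq 3$, uniformly in $\rho_0$ (Lemma~\ref{lemma:d and d star}); this averaging structure is precisely what turns the coefficient from logarithmically divergent into bounded and allows the argument to close at the exponent $1-\alpha\tilde\lambda$.
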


\begin{proof}
In view of Lemma \ref{lemma:chi rep form} and Remark \ref{remark:same rep form}, by subtracting $\chi^{*}(\rho_0,u_0)$ from $\chi(\rho_0,u_0)$ and recalling that $\chi(\rho_*,\cdot) = \chi^{*}(\rho_*,\cdot)$, we arrive at the first representation formula for the perturbation. Given any $(\rho_0,u_0) \in \tilde{\mathcal{K}}$, 
\begin{equation}\label{eq:rep form phi}
    \begin{aligned}
          2k'(\rho_0)(\rho_0 & -\rho_*) \tilde{\chi} (\rho_0,u_0)  \\ 
        =\,\,\,&  \int_{\rho_*}^{\rho_0} {k'(\rho)}d(\rho) \Big[ \tilde{\chi} (\rho,u_0+k(\rho_0)-k(\rho)) + \tilde{\chi}(\rho,u_0-k(\rho_0)+k(\rho)) \Big] \, d\rho \\ 
        + &  \int_{\rho_*}^{\rho_0} {k'(\rho) d(\rho)} \Big[ \chi^{*}(\rho,u_0+k(\rho_0)-k(\rho))  - \chi^{*}(\rho,u_0+k_*(\rho_0)-k_*(\rho))\Big] \, d\rho \\ 
        + & \int_{\rho_*}^{\rho_0} \Big[ {k'(\rho) d(\rho)} - {k_*'(\rho)d_*(\rho)}\frac{k'(\rho_0)}{k_*'(\rho_0)} \Big] \chi^{*}(\rho,u_0+k_*(\rho_0)-k_*(\rho)) \, d\rho \\
        + & \int_{\rho_*}^{\rho_0} {k'(\rho) d(\rho)} \Big[ \chi^{*}(\rho,u_0-k(\rho_0)+k(\rho)) - \chi^{*}(\rho,u_0 - k_*(\rho_0) + k_*(\rho)) \Big] \, d\rho \\
        + &  \int_{\rho_*}^{\rho_0} \Big[ {k'(\rho)d(\rho)} - {k'_*(\rho) d_*(\rho)}\frac{k'(\rho_0)}{k'_*(\rho_0)} \Big] \chi^{*}(\rho,u_0 - k_*(\rho_0) + k_*(\rho)) \, d\rho \\
        - &  \bigg[ \int_{-(k(\rho_0) - k(\rho_*))}^{k(\rho_0) - k(\rho_*)} {\chi^{*}(\rho_*,u_0 - y)}\, dy - \int_{-(k_*(\rho_0) - k_*(\rho_*))}^{k_*(\rho_0) - k_*(\rho_*)} \frac{k'(\rho_0)}{k_*'(\rho_0)}{\chi^{*}(\rho_*,u_0 - y)} \, dy \bigg]\\
        =\,\,\,&\,I_1+\cdots+I_6.
    \end{aligned}
\end{equation}
 We then bound $|I_1|$ by
\begin{equation*}
   |I_1|\leq \int_{\rho_*}^{\rho_0} 2 d(\rho) \Vert k'(\rho) \tilde{\chi}(\rho,\cdot) \Vert_{L^\infty(\mathbb{R})} \, d\rho.
\end{equation*}
$I_2$ is bounded by
\beqas
    |I_2|\leq&\, \int_{\rho_*}^{\rho_0} d(\rho) [ k'(\rho) \chi^{*}(\rho,\cdot) ]_{C^{\tilde{\lambda}}(\mathbb{R})} \big| (k(\rho_0)-k(\rho)) - (k_*(\rho_0)-k_*(\rho)) \big|^{\tilde{\lambda}} \, d\rho\\
    \leq&\,  M \int_{\rho_*}^{\rho_0} \rho^{-\alpha \tilde{\lambda}}  [ k'(\rho) \chi^{*}(\rho,\cdot) ]_{C^{\tilde{\lambda}}(\mathbb{R})} \, d\rho,
\eeqas
where we have applied Lemma \ref{lemma:k difference rho0 and rho} and the bound \eqref{eq:bound on d}, and
$M = M(\alpha,\kappa_2,C_p)$. Notice that $I_4$ can be bounded in exactly the same way. Next, $I_3$ is bounded by
\begin{equation*}
|I_3|\leq     \int_{\rho_*}^\rho \left| d(\rho) - d_*(\rho) \frac{k'(\rho_0)}{k_*'(\rho_0)} \frac{k_*'(\rho)}{k'(\rho)} \right| \Vert k'(\rho) \chi^{*}(\rho,\cdot)  \Vert_{L^\infty(\mathbb{R})} \, d\rho,
\end{equation*}
and we bound the first term in the integrand by
\begin{equation*}
    d(\rho) \left| 1-\frac{k'(\rho_0)}{k_*'(\rho_0)} \frac{k_*'(\rho)}{k'(\rho)} \right| + \left| d(\rho) - d_*(\rho) \right|\frac{k'(\rho_0)}{k_*'(\rho_0)} \frac{k_*'(\rho)}{k'(\rho)} \leq M\rho^{-\al}
\end{equation*}
using Lemma \ref{lemma:d and d star} for the right-hand term and Lemma \ref{lemma:k' product bound} for the left-hand term, respectively, along with \eqref{eq:bound on d}. Thus we see that 
\begin{equation*}
  |I_3|\leq  M \int_{\rho_*}^{\rho_0} \rho^{-\alpha}  \Vert k'(\rho) \chi^{*}(\rho,\cdot) \Vert_{L^\infty(\mathbb{R})} \, d\rho,
\end{equation*}
where $M = M(\alpha,\kappa_2,C_p)$. Once again, $I_5$ may be bounded in exactly the same way. We split $I_6$ into two terms as
\begin{equation}\label{eq:split rep form chi}
\begin{aligned}
 I_6=&   \int_{-(k(\rho_0)-k(\rho_*))}^{k(\rho_0)-k(\rho_*)} \chi^{*}(\rho_*,u_0-y) \, dy - \int_{-(k_*(\rho_0)-k_*(\rho_*))}^{k_*(\rho_0)-k_*(\rho_*)} \chi^{*}(\rho_*,u_0-y) \, dy \\
    &+ \left( 1-\frac{k'(\rho_0)}{k_*'(\rho_0)} \right)  \int_{-(k_*(\rho_0)-k_*(\rho_*))}^{k_*(\rho_0)-k_*(\rho_*)} \chi^{*}(\rho_*,u_0-y) \, dy.
\end{aligned}
\end{equation}
We concentrate on the first line of \eqref{eq:split rep form chi}. Observe that the two intervals, $[-(k(\rho_0)-k(\rho_*)),k(\rho_0)-k(\rho_*)]$ and $[-(k_*(\rho_0)-k_*(\rho_*)),k_*(\rho_0)-k_*(\rho_*)]$, are always nested within one another; one interval is always entirely contained in the other. Hence, since the same quantity is being integrated, we may bound this line by
\begin{equation*}
    2\big| (k(\rho_0)-k(\rho_*)) - (k_*(\rho_0)-k_*(\rho_*)) \big| \cdot \Vert \chi^{*}(\rho_*,\cdot) \Vert_{L^\infty(\mathbb{R})}\leq M\rho_*^{-\al}\min\big(\frac{\rho_0-\rho_*}{\rho_*},1\big)\|\chi^{*}(\rho_*,\cdot) \Vert_{L^\infty(\mathbb{R})},
\end{equation*}
by Lemma \ref{lemma:k difference rho0 and rho}.  On the other hand, the final term of \eqref{eq:split rep form chi} is bounded by
\begin{equation*}
    \left| 1-\frac{k'(\rho_0)}{k_*'(\rho_0)} \right| \cdot 2\big| k_*(\rho_0) - k_*(\rho_*) \big| \cdot \Vert \chi^{*}(\rho_*,\cdot) \Vert_{L^\infty(\mathbb{R})}\leq M \rho_0^{-\alpha} \log \left( \frac{\rho_0}{\rho_*} \right) \Vert \chi^{*}(\rho_*,\cdot) \Vert_{L^\infty(\mathbb{R})},
\end{equation*}
by Lemma \ref{lemma:k' k'star ratio}. We emphasise that it has been sufficient to assume $\alpha>0$.

In total, by combining these estimates for $I_1,\dots,I_6$ and then dividing through \eqref{eq:rep form phi} by the factor $(\rho_0-\rho_*)$, we obtain
\begin{equation}\label{eq:pre gronwall}
\begin{aligned}
     \Vert k'(\rho_0) \tilde{\chi}(\rho_0,\cdot) \Vert_{L^\infty(\mathbb{R})} \leq &\, \frac{1}{(\rho_0-\rho_*)}\int_{\rho_*}^{\rho_0} d(\rho) \Vert k'(\rho) \tilde{\chi}(\rho,\cdot) \Vert_{L^\infty(\mathbb{R})} \, d\rho \\
     &+ \frac{M}{(\rho_0-\rho_*)}\int^{\rho_0}_{\rho_*} \rho^{-\alpha \tilde{\lambda}} [ k'(\rho) \chi^{*}(\rho,\cdot) ]_{C^{\tilde{\lambda}}(\mathbb{R})} \, d\rho \\
     &+ \frac{M}{(\rho_0-\rho_*)} \int_{\rho_*}^{\rho_0} \rho^{-\alpha} \Vert k'(\rho) \chi^{*}(\rho,\cdot) \Vert_{L^\infty(\mathbb{R})} \, d\rho \\
     &+ \frac{M}{1+(\rho_0-\rho_*)}\rho_*^{-\alpha} \Vert \chi^{*}(\rho_*,\cdot) \Vert_{L^\infty(\mathbb{R})} \\
     &+ \frac{M}{1+(\rho_0-\rho_*)} \rho_0^{-\alpha} \log \left( 1 +  \frac{\rho_0}{\rho_*} \right) \Vert \chi^{*}(\rho_*,\cdot) \Vert_{L^\infty(\mathbb{R})}.
\end{aligned}
\end{equation}
We now apply Gr\"{o}nwall's lemma to \eqref{eq:pre gronwall} and divide by $k'(\rho_0)$, which yields
\begin{equation}\label{eq:post gronwall}
\begin{aligned}
     \Vert  \tilde{\chi}&  (\rho_0,\cdot)   \Vert_{L^\infty(\mathbb{R})}  \\ \leq &
      \bigg(  \frac{M}{(\rho_0-\rho_*)k'(\rho_0)}\int^{\rho_0}_{\rho_*} \rho^{-\alpha \tilde{\lambda}} [ k'(\rho) \chi^{*}(\rho,\cdot) ]_{C^{\tilde{\lambda}} (\mathbb{R})} \, d\rho \\
     &+ \frac{M}{(\rho_0-\rho_*)k'(\rho_0)} \int_{\rho_*}^{\rho_0} \rho^{-\alpha}  \Vert k'(\rho) \chi^{*}(\rho,\cdot) \Vert_{L^\infty(\mathbb{R})} \, d\rho \\
     &+ \frac{M}{k'(\rho_0)[1+(\rho_0-\rho_*)]}\rho_*^{-\alpha} \Vert \chi^{*}(\rho_*,\cdot) \Vert_{L^\infty(\mathbb{R})} \\
     &+ \frac{M}{k'(\rho_0)[1+(\rho_0-\rho_*)]} \rho_0^{-\alpha} \log \left( 1 + \frac{\rho_0}{\rho_*} \right) \Vert \chi^{*}(\rho_*,\cdot) \Vert_{L^\infty(\mathbb{R})} \bigg) \exp\Big({\frac{1}{(\rho_0-\rho_*)}\int_{\rho_*}^{\rho_0} d(\rho) \, d\rho }\Big).
\end{aligned}
\end{equation}
It is clear that, in view of the bound on $d(\rho)$ provided by Lemma \ref{lemma:d and d star}, the exponential on the right-hand side is bounded above by $e^3$. Now recall from Proposition \ref{lemma:chi iso rescaled properties} that
\begin{equation*}
    \Vert \chi^{*}(\rho,\cdot) \Vert_{L^\infty(\mathbb{R})} + [ \chi^{*}(\rho,\cdot) ]_{C^{\tilde{\lambda}}(\mathbb{R})} \leq M\rho \qquad \text{for all } \rho \geq \rho_*.
\end{equation*}
Then, using also Lemma \ref{lemma:k' k'star ratio}, we have
\begin{equation*}
    \Vert k'(\rho) \chi^{*}(\rho,\cdot)\Vert_{L^\infty(\mathbb{R})} + [k'(\rho) \chi^{*}(\rho,\cdot)]_{C^{\tilde{\lambda}}(\mathbb{R})} \leq 
    M \qquad \text{for all }\rho \geq \rho_*.
\end{equation*}
We are now in a position to bound each of the terms in the  brackets of \eqref{eq:post gronwall}. We begin with the first term.
\begin{equation}\label{eq:pre pieces1}
    \begin{aligned}
         \frac{M}{(\rho_0-\rho_*)k'(\rho_0)} \int_{\rho_*}^{\rho_0} \rho^{-\alpha \tilde{\lambda}}  [k'(\rho) \chi^{*}(\rho,\cdot)]_{C^{\tilde{\lambda}}(\mathbb{R})} \, d\rho \leq \frac{M}{(\rho_0-\rho_*)k'(\rho_0)} \int_{\rho_*}^{\rho_0} \rho^{-\alpha \tilde{\lambda}} \, d\rho\leq M\rho_0^{1-\al\tilde\la},
    \end{aligned}
\end{equation}
since $\alpha \in (0,1)$ implies $\alpha \tilde{\lambda} \in (0,1)$.

Similarly, for the second term of \eqref{eq:post gronwall}, noting again that $\alpha \in (0,1)$, 
\begin{equation}\label{eq:pre pieces2}
    \frac{M}{(\rho_0 - \rho_*) k'(\rho_0)} \int_{\rho_*}^{\rho_0} \rho^{-\alpha} \Vert k'(\rho) \chi^{*}(\rho,\cdot) \Vert_{L^\infty(\mathbb{R})} \, d\rho \leq \frac{M}{(\rho_0-\rho_*)k'(\rho_0)} \int_{\rho_*}^{\rho_0} \rho^{-\alpha} \, d\rho \leq M\rho_0^{1-\alpha}.
\end{equation}

For the third and fourth terms of \eqref{eq:post gronwall}, observe that, by Lemma \ref{lemma:p' below},
\begin{equation}\label{eq:piece3}
    \frac{1}{k'(\rho_0)[1+(\rho_0-\rho_*)]} \leq M.
\end{equation}
Thus, collecting the results in \eqref{eq:pre pieces1}--\eqref{eq:piece3}, we have that, for some positive $M=M(\alpha,\kappa_2,C_p,\rho_*)$, 
\begin{equation*}
    \Vert \tilde{\chi} (\rho_0 , \cdot) \Vert_{L^\infty(\mathbb{R})} \leq M\left( \rho_0^{1-\alpha \tilde{\lambda}} + \rho_0^{1-\alpha} +1 \right),
\end{equation*}
which (by making $M$ larger if necessary) straightforwardly yields the result.
\end{proof}
\begin{remark}
We note that, if $\alpha \tilde{\lambda} > 1$, the proof of Lemma \ref{lemma:error bound abs} shows that we have the stronger estimate
\begin{equation*}
    \Vert \tilde{\chi}(\rho,\cdot) \Vert_{L^\infty(\mathbb{R})} \leq M \qquad \text{for } \rho \geq \rho_*.
\end{equation*}
\end{remark}

\subsection{Generating entropies and second representation formula}\label{subsection:generating entropies}
In what follows, we generate entropies by convolving with the entropy kernel. Specifically, given a suitably integrable test function $\psi$, we may generate an entropy according to the formula
\begin{equation}\label{eq:generate any psi entropy}
    \eta^\psi(\rho,u) := \int_\mathbb{R} \chi(\rho,u-s) \psi(s) \, ds.  
    \end{equation}
    Thus, for $\rho\geq \rho_*$,
    \begin{equation}
    \eta^\psi(\rho,u)= \int_\mathbb{R} \chi^{*}(\rho,u-s) \psi(s) \, ds + \int_\mathbb{R} \tilde{\chi}(\rho,u-s) \psi(s) \, ds  =: \eta^{*,\psi}(\rho,u) + \tilde{\eta}^{\psi}(\rho,u).
\end{equation}
\begin{remark}
 Since $\chi$ is given as a function of $(\rho,u)$, we will also generate entropies as functions of $(\rho,u)$, even though these are technically functions of $(\rho,m)$. We adopt this convention throughout this subsection and Section \ref{section:entropy pairs bounds} only, in order to simplify computations.
 \end{remark}
In particular, when we choose the special test function $\hat{\psi}(s) = \frac{1}{2}s|s|$, for $\rho \geq \rho_*$,
\begin{equation}\label{eq:def eta hat ch2}
    \begin{aligned}
        \hat{\eta}(\rho,u) := \eta^{\hat{\psi}}(\rho,u) &= \eta^{*,\hat{\psi}}(\rho,u) + \tilde{\eta}^{\hat{\psi}}(\rho,u) =: \hat{\eta}^{*}(\rho,u) + \tilde{\eta}(\rho,u).
    \end{aligned}
\end{equation}

\begin{lemma}[Second representation formula for the perturbation]\label{lemma:second rep formula eta error}
    Let $\psi \in C^2(\mathbb{R})$ be such that $\tilde{\eta}^{\psi}_{uu}$, $\eta^{*,\psi}_{u}$, $\eta^{*,\psi}_{uu}$ are well-defined continuous functions. Then, for any $(\rho_0,u_0) \in \tilde{\mathcal{K}}$,
    \begin{equation}\label{eq:rep form eta error from wave}
\begin{aligned}
    2  (\rho_0  & -\rho_*)k'(\rho_0)\tilde{\eta}^{\psi}(\rho_0,u_0) \\
    =&\int_{\rho_*}^{\rho_0} d(\rho) k'(\rho)  \big( \tilde{\eta}^{\psi}(\rho,u_0+k(\rho_0)-k(\rho)) + \tilde{\eta}^{\psi}(\rho,u_0-k(\rho_0)+k(\rho)) \big) \, d\rho \\
+&\int_{\rho_*}^{\rho_0}(\rho-\rho_*)\big(k'(\rho)^2-\frac{\kappa_2}{\rho^2}\big) \eta^{*,\psi}_u(\rho,u_0+k(\rho_0)-k(\rho))  \,d\rho \\
-&\int_{\rho_*}^{\rho_0}(\rho-\rho_*)\big(k'(\rho)^2-\frac{\kappa_2}{\rho^2}\big)\eta^{*,\psi}_u(\rho,u_0-k(\rho_0)+k(\rho)) \,d\rho.
\end{aligned}
\end{equation}
\end{lemma}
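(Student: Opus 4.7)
The approach is to adapt the Fourier-space proof of Lemma \ref{lemma:chi rep form} to the inhomogeneous wave equation satisfied by $\tilde{\eta}^\psi$. First, I would convolve \eqref{eq:chi error wave eqn} in $u$ with $\psi$; the regularity hypotheses on $\tilde{\eta}^\psi_{uu}$, $\eta^{*,\psi}_u$, $\eta^{*,\psi}_{uu}$ ensure that $\tilde{\eta}^\psi$ satisfies classically
\[
\tilde{\eta}^\psi_{\rho\rho} - k'(\rho)^2\tilde{\eta}^\psi_{uu} = \Big(k'(\rho)^2 - \frac{\kappa_2}{\rho^2}\Big)\eta^{*,\psi}_{uu}, \qquad \rho > \rho_*,
\]
with vanishing Cauchy data $\tilde{\eta}^\psi(\rho_*,\cdot) = 0 = \tilde{\eta}^\psi_\rho(\rho_*,\cdot)$ inherited from \eqref{eq:chi error wave eqn}.

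Next, taking the Fourier transform in $u$ and writing $\tilde{f}(\rho,\xi) := \mathcal{F}_u\tilde{\eta}^\psi$, $\hat{g}(\rho,\xi) := \mathcal{F}_u\eta^{*,\psi}$, this becomes $\tilde{f}_{\rho\rho} + k'(\rho)^2\xi^2\tilde{f} = -(k'(\rho)^2 - \kappa_2/\rho^2)\xi^2\hat{g}$, which rearranges to
\[
(\rho-\rho_*)k'(\rho)^2\tilde{f} = -(\rho-\rho_*)\xi^{-2}\tilde{f}_{\rho\rho} - (\rho-\rho_*)\Big(k'(\rho)^2-\frac{\kappa_2}{\rho^2}\Big)\hat{g}.
\]
Multiplying by $\sin\big((k(\rho)-k(\rho_0))\xi\big)$, integrating over $\rho\in[\rho_*,\rho_0]$, and integrating the $\tilde{f}_{\rho\rho}$ term by parts twice exactly as in Lemma \ref{lemma:chi rep form}, the boundary contributions at $\rho_*$ vanish by the zero Cauchy data and those at $\rho_0$ produce $\xi^{-1}(\rho_0-\rho_*)k'(\rho_0)\tilde{f}(\rho_0,\xi)$, while the $\int(\rho-\rho_*)k'(\rho)^2\sin(\cdot)\tilde{f}\,d\rho$ terms on each side cancel. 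Using the identity $k' + ((\rho-\rho_*)k')' = k'(\rho)d(\rho)$, this gives
\[
(\rho_0-\rho_*)k'(\rho_0)\tilde{f}(\rho_0,\xi) = \int_{\rho_*}^{\rho_0} k'(\rho)d(\rho)\cos\big((k(\rho)-k(\rho_0))\xi\big)\tilde{f}\,d\rho + \xi\int_{\rho_*}^{\rho_0}(\rho-\rho_*)\Big(k'(\rho)^2-\frac{\kappa_2}{\rho^2}\Big)\sin\big((k(\rho)-k(\rho_0))\xi\big)\hat{g}\,d\rho.
\]

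Finally, I would apply the inverse Fourier transform in $\xi$. The $\cos$ symbol acting on $\tilde{f}$ returns the symmetric average of $\tilde{\eta}^\psi(\rho,\cdot)$ at the characteristic points $u_0 \pm (k(\rho_0)-k(\rho))$, and after multiplying through by $2$ this matches the first integral of \eqref{eq:rep form eta error from wave}. For the source term, writing $\xi\hat{g} = -i\mathcal{F}_u(\eta^{*,\psi}_u)$ and using $\mathcal{F}^{-1}_\xi\big(\sin(a\xi)\mathcal{F}_u(h)\big)(u_0) = \tfrac{1}{2i}\big(h(u_0+a)-h(u_0-a)\big)$ converts the $\xi\sin$ symbol on $\hat{g}$ into $\tfrac{1}{2}\big[\eta^{*,\psi}_u(\rho,u_0+k(\rho_0)-k(\rho)) - \eta^{*,\psi}_u(\rho,u_0-k(\rho_0)+k(\rho))\big]$, producing the remaining two integrals of \eqref{eq:rep form eta error from wave} after multiplying by $2$. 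The main subtlety is the careful bookkeeping of signs and factors of $i$ during the Fourier inversion and the justification that the manipulations — which require differentiating $\eta^{*,\psi}$ and taking Fourier transforms — are legitimate at the level of functions; this is precisely why the hypotheses of the lemma insist that $\eta^{*,\psi}_u$ and $\eta^{*,\psi}_{uu}$ exist as continuous functions, even though $\chi^{*}$ itself is only a distribution.
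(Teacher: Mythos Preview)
Your proposal is correct and follows essentially the same approach as the paper, which simply says that the proof ``is similar to that of Lemma \ref{lemma:chi rep form}, and makes use of the fact that $\tilde{\eta}^{\psi}$ satisfies an analogous wave equation to \eqref{eq:chi error wave eqn}.'' Your Fourier-space derivation is precisely the adaptation the authors have in mind: convolve \eqref{eq:chi error wave eqn} with $\psi$ to obtain the inhomogeneous wave equation with zero Cauchy data, then repeat the multiplication-by-$\sin$ and double integration by parts from Lemma \ref{lemma:chi rep form}, with the only new feature being the source term $(k'(\rho)^2-\kappa_2/\rho^2)\eta^{*,\psi}_{uu}$, whose inverse Fourier transform you handle correctly via $\xi\hat g=-i\mathcal F_u(\eta^{*,\psi}_u)$.
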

The proof of Lemma \ref{lemma:second rep formula eta error} is similar to that of Lemma \ref{lemma:chi rep form}, and makes use of the fact that $\tilde{\eta}^{\psi}$ satisfies an analogous wave equation to \eqref{eq:chi error wave eqn}.

\subsection{Generating entropy fluxes}
Recall from Theorem \ref{thm:expansion at vacuum} that we decompose the entropy flux kernel $\sigma(\rho,u,s)$ for our problem as
\begin{equation}
    \sigma(\rho,u,s) = u \chi (\rho,u,s) + h (\rho,u,s).
\end{equation}
Then the entropy flux corresponding to an entropy $\eta^\psi$ is given by the representation
\begin{equation*}
\begin{aligned}
    q^\psi(\rho,u) = \int_\mathbb{R} \sigma(\rho,u,s) \psi(s) \, ds &= u\int_\mathbb{R} \chi(\rho,u-s) \psi(s) \, ds + \int_\mathbb{R} h(\rho,u-s) \psi(s) \, ds, \\
    &= u\eta^\psi(\rho,u) + \int_\mathbb{R} h(\rho,u-s) \psi(s) \, ds.
    \end{aligned}
\end{equation*}
In turn, defining
\begin{equation}
    H^{\psi}(\rho,u) := \int_\mathbb{R} h(\rho,u-s) \psi(s) \, ds,
\end{equation}
we see that $q^\psi(\rho,u) = u\eta^\psi(\rho,u) + H^\psi(\rho,u)$. As such, any entropy flux can be generated from the kernels $\chi$ and $h$. Analogously to the decomposition of the entropy kernel, we decompose the kernel $h$ as an isothermal kernel and a perturbation. To this end, we make the following definition.
\begin{defn}\label{def:h iso definition and H}
Define $h^{*}$ by
\begin{equation}\label{eq:h iso def}
    h^{*}(\rho,u) := \int_\mathbb{R} h_\rho(\rho_*,s) g^\sharp(\rho,u-s) \, ds + \int_\mathbb{R} h(\rho_*,s) g^\flat(\rho,u-s) \, ds,
\end{equation}
where $g^\sharp$ and $g^\flat$ were introduced in Definition \ref{def:g sharp and g flat} and define also
\beq
\tilde{h}(\rho,u):= h(\rho,u)-h^*(\rho,u) \text{ for }\rho\geq \rho_*.
\eeq
\end{defn}
Note that \eqref{eq:h iso def} and Theorem \ref{thm:expansion at vacuum} imply that $h^{*}$ is the unique solution of
\begin{equation}\label{eq:wave eqn h iso}
    \left\lbrace\begin{aligned}
        &h^{*}_{\rho\rho} - \frac{\kappa_2}{\rho^2} h^{*}_{uu} = 0 \quad \text{for } (\rho,u) \in (\rho_*,\infty)\times \mathbb{R}, \\
        &h^{*}(\rho_*,u)=h(\rho_*,u), \\
        &h^{*}_\rho(\rho_*,u)=h_\rho(\rho_*,u).
    \end{aligned}\right.
\end{equation}
Thus $h^{*}(\rho,\cdot)$ is H\"{o}lder continuous, and
    \begin{equation}
        \supp h^{*}(\rho,\cdot) \subset [-k_*(\rho),k_*(\rho)] \qquad \text{for }(\rho,u)\in[\rho_*,\infty)\times\mathbb{R}.
    \end{equation}
Moreover,
\begin{equation*}
    \sigma^{*}(\rho,u,s) = u\chi^{*}(\rho,u-s) + h^{*}(\rho,u-s),
\end{equation*}
where $\sigma^{*}$ is in fact the entropy flux kernel considered in \cite{SS1} (up to a re-scaling in terms of $\rho_*$).  This observation makes use of the fact, see \cite{SS1}, that the isothermal flux term $h^*$ is itself an entropy.
\begin{lemma}\label{lemma:h iso bounded by rho}
  There exists a positive constant $M$ depending also on $\rho_*$ such that
    \begin{equation}\label{eq:h iso bounded by rho}
        \Vert h^{*}(\rho,\cdot)\Vert_{L^\infty(\mathbb{R})} \leq M\frac{\rho}{\sqrt{k(\rho)}} \qquad \text{for }\rho\geq \rho_*.
    \end{equation}  
\end{lemma}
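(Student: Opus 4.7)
The plan is to recognize that the definition \eqref{eq:h iso def} of $h^{*}$ is structurally identical to the definition of $\chi^{*}$ in Definition \ref{def:chi iso def}: both are superpositions of the same scaled isothermal kernels $g^\sharp$ and $g^\flat$ convolved against boundary data at $\rho = \rho_*$, with only the boundary data differing. Consequently, I would adapt the proof of Proposition \ref{lemma:chi iso rescaled properties} verbatim, with $(\chi(\rho_*,\cdot),\chi_\rho(\rho_*,\cdot))$ replaced by $(h(\rho_*,\cdot),h_\rho(\rho_*,\cdot))$.

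The first step is to verify that the new boundary data are admissible: both $h(\rho_*,\cdot)$ and $h_\rho(\rho_*,\cdot)$ need to be bounded, and supported in $[-k(\rho_*),k(\rho_*)]$. For $h(\rho_*,\cdot)$, boundedness and support follow directly from Theorem \ref{thm:expansion at vacuum}. For $h_\rho(\rho_*,\cdot)$, one uses the wave equation \eqref{eq:ent flux kernel} for $h$, combined with the interior regularity of $h$ on the fixed compact slice $\{\rho=\rho_*\}$ (which lies strictly inside the domain of propagation and away from the vacuum), to obtain a pointwise bound; the H\"older regularity and support inclusion transfer from the analogous statements for $\chi_\rho(\rho_*,\cdot)$ via $h=\sigma-u\chi$.

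The second step is the main quantitative estimate. Using the rescaling in Definition \ref{def:g sharp and g flat}, the bounds on the genuinely isothermal kernels $\chi^\sharp$, $\chi^\flat$ from \cite[Theorem 1.3]{SS1} transfer to convolution estimates for $g^\sharp(\rho,\cdot)$ and $g^\flat(\rho,\cdot)$ that produce the isothermal scaling $\rho/\sqrt{k_*(\rho)}$ when acting on fixed bounded, compactly supported data. These are precisely the estimates that yield Proposition \ref{lemma:chi iso rescaled properties}, and applied to \eqref{eq:h iso def} with the data from step one they give
\begin{equation*}
\Vert h^{*}(\rho,\cdot)\Vert_{L^\infty(\mathbb{R})}\leq M\,\frac{\rho}{\sqrt{k_*(\rho)}}\qquad\text{for }\rho\geq\rho_*.
\end{equation*}
By \eqref{eq:k and k star close}, $|k(\rho)-k_*(\rho)|$ is bounded by a constant multiple of $\rho_*^{-\alpha}$, so $k_*(\rho)$ and $k(\rho)$ are comparable uniformly on $[\rho_*,\infty)$ (noting that $k(\rho),k_*(\rho)\to\infty$ as $\rho\to\infty$, so the additive defect is absorbed into the multiplicative constant). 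This upgrades the bound to the claimed form in terms of $k(\rho)$.

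I expect the only genuine subtlety to be the control on $h_\rho(\rho_*,\cdot)$, since Theorem \ref{thm:expansion at vacuum} as stated gives H\"older regularity in the combined variable rather than a clean $\rho$-derivative bound at a fixed slice; however, this is a fixed-$\rho_*$ issue that does not involve the large-$\rho$ asymptotics, and it is handled by the interior regularity of the hyperbolic problem \eqref{eq:ent flux kernel} away from $\rho=0$. Once that local bound is in hand, the rest of the argument is a mechanical transcription of the proof of Proposition \ref{lemma:chi iso rescaled properties}, and for this reason I would expect the paper to dispatch the proof simply by quoting that proposition.
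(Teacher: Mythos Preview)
Your proposal is correct and matches the paper's approach exactly: the paper's proof simply restates the convolution representation \eqref{eq:h iso def} and then says the result ``is now easily deduced using the explicit forms of $g^\sharp$ and $g^\flat$, as was done for $\chi^{*}$ in Proposition \ref{lemma:chi iso rescaled properties}.'' You even correctly anticipated that the paper would dispatch the proof by quoting that proposition rather than repeating the argument.
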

\begin{proof}
Using the kernels $g^\sharp$ and $g^\flat$ (see Definition \ref{def:g sharp and g flat}), we see that
\begin{equation}
    h^{*}(\rho,u) = \int_\mathbb{R} h_\rho(\rho_*,s) g^\sharp(\rho,u-s) \, ds + \int_\mathbb{R} h(\rho_*,s) g^\flat(\rho,u-s) \, ds.
\end{equation}
The result is now easily deduced using the explicit forms of $g^\sharp$ and $g^\flat$, as was done for $\chi^{*}$ in Proposition \ref{lemma:chi iso rescaled properties}.
\end{proof}
A simple calculation verifies the following lemma.
\begin{lemma}
    The perturbation $\tilde{h}(\rho,u)$ solves, for $(\rho,u) \in (\rho_*,\infty)\times \mathbb{R}$,
\begin{equation}\label{eq:h error wave equation}
\left\lbrace\begin{aligned}
    & \tilde{h}_{\rho \rho} - k'(\rho)^2 \tilde{h}_{uu} = \big( k'(\rho)^2 - \frac{\kappa_2}{\rho^2} \big)h^{*}_{uu}(\rho,u) + \frac{p''(\rho)}{\rho}\chi_u(\rho,u), \\
    & \tilde{h}(\rho_*,u) = 0, \\
    & \tilde{h}_\rho(\rho_*,u) = 0.
\end{aligned}\right.
\end{equation}
\end{lemma}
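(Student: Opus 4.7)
The plan is to observe that this is an entirely direct computation: since $\tilde{h}$ is defined as the difference $h-h^{*}$, the claim follows by subtracting the linear wave equations satisfied by $h$ and by $h^{*}$, together with a matching of the initial data at $\rho=\rho_*$.

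More specifically, I would first record the equation for the full kernel $h$. Since $\sigma(\rho,u,s) = u\chi(\rho,u,s) + h(\rho,u,s)$ and $\sigma-u\chi = h$ satisfies \eqref{eq:ent flux kernel}, we have
\begin{equation*}
h_{\rho\rho} - k'(\rho)^2 h_{uu} = \frac{p''(\rho)}{\rho}\chi_u
\end{equation*}
on $(0,\infty)\times\R$. Next, from \eqref{eq:wave eqn h iso}, the isothermal part satisfies $h^{*}_{\rho\rho} - \frac{\kappa_2}{\rho^2} h^{*}_{uu} = 0$ on $(\rho_*,\infty)\times\R$, with $h^{*}(\rho_*,\cdot) = h(\rho_*,\cdot)$ and $h^{*}_\rho(\rho_*,\cdot) = h_\rho(\rho_*,\cdot)$.

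Subtracting these two equations gives
\begin{equation*}
\tilde{h}_{\rho\rho} - k'(\rho)^2 \tilde{h}_{uu} = \frac{p''(\rho)}{\rho}\chi_u - h^{*}_{\rho\rho} + k'(\rho)^2 h^{*}_{uu},
\end{equation*}
and on the right-hand side I would replace $h^{*}_{\rho\rho}$ using the isothermal wave equation, $h^{*}_{\rho\rho} = \frac{\kappa_2}{\rho^2} h^{*}_{uu}$, to obtain
\begin{equation*}
\tilde{h}_{\rho\rho} - k'(\rho)^2 \tilde{h}_{uu} = \Big(k'(\rho)^2-\frac{\kappa_2}{\rho^2}\Big) h^{*}_{uu} + \frac{p''(\rho)}{\rho}\chi_u,
\end{equation*}
which is the claimed equation. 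The initial conditions $\tilde{h}(\rho_*,u)=0$ and $\tilde{h}_\rho(\rho_*,u)=0$ follow immediately from the matching of data in the definition of $h^{*}$.

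There is no essential obstacle here; the only point worth being careful about is the regularity needed to make the subtraction of the two wave equations meaningful. Since $h$ and $h^{*}$ are a priori only H\"older continuous in $u$, the identity should be read in the distributional sense on $(\rho_*,\infty)\times\R$, and the right-hand side involves the distributional derivatives $h^{*}_{uu}$ and $\chi_u$ which are well defined by \eqref{eq:wave eqn h iso} and \eqref{eq:ent kernel} respectively. With this interpretation fixed at the outset, the argument is simply a line or two of algebra combined with the initial conditions in Definition \ref{def:h iso definition and H}.
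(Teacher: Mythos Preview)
Your proposal is correct and matches the paper's approach: the paper simply states that ``A simple calculation verifies the following lemma'' and gives no further proof, and your subtraction of the wave equations for $h$ and $h^{*}$ together with the matching of initial data at $\rho=\rho_*$ is exactly that calculation. Your additional remark on the distributional interpretation is a welcome clarification that the paper leaves implicit.
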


For $\psi \in C^2_c(\mathbb{R})$ and $\hat{\psi}(s)=\frac{1}{2}s|s|$, we define, for $(\rho,u) \in [\rho_*,\infty)\times\mathbb{R}$,
\begin{equation}
    H^{*,\psi}(\rho,u) := \int_\mathbb{R} h^{*}(\rho,u-s) \psi(s) \, ds, \quad \hat{H}^{*}(\rho,u) := \int_\mathbb{R} h^{*}(\rho,u-s) \hat{\psi}(s) \, ds.
    \end{equation}
Now the entropy flux corresponding to $\eta^\psi$ is given by
$${q}^\psi(\rho,u)={q}^{\psi,*}(\rho,u)+\tilde{q}^\psi(\rho,u)=u{\eta}^{\psi,*}(\rho,u)+u\tilde\eta^\psi(\rho,u)+{H}^{\psi,*}(\rho,u)+\tilde{H}^\psi(\rho,u)$$
and the flux corresponding to $\hat\eta$ is given by
\beq\label{eq:qhat}
\hat{q}(\rho,u)=\hat{q}^*(\rho,u)+\tilde{q}(\rho,u)=u\hat{\eta}^*(\rho,u)+u\tilde\eta(\rho,u)+\hat{H}^*(\rho,u)+\tilde{H}(\rho,u)
\eeq
with obvious notation for $\tilde{H}^\psi$ and $\tilde{H}$.

\vspace{-2mm}

\section{Estimating entropy pairs}\label{section:entropy pairs bounds}
In this section, we use the representation formulas obtained in Section \ref{section:rep formula chi error} and the estimate of Lemma \ref{lemma:error bound abs} to estimate the entropy pairs generated by the special function $\hat{\psi}(s)=\frac{1}{2}s|s|$ and by compactly supported test functions. 
\subsection{Special entropy pair}
We first consider the special entropy, $\hat{\eta}(\rho,u)$, generated by $\hat{\psi}(s) = \frac{1}{2}s|s|$. This was defined by the formula \eqref{eq:def eta hat ch2} for $\rho \geq \rho_*$, 
\begin{equation}\label{eq:special entropy generated}
\begin{aligned}
      \hat{\eta}(\rho,u) &= \frac{1}{2} \int_\mathbb{R} \chi^{*}(\rho,u-s) s|s| \, ds + \frac{1}{2}\int_\mathbb{R} \tilde{\chi}(\rho,u-s) s|s| \, ds,  \\
      & = \hat{\eta}^{*}(\rho,u) + \tilde{\eta}(\rho,u).
\end{aligned}
\end{equation}
The goal of this subsection is to prove the following lemma, which is the analogue of \cite[Lemma 3.5]{SS1}.
\begin{lemma}\label{lemma:pointwise hat chapter 2}
    Let $\hat{\psi}(s)=\frac{1}{2}s|s|$, and $(\hat{\eta},\hat{q})$ its associated entropy pair via \eqref{eq:def eta hat ch2} and \eqref{eq:qhat}. There exists a positive constant $M$ such that
    \begin{equation}\label{ineq:etahatests}
        \begin{aligned}
            &|\hat{\eta}(\rho,m)| \leq M \eta^*(\rho,m), \qquad |\hat{\eta}_m(\rho,m)| \leq M\big( |u| + \sqrt{\log\rho} \big), \qquad |\rho \hat{\eta}_{mm}(\rho,m)| \leq M,
        \end{aligned}
    \end{equation}
    where $\eta^*(\rho,m)$ is the mechanical energy defined in \eqref{eq:physicalentropy} and 
    \beq\label{ineq:qhatlower}
    \hat{q}(\rho,m) \geq M^{-1}\rho|u|^3 - M \big( \rho|u|^2 + \rho + \rho(\log(\rho/\rho_*))^4 \big)
    \eeq
   for all $\rho \geq \rho_*$. For the mixed derivatives $\hat{\eta}_{mu}(\rho,\rho u) = \partial_u \hat{\eta}_m(\rho,\rho u)$ and $\hat{\eta}_{m \rho}(\rho,\rho u) = \partial_\rho \hat{\eta}_m(\rho,\rho u)$, we have 
    \begin{equation}\label{ineq:etahatmixed}
        |\hat{\eta}_{mu}(\rho,\rho u)| \leq M\frac{1}{1+\sqrt{\log(\rho/\rho_*)}}, \qquad |\hat{\eta}_{m\rho}(\rho,\rho u)| \leq M\rho^{-1}.
    \end{equation}
    Moreover, on the complement region $\rho \leq \rho_*$, we have
    \begin{equation}\label{ineq:one}
        \begin{aligned}
            &|\hat{\eta}(\rho,m)| \leq M \eta^*(\rho,m), \qquad && \hat{q}(\rho,m) \geq M^{-1}\big( \rho|u|^3 + \rho^{\gamma+\theta} \big) - M\big( \rho|u|^2 + \rho^\gamma \big), \\
            &|\hat{\eta}_m(\rho,m)| \leq M\big( |u|+\rho^\theta \big), \qquad && |\rho \hat{\eta}_{mm}(\rho,m)| \leq M, \\
            &|\hat{\eta}_{mu}(\rho,\rho u)| \leq M, \qquad && |\hat{\eta}_{m\rho}(\rho,\rho u)| \leq M\rho^{\theta-1},
        \end{aligned}
    \end{equation}
    where in the final line we consider $\hat{\eta}_m(\rho,\rho u)$ as a function of $\rho$ and $u$, as in \eqref{ineq:etahatmixed}. Finally,
    \begin{equation}\label{eq:eta m hat difference squared bounded by e star}
        \rho\big| \hat{\eta}_m(\rho,0) - \hat{\eta}_m(\bar{\rho},0) \big|^2 \leq Me^*(\rho,\bar{\rho}) \qquad \text{for } \rho,\bar{\rho} \geq 0.
    \end{equation}
\end{lemma}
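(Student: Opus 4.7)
The plan is to split, for $\rho\geq\rho_*$,
\begin{equation*}
\hat\eta = \hat\eta^* + \tilde\eta,\qquad \hat q = u\hat\eta^* + u\tilde\eta + \hat H^* + \tilde H,
\end{equation*}
and to transfer the estimates for the principal pair $(\hat\eta^*,\hat q^*)$ directly from \cite[Lemma~3.5]{SS1} via the re-scalings of Definitions~\ref{def:g sharp and g flat}--\ref{def:chi iso def} and~\ref{def:h iso definition and H}, while treating $(\tilde\eta,\tilde H)$ as perturbative corrections using Lemmas~\ref{lemma:error bound abs} and~\ref{lemma:second rep formula eta error}. In the low-density region $\rho\leq\rho_*$ the bounds \eqref{ineq:one} follow from classical polytropic estimates (cf.~\cite{ChenLeFloch,LionsPerthameTadmor} or the parallel proof in~\cite{SS1}), since \eqref{eq:p asymp near 0 P} makes the pressure a controlled perturbation of a $\gamma$-law on $[0,\rho_*]$.

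For the perturbation $\tilde\eta(\rho,u)=\int\tilde\chi(\rho,u-s)\hat\psi(s)\,ds$, Lemma~\ref{lemma:error bound abs} together with $\supp\tilde\chi(\rho,\cdot)\subset\tilde{\mathcal K}$ from \eqref{eq:supp chi error} yields the crude bound
\begin{equation*}
|\tilde\eta(\rho,u)|\leq M\rho^{1-\alpha\tilde\lambda}\log(\rho/\rho_*)\bigl(u^2+\log^2(\rho/\rho_*)\bigr),
\end{equation*}
which is $o(\eta^*)$ as $\rho\to\infty$ because $\alpha\tilde\lambda>0$. Repeating the argument after integration by parts --- replacing $\hat\psi$ successively by $\hat\psi'(s)=|s|$ and $\hat\psi''(s)=\sgn(s)$ --- gives analogous crude bounds for $\tilde\eta_u$ and $\tilde\eta_{uu}$ that each beat the sharp $\hat\eta^*$ bounds by the factor $\rho^{-\alpha\tilde\lambda}$ up to logarithmic corrections. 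The $\rho$-derivative will require more care: I would apply the second representation formula \eqref{eq:rep form eta error from wave}, differentiate through the integral in $\rho_0$ (and through the prefactor $(\rho_0-\rho_*)^{-1}k'(\rho_0)^{-1}$, which after differentiation contributes factors of $\rho_0^{-1}$), and exploit the quantitative decay $|k'(\rho)^2-\kappa_2/\rho^2|\leq M\rho^{-\alpha-2}$ from Corollary~\ref{corollary:k' and k'' compare with rho powers} to control the source term and hence $\tilde\eta_{m\rho}$. The kernel $\tilde H$ is handled in exactly the same way from \eqref{eq:h error wave equation}, the extra source $\rho^{-1}p''(\rho)\chi_u$ being dominated by $M\rho^{-\alpha-2}|\chi_u|$ thanks to Lemma~\ref{lemma:p' below}.

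The lower bound \eqref{ineq:qhatlower} then follows by combining the cubic lower bound for $\hat q^*$ inherited from \cite{SS1} with the observation that $|u\tilde\eta|+|\tilde H|$ is absorbed into the error $M\bigl(\rho u^2+\rho+\rho(\log(\rho/\rho_*))^4\bigr)$. Finally, \eqref{eq:eta m hat difference squared bounded by e star} is obtained by writing $\hat\eta_m(\rho,0)-\hat\eta_m(\bar\rho,0)=\int_{\bar\rho}^\rho \hat\eta_{m\rho}(r,0)\,dr$, applying Cauchy--Schwarz together with the piecewise bounds $|\hat\eta_{m\rho}(r,0)|\leq M r^{-1}$ for $r\geq\rho_*$ and $|\hat\eta_{m\rho}(r,0)|\leq M r^{\theta-1}$ for $r\leq\rho_*$, and comparing the resulting integral with the quantitative lower bounds on $e^*(\rho,\bar\rho)$ recalled in Appendix~\ref{section:appendix}.

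The principal obstacle will be the mixed-derivative estimates in \eqref{ineq:etahatmixed}: brute-force bounds through Lemma~\ref{lemma:error bound abs} lose logarithmic factors against the test functions $\hat\psi'$ and $\hat\psi''$, so preserving the sharp $1/\sqrt{\log(\rho/\rho_*)}$ and $\rho^{-1}$ decay rates uniformly as $\rho\to\infty$ really requires differentiating the representation formula \eqref{eq:rep form eta error from wave} carefully. This is exactly the place where the quantitative hypothesis $\alpha>0$ of Definition~\ref{def:asymp isothermal} enters: it is the $\rho^{-\alpha-2}$ decay of the driving term $(k'(\rho)^2-\kappa_2/\rho^2)\eta^{*,\psi}_u$ that ensures the source integral survives the division by the $k'(\rho_0)(\rho_0-\rho_*)$ prefactor with the correct rate.
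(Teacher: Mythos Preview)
Your overall strategy matches the paper's: split into $\hat\eta^*+\tilde\eta$ and $\hat q^*+\tilde q$, import the principal-part estimates from \cite[Lemma~3.5]{SS1}, handle $\rho\leq\rho_*$ by the standard polytropic bounds, and control the perturbations via Lemma~\ref{lemma:error bound abs} together with the second representation formula. You also correctly flag the mixed derivatives as the delicate step and that the $\rho^{-\alpha-2}$ decay of the source is what makes the Gr\"onwall close.

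There is, however, a genuine gap in your ordering for \eqref{ineq:qhatlower}. Your crude bound $|\tilde\eta(\rho,u)|\leq M\rho^{1-\alpha\tilde\lambda}\log(\rho/\rho_*)\,(u^2+\log^2(\rho/\rho_*))$ carries a $u^2$ factor, so $|u\tilde\eta|$ produces a term of order $\rho^{1-\alpha\tilde\lambda}\log(\rho/\rho_*)\,|u|^3$, which \emph{competes} with the principal cubic $M^{-1}\rho|u|^3$ from $\hat q^*$; there is no reason the perturbation coefficient is uniformly smaller than the principal one on all of $[\rho_*,\infty)$, so you cannot simply ``absorb it into the error''. The paper avoids this by first running a Gr\"onwall argument on the representation formula for $\tilde\eta_m$ itself (not only after differentiating in $\rho_0$), obtaining the \emph{uniform-in-$u$} bound $\|\tilde\eta_m(\rho,\cdot)\|_{L^\infty}\leq M\rho^{-\alpha}$; this is then fed into the $\rho_0$-differentiated formula to get $\|\tilde\eta_{m\rho}(\rho,\cdot)\|_{L^\infty}\leq Mk(\rho)\rho^{-1-\alpha\tilde\lambda}$; and only \emph{then}, integrating back using $\tilde\eta(\rho,0)=0$ and $\tilde\eta_m(\rho_*,\cdot)=0$, one deduces the sharper $|\tilde\eta(\rho,u)|\leq M\rho|u|$. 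With this bound $|u\tilde\eta|\leq M\rho u^2$ is purely quadratic in $u$ and the lower bound on $\hat q$ follows. So the correct chain is: Gr\"onwall on $\tilde\eta_m$ $\to$ $\tilde\eta_{m\rho}$ $\to$ improved $\tilde\eta$ $\to$ \eqref{ineq:qhatlower}, not the direct route through the crude bound that you propose.
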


By \cite[Lemma 3.5]{SS1}, the estimates of Lemma \ref{lemma:pointwise hat chapter 2} hold for the principal parts $\hat{\eta}^*$ and $\hat{q}^*$. In addition, the estimates \eqref{ineq:one} in the lower region are by now standard and may be seen, for example, in \cite[Lemma 3.5]{SS1}. Thus we will focus in the sequel on the error terms, $\tilde\eta$ and $\tilde q$. As these estimates are lengthy, the following two subsections will be devoted to their proofs. As a guide to the reader, we point out here that estimates \eqref{ineq:etahatests} may be found in Lemmas \ref{lemma:tildeeta} and \ref{lemma:eta hat error u derivs} while \eqref{ineq:etahatmixed} may be found in Lemma \ref{lemma:eta hat error m rho ch2}. Finally, \eqref{ineq:qhatlower} is proved in Corollary \ref{cor:qhatlower}, while \eqref{eq:eta m hat difference squared bounded by e star} proved similarly to the results in Appendix A (see \cite[Remark 3.51]{thesis} for a complete proof).

\subsubsection{Higher differentiability for the special entropy}
Surprisingly, it is the case that the special entropy $\hat{\eta}$ of \eqref{eq:special entropy generated} is three times continuously differentiable in its second variable, as demonstrated in the next lemma. This property will be used repeatedly in the proofs of \eqref{ineq:etahatests}--\eqref{ineq:etahatmixed}.

\begin{lemma}\label{lemma:three derivs eta hat and eta iso hat}
The entropies $\hat{\eta}^{*}$ and $\tilde{\eta}$ are three times continuously differentiable in their second variable, i.e., $\hat{\eta}^{*}(\rho,\cdot), \tilde{\eta}(\rho,\cdot) \in C^3(\mathbb{R})$ for each $\rho \in [\rho_*,\infty)$. In fact,
\begin{equation}
    \hat{\eta}^{*}_{uuu}(\rho,u) = 2 \chi^{*}(\rho,u), \quad  \tilde{\eta}_{uuu}(\rho,u) = 2\tilde{\chi}(\rho,u) \quad \text{for } (\rho,u) \in [\rho_*,\infty)\times \mathbb{R}.
\end{equation}
Correspondingly, there exists a positive constant $M$ such that
\begin{equation}
    \Vert \hat{\eta}^{*}_{uuu}(\rho,\cdot)\Vert_{L^\infty(\mathbb{R})}\leq \frac{M\rho}{\sqrt{k(\rho)}},  \quad \Vert \tilde{\eta}_{uuu}(\rho,\cdot)\Vert_{L^\infty(\mathbb{R})} \leq M \rho^{1-\alpha \tilde{\lambda}} \quad \text{for } \rho \geq \rho_*.
\end{equation}
\end{lemma}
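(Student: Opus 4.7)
The plan is to exploit the specific structure of $\hat{\psi}(s)=\tfrac{1}{2}s|s|$, whose classical derivatives $\hat{\psi}'(s)=|s|$ and $\hat{\psi}''(s)=\operatorname{sgn}(s)$ are bounded but not $C^1$, and then to pass the third derivative onto the kernel by Fubini and the fundamental theorem of calculus, producing exactly the stated identities $\hat{\eta}^{*}_{uuu}=2\chi^{*}$ and $\tilde{\eta}_{uuu}=2\tilde{\chi}$. The crucial inputs for this are (i) the compact supports of $\chi^{*}(\rho,\cdot)$ and $\tilde{\chi}(\rho,\cdot)$ from Remark \ref{remark:supports}, and (ii) the fact that $\chi^{*}(\rho,\cdot)$ and $\chi(\rho,\cdot)$ are both Hölder continuous (by Proposition \ref{lemma:chi iso rescaled properties} and Theorem \ref{thm:expansion at vacuum}), so that $\tilde{\chi}(\rho,\cdot)=\chi(\rho,\cdot)-\chi^{*}(\rho,\cdot)$ is continuous on $\mathbb{R}$.

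The first step is to perform the change of variables $y=u-s$ in the convolution
\begin{equation*}
\tilde{\eta}(\rho,u)=\int_\mathbb{R}\tilde{\chi}(\rho,y)\,\hat{\psi}(u-y)\,dy,
\end{equation*}
and likewise for $\hat{\eta}^{*}$. Because $\tilde{\chi}(\rho,\cdot)$ is continuous and compactly supported in $\tilde{\mathcal{K}}$, and $\hat{\psi}\in C^{1,1}(\mathbb{R})$ with $\hat{\psi}',\hat{\psi}''\in L^\infty(\mathbb{R})$, the dominated convergence theorem justifies differentiating twice under the integral sign, yielding
\begin{equation*}
\tilde{\eta}_{uu}(\rho,u)=\int_\mathbb{R}\tilde{\chi}(\rho,y)\operatorname{sgn}(u-y)\,dy=\int_{-\infty}^{u}\tilde{\chi}(\rho,y)\,dy-\int_{u}^{\infty}\tilde{\chi}(\rho,y)\,dy.
\end{equation*}

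Next, since $\tilde{\chi}(\rho,\cdot)$ is continuous, the fundamental theorem of calculus applies to each of the two integrals above, giving
\begin{equation*}
\tilde{\eta}_{uuu}(\rho,u)=\tilde{\chi}(\rho,u)-\bigl(-\tilde{\chi}(\rho,u)\bigr)=2\tilde{\chi}(\rho,u),
\end{equation*}
which is a continuous function of $u$, so $\tilde{\eta}(\rho,\cdot)\in C^3(\mathbb{R})$. The exact same argument, with $\chi^{*}$ in place of $\tilde{\chi}$, yields $\hat{\eta}^{*}_{uuu}(\rho,u)=2\chi^{*}(\rho,u)$ and $\hat{\eta}^{*}(\rho,\cdot)\in C^3(\mathbb{R})$.

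Finally, the $L^\infty$ bounds are immediate consequences of the identities just established: $\|\hat{\eta}^{*}_{uuu}(\rho,\cdot)\|_{L^\infty(\mathbb{R})}=2\|\chi^{*}(\rho,\cdot)\|_{L^\infty(\mathbb{R})}\leq M\rho/\sqrt{k(\rho)}$ by Proposition \ref{lemma:chi iso rescaled properties}, and $\|\tilde{\eta}_{uuu}(\rho,\cdot)\|_{L^\infty(\mathbb{R})}=2\|\tilde{\chi}(\rho,\cdot)\|_{L^\infty(\mathbb{R})}\leq M\rho^{1-\alpha\tilde{\lambda}}$ by Lemma \ref{lemma:error bound abs}. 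I do not anticipate any serious obstacle here: the only subtlety is recognising that although $\hat{\psi}$ is only $C^{1,1}$, the compactness of the support of the kernel in the $s$-variable together with its continuity let one exchange one derivative from $\hat{\psi}''$ onto $\tilde{\chi}$ via the fundamental theorem of calculus, producing the identity exactly.
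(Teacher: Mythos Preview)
Your proof is correct and follows essentially the same route as the paper: the paper simply writes $\tilde{\eta}(\rho,u) = \tfrac{1}{2}\int_{-\infty}^u \tilde{\chi}(\rho,s)(u-s)^2\,ds - \tfrac{1}{2}\int_u^\infty \tilde{\chi}(\rho,s)(u-s)^2\,ds$ (which is your change of variables with $\hat{\psi}(u-y)=\tfrac{1}{2}(u-y)|u-y|$ split at $y=u$) and then appeals to direct calculation and Lemma \ref{lemma:error bound abs}. Your version is more explicit about the justification via dominated convergence and the fundamental theorem of calculus, but the underlying argument is identical.
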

\begin{proof}
Recall that, by definition,
\begin{equation}\label{eq:expand eta error}
    \tilde{\eta}(\rho,u) = \frac{1}{2}\int_{-\infty}^u \tilde{\chi}(\rho,s) (u-s)^2 \, ds - \frac{1}{2}\int_u^\infty \tilde{\chi}(\rho,s) (u-s)^2 \, ds.
\end{equation}
The rest of the proof now follows from direct calculation and Lemma \ref{lemma:error bound abs}.
\end{proof}

\subsubsection{Bounds on the special entropy and its $u$ derivatives}\label{subsubsection:eta hat u derivs}
Below is the first estimate on the special entropy, which shows that it is controlled by the mechanical energy, $\eta^*$.

\begin{lemma}\label{lemma:tildeeta}
    There exists a positive constant $M$ such that
    \begin{equation}
        |\tilde{\eta}(\rho,u)| \leq M \eta^*(\rho,u).
    \end{equation}
\end{lemma}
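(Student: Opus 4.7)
The plan is to unfold the definition of $\tilde{\eta}$ against $\hat{\psi}(s)=\frac{1}{2}s|s|$, exploit the compact support of $\tilde{\chi}(\rho,\cdot)$ given by Remark \ref{remark:supports}, and then apply the uniform $L^\infty$ bound from Lemma \ref{lemma:error bound abs}. Writing $K(\rho):=\max\{k(\rho),k_*(\rho)\}$ and changing variables $z=u-s$,
\begin{equation*}
\tilde{\eta}(\rho,u) = \frac{1}{2}\int_{-K(\rho)}^{K(\rho)} \tilde{\chi}(\rho,z)\,(u-z)|u-z|\,dz.
\end{equation*}
Using the elementary pointwise bound $(u-z)^2 \leq 2u^2 + 2z^2$, I would split the estimate as
\begin{equation*}
|\tilde{\eta}(\rho,u)| \leq u^2 \int_{-K(\rho)}^{K(\rho)} |\tilde{\chi}(\rho,z)|\,dz + \int_{-K(\rho)}^{K(\rho)} |\tilde{\chi}(\rho,z)|\,z^2\,dz,
\end{equation*}
and bound the two pieces against $\tfrac{1}{2}\rho u^2$ and $\rho e(\rho)$, respectively.

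For the first term, Lemma \ref{lemma:error bound abs} yields $\|\tilde{\chi}(\rho,\cdot)\|_{L^\infty} \leq M\rho^{1-\alpha\tilde{\lambda}}$, while \eqref{eq:k and k star close} combined with $k_*(\rho) = \sqrt{\kappa_2}\log(\rho/\rho_*) + k(\rho_*)$ gives $K(\rho) \leq M(1+\log(\rho/\rho_*))$. Since $\alpha\tilde{\lambda}>0$, the function $\rho^{-\alpha\tilde{\lambda}}\log(\rho/\rho_*)$ is bounded on $[\rho_*,\infty)$, so
\begin{equation*}
u^2\int_{-K(\rho)}^{K(\rho)}|\tilde{\chi}(\rho,z)|\,dz \leq 2Mu^2\,K(\rho)\,\rho^{1-\alpha\tilde{\lambda}} \leq M\rho u^2.
\end{equation*}

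For the second term, the same $L^\infty$ bound on $\tilde\chi$ together with $|z|\leq K(\rho) \leq M\log(\rho/\rho_*)+M$ produces
\begin{equation*}
\int_{-K(\rho)}^{K(\rho)} |\tilde{\chi}(\rho,z)|\,z^2\,dz \leq \tfrac{2}{3}K(\rho)^3\,\|\tilde{\chi}(\rho,\cdot)\|_{L^\infty} \leq M\rho^{1-\alpha\tilde{\lambda}}(1+\log(\rho/\rho_*))^3.
\end{equation*}
The asymptotically isothermal hypothesis \eqref{eq:p derivs} forces $p(y)/y^2 \sim \kappa_2/y$ at infinity, hence $e(\rho)\geq c\log(\rho/\rho_*)$ for $\rho\geq \rho_*$ (possibly enlarging $\rho_*$), so $\rho e(\rho) \geq c\rho\log(\rho/\rho_*)$. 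Since $\rho^{-\alpha\tilde{\lambda}}(1+\log(\rho/\rho_*))^2$ is bounded on $[\rho_*,\infty)$, the second term is controlled by $M\rho e(\rho)$.

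Combining the two estimates gives $|\tilde{\eta}(\rho,u)| \leq M(\tfrac{1}{2}\rho u^2 + \rho e(\rho)) = M\eta^*(\rho,u)$, as required. The main obstacle in this argument is verifying that $\rho^{-\alpha\tilde{\lambda}}(1+\log(\rho/\rho_*))^{3}$ and its analogues remain bounded on $[\rho_*,\infty)$ after multiplication by the logarithmic growth of $K(\rho)^3$; all of this relies on having strictly positive exponent $\alpha\tilde{\lambda}$, which is guaranteed by $\alpha\in(0,1)$ and $\tilde{\lambda}>0$. The remaining computations are routine and closely follow the style of estimates already carried out in Section \ref{section:rep formula chi error}.
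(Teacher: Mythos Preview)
Your proposal is correct and follows essentially the same approach as the paper's proof: both use the support bound on $\tilde{\chi}$ from Remark \ref{remark:supports}, the $L^\infty$ estimate of Lemma \ref{lemma:error bound abs}, the elementary splitting $(u-z)^2 \leq 2u^2+2z^2$ (the paper phrases this as ``Cauchy--Schwarz''), and then compare the resulting $\rho^{1-\alpha\tilde\lambda}k(\rho)u^2$ and $\rho^{1-\alpha\tilde\lambda}k(\rho)^3$ terms against $\rho u^2$ and $\rho e(\rho)$ using the logarithmic growth of $e(\rho)$. The only cosmetic difference is that the paper cites Lemma \ref{lemma:bound on max k and k star} for $K(\rho)\leq Mk(\rho)$ rather than arguing directly from \eqref{eq:k and k star close}.
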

\begin{proof}
From Remark \ref{remark:supports}, $\supp \tilde{\chi}(\rho,\cdot) \subset [-\max\{k(\rho),k_*(\rho)\},\max\{k(\rho),k_*(\rho) \}]$. Using the bound  $\max\{k(\rho),k_*(\rho)\}\leq M k(\rho)$ due to Lemma \ref{lemma:bound on max k and k star}, we see from \eqref{eq:expand eta error} and the Cauchy--Schwarz inequality that
\begin{equation}\label{eq:eta error less mech energy}
\begin{aligned}
     |\tilde{\eta}(\rho,u)| &\leq  M k(\rho)u^2 \Vert \tilde{\chi}(\rho,\cdot) \Vert_{L^\infty(\mathbb{R})} + M k(\rho)^3 \Vert \tilde{\chi}(\rho,\cdot) \Vert_{L^\infty(\mathbb{R})}\\
     &\leq \rho^{1-\alpha \tilde{\lambda}}  \big( k(\rho) u^2 + k(\rho)^3 \big)
\end{aligned}
\end{equation}
using Lemma \ref{lemma:error bound abs}. The result now follows easily as $\eta^*(\rho,u)=\half \rho u^2+\rho e(\rho)$ and as $\rho\to\infty$, $\frac{e(\rho)}{\log(\rho)}\to\text{const.}>0$.
\end{proof}
Arguing similarly for the derivatives of $\tilde\eta$, we also obtain the following lemma.
\begin{lemma}\label{lemma:eta hat error u derivs}
    There exists a positive constant $M$ such that
    \begin{equation}
       \begin{aligned}
       & |\tilde{\eta}_m(\rho,u)| \leq \frac{M}{1+\log(\rho/\rho_*)}\big( |u| + 1 \big) && \qquad \text{for } (\rho,u) \in [\rho_*,\infty)\times\mathbb{R},  \\
       & |\tilde{\eta}_{m u}(\rho,u)| + |\rho \tilde{\eta}_{m m}(\rho,u)| \leq \frac{M}{1+\log(\rho/\rho_*)} && \qquad \text{for }(\rho,u)\in[\rho_*,\infty)\times\mathbb{R}.
        \end{aligned}
    \end{equation}
\end{lemma}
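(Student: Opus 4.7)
The plan is to reduce all three bounds to $L^\infty$ estimates on $\tilde{\eta}_u$ and $\tilde{\eta}_{uu}$, since the change from $(\rho,u)$-derivatives to $(\rho,m)$-derivatives is a straightforward chain-rule computation. Indeed, viewing $\tilde{\eta}$ as a function of $(\rho,m)$ via $m=\rho u$, one has $\tilde{\eta}_m(\rho,u)=\rho^{-1}\tilde{\eta}_u(\rho,u)$, and consequently
\begin{equation*}
\tilde{\eta}_{mu}(\rho,\rho u)=\rho^{-1}\tilde{\eta}_{uu}(\rho,u),\qquad \rho\,\tilde{\eta}_{mm}(\rho,\rho u)=\rho^{-1}\tilde{\eta}_{uu}(\rho,u),
\end{equation*}
so it suffices to prove $|\tilde{\eta}_u(\rho,u)|\lesssim (\log\rho/\rho_*)^{-1}(|u|+1)\rho$ and $|\tilde{\eta}_{uu}(\rho,u)|\lesssim (\log\rho/\rho_*)^{-1}\rho$ for $\rho\geq\rho_*$ (up to the conventional $1+\log$ in the denominator).

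Next, starting from the explicit representation \eqref{eq:expand eta error}, I would differentiate under the integral sign. The boundary terms vanish because of the quadratic factor $(u-s)^2$, which leaves
\begin{equation*}
\tilde{\eta}_u(\rho,u)=\int_{-\infty}^{u}\tilde{\chi}(\rho,s)(u-s)\,ds-\int_{u}^{\infty}\tilde{\chi}(\rho,s)(u-s)\,ds,
\end{equation*}
and
\begin{equation*}
\tilde{\eta}_{uu}(\rho,u)=\int_{-\infty}^{u}\tilde{\chi}(\rho,s)\,ds-\int_{u}^{\infty}\tilde{\chi}(\rho,s)\,ds.
\end{equation*}
By Remark \ref{remark:supports}, $\supp\tilde{\chi}(\rho,\cdot)\subset[-K(\rho),K(\rho)]$ with $K(\rho):=\max\{k(\rho),k_*(\rho)\}$, and by Lemma \ref{lemma:bound on max k and k star} we have $K(\rho)\leq Mk(\rho)$. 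Since $k_*(\rho)=\sqrt{\kappa_2}\log(\rho/\rho_*)+k(\rho_*)$ and Lemma \ref{lemma:p' below} gives $|k(\rho)-k_*(\rho)|\lesssim \rho_*^{-\alpha}$, we conclude that $K(\rho)\leq M\bigl(1+\log(\rho/\rho_*)\bigr)$.

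Combining the support restriction with the pointwise bound $\|\tilde{\chi}(\rho,\cdot)\|_{L^\infty(\mathbb{R})}\leq M\rho^{1-\alpha\tilde{\lambda}}$ from Lemma \ref{lemma:error bound abs}, a direct estimate yields
\begin{equation*}
|\tilde{\eta}_u(\rho,u)|\leq 2K(\rho)\bigl(|u|+K(\rho)\bigr)\|\tilde{\chi}(\rho,\cdot)\|_{L^\infty(\mathbb{R})}\leq M\bigl(1+\log(\rho/\rho_*)\bigr)^{2}\rho^{1-\alpha\tilde{\lambda}}(|u|+1),
\end{equation*}
and similarly $|\tilde{\eta}_{uu}(\rho,u)|\leq 2K(\rho)\|\tilde{\chi}(\rho,\cdot)\|_{L^\infty(\mathbb{R})}\leq M\bigl(1+\log(\rho/\rho_*)\bigr)\rho^{1-\alpha\tilde{\lambda}}$. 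Dividing by $\rho$ and absorbing the logarithmic factors into the polynomial decay $\rho^{-\alpha\tilde{\lambda}}$ then delivers the two claimed bounds: the function $(\log\rho)^{3}\rho^{-\alpha\tilde{\lambda}}$ is uniformly bounded on $[\rho_*,\infty)$, and in fact decays like a negative power of $\rho$, so it is dominated by $M/(1+\log(\rho/\rho_*))$ once $M$ is chosen large enough (separating the ranges $\rho\in[\rho_*,2\rho_*]$, where both sides are bounded below by a positive constant, and $\rho\geq 2\rho_*$, where the polynomial decay trivially beats the logarithmic one).

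The only mildly delicate point is organising the $u$- and $\rho$-dependences consistently so that the final constants depend only on the admissible parameters; I do not expect a genuine obstacle, since every ingredient is already available from Lemma \ref{lemma:error bound abs}, Remark \ref{remark:supports}, Lemma \ref{lemma:bound on max k and k star}, and Lemma \ref{lemma:p' below}, and the needed decay $\rho^{-\alpha\tilde{\lambda}}(\log\rho)^{2}$ is strictly stronger than the claimed $1/\log(\rho/\rho_*)$ decay.
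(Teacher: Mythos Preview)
Your proposal is correct and follows exactly the route the paper indicates: after Lemma~\ref{lemma:tildeeta} the paper simply says ``Arguing similarly for the derivatives of $\tilde\eta$, we also obtain the following lemma,'' meaning one differentiates the representation \eqref{eq:expand eta error}, uses the support bound from Remark~\ref{remark:supports} together with Lemma~\ref{lemma:bound on max k and k star}, and the $L^\infty$ estimate of Lemma~\ref{lemma:error bound abs}, then absorbs the logarithmic factors into the power decay $\rho^{-\alpha\tilde{\lambda}}$. Your chain-rule identities $\tilde{\eta}_m=\rho^{-1}\tilde{\eta}_u$, $\tilde{\eta}_{mu}=\rho\tilde{\eta}_{mm}=\rho^{-1}\tilde{\eta}_{uu}$ and the final absorption argument are all in order.
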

The last estimate that we need for $\tilde{\eta}$ is that for $\tilde{\eta}_{m \rho}$. To prove this, we first show an improved estimate on $\tilde{\eta}_m$. This is the subject of the next result, Lemma \ref{lemma:eta error m bound}. Convolving \eqref{eq:chi error wave eqn} with the special test function $\hat{\psi}=\frac{1}{2}s|s|$, we find that $\tilde{\eta}$ satisfies 
\begin{equation}\label{eq:eta special error wave eqn}
    \left\lbrace\begin{aligned}
        &\tilde{\eta}_{\rho\rho} - k'(\rho)^2\tilde{\eta}_{uu} = \big( k'(\rho)^2 - \frac{\kappa_2}{\rho^2} \big) \hat{\eta}^{*}_{uu} \qquad \text{for }(\rho,u)\in(\rho_*,\infty)\times\mathbb{R}, \\
        & \tilde{\eta}(\rho_*,u) = 0 , \\
        & \tilde{\eta}_{\rho}(\rho_*,u) = 0.
    \end{aligned}\right.
\end{equation}
Proceeding exactly as was done in the proof of Lemma \ref{lemma:chi rep form}, we obtain
\begin{equation}\label{eq:rep form h error from wave}
\begin{aligned}
    2(\rho_0-\rho_*) k'(\rho_0) & \tilde{\eta}(\rho_0,u_0) \\=&\int_{\rho_*}^{\rho_0} d(\rho) k'(\rho)  \big( \tilde{\eta}(\rho,u_0+k(\rho_0)-k(\rho)) + \tilde{\eta}(\rho,u_0-k(\rho_0)+k(\rho)) \big) \, d\rho \\
+&\int_{\rho_*}^{\rho_0}(\rho-\rho_*)\big(k'(\rho)^2-\frac{\kappa_2}{\rho^2}\big)   \hat{\eta}^{*}_u(\rho,u_0+k(\rho_0)-k(\rho)) \, d\rho  \\
-&\int_{\rho_*}^{\rho_0}(\rho-\rho_*)\big(k'(\rho)^2-\frac{\kappa_2}{\rho^2}\big)  \hat{\eta}^{*}_u(\rho,u_0-k(\rho_0)+k(\rho))   \, d\rho.
\end{aligned}
\end{equation}
Thus, noting that $\hat{\eta}^*_u=\rho\hat{\eta}^*_m$ and taking a derivative with respect to $u_0$,
\begin{equation}\label{eq:eta hat error m deriv rep formula}
\begin{aligned}
    2(\rho_0-\rho_*) & \rho_0 k'(\rho_0)\tilde{\eta}_m(\rho_0,u_0) \\=&\int_{\rho_*}^{\rho_0} d(\rho) \rho k'(\rho)  \big( \tilde{\eta}_m(\rho,u_0+k(\rho_0)-k(\rho)) + \tilde{\eta}_m(\rho,u_0-k(\rho_0)+k(\rho)) \big) \, d\rho \\
+&\int_{\rho_*}^{\rho_0}(\rho-\rho_*) \rho\big(k'(\rho)^2-\frac{\kappa_2}{\rho^2}\big)  \hat{\eta}^{*}_{mu}(\rho,u_0+k(\rho_0)-k(\rho))   \, d\rho   \\
-&\int_{\rho_*}^{\rho_0}(\rho-\rho_*) \rho\big(k'(\rho)^2-\frac{\kappa_2}{\rho^2}\big)  \hat{\eta}^{*}_{mu}(\rho,u_0-k(\rho_0)+k(\rho))   \, d\rho.
\end{aligned}
\end{equation}

\begin{lemma}\label{lemma:eta error m bound}
    There exists a positive constant $M$ such that
    \begin{equation}
        \Vert \tilde{\eta}_m(\rho,\cdot) \Vert_{L^\infty(\mathbb{R})} \leq M \rho^{-\alpha} \qquad \text{for } \rho \geq \rho_*.
    \end{equation}
\end{lemma}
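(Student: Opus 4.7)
The plan is to apply the representation formula \eqref{eq:eta hat error m deriv rep formula} and derive an integral inequality for $\|\tilde\eta_m(\rho_0,\cdot)\|_{L^\infty(\R)}$, which we close by a Gr\"{o}nwall-type argument exactly as in the proof of Lemma \ref{lemma:error bound abs}. The right-hand side of \eqref{eq:eta hat error m deriv rep formula} splits into an iteration term (involving $\tilde\eta_m$ at characteristic-shifted arguments) and a source term (involving $\hat\eta^{*}_{mu}$). Both terms must be shown, after division by $2(\rho_0-\rho_*)\rho_0 k'(\rho_0)$, to combine into an inequality of the Gr\"{o}nwall form with $M\rho_0^{-\alpha}$ forcing.

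For the source term, I would invoke Corollary \ref{corollary:k' and k'' compare with rho powers}, which gives $|k'(\rho)^2 - \kappa_2/\rho^2| \leq M\rho^{-\alpha-2}$, together with the uniform bound on $|\hat\eta^{*}_{mu}|$ provided by \cite[Lemma~3.5]{SS1} (up to possible logarithmic factors). Using also $\rho k'(\rho)$ bounded above and below by positive constants (Lemma \ref{lemma:p' below}), the source integral is controlled by $M\int_{\rho_*}^{\rho_0} \rho^{-\alpha}\,d\rho \leq M\rho_0^{1-\alpha}/(1-\alpha)$. After dividing by $(\rho_0-\rho_*)\rho_0 k'(\rho_0)$ and noting that $\rho_0/(\rho_0-\rho_*) \leq 2$ for $\rho_0 \geq 2\rho_*$, the source contribution to $\|\tilde\eta_m(\rho_0,\cdot)\|_{L^\infty}$ is at most $M\rho_0^{-\alpha}$. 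The remaining regime $\rho_* \leq \rho_0 \leq 2\rho_*$ is handled separately via the boundary condition $\tilde\eta_m(\rho_*,\cdot) = 0$ (a consequence of $\tilde\eta(\rho_*,\cdot)=0$) and continuity, so the bound $M\rho_0^{-\alpha}$ can be enforced by enlarging $M$.

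For the iteration term, using $d(\rho) \leq 3$ from Lemma \ref{lemma:d and d star} and the $\rho k'(\rho)\sim 1$ estimate, we obtain
\begin{equation*}
\frac{1}{2(\rho_0-\rho_*)\rho_0 k'(\rho_0)}\int_{\rho_*}^{\rho_0} d(\rho)\rho k'(\rho)\big[|\tilde\eta_m(\rho,+)| + |\tilde\eta_m(\rho,-)|\big]\,d\rho \leq \frac{C}{\rho_0-\rho_*}\int_{\rho_*}^{\rho_0}\|\tilde\eta_m(\rho,\cdot)\|_{L^\infty(\R)}\,d\rho.
\end{equation*}
Combining with Step~1 yields
\begin{equation*}
\|\tilde\eta_m(\rho_0,\cdot)\|_{L^\infty(\R)} \leq M\rho_0^{-\alpha} + \frac{1}{\rho_0-\rho_*}\int_{\rho_*}^{\rho_0} d(\rho)\|\tilde\eta_m(\rho,\cdot)\|_{L^\infty(\R)}\,d\rho,
\end{equation*}
which is precisely the form of inequality appearing in \eqref{eq:pre gronwall}. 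Applying the Gr\"{o}nwall lemma as in Lemma \ref{lemma:error bound abs}, the resulting exponential factor is bounded by $e^3$ since $d(\rho)\leq 3$, and we conclude $\|\tilde\eta_m(\rho_0,\cdot)\|_{L^\infty(\R)} \leq M\rho_0^{-\alpha}$.

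The main obstacle is to verify that the Gr\"{o}nwall-type argument preserves the decay rate $\rho_0^{-\alpha}$ of the source term rather than yielding only a uniform bound; this requires faithfully following the specific Gr\"{o}nwall procedure of Lemma \ref{lemma:error bound abs}, which exploits the averaged structure $\tfrac{1}{\rho_0-\rho_*}\int_{\rho_*}^{\rho_0}d(\rho)\,d\rho \leq 3$ to produce only a bounded multiplicative constant. A secondary issue is to confirm that the bound $|\hat{\eta}^{*}_{mu}| \leq M$ used in Step 1 is indeed uniform (the log-factor version $M/(1+\sqrt{\log(\rho/\rho_*)})$ from \cite[Lemma~3.5]{SS1} is also sufficient and only improves matters).
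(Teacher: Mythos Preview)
Your proposal is correct and follows essentially the same approach as the paper. The only notable difference is that the paper runs the Gr\"{o}nwall argument on the quantity $\|\rho k'(\rho)\tilde{\eta}_m(\rho,\cdot)\|_{L^\infty(\R)}$ rather than $\|\tilde{\eta}_m(\rho,\cdot)\|_{L^\infty(\R)}$ directly; this choice makes the iteration kernel exactly $d(\rho)$ (not $C$ or $\sqrt{3}\,d(\rho)$), and the source estimate $\frac{M}{\rho_0-\rho_*}\int_{\rho_*}^{\rho_0}\rho^{-\alpha}\,d\rho\leq M\rho_0^{-\alpha}$ holds uniformly for all $\rho_0>\rho_*$, so no separate treatment of $\rho_0\in[\rho_*,2\rho_*]$ is needed.
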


\begin{proof}
Observe that, after dividing by $2(\rho_0-\rho_*)$, the first term on the right-hand side of \eqref{eq:eta hat error m deriv rep formula} is bounded by
\begin{equation*}
    \frac{1}{(\rho_0-\rho_*)}\int_{\rho_*}^{\rho_0} d(\rho) \Vert \rho k'(\rho) \tilde{\eta}_m(\rho,\cdot) \Vert_{L^\infty(\mathbb{R})} \, d\rho.
\end{equation*}
On the other hand, the sum of the second and third terms on the right-hand side of \eqref{eq:eta hat error m deriv rep formula} is bounded by
\begin{equation*}
\begin{aligned}
     \frac{M}{(\rho_0-\rho_*)}\int_{\rho_*}^{\rho_0}  \big| k'(\rho)^2 - \frac{\kappa_2}{\rho^2} \big| (\rho-\rho_*)\rho \, d\rho,
\end{aligned}
\end{equation*}
where we made use of the bound on $\hat{\eta}^{*}_{mu}$ provided by Proposition \ref{lemma:chi iso rescaled properties} (\textit{cf.}~\cite[Lemma 3.5]{SS1}). Thus, appealing to Corollary \ref{corollary:k' and k'' compare with rho powers} to control the $k'(\rho)^2$ term, the sum of the second and third terms on the right-hand side of \eqref{eq:eta hat error m deriv rep formula} is bounded by
\begin{equation*}
    \frac{M}{\rho_0-\rho_*}\int_{\rho_*}^{\rho_0}\rho^{-\alpha} \, d\rho \leq M\rho_0^{-\alpha},
\end{equation*}
provided $\alpha \in (0,1)$. In summary, we get 
\begin{equation*}
\begin{aligned}
    \Vert \rho_0 k'(\rho_0) \tilde{\eta}_m(\rho_0,\cdot) \Vert_{L^\infty(\mathbb{R})} \leq & M \rho_0^{-\alpha} +  \frac{1}{(\rho_0-\rho_*)}\int_{\rho_*}^{\rho_0} d(\rho) \Vert \rho k'(\rho) \tilde{\eta}_m(\rho,\cdot) \Vert_{L^\infty(\mathbb{R})} \, d\rho,
    \end{aligned}
\end{equation*}
from which an application of Gr\"{o}nwall's lemma yields
\begin{equation*}
    \Vert \rho_0 k'(\rho_0) \tilde{\eta}_m(\rho_0,\cdot) \Vert_{L^\infty(\mathbb{R})} \leq M \rho_0^{-\alpha} \exp\left( \frac{1}{\rho_0-\rho_*}\int_{\rho_*}^{\rho_0} d(\rho) \, d\rho \right).
\end{equation*}
Using the bound on $d(\rho)$ provided by Lemma \ref{lemma:d and d star} concludes the proof.
\end{proof}

Armed with the previous result, we are in a position to prove the required estimate on the mixed derivative $\tilde{\eta}_{m \rho}$, contained in the next lemma.
\begin{lemma}\label{lemma:eta hat error m rho ch2}
    There exists a positive constant $M$ such that
    \begin{equation}
        \Vert \tilde{\eta}_{m \rho}(\rho,\cdot) \Vert_{L^\infty(\mathbb{R})} \leq M k(\rho) \rho^{-1-\alpha \tilde{\lambda}} \qquad \text{for } \rho \geq \rho_*.
    \end{equation}
\end{lemma}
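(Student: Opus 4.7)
The plan is to differentiate the representation formula \eqref{eq:eta hat error m deriv rep formula} for $\tilde{\eta}_m$ with respect to $\rho_0$ and isolate $\tilde{\eta}_{m\rho}$. Writing $B(\rho_0) := 2(\rho_0-\rho_*)\rho_0 k'(\rho_0)$ and $u_\pm := u_0 \pm (k(\rho_0)-k(\rho))$, the $\rho_0$-derivative of the right-hand side of \eqref{eq:eta hat error m deriv rep formula} produces a boundary term at $\rho=\rho_0$ from the $\tilde{\eta}_m$-integral (equal to $2d(\rho_0)\rho_0 k'(\rho_0)\tilde{\eta}_m(\rho_0,u_0)$), a vanishing boundary term from the forcing integral (since its integrand is zero at $\rho=\rho_0$), and interior contributions in which the chain rule produces factors of $\pm k'(\rho_0)$ acting on $u_\pm$. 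Using $\hat{\eta}^*_{muu} = (2/\rho)\chi^*$ from Lemma \ref{lemma:three derivs eta hat and eta iso hat} together with the identity $2d(\rho_0)\rho_0 k'(\rho_0) - B'(\rho_0) = 2\rho_* k'(\rho_0)$ (a direct calculation from the definitions of $d$ and $B$), I would arrive at
\begin{equation*}
\tilde{\eta}_{m\rho}(\rho_0,u_0) = \frac{\rho_*\,\tilde{\eta}_m(\rho_0,u_0)}{(\rho_0-\rho_*)\rho_0} + \frac{J_1}{2(\rho_0-\rho_*)\rho_0} + \frac{J_2}{(\rho_0-\rho_*)\rho_0},
\end{equation*}
where $J_1 := \int_{\rho_*}^{\rho_0} d(\rho)\rho k'(\rho)[\tilde{\eta}_{mu}(\rho,u_+) - \tilde{\eta}_{mu}(\rho,u_-)]\,d\rho$ and $J_2 := \int_{\rho_*}^{\rho_0}(\rho-\rho_*)(k'(\rho)^2 - \kappa_2/\rho^2)[\chi^*(\rho,u_+) + \chi^*(\rho,u_-)]\,d\rho$.

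The first term is controlled by Lemma \ref{lemma:eta error m bound}, which gives $|\tilde{\eta}_m(\rho_0,u_0)| \leq M\rho_0^{-\alpha}$ and hence a contribution of size $O(\rho_0^{-2-\alpha})$. For $J_2$ I would use $\|\chi^*(\rho,\cdot)\|_{L^\infty(\mathbb{R})} \leq M\rho/\sqrt{k(\rho)}$ from Proposition \ref{lemma:chi iso rescaled properties} and $|k'(\rho)^2 - \kappa_2/\rho^2| \leq M\rho^{-2-\alpha}$ from Corollary \ref{corollary:k' and k'' compare with rho powers}, yielding $|J_2| \leq M\int_{\rho_*}^{\rho_0}\rho^{-\alpha}\,d\rho \leq M\rho_0^{1-\alpha}$ and a contribution of size $O(\rho_0^{-1-\alpha})$. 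Since $\tilde{\lambda}\leq 1$ and $k$ is bounded below by $k(\rho_*)>0$, both of these are dominated by the target $Mk(\rho_0)\rho_0^{-1-\alpha\tilde{\lambda}}$.

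The crux of the proof is the estimate on $J_1$, and it is here that the factor $k(\rho_0)$ naturally arises. The key idea is to control the integrand not by a pointwise bound on $\tilde{\eta}_{mu}$ (using Lemma \ref{lemma:eta hat error u derivs} directly would give only $O(1/\log\rho)$ and leads to a weaker final bound) but by a Lipschitz estimate in the second variable. By Lemma \ref{lemma:three derivs eta hat and eta iso hat}, $\tilde{\eta}_{muu}(\rho,u) = (2/\rho)\tilde{\chi}(\rho,u)$, and Lemma \ref{lemma:error bound abs} then furnishes $\|\tilde{\eta}_{muu}(\rho,\cdot)\|_{L^\infty(\mathbb{R})} \leq 2M\rho^{-\alpha\tilde{\lambda}}$. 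Combining this with $|u_+ - u_-| = 2(k(\rho_0)-k(\rho)) \leq 2k(\rho_0)$ via the mean value theorem yields
\begin{equation*}
|\tilde{\eta}_{mu}(\rho,u_+) - \tilde{\eta}_{mu}(\rho,u_-)| \leq 4Mk(\rho_0)\rho^{-\alpha\tilde{\lambda}}.
\end{equation*}
Since $\alpha\tilde{\lambda}<1$, integration gives $|J_1| \leq Mk(\rho_0)\rho_0^{1-\alpha\tilde{\lambda}}$, and dividing by $2(\rho_0-\rho_*)\rho_0 \sim 2\rho_0^2$ delivers exactly $Mk(\rho_0)\rho_0^{-1-\alpha\tilde{\lambda}}$, matching the claimed bound.

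The main obstacle is therefore conceptual rather than computational: one must resist using an in-hand pointwise bound on $\tilde{\eta}_{mu}$ in $J_1$, and instead exploit the smallness of $u_+-u_-$ via the Lipschitz estimate, letting the $L^\infty$ control on $\tilde{\chi}$ from Lemma \ref{lemma:error bound abs} do the essential work through $\tilde{\eta}_{muu} = (2/\rho)\tilde{\chi}$. The case $\rho_0 \in [\rho_*,2\rho_*]$ follows from continuity together with $\tilde{\eta}_{m\rho}(\rho_*,u) \equiv 0$, which is a consequence of $\tilde{\eta}_\rho(\rho_*,\cdot)\equiv 0$ and $k$ being bounded on this compact interval.
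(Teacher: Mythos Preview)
Your proposal is correct and follows the same overall framework as the paper: differentiate the representation formula \eqref{eq:eta hat error m deriv rep formula} in $\rho_0$, producing exactly the identity \eqref{eq:hat eta m rho rep}, and then estimate each resulting term. The paper's own proof is very terse---it just writes down \eqref{eq:hat eta m rho rep}, cites Lemmas \ref{lemma:eta hat error u derivs} and \ref{lemma:eta error m bound}, invokes a ``Gr\"onwall argument'' by analogy with Lemma \ref{lemma:eta error m bound}, and defers the remaining details to \cite[Chapter 3]{thesis}.

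Where your write-up genuinely adds something is in the treatment of $J_1$. You are right that applying the pointwise bound $|\tilde\eta_{mu}|\leq M/(1+\log(\rho/\rho_*))$ from Lemma \ref{lemma:eta hat error u derivs} directly would \emph{not} give the stated power $\rho^{-1-\alpha\tilde\lambda}$; instead you exploit the difference structure $\tilde\eta_{mu}(\rho,u_+)-\tilde\eta_{mu}(\rho,u_-)$ via the mean value theorem together with $\tilde\eta_{muu}=(2/\rho)\tilde\chi$ (Lemma \ref{lemma:three derivs eta hat and eta iso hat}) and the bound $\|\tilde\chi(\rho,\cdot)\|_{L^\infty}\leq M\rho^{1-\alpha\tilde\lambda}$ (Lemma \ref{lemma:error bound abs}). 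This is the genuine mechanism behind the factor $k(\rho_0)\rho_0^{-1-\alpha\tilde\lambda}$, and the paper's brief sketch leaves this step implicit.

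One small point: your handling of $\rho_0\in[\rho_*,2\rho_*]$ by ``continuity together with $\tilde\eta_{m\rho}(\rho_*,\cdot)\equiv 0$'' is a little loose, since what you need is a bound \emph{uniform in $u$}, not just continuity in $\rho_0$ at each fixed $u$. This is easily repaired---for instance by noting that $\tilde\eta_{m\rho}=-\rho^{-2}\tilde\eta_u+\rho^{-1}\tilde\eta_{u\rho}$ and that both $\|\tilde\eta_u(\rho,\cdot)\|_{L^\infty}$ and $\|\tilde\eta_{u\rho}(\rho,\cdot)\|_{L^\infty}$ are bounded on $[\rho_*,2\rho_*]$ (the first from Lemma \ref{lemma:eta hat error u derivs}, the second by integrating the $u$-derivative of the wave equation \eqref{eq:eta special error wave eqn} from $\rho_*$ and using Lemma \ref{lemma:three derivs eta hat and eta iso hat}). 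The paper is equally brief here.
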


\begin{proof}
We begin by differentiating \eqref{eq:eta hat error m deriv rep formula} with respect to $\rho_0$. We get
\begin{equation}\label{eq:hat eta m rho rep}
\begin{aligned}
    2 (\rho_0&-\rho_* ) \rho_0 k'(\rho_0)\tilde{\eta}_{m \rho}(\rho_0,u_0) + \partial_{\rho_0}\big(2(\rho_0-\rho_*) \rho_0 k'(\rho_0)\big)\tilde{\eta}_m(\rho_0,u_0) \\
   = \,& k'(\rho_0)\int_{\rho_*}^{\rho_0} d(\rho) \rho k'(\rho)  \big( \tilde{\eta}_{mu}(\rho,u_0+k(\rho_0)-k(\rho)) - \tilde{\eta}_{mu}(\rho,u_0-k(\rho_0)+k(\rho)) \big) \, d\rho \\
    +&\, 2 d(\rho_0) \rho_0 k'(\rho_0)  \tilde{\eta}_m(\rho_0,u_0) \\
+& \,k'(\rho_0)\int_{\rho_*}^{\rho_0}(\rho-\rho_*) \rho\big(k'(\rho)^2-\frac{\kappa_2}{\rho^2}\big)  \hat{\eta}^{*}_{muu}(\rho,u_0+k(\rho_0)-k(\rho))  \,d\rho   \\
+&\, k'(\rho_0)\int_{\rho_*}^{\rho_0}(\rho-\rho_*) \rho\big(k'(\rho)^2-\frac{\kappa_2}{\rho^2}\big)   \hat{\eta}^{*}_{muu}(\rho,u_0-k(\rho_0)+k(\rho)) \,d\rho.
\end{aligned}
\end{equation}
Using the bounds of Lemmas \ref{lemma:eta hat error u derivs} and \ref{lemma:eta error m bound}, we may apply a Gr\"{o}nwall argument, similarly to the proof of Lemma \ref{lemma:eta error m bound} in order to conclude the desired result. Note that the second term on the left-hand side of \eqref{eq:hat eta m rho rep} is controlled by expanding the derivative term and using the bounds provided by Lemmas \ref{lemma:p' below} and \ref{lemma:eta error m bound}, and Corollary \ref{corollary:k' and k'' compare with rho powers}. Further details may be found in \cite[Chapter 3]{thesis}.
\end{proof}

We conclude this subsection with a stronger estimate on the entropy error, $\tilde{\eta}$.
\begin{cor}\label{corollary:better hat eta error}
There exists a positive constant $M$ such that
\begin{equation}
    |\tilde{\eta}(\rho,u)| \leq M\rho |u| \qquad \text{for } (\rho,u)\in[\rho_*,\infty)\times\mathbb{R}.
\end{equation}
\end{cor}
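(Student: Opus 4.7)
The plan is to improve on the crude bound of Lemma \ref{lemma:tildeeta} by combining a symmetry observation with the sharpened pointwise estimate on $\tilde{\eta}_m$ already proved in Lemma \ref{lemma:eta error m bound}. Recall Lemma \ref{lemma:tildeeta} only yields $|\tilde{\eta}(\rho,u)|\leq M\eta^*(\rho,u)$, which does not vanish at $u=0$; the present corollary claims a bound that does. The key is therefore to show that $\tilde{\eta}(\rho,0)=0$ and then integrate the sharper derivative estimate.

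First, I would establish that $\tilde{\chi}(\rho,\cdot)$ is an even function of its second variable. This follows because both $\chi(\rho,\cdot)$ and $\chi^{*}(\rho,\cdot)$ are even: the initial-value problem \eqref{eq:ent kernel} (with the Galilean reduction $s=0$) is invariant under $u\mapsto -u$, so $\chi(\rho,u)=\chi(\rho,-u)$, and then the evenness propagates to $\chi^{*}$ via Definition \ref{def:chi iso def} because the Cauchy data $\chi(\rho_*,\cdot)$, $\chi_\rho(\rho_*,\cdot)$ are even and \eqref{eq:g kernels} is again $u\to -u$ symmetric. Consequently, evaluating $\tilde{\eta}$ at $u=0$ gives
\begin{equation*}
\tilde{\eta}(\rho,0)=\tfrac{1}{2}\int_{\mathbb{R}}\tilde{\chi}(\rho,-s)\,s|s|\,ds=\tfrac{1}{2}\int_{\mathbb{R}}\tilde{\chi}(\rho,s)\,s|s|\,ds,
\end{equation*}
and the substitution $s\mapsto -s$ simultaneously gives the same integral with the opposite sign, so $\tilde{\eta}(\rho,0)=0$.

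Second, by the fundamental theorem of calculus and the convention of Subsection \ref{subsection:generating entropies} (working in $(\rho,u)$ coordinates so that $\partial_u=\rho\,\partial_m$ when $m=\rho u$), I would write, for any $\rho\geq\rho_*$ and $u\in\mathbb{R}$,
\begin{equation*}
\tilde{\eta}(\rho,u)=\int_0^u \partial_v\tilde{\eta}(\rho,v)\,dv=\rho\int_0^u \tilde{\eta}_m(\rho,v)\,dv.
\end{equation*}
Lemma \ref{lemma:eta error m bound} yields $\Vert\tilde{\eta}_m(\rho,\cdot)\Vert_{L^\infty(\mathbb{R})}\leq M\rho^{-\alpha}$, so the integrand is bounded pointwise by $M\rho^{-\alpha}$.

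Third, combining the two previous steps immediately gives $|\tilde{\eta}(\rho,u)|\leq M\rho^{1-\alpha}|u|\leq M\rho|u|$ for all $\rho\geq\rho_*$, since $\alpha>0$, which proves the corollary (and in fact gives the slightly stronger bound $M\rho^{1-\alpha}|u|$). I do not anticipate a genuine obstacle: once the symmetry argument fixing $\tilde{\eta}(\rho,0)=0$ is recognised, the rest is a one-line consequence of Lemma \ref{lemma:eta error m bound}. The only point requiring mild care is the bookkeeping between the $(\rho,u)$ and $(\rho,m)$ conventions when converting $\partial_u$ to $\rho\,\partial_m$.
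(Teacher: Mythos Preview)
Your proposal is correct and in fact slightly more efficient than the paper's own argument. Both proofs begin identically, using the evenness of $\tilde{\chi}(\rho,\cdot)$ to conclude $\tilde{\eta}(\rho,0)=0$, and both finish by integrating a bound on $\tilde{\eta}_u=\rho\,\tilde{\eta}_m$ from $u=0$. The difference lies in how the bound on $\tilde{\eta}_m$ is obtained: the paper takes a detour through Lemma~\ref{lemma:eta hat error m rho ch2}, writing $\tilde{\eta}_m(\rho,u)=\int_{\rho_*}^{\rho}\tilde{\eta}_{m\rho}(y,u)\,dy$ (using $\tilde{\eta}_m(\rho_*,\cdot)=0$) and then integrating the estimate $|\tilde{\eta}_{m\rho}|\leq M k(\rho)\rho^{-1-\alpha\tilde{\lambda}}$ in $\rho$ to arrive at $|\tilde{\eta}_m|\leq M$. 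You bypass this entirely by invoking Lemma~\ref{lemma:eta error m bound} directly, which already gives $\|\tilde{\eta}_m(\rho,\cdot)\|_{L^\infty}\leq M\rho^{-\alpha}$. Your route is shorter, avoids the need for Lemma~\ref{lemma:eta hat error m rho ch2} at this stage, and yields the marginally sharper bound $|\tilde{\eta}(\rho,u)|\leq M\rho^{1-\alpha}|u|$, exactly as you note. The paper's approach is not wrong, merely redundant here; Lemma~\ref{lemma:eta hat error m rho ch2} is genuinely needed later for the estimate on $\hat{\eta}_{m\rho}$ in Lemma~\ref{lemma:pointwise hat chapter 2}, but not for this corollary.
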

\begin{proof}
Firstly, recall from the definition of $\tilde{\eta}$ that
\begin{equation}\label{eq:hat eta error is 0 at u is 0}
    \tilde{\eta}(\rho,0) = \frac{1}{2}\int_\mathbb{R} \tilde{\chi}(\rho,0-s)s|s| \, ds = 0 \qquad \text{for }\rho\geq \rho_*,
\end{equation}
since the integrand is odd, as $\tilde{\chi}(\rho,\cdot)$ is even (\textit{cf.}~\cite[Lemma 2.7]{thesis}). We also have 
\begin{equation}\label{eq:hat eta error m rho star is 0}
    \tilde{\eta}_m(\rho_*,u) = \rho_*^{-1}\frac{\partial}{\partial u} \tilde{\eta}(\rho_*,u) = 0 \qquad \text{for all } u \in \mathbb{R}.
\end{equation}
Additionally, since $k(\rho) \leq M( 1 + \log(\rho/\rho_*))$ by Corollary \ref{corollary:k sim log}, there exists $M>0$ such that
\begin{equation*}
    \rho^{-\alpha \tilde{\lambda}/2}k(\rho) \leq M \qquad \text{for } \rho \geq \rho_*.
\end{equation*}
The fundamental theorem of calculus and \eqref{eq:hat eta error m rho star is 0} show that
\begin{equation*}
    \tilde{\eta}_m(\rho,u) - \tilde{\eta}_m(\rho_*,u) = \int_{\rho_*}^\rho \frac{\partial}{\partial \rho}\bigg|_u \tilde{\eta}_m(y,u) \, dy = \int_{\rho_*}^\rho \tilde{\eta}_{m \rho}(y,u) \, dy,
\end{equation*}
where the final integrand is the mixed derivative $\partial_\rho \hat{\eta}_m(\rho, \rho u)$. Therefore, integrating in $\rho$ yields
\begin{equation*}
\begin{aligned}
       |\rho^{-1}\tilde{\eta}_u(\rho,u)|= |\tilde{\eta}_{m}(\rho,u)| \leq \int^\rho_{\rho_*} |\tilde{\eta}_{m\rho}(y,u)| \, dy  &\leq M\int_{\rho_*}^\rho y^{-1-\alpha \tilde{\lambda}} k(y) \, dy, \\
       &\leq M\int_{\rho_*}^\rho y^{-1-\alpha \tilde{\lambda}/2} \, dy,
\end{aligned}
\end{equation*}
 using Lemma \ref{lemma:eta hat error m rho ch2},  so that $\tilde{\eta}_u(\rho,u) \leq M\rho$ for all $\rho\geq\rho_*$. Integrating the above in $u$, using the fact that  $\tilde{\eta}(\rho,0)=0$ from \eqref{eq:hat eta error is 0 at u is 0}, yields the result.
\end{proof}

\subsubsection{Lower bound for the special entropy flux}\label{subsubsection:lower bound hat q}
The final estimate of Lemma \ref{lemma:pointwise hat chapter 2} that we need to prove is \eqref{ineq:qhatlower}. This is the purpose of this subsection. We recall from \eqref{eq:qhat} that
$$\hat{q}^*(\rho,u)=u\hat{\eta}^*(\rho,u)+\hat{H}^*(\rho,u)$$
and begin this subsection by collecting some estimates on 
 $\hat{H}^{*}$.
\begin{lemma}\label{lemma:H iso C3}
The function $\hat{H}^{*}$ is three times continuously differentiable in its second variable, i.e., $\hat{H}^{*}(\rho,\cdot) \in C^3(\mathbb{R})$. In fact,
\begin{equation}
    \hat{H}^{*}_{uuu}(\rho,u) = 2h^{*}(\rho,u) \qquad \text{for } (\rho,u) \in [\rho_*,\infty)\times\mathbb{R}.
\end{equation}
In addition, there exists $M>0$ depending on $\rho_*$ such that
\begin{equation}\label{eq:blunt H hat iso uu}
\begin{aligned}
     \Vert \hat{H}^{*}_{uu}(\rho,\cdot)\Vert_{L^\infty(\mathbb{R})}+ k(\rho)\Vert \hat{H}^{*}_{uuu}(\rho,\cdot)\Vert_{L^\infty(\mathbb{R})} \leq M{\rho}\sqrt{k(\rho)}\qquad \text{for } \rho \geq \rho_*.
\end{aligned}
\end{equation}
\end{lemma}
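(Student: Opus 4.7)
The plan is to mimic the argument used for Lemma \ref{lemma:three derivs eta hat and eta iso hat} and derive explicit expressions for the $u$-derivatives of $\hat{H}^*$ by splitting at the singularity of $\hat{\psi}$. Writing $\hat{\psi}(s)=\frac{1}{2}s|s|$ and performing the change of variables $v=u-s$ in the defining convolution yields
\begin{equation*}
    \hat{H}^*(\rho,u) = \frac{1}{2}\int_{-\infty}^{u} h^*(\rho,v)(u-v)^2\,dv - \frac{1}{2}\int_{u}^{\infty} h^*(\rho,v)(u-v)^2\,dv.
\end{equation*}
Since $h^*(\rho,\cdot)$ is H\"older continuous (by \eqref{eq:wave eqn h iso} and the analysis of \cite{SS1}) and compactly supported in $[-k_*(\rho),k_*(\rho)]$, differentiation under the integral is legitimate.

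Differentiating once, the boundary contributions at $v=u$ vanish because of the factor $(u-v)^2$, producing $\hat{H}^*_u(\rho,u)=\int_{-\infty}^u h^*(\rho,v)(u-v)\,dv-\int_u^\infty h^*(\rho,v)(u-v)\,dv$. Differentiating a second time, the boundary terms again vanish due to the factor $(u-v)$, leaving $\hat{H}^*_{uu}(\rho,u)=\int_{-\infty}^u h^*(\rho,v)\,dv-\int_u^\infty h^*(\rho,v)\,dv$. A third differentiation now picks up the boundary terms $+h^*(\rho,u)$ from each integral, yielding the claimed identity $\hat{H}^*_{uuu}(\rho,u)=2h^*(\rho,u)$; continuity of this derivative in $u$ follows from the H\"older continuity of $h^*(\rho,\cdot)$.

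For the quantitative bound, we use that $\supp h^*(\rho,\cdot)\subset[-k_*(\rho),k_*(\rho)]$ together with Lemma \ref{lemma:bound on max k and k star}, which gives $k_*(\rho)\leq M k(\rho)$, and Lemma \ref{lemma:h iso bounded by rho}, which provides $\|h^*(\rho,\cdot)\|_{L^\infty(\mathbb{R})}\leq M\rho/\sqrt{k(\rho)}$. The expression for $\hat{H}^*_{uu}$ is bounded by $2k_*(\rho)\|h^*(\rho,\cdot)\|_{L^\infty}\leq M\rho\sqrt{k(\rho)}$, while the identity $\hat{H}^*_{uuu}=2h^*$ immediately gives $k(\rho)\|\hat{H}^*_{uuu}(\rho,\cdot)\|_{L^\infty}\leq M\rho\sqrt{k(\rho)}$; summing produces \eqref{eq:blunt H hat iso uu}.

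There is no real obstacle here beyond verifying that differentiation under the integral is permitted; this is already handled by the H\"older regularity of $h^*(\rho,\cdot)$ ensured in Section \ref{section:rep formula chi error} and by the compactness of its support, so the result reduces to the direct computation above.
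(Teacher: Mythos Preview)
Your proof is correct and follows essentially the same approach as the paper, which simply remarks that the argument is analogous to that of Lemma \ref{lemma:three derivs eta hat and eta iso hat}. Your explicit computation of the three $u$-derivatives via the splitting of $\hat\psi$ at its singularity, combined with the support and $L^\infty$ bounds from Lemmas \ref{lemma:h iso bounded by rho} and \ref{lemma:bound on max k and k star}, is exactly the intended direct calculation.
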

The proof of Lemma \ref{lemma:H iso C3} is similar to that of Lemma \ref{lemma:three derivs eta hat and eta iso hat}.

In order to bound $\tilde{q}(\rho,u)$, we require an estimate on $\tilde{H}(\rho,u)$. This is the main content of the following lemma.
\begin{lemma}\label{lemma:H error hat bound}
There exists $M>0$ such that, for all  $(\rho,u) \in [\rho_*,\infty) \times \mathbb{R}$,
\begin{equation}\label{ineq:tildeHmainest}
    |\tilde{H}(\rho,u)| \leq M\big( \rho|u| + \rho + \rho\sqrt{\log(\rho/\rho_*)} \big).
\end{equation}
In addition, for all $\rho\geq \rho_*$, we have the bounds
\beqa\label{ineq:tildeHderivs}
&\Vert \tilde{H}_u(\rho,\cdot)\Vert_{L^\infty(\mathbb{R})} \leq M k(\rho) \rho^{1-\alpha} , \\
  &  \Vert k'(\rho) \tilde{H}_{uu}(\rho,\cdot) \Vert_{L^\infty(\mathbb{R})} \leq M \rho^{-\alpha} .
\eeqa
\end{lemma}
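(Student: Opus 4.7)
The plan is to mirror the strategy used for the special entropy $\tilde{\eta}$ in Section \ref{subsubsection:eta hat u derivs}. First, convolving \eqref{eq:h error wave equation} against $\hat{\psi}(s)=\frac{1}{2}s|s|$ in the $u$ variable shows that $\tilde{H}$ is the unique solution of
\begin{equation*}
\left\lbrace\begin{aligned}
&\tilde{H}_{\rho\rho}-k'(\rho)^2\tilde{H}_{uu}=\Bigl(k'(\rho)^2-\tfrac{\kappa_2}{\rho^2}\Bigr)\hat{H}^{*}_{uu}(\rho,u)+\tfrac{p''(\rho)}{\rho}\hat{\eta}_u(\rho,u),\\
&\tilde{H}(\rho_*,u)=\tilde{H}_\rho(\rho_*,u)=0.
\end{aligned}\right.
\end{equation*}
I would then apply the Fourier-transform derivation of Lemma \ref{lemma:chi rep form}, exactly as for Lemma \ref{lemma:second rep formula eta error}, to obtain a representation of $\tilde{H}(\rho_0,u_0)$ as an integral of $\tilde{H}$ along the two backward characteristics $u_0\pm(k(\rho_0)-k(\rho))$ plus inhomogeneous source integrals involving $\hat{H}^{*}_u$ and $\hat{\eta}_u$ (the former arising after one integration by parts in $\rho$ absorbs a $u$-derivative from the $\hat{H}^{*}_{uu}$ source, exactly as in \eqref{eq:rep form eta error from wave}).

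I would first establish \eqref{ineq:tildeHderivs}. Differentiating the representation formula once and twice in $u_0$ produces analogous formulas for $\tilde{H}_u$ and $\tilde{H}_{uu}$, whose sources involve $\hat{H}^{*}_{uu}$, $\hat{H}^{*}_{uuu}=2h^{*}$ (by Lemma \ref{lemma:H iso C3}), $\hat{\eta}_{uu}$ and $\hat{\eta}_{uuu}=2\chi$ (by Lemma \ref{lemma:three derivs eta hat and eta iso hat}). Combining the pointwise bounds $|k'(\rho)^2-\kappa_2/\rho^2|\leq M\rho^{-\alpha-2}$ (Corollary \ref{corollary:k' and k'' compare with rho powers}), $|p''(\rho)/\rho|\leq M\rho^{-\alpha-2}$ (Lemma \ref{lemma:p' below}), $\Vert h^{*}(\rho,\cdot)\Vert_{L^\infty}\leq M\rho/\sqrt{k(\rho)}$ (Lemma \ref{lemma:h iso bounded by rho}), and $\Vert\chi(\rho,\cdot)\Vert_{L^\infty}\leq M\rho$ (Proposition \ref{lemma:chi iso rescaled properties} together with Lemma \ref{lemma:error bound abs}), a Gr\"onwall argument exactly as in the proof of Lemma \ref{lemma:eta error m bound}---applied first to $\Vert k'(\rho)\tilde{H}_{uu}(\rho,\cdot)\Vert_{L^\infty}$ and then to $\Vert k'(\rho)\rho^{-1}\tilde{H}_u(\rho,\cdot)\Vert_{L^\infty}$---would yield \eqref{ineq:tildeHderivs}. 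For \eqref{ineq:tildeHmainest}, I would estimate $\tilde{H}(\rho_0,0)$ directly from the representation formula at $u_0=0$, controlling $\hat{H}^{*}$ via $\Vert\hat{H}^{*}(\rho,\cdot)\Vert_{L^\infty}\leq M\rho\sqrt{k(\rho)}$ (Lemma \ref{lemma:H iso C3}) and $k(\rho)\leq M(1+\log(\rho/\rho_*))$ (Corollary \ref{corollary:k sim log}), to deduce $|\tilde{H}(\rho_0,0)|\leq M(\rho_0+\rho_0\sqrt{\log(\rho_0/\rho_*)})$. The general bound then follows by the fundamental theorem of calculus, writing $\tilde{H}(\rho,u)=\tilde{H}(\rho,0)+\int_0^u\tilde{H}_u(\rho,v)\,dv$ and using $k(\rho)\rho^{-\alpha}\leq M$ to convert the bound $|\tilde{H}_u|\leq Mk(\rho)\rho^{1-\alpha}$ into the desired $M\rho|u|$ contribution.

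The principal obstacle is that $\hat{\psi}$ is not compactly supported, so $\hat{H}^{*}$ and its lower derivatives $\hat{H}^{*}_u$, $\hat{H}^{*}_{uu}$ exhibit growth in $u$, and $\tilde{H}$ itself is not bounded uniformly in $u$; this prohibits running Gr\"onwall's inequality directly on $\Vert\tilde{H}(\rho,\cdot)\Vert_{L^\infty}$. The fix is to organise the Gr\"onwall argument around the highest $u$-derivative $\tilde{H}_{uu}$, whose source $\hat{H}^{*}_{uuu}=2h^{*}$ is compactly supported and pointwise controlled uniformly in $u$, and then descend via $u$-integration to $\tilde{H}_u$ and finally to $\tilde{H}$. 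A secondary technical nuisance is to track the $u$-growth of $\hat{H}^{*}_{uu}$ and $\hat{\eta}_u$ evaluated at the shifted characteristic arguments $u_0\pm(k(\rho_0)-k(\rho))$; this $u_0$-dependence, compounded with the decay factors $\rho^{-\alpha-2}$ from Corollary \ref{corollary:k' and k'' compare with rho powers} and Lemma \ref{lemma:p' below}, is precisely what produces the $\rho|u|$ term on the right-hand side of \eqref{ineq:tildeHmainest}.
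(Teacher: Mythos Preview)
Your treatment of the derivative estimates \eqref{ineq:tildeHderivs} is essentially the paper's: both derive the representation formula \eqref{eq:rep form h error} and run a Gr\"onwall argument, as in Lemma \ref{lemma:eta error m bound}.

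For the main estimate \eqref{ineq:tildeHmainest}, your route differs from the paper's. The paper does not return to the representation formula; instead it reads the equation directly as
\[
\tilde{H}_{\rho\rho}=k'(\rho)^2\tilde{H}_{uu}+\Bigl(k'(\rho)^2-\tfrac{\kappa_2}{\rho^2}\Bigr)\hat{H}^{*}_{uu}+\tfrac{p''(\rho)}{\rho}\hat{\eta}_u,
\]
bounds the right-hand side pointwise using the already-proved $\|k'(\rho)\tilde{H}_{uu}\|_{L^\infty}\leq M\rho^{-\alpha}$ together with \eqref{eq:blunt H hat iso uu} and $|\hat{\eta}_u|\leq M\rho(|u|+\sqrt{k(\rho)})$, and then integrates twice in $\rho$ from the zero data at $\rho_*$. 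The $\rho|u|$ contribution comes straight from the $|u|$ in $\hat{\eta}_u$.

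Your alternative---estimate $\tilde{H}(\rho_0,0)$ from the representation formula, then integrate in $u$---has a gap as written. First, unlike $\tilde{\eta}(\rho,0)$ in Corollary \ref{corollary:better hat eta error}, the value $\tilde{H}(\rho,0)$ does \emph{not} vanish by parity (since $\tilde{h}$ is odd and $\hat{\psi}$ is odd, the integrand is even), so $u$-integration from zero requires a genuine separate argument for $\tilde{H}(\rho,0)$. Second, extracting $\tilde{H}(\rho_0,0)$ from the representation formula at $u_0=0$ is not ``direct'': the right-hand side involves $\tilde{H}(\rho,\pm(k(\rho_0)-k(\rho)))$, so you would still need to feed back the $\tilde{H}_u$ bound to reduce these to $\tilde{H}(\rho,0)$ plus error and run yet another Gr\"onwall. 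Third, your citation of Lemma \ref{lemma:H iso C3} is inaccurate: it bounds $\hat{H}^{*}_{uu}$ and $\hat{H}^{*}_{uuu}$, not $\hat{H}^{*}$, and indeed $\hat{H}^{*}(\rho,\cdot)$ is unbounded in $u$; the representation formula in fact contains $\hat{H}^{*}_u$, which also grows in $u$ and must be estimated using the compact support of $h^{*}$. All of this can be made to work, but the paper's double $\rho$-integration is considerably cleaner and avoids every one of these complications.
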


\begin{proof}
We begin with the estimates on the derivatives of $\tilde{H}$ and subsequently apply these to deduce the main bound for $\tilde{H}$.

 By convolving \eqref{eq:h error wave equation} with the special function $\hat{\psi}(s) =\frac{1}{2}s|s|$, we obtain
\begin{equation}\label{eq:H error wave equation}
\left\lbrace\begin{aligned}
    & \tilde{H}_{\rho \rho} - k'(\rho)^2 \tilde{H}_{uu} = \big( k'(\rho)^2 - \frac{\kappa_2}{\rho^2} \big)\hat{H}^{*}_{uu}(\rho,u) + \frac{p''(\rho)}{\rho}\hat{\eta}_u(\rho,u) , \\
    & \tilde{H}(\rho_*,u) = 0, \\
    & \tilde{H}_\rho(\rho_*,u) = 0.
\end{aligned}\right.
\end{equation}
Therefore by arguments similar to those in Lemma \ref{lemma:chi rep form} (see also Lemma \ref{lemma:second rep formula eta error}), we find the following representation formula for $\tilde{H}$:
\begin{equation}\label{eq:rep form h error}
\begin{aligned}
   2(\rho_0-\rho_*)  k'(\rho_0) & \tilde{H}(\rho_0,u_0) \\ =&\,\int_{\rho_*}^{\rho_0} d(\rho) k'(\rho)  \big( \tilde{H}(\rho,u_0+k(\rho_0)-k(\rho)) + \tilde{H}(\rho,u_0-k(\rho_0)+k(\rho)) \big) \, d\rho \\
+&\int_{\rho_*}^{\rho_0}(\rho-\rho_*)\big(k'(\rho)^2-\frac{\kappa_2}{\rho^2}\big)  \hat{H}^{*}_u(\rho,u_0+k(\rho_0)-k(\rho))  \,d\rho\\
-&\int_{\rho_*}^{\rho_0}(\rho-\rho_*)\big(k'(\rho)^2-\frac{\kappa_2}{\rho^2}\big)  \hat{H}^{*}_u(\rho,u_0-k(\rho_0)+k(\rho))\,d\rho  \\
         +&\int_{\rho_*}^{\rho_0}\frac{p''(\rho)}{\rho}(\rho-\rho_*)\big(\hat{\eta}(\rho,u_0+k(\rho_0)-k(\rho))-\hat{\eta}(\rho,u_0-k(\rho_0)+k(\rho))\big)\,d\rho,
\end{aligned}
\end{equation}
which has a similar structure to \eqref{eq:rep form h error from wave}. The proof of estimates \eqref{ineq:tildeHderivs} now follows the same lines as the proof of Lemma \ref{lemma:eta error m bound}, and so we omit the details.

To conclude the final estimate, \eqref{ineq:tildeHmainest}, we convolve \eqref{eq:h error wave equation} with the function $\hat{\psi}=\half s|s|$, to obtain
\begin{equation*}
    \tilde{H}_{\rho \rho}(\rho,u) = k'(\rho)^2 \tilde{H}_{uu}(\rho,u) + \big( k'(\rho)^2 - \frac{\kappa_2}{\rho^2}  \big) \hat{H}^{*}_{uu}(\rho,u) + \frac{p''(\rho)}{\rho}\hat{\eta}_u(\rho,u),
\end{equation*}
from which we may directly estimate, using \eqref{eq:k'' close to rho squared} and  \eqref{eq:blunt H hat iso uu}, and the bound on $\hat{\eta}_u$ provided by Lemmas \ref{lemma:chi iso rescaled properties} (\textit{cf.}~\cite[Lemma 3.5]{SS1}) and \ref{lemma:eta hat error u derivs},
\begin{equation*}
    |\tilde{H}_{\rho \rho}(\rho,u)| \leq M \bigg( \rho^{-\alpha-1}  + \rho^{-\alpha-1} \big( |u| + \sqrt{k(\rho)} \big) \bigg),
\end{equation*}
We now integrate in $\rho$, making use of the fact that $\tilde{H}_\rho(\rho_*,\cdot) = 0$ and the monotonicity of $k$, and obtain
\begin{equation*}
\begin{aligned}
     |\tilde{H}_\rho(\rho,u)| \leq M\bigg( \rho_*^{-\alpha} &+ \rho_*^{-\alpha} \big(|u| 
     + \sqrt{\log(\rho/\rho_*)} \big) \bigg).
\end{aligned}
\end{equation*}
Integrating once more in $\rho$, and making use of the fact that $\tilde{H}(\rho_*,\cdot)=0$, gives the result.
\end{proof}

Using this result, we are able to bound $\tilde{q}$ in absolute value.
\begin{cor}\label{corollary:upper bound hat q error}
    There exists a positive constant $M$ such that
    \begin{equation}
        |\tilde{q}(\rho,u)| \leq M\big( \rho|u|^2 + \rho + \rho \sqrt{\log (\rho/\rho_*)} \big) \qquad \text{for } (\rho,u)\in[\rho_*,\infty)\times\mathbb{R}.
    \end{equation}
\end{cor}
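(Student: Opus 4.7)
The plan is very short: I will simply combine the two main bounds already established for the components of $\tilde{q}$. Recall from \eqref{eq:qhat} that $\hat{q} = \hat{q}^* + \tilde{q}$, where
\begin{equation*}
\tilde{q}(\rho,u) = u\,\tilde{\eta}(\rho,u) + \tilde{H}(\rho,u).
\end{equation*}
Therefore, by the triangle inequality, $|\tilde{q}(\rho,u)| \leq |u|\,|\tilde{\eta}(\rho,u)| + |\tilde{H}(\rho,u)|$, and each factor has already been estimated.

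For the first term, I would invoke Corollary \ref{corollary:better hat eta error}, which gives $|\tilde{\eta}(\rho,u)| \leq M\rho|u|$ on $[\rho_*,\infty)\times\mathbb{R}$. Multiplying by $|u|$ yields $|u\,\tilde{\eta}(\rho,u)| \leq M\rho|u|^2$, which is already the first term on the right-hand side of the desired bound. For the second term, I would apply Lemma \ref{lemma:H error hat bound}, specifically estimate \eqref{ineq:tildeHmainest}, to obtain $|\tilde{H}(\rho,u)| \leq M(\rho|u| + \rho + \rho\sqrt{\log(\rho/\rho_*)})$.

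The only remaining issue is the linear-in-$|u|$ term $\rho|u|$ appearing in the bound for $\tilde{H}$, which does not directly appear in the target estimate. This is handled by Young's inequality, $\rho|u| \leq \tfrac{1}{2}\rho|u|^2 + \tfrac{1}{2}\rho$, which absorbs it into the $\rho|u|^2$ and $\rho$ terms already present. Collecting the contributions and enlarging the constant $M$ if necessary yields the claim. There is no real obstacle here; the work has been done in Corollary \ref{corollary:better hat eta error} and Lemma \ref{lemma:H error hat bound}, and this corollary is essentially a bookkeeping step packaging the absolute-value bound on $\tilde{q}$ in a form convenient for the proof of the lower bound \eqref{ineq:qhatlower} on $\hat{q}$ to come.
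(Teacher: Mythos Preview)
Your proof is correct and matches the paper's own argument essentially line for line: the paper also writes $|\tilde{q}| \leq |u\tilde{\eta}| + |\tilde{H}|$, invokes Corollary \ref{corollary:better hat eta error} and Lemma \ref{lemma:H error hat bound}, and absorbs the $\rho|u|$ term via the elementary inequality $2ab \leq a^2 + b^2$ (which the paper calls Cauchy--Schwarz and you call Young's inequality). There is nothing to add.
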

\begin{proof}
Given the decomposition for $\tilde{q}$ given in \eqref{eq:qhat},
\begin{equation*}
\begin{aligned}
    |\tilde{q}(\rho,u)| &\leq |u\tilde{\eta}(\rho,u)| + |\tilde{H}(\rho,u)|, \\
    &\leq M\big( \rho|u|^2 + \rho + \rho\sqrt{\log(\rho/\rho_*)} \big),
\end{aligned}
\end{equation*}
where we used the results of Corollary \ref{corollary:better hat eta error}, Lemma \ref{lemma:H error hat bound}, and the Cauchy--Schwarz inequality.
\end{proof}
We now state the lower bound on the entropy flux, \eqref{ineq:qhatlower}, from Lemma \ref{lemma:pointwise hat chapter 2}.
\begin{cor}\label{cor:qhatlower}
There exists a positive constant $M$ such that
\begin{equation}
    \hat{q}(\rho,u) \geq M^{-1} \rho|u|^3 - M\big( \rho |u|^2 + \rho + \rho(\log (\rho/\rho_*))^4 \big) \qquad \text{for }(\rho,u) \in [\rho_*,\infty)\times\mathbb{R}.
\end{equation}
\end{cor}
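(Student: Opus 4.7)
The plan is to decompose $\hat{q}$ via \eqref{eq:qhat} as $\hat{q}(\rho,u) = \hat{q}^*(\rho,u) + \tilde{q}(\rho,u)$ and prove the lower bound by pairing a known cubic-in-$|u|$ lower bound on the principal part $\hat{q}^*$ with the absolute upper bound on the perturbation $\tilde{q}$ already established in Corollary \ref{corollary:upper bound hat q error}.

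First, I would invoke the principal-part estimate: as noted at the start of the subsection, $\hat{q}^*$ is exactly the special entropy flux analysed in \cite[Lemma 3.5]{SS1}, so it satisfies a lower bound of the form
\begin{equation*}
\hat{q}^*(\rho,u) \geq M^{-1} \rho |u|^3 - M\bigl( \rho |u|^2 + \rho + \rho(\log(\rho/\rho_*))^4 \bigr)
\qquad \text{for } (\rho,u) \in [\rho_*,\infty)\times\mathbb{R}.
\end{equation*}
This is the only ingredient that is \emph{imported} from the approximately isothermal analysis, and it is precisely the same lower bound we are targeting, just for $\hat{q}^*$ instead of $\hat{q}$.

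Next, I would apply the triangle inequality $\hat{q}(\rho,u) \geq \hat{q}^*(\rho,u) - |\tilde{q}(\rho,u)|$ and insert the pointwise bound from Corollary \ref{corollary:upper bound hat q error}, namely
\begin{equation*}
|\tilde{q}(\rho,u)| \leq M \bigl( \rho|u|^2 + \rho + \rho \sqrt{\log(\rho/\rho_*)} \bigr).
\end{equation*}
Combining these two inequalities yields
\begin{equation*}
\hat{q}(\rho,u) \geq M^{-1}\rho|u|^3 - M\bigl( \rho|u|^2 + \rho + \rho(\log(\rho/\rho_*))^4 + \rho\sqrt{\log(\rho/\rho_*)} \bigr).
\end{equation*}

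Finally, since $\sqrt{\log(\rho/\rho_*)} \leq 1 + (\log(\rho/\rho_*))^4$ for all $\rho \geq \rho_*$, the $\sqrt{\log}$ term is absorbed into the constant and the $(\log(\rho/\rho_*))^4$ term, so that (after enlarging $M$ if necessary) the claimed inequality follows. There is no genuine obstacle here: all the analytical work has already been done in Lemma \ref{lemma:H error hat bound}, Corollary \ref{corollary:better hat eta error}, and Corollary \ref{corollary:upper bound hat q error} (for the perturbation), and in \cite[Lemma 3.5]{SS1} (for the principal part). The corollary is simply the combination of these two pieces, with the $\tilde{q}$-error comfortably dominated by the error terms already present in the bound on $\hat{q}^*$.
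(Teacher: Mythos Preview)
Your proposal is correct and matches the paper's own proof essentially line for line: decompose $\hat{q}=\hat{q}^*+\tilde{q}$ via \eqref{eq:qhat}, invoke \cite[Lemma 3.5]{SS1} for the lower bound on $\hat{q}^*$, and absorb $|\tilde{q}|$ using Corollary \ref{corollary:upper bound hat q error}. The only difference is that you spell out the absorption of the $\sqrt{\log}$ term explicitly, whereas the paper leaves this implicit.
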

\begin{proof}
Recall that, as we saw in \eqref{eq:qhat}, we may write $ \hat{q}(\rho,u) = \hat{q}^{*}(\rho,u) + \tilde{q}(\rho,u)$, where $\hat{q}^{*}$ satisfies the lower bound \eqref{ineq:qhatlower} by \cite[Lemma 3.5]{SS1}. The result follows from Corollary \ref{corollary:upper bound hat q error}.
\end{proof}

\begin{remark}
The results of Subsections \ref{subsubsection:eta hat u derivs} and \ref{subsubsection:lower bound hat q} prove Lemma \ref{lemma:pointwise hat chapter 2} in full except for \eqref{eq:eta m hat difference squared bounded by e star}, which is contained in Appendix \ref{section:appendix} and written fully in \cite[Section 3.4]{thesis}.
\end{remark}

\subsection{Entropies generated by compactly supported test functions}
We now consider entropies generated by functions $\psi \in C^2_c(\mathbb{R})$. Recall from \eqref{eq:generate any psi entropy} that, for $\rho \geq \rho_*$,
\begin{equation*}
\begin{aligned}
      \eta^\psi(\rho,u) &= \int_\mathbb{R} \chi^{*}(\rho,u-s) \psi(s) \, ds + \int_\mathbb{R} \tilde{\chi}(\rho,u-s) \psi(s) \, ds , \\
      & = \eta^{*,\psi}(\rho,u) + \tilde{\eta}^{\psi}(\rho,u).
\end{aligned}
\end{equation*}

The goal of this subsection is to prove the following lemma, which is the analogue of \cite[Lemma 3.8]{SS1}.

\begin{lemma}\label{lemma:cpctly supported chapter 2}
    Let $\psi \in C^2_c(\mathbb{R})$ and $(\eta^\psi,q^\psi)$ the associated entropy pair via \eqref{eq:generate entropy} and \eqref{eq:generate entropy flux}. There exists a positive constant $M_\psi$ such that
    \begin{equation}
        \begin{aligned}
            |\eta^\psi(\rho,m)| \leq M_\psi \rho\min\{1, \frac{1}{\sqrt{\log (\rho/\rho_* + 1)}}\}, \quad |q^\psi(\rho,m)| \leq M_\psi \rho  \quad \text{for } (\rho,m) \in \mathbb{R}^2_+.
        \end{aligned}
        \end{equation}
        Also, with $\eta^\psi_{mu}(\rho,\rho u) = \partial_u \eta^\psi_m(\rho,\rho u)$ the usual mixed derivative,
        \begin{equation}
            |\eta_m^\psi(\rho,m)| + |\eta_{mu}^\psi(\rho,m)| + |\rho \eta^\psi_{mm}(\rho,m)| \leq M_\psi \min\{1, \frac{1}{\sqrt{\log (\rho/\rho_* + 1)}}\}  \quad \text{for }(\rho,m) \in \mathbb{R}^2_+.
        \end{equation}
        Finally, with $\eta^\psi_{m\rho}(\rho,\rho u) = \partial_\rho \eta^\psi_m(\rho,\rho u)$ the usual mixed derivative,
        \begin{equation}
            |\eta_{m \rho}^\psi(\rho,m)| \leq M_\psi \frac{\sqrt{p'(\rho)}}{\rho} \qquad \text{for }(\rho,m) \in \mathbb{R}^2_+.
        \end{equation}
\end{lemma}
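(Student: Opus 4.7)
The approach mirrors Section~\ref{section:entropy pairs bounds}: for $\rho\geq\rho_*$, decompose $\eta^\psi=\eta^{*,\psi}+\tilde\eta^\psi$ and $q^\psi=q^{*,\psi}+\tilde q^\psi$, where the starred quantities are generated by convolving $\chi^*$ and $h^*$ against $\psi$, while the tildes come from the perturbations $\tilde\chi$ and $\tilde h$. The principal parts satisfy every asserted bound by direct application of \cite[Lemma 3.8]{SS1}, after the rescaling of Definitions~\ref{def:g sharp and g flat}--\ref{def:chi iso def}; for $\rho\leq\rho_*$ the bounds likewise follow from the same reference, since the relevant estimates there depend only on the near-vacuum expansion \eqref{eq:p asymp near 0 P}, which is unchanged. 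It therefore suffices to verify that the tilde pieces obey bounds at least as strong as those required.

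The decisive input is Lemma~\ref{lemma:error bound abs}, which gives $\Vert\tilde\chi(\rho,\cdot)\Vert_{L^\infty(\R)}\leq M\rho^{1-\alpha\tilde\lambda}$, together with the elementary fact that $\rho^{-\alpha\tilde\lambda}\leq M/\sqrt{\log(\rho/\rho_*+1)}$ for all $\rho\geq\rho_*$ (a power decays faster than any inverse logarithm). Since $\psi\in C^2_c(\R)$, this gives
\begin{equation*}
|\tilde\eta^\psi(\rho,u)|\leq \Vert\tilde\chi(\rho,\cdot)\Vert_{L^\infty(\R)}\Vert\psi\Vert_{L^1(\R)}\leq M_\psi\,\rho/\sqrt{\log(\rho/\rho_*+1)},
\end{equation*}
and differentiating under the integral against $\psi'$ or $\psi''$ yields the same kind of bound for $\tilde\eta^\psi_u$ and $\tilde\eta^\psi_{uu}$. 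The chain-rule identities $\eta^\psi_m=\rho^{-1}\eta^\psi_u$, $\eta^\psi_{mu}=\rho^{-1}\eta^\psi_{uu}$, and $\rho\eta^\psi_{mm}=\rho^{-1}\eta^\psi_{uu}$ then transfer these bounds to $\tilde\eta^\psi_m$, $\tilde\eta^\psi_{mu}$, and $\rho\tilde\eta^\psi_{mm}$.

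The mixed derivative $\tilde\eta^\psi_{m\rho}$ requires an argument parallel to Lemma~\ref{lemma:eta hat error m rho ch2}: convolving \eqref{eq:chi error wave eqn} against $\psi$ shows that $\tilde\eta^\psi$ solves $\tilde\eta^\psi_{\rho\rho}-k'(\rho)^2\tilde\eta^\psi_{uu}=(k'(\rho)^2-\kappa_2/\rho^2)\eta^{*,\psi}_{uu}$ with zero Cauchy data at $\rho_*$. The Fourier-side identity that produced Lemma~\ref{lemma:chi rep form} supplies an integral representation; differentiating once in $u_0$ and once in $\rho_0$, controlling the source through Corollary~\ref{corollary:k' and k'' compare with rho powers} and the \cite{SS1}-bound on $\eta^{*,\psi}_{uu}$, and closing by the Gr\"onwall step of Lemma~\ref{lemma:eta error m bound}, produces $|\tilde\eta^\psi_{u\rho}|\leq M_\psi$. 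Combined with $\tilde\eta^\psi_{m\rho}=-\rho^{-2}\tilde\eta^\psi_u+\rho^{-1}\tilde\eta^\psi_{u\rho}$ and the \cite{SS1}-estimate on the principal part, the claim $|\eta^\psi_{m\rho}|\leq M_\psi\sqrt{p'(\rho)}/\rho$ follows, using Lemma~\ref{lemma:p' below} to see that $\sqrt{p'(\rho)}$ is bounded above and below on $[\rho_*,\infty)$.

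The main obstacle is the flux bound $|q^\psi(\rho,m)|\leq M_\psi\rho$, since naively $q^\psi=u\eta^\psi+H^\psi$ carries an extra factor of $|u|$ which, on the support of $\eta^\psi(\rho,\cdot)$, may be as large as $k(\rho)$, a quantity of order $\log\rho$. Following the template of Subsection~\ref{subsubsection:lower bound hat q}, I would derive the analogue of \eqref{eq:H error wave equation} for $\tilde H^\psi$ (wave equation with source $(k'(\rho)^2-\kappa_2/\rho^2)H^{*,\psi}_{uu}+(p''(\rho)/\rho)\eta^\psi_u$ and zero data at $\rho_*$), represent it via backward characteristics as in Lemma~\ref{lemma:H error hat bound}, and observe that the $C^2_c$ hypothesis keeps every source term integrable in $\rho$. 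This gives $|\tilde q^\psi|\leq M_\psi\rho$, with the logarithmic factor cancelling against $|u\tilde\eta^\psi|$; combined with the \cite{SS1}-bound $|q^{*,\psi}|\leq M_\psi\rho$, the claim follows.
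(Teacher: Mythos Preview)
Your proposal is correct and follows essentially the same approach as the paper: decompose into principal and perturbation parts for $\rho\geq\rho_*$, invoke \cite[Lemma 3.8]{SS1} for the starred pieces, control $\tilde\eta^\psi$ and its $u$-derivatives directly from Lemma~\ref{lemma:error bound abs} (this is the paper's Lemma~\ref{lemma:eta error u bounds and support}), and handle the flux via the wave equation \eqref{eq:H error psi wave equation} for $\tilde H^\psi$ together with the support bound $|u|\lesssim k(\rho)$ on $\supp\tilde\eta^\psi(\rho,\cdot)$ (the paper's Corollary~\ref{cor:u eta error psi} and Lemma~\ref{lemma:H error psi}). Your phrase ``the logarithmic factor cancelling'' should really read ``the logarithmic factor is absorbed by the power $\rho^{-\alpha\tilde\lambda}$,'' but the mechanism is the one you intend.

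The one genuine methodological difference is in the mixed derivative $\eta^\psi_{m\rho}$. You propose to bound $\tilde\eta^\psi_{m\rho}$ by writing the second representation formula (Lemma~\ref{lemma:second rep formula eta error}) for $\tilde\eta^\psi$, differentiating in $u_0$ and $\rho_0$, and closing via Gr\"onwall, then adding the \cite{SS1} estimate for $\eta^{*,\psi}_{m\rho}$. The paper instead works directly with the \emph{full} $\eta^\psi$ via the first representation formula \eqref{eq:rep form chi} convolved with $\psi'$ (Lemma~\ref{lemma:eta rho m cpct test}): after dividing by $\rho_0$ and differentiating in $\rho_0$, every term on the right already involves only $\eta^\psi_u$ or $\eta^\psi_{uu}$, which are bounded by the earlier steps, so no Gr\"onwall loop is needed at this stage and no separate appeal to \cite{SS1} for the principal part is required. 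Your route works equally well and is closer in spirit to the $\hat\eta$ arguments of Subsection~\ref{subsubsection:eta hat u derivs}; the paper's route is marginally shorter here because the compact support of $\psi$ makes the direct bound on each piece of \eqref{eq:rep form chi} immediate.
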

We recall that \cite[Lemma 3.8]{SS1} proved these inequalities for the principal parts, $\eta^{\psi,*}$ and $q^{\psi,*}$. We therefore focus on the error terms, $\tilde{\eta}^\psi$ and $\tilde{q}^\psi$.
To begin with, in view of Lemma \ref{lemma:error bound abs}, we can bound the error as
\begin{equation}\label{eq:eta error psi compact bounding procedure}
    \begin{aligned}
    |\tilde{\eta}^{\psi}(\rho,u)| &\leq M \rho^{1-\alpha \tilde{\lambda}} \int_\mathbb{R} |\psi(s)|ds, \\
    &\leq M \rho^{1-\alpha \tilde{\lambda}}.
    \end{aligned}
\end{equation}
Notice also that, if $\supp \psi \subset [a,b]$, then, in view of \eqref{eq:supp chi error},
\begin{equation*}
\begin{aligned}
    \tilde\eta^{\psi}(\rho,u) = \int_\mathbb{R} \tilde{\chi}(\rho,s) \psi(u-s) \, ds = \int_{-\max\{ k(\rho),k_*(\rho) \}}^{\max\{ k(\rho),k_*(\rho) \}} \tilde{\chi}(\rho,s) \psi(u-s) \, ds,
    \end{aligned}
\end{equation*}
and the right-hand side vanishes for $u \notin [a-\max\{k(\rho),k_*(\rho)\},b+\max\{k(\rho),k_*(\rho)\}]$. Additionally,  differentiating in $u$, we get
\begin{equation*}
    \tilde{\eta}_u^{\psi}(\rho,u) = \int_\mathbb{R} \tilde{\chi}(\rho,u-s) \psi'(s) \, ds.
\end{equation*}
Taking further derivatives and arguing as in \eqref{eq:eta error psi compact bounding procedure}, it easily follows from Lemma \ref{lemma:error bound abs} that
\begin{equation*}
    |\tilde{\eta}^{\psi}_m(\rho,u)| + |\tilde{\eta}^{\psi}_{mu}(\rho,u)| + |\rho \tilde{\eta}^{\psi}_{mm}(\rho,u)| \leq M \rho^{-\alpha \tilde{\lambda}},
\end{equation*}
where $\tilde{\eta}^{\psi}_{mu}(\rho,\rho u) = \partial_u \tilde{\eta}^\psi_m(\rho,\rho u)$ is the usual mixed derivative. Hence, we have proved the following lemma.
\begin{lemma}\label{lemma:eta error u bounds and support}
    Let $\psi \in C^2_c(\mathbb{R})$ be such that $\supp \psi \subset [a,b]$. Then,
    \begin{equation}
        \supp \tilde{\eta}^{\psi}(\rho,\cdot) \subset [a-\max\{k(\rho),k_*(\rho)\},b+\max\{k(\rho),k_*(\rho)\}].
    \end{equation}
    Also, there exists a positive constant $M_\psi$ such that
    \begin{equation}\label{eq:eta error cpct bound}
        \Vert \tilde{\eta}^{\psi}(\rho,\cdot) \Vert_{L^\infty(\mathbb{R})} +  \Vert \rho \tilde{\eta}^{\psi}_m(\rho,\cdot) \Vert_{L^\infty(\mathbb{R})}\leq M_\psi \rho^{1-\alpha \tilde{\lambda}} \qquad \text{for } \rho \geq \rho_*,
    \end{equation}
    and
    \begin{equation}
    \Vert \tilde{\eta}^{\psi}_{mu}(\rho,\cdot)\Vert_{L^\infty(\mathbb{R})} + \Vert \rho \tilde{\eta}^{\psi}_{mm}(\rho,\cdot)\Vert_{L^\infty(\mathbb{R})} \leq M_\psi \rho^{-\alpha \tilde{\lambda}} \qquad \text{for } \rho \geq \rho_*.
\end{equation}
\end{lemma}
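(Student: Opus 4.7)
The plan is to observe that this lemma is essentially an immediate consequence of the pointwise bound $\|\tilde{\chi}(\rho,\cdot)\|_{L^\infty(\mathbb{R})} \leq M\rho^{1-\alpha\tilde\lambda}$ from Lemma \ref{lemma:error bound abs} together with the support information on $\tilde\chi$ recorded in \eqref{eq:supp chi error}, coupled with the chain rule that converts $m$-derivatives on $\eta^\psi(\rho,u)$ (viewed as a function of $(\rho,m)$ with $m=\rho u$) into $u$-derivatives, specifically $\eta^\psi_m = \eta^\psi_u/\rho$ and $\eta^\psi_{mm} = \eta^\psi_{uu}/\rho^2$, $\eta^\psi_{mu} = \eta^\psi_{uu}/\rho$.

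First I would prove the support statement. Writing $\tilde\eta^\psi(\rho,u) = \int_\mathbb{R} \tilde{\chi}(\rho,s)\psi(u-s)\,ds$ and using \eqref{eq:supp chi error}, the integrand vanishes unless both $s\in[-\max\{k(\rho),k_*(\rho)\},\max\{k(\rho),k_*(\rho)\}]$ and $u-s\in[a,b]$. For such $s$ the value $u=s+(u-s)$ lies in $[a-\max\{k(\rho),k_*(\rho)\},b+\max\{k(\rho),k_*(\rho)\}]$, as required.

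Next I would turn to the sup-norm bounds. For the entropy itself, expressing $\tilde\eta^\psi(\rho,u) = \int_{a}^{b}\tilde\chi(\rho,u-s)\psi(s)\,ds$, Lemma \ref{lemma:error bound abs} yields
\begin{equation*}
|\tilde\eta^\psi(\rho,u)| \leq \|\tilde\chi(\rho,\cdot)\|_{L^\infty(\mathbb{R})}\, \|\psi\|_{L^1(\mathbb{R})} \leq M_\psi\, \rho^{1-\alpha\tilde\lambda}.
\end{equation*}
Differentiating under the integral sign and moving the derivative onto $\psi$ (which is permissible since $\psi \in C^2_c(\mathbb{R})$), I would write
\begin{equation*}
\tilde\eta^\psi_u(\rho,u) = \int_\mathbb{R} \tilde\chi(\rho,u-s)\psi'(s)\,ds, \qquad \tilde\eta^\psi_{uu}(\rho,u) = \int_\mathbb{R} \tilde\chi(\rho,u-s)\psi''(s)\,ds,
\end{equation*}
and the very same estimation (using $\|\psi'\|_{L^1}, \|\psi''\|_{L^1}<\infty$) gives $\|\tilde\eta^\psi_u(\rho,\cdot)\|_{L^\infty(\mathbb{R})} + \|\tilde\eta^\psi_{uu}(\rho,\cdot)\|_{L^\infty(\mathbb{R})} \leq M_\psi\rho^{1-\alpha\tilde\lambda}$.

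Finally I would translate these $u$-derivative bounds into the stated $m$-derivative bounds. Since $\rho\tilde\eta^\psi_m = \tilde\eta^\psi_u$, $\rho^2\tilde\eta^\psi_{mm} = \tilde\eta^\psi_{uu}$, and $\rho\tilde\eta^\psi_{mu} = \tilde\eta^\psi_{uu}$ (the last being the mixed derivative as defined in the lemma statement), the two bounds of \eqref{eq:eta error cpct bound} and the final display of the lemma follow directly, each time absorbing one power of $\rho$ to pass from $\rho^{1-\alpha\tilde\lambda}$ to $\rho^{-\alpha\tilde\lambda}$. There is no real obstacle: all analytic content is imported from Lemma \ref{lemma:error bound abs}, and the lemma is just an organisational statement that packages the consequences for convolutions against $C^2_c$ test functions.
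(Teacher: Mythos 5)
Your proof is correct and takes essentially the same route as the paper: you invoke the $L^\infty$ bound on $\tilde\chi$ from Lemma \ref{lemma:error bound abs}, read off the support from \eqref{eq:supp chi error}, transfer $u$-derivatives onto $\psi \in C^2_c$ under the integral sign, and then convert to $m$-derivatives via the chain rule relations $\eta^\psi_u = \rho\,\eta^\psi_m$, $\eta^\psi_{uu}=\rho\,\eta^\psi_{mu}=\rho^2\,\eta^\psi_{mm}$. This matches the paper's argument step for step.
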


\begin{cor}\label{cor:u eta error psi}
For $\psi \in C^2_c(\mathbb{R})$, there exists a positive constant $M_\psi $ such that
\begin{equation}
    |u\tilde{\eta}^{\psi}(\rho,u)| \leq M_\psi k(\rho) \rho^{1-\alpha \tilde{\lambda}} \qquad \text{for } (\rho,u)\in[\rho_*,\infty)\times\mathbb{R}.
\end{equation}
\end{cor}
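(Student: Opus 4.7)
The proof is a short deduction from Lemma \ref{lemma:eta error u bounds and support}. The plan is to split based on whether $u$ lies in the support of $\tilde{\eta}^{\psi}(\rho,\cdot)$ or not. Outside this support the estimate is trivial since $\tilde{\eta}^{\psi}(\rho,u) = 0$. Inside the support, the support condition immediately gives a bound on $|u|$ of the form $|u| \leq \max\{|a|,|b|\} + \max\{k(\rho),k_*(\rho)\}$, where $[a,b] \supset \supp\psi$.

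The next step is to replace $\max\{k(\rho),k_*(\rho)\}$ by a constant multiple of $k(\rho)$ using Lemma \ref{lemma:bound on max k and k star}, and then absorb the constant term $\max\{|a|,|b|\}$ into $k(\rho)$ by using $k(\rho) \geq k(\rho_*) > 0$ for all $\rho \geq \rho_*$. This yields $|u| \leq M_\psi' k(\rho)$ on the support of $\tilde{\eta}^{\psi}(\rho,\cdot)$, for some positive constant $M_\psi'$ depending on $\psi$ and $\rho_*$.

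Finally, combining this bound on $|u|$ with the $L^\infty$ estimate
\[
\Vert \tilde{\eta}^{\psi}(\rho,\cdot) \Vert_{L^\infty(\mathbb{R})} \leq M_\psi \rho^{1-\alpha \tilde{\lambda}}
\]
from \eqref{eq:eta error cpct bound} of Lemma \ref{lemma:eta error u bounds and support} gives the claimed estimate, after adjusting the constant $M_\psi$. There is no real obstacle here; the only point requiring minor care is confirming that $\max\{k,k_*\} \lesssim k$ (to which Lemma \ref{lemma:bound on max k and k star} was already appealed in the proof of Lemma \ref{lemma:tildeeta}), and ensuring that the additive constant $\max\{|a|,|b|\}$ can be absorbed uniformly for $\rho \geq \rho_*$, which works since $k(\rho_*)$ is strictly positive.
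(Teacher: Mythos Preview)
Your proof is correct and follows essentially the same approach as the paper: the paper's own proof also invokes the compact support and $L^\infty$ bound from Lemma~\ref{lemma:eta error u bounds and support} together with the bound $\max\{k(\rho),k_*(\rho)\}\leq Mk(\rho)$ from Lemma~\ref{lemma:bound on max k and k star}. Your write-up simply makes explicit the step of absorbing the additive constant $\max\{|a|,|b|\}$ via $k(\rho)\geq k(\rho_*)>0$, which the paper leaves implicit.
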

\begin{proof}
The result now follows easily from Lemma \ref{lemma:eta error u bounds and support}, using the compact support of $\tilde{\eta}^{\psi}$ and the estimate \eqref{eq:eta error cpct bound}, along with the bound on $\max\{k(\rho),k_*(\rho)\}$ provided by Lemma \ref{lemma:bound on max k and k star}.
\end{proof}
On the other hand, \cite[Lemma 3.8]{SS1} showed that 
\begin{equation*}
    |\eta^{*,\psi}(\rho,u)| \leq \frac{M_\psi \rho}{\sqrt{\log( \rho/\rho_* + 1)}} \qquad \text{for } \rho \geq \rho_*.
\end{equation*}
Thus, adding the contributions from $\eta^{*,\psi}$ and $\tilde{\eta}^{\psi}$,
\begin{equation}\label{eq:prelim error cpct m derivs}
    \Vert \eta^{\psi}(\rho,\cdot) \Vert_{L^\infty(\mathbb{R})} \leq \frac{M_\psi \rho}{\sqrt{k(\rho)}} \qquad \text{for }\rho \geq \rho_*.
\end{equation}
We now consider the mixed derivative of the full entropy, $\eta^{\psi}_{m \rho}$, for which the following result holds.
\begin{lemma}\label{lemma:eta rho m cpct test}
   Let $\psi \in C^2_c(\mathbb{R})$ and $\eta^\psi$ be generated by \eqref{eq:generate entropy}. Then, there exists a positive constant $M_\psi$ such that
\begin{equation}
    \Vert\eta^{\psi}_{m \rho}(\rho,\cdot)\Vert_{L^\infty(\mathbb{R})} \leq M_\psi \frac{\sqrt{p'(\rho)}}{\rho} \qquad \text{for } \rho \geq \rho_*.
\end{equation}
\end{lemma}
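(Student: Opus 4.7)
The approach is to decompose $\eta^\psi = \eta^{*,\psi} + \tilde{\eta}^{\psi}$ and control each piece separately. For the principal part $\eta^{*,\psi}_{m\rho}$, the bound $M_\psi\sqrt{p'(\rho)}/\rho$ is inherited from \cite[Lemma 3.8]{SS1} transported through the rescaling in Definition \ref{def:chi iso def} that relates $\chi^{*}$ to the approximately isothermal entropy kernel of \cite{SS1}; in the regime $\rho \geq \rho_*$ we have $\sqrt{p'(\rho)}/\rho \sim 1/\rho$ by Lemma \ref{lemma:p' below}, consistent with the target bound. The remainder of the proof treats the perturbation $\tilde{\eta}^{\psi}$, and follows the same two-step template used for the special entropy in Lemmas \ref{lemma:eta error m bound} and \ref{lemma:eta hat error m rho ch2}.

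The first step is to establish a preliminary improved bound $\Vert \tilde{\eta}^\psi_m(\rho,\cdot)\Vert_{L^\infty(\mathbb{R})} \leq M_\psi \rho^{-\alpha}$ for $\rho \geq \rho_*$, strengthening Lemma \ref{lemma:eta error u bounds and support}. Starting from the second representation formula for $\tilde{\eta}^\psi$ (Lemma \ref{lemma:second rep formula eta error}, applied with general $\psi \in C^2_c(\mathbb{R})$ instead of $\hat{\psi}$), differentiating once in $u_0$ gives an integral identity for $\rho_0 k'(\rho_0)\tilde{\eta}^\psi_m(\rho_0,u_0)$ whose forcing terms involve $\eta^{*,\psi}_{mu}$ and the factor $k'(\rho)^2 - \kappa_2/\rho^2$. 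Combining the pointwise bound $|k'(\rho)^2 - \kappa_2/\rho^2| \leq M\rho^{-\alpha-2}$ from Corollary \ref{corollary:k' and k'' compare with rho powers} with the bound on $\eta^{*,\psi}_{mu}$ from \cite[Lemma 3.8]{SS1} and the uniform control on $d(\rho)$ from Lemma \ref{lemma:d and d star}, Gr\"{o}nwall's inequality closes exactly as in Lemma \ref{lemma:eta error m bound} and yields the desired $\rho^{-\alpha}$ decay for $\tilde{\eta}^\psi_m$.

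The second step differentiates this representation once more in $\rho_0$ to produce an identity for $\tilde{\eta}^\psi_{u\rho}$ structurally identical to \eqref{eq:hat eta m rho rep}, with $\hat{\eta}$ replaced by $\tilde{\eta}^\psi$ and $\hat{\eta}^{*}$ replaced by $\eta^{*,\psi}$. Using the relation $\tilde{\eta}^\psi_{m\rho} = \rho^{-1}\partial_\rho\tilde{\eta}^\psi_u - \rho^{-2}\tilde{\eta}^\psi_u$, the right-hand side splits into an integrand controlled by $\tilde{\eta}^\psi_{mu}$ (handled by Lemma \ref{lemma:eta error u bounds and support}), an integrand carrying $(k'(\rho)^2 - \kappa_2/\rho^2)\eta^{*,\psi}_{muu}$ (handled by Corollary \ref{corollary:k' and k'' compare with rho powers} together with the bound on $\eta^{*,\psi}_{muu}$ from \cite[Lemma 3.8]{SS1}), a harmless contribution $d(\rho_0)\rho_0 k'(\rho_0)\tilde{\eta}^\psi_m$, and finally the boundary term arising from $\partial_{\rho_0}\bigl(2(\rho_0-\rho_*)\rho_0 k'(\rho_0)\bigr)\tilde{\eta}^\psi_m(\rho_0,u_0)$. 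A Gr\"{o}nwall argument exactly mirroring the proof of Lemma \ref{lemma:eta hat error m rho ch2} then yields the target estimate $M_\psi\sqrt{p'(\rho)}/\rho \sim M_\psi k'(\rho)$.

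The main technical obstacle is precisely this boundary term, whose leading behaviour is $\sim \rho_0 k'(\rho_0)\cdot \rho_0^{-\alpha}$ after expanding the derivative using Lemma \ref{lemma:p' below} and Corollary \ref{corollary:k' and k'' compare with rho powers}; only the improved bound $\Vert \tilde{\eta}^\psi_m\Vert_{L^\infty(\mathbb{R})} \leq M_\psi\rho^{-\alpha}$ from Step one (and not the weaker $\rho^{-\alpha\tilde{\lambda}}$ from Lemma \ref{lemma:eta error u bounds and support}) gives enough decay to absorb this contribution into the right-hand side of the target inequality after division by $2(\rho_0-\rho_*)\rho_0 k'(\rho_0)$.
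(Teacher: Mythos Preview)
Your approach is correct but takes a genuinely different route from the paper. The paper does \emph{not} split $\eta^\psi = \eta^{*,\psi} + \tilde{\eta}^\psi$; instead it works with the full entropy directly. Starting from the \emph{first} representation formula \eqref{eq:rep form chi} (Lemma \ref{lemma:chi rep form}) convolved with $\psi$, the paper writes $\eta^\psi_m(\rho_0,u_0) = I + II + III$ (including the boundary term $III$ coming from $\chi(\rho_*,\cdot)$), differentiates each of $I$, $II$, $III$ in $\rho_0$, and bounds the resulting pieces by hand using the already-established $L^\infty$ bounds on $\eta^\psi_u$, $\eta^\psi_{uu}$. No Gr\"onwall step is needed, nor any preliminary improvement of $\tilde{\eta}^\psi_m$; each piece is shown directly to be $\leq M_\psi/(1+(\rho_0-\rho_*))$, and the comparison $\sqrt{p'(\rho)}/\rho \geq \sqrt{\kappa_2/2}\,\rho^{-1}$ finishes. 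Your route instead mirrors the treatment of the special entropy $\hat{\eta}$: it uses the \emph{second} representation formula (Lemma \ref{lemma:second rep formula eta error}) for the perturbation $\tilde{\eta}^\psi$, first upgrades $\|\tilde{\eta}^\psi_m\|_{L^\infty}$ from $\rho^{-\alpha\tilde{\lambda}}$ to $\rho^{-\alpha}$ via Gr\"onwall, then differentiates in $\rho_0$ to control $\tilde{\eta}^\psi_{m\rho}$, and imports the $\eta^{*,\psi}_{m\rho}$ bound from \cite{SS1}. The paper's path is shorter and more self-contained (it in fact derives the $\eta^{*,\psi}_{m\rho}$ bound as a by-product, see Corollary \ref{corollary:eta rho m cpct test cor}, rather than citing \cite{SS1}); your path has the advantage of reusing verbatim the template of Lemmas \ref{lemma:eta error m bound}--\ref{lemma:eta hat error m rho ch2}, which makes the structural parallel with the $\hat{\eta}$ analysis transparent.
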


\begin{proof}
We first use the representation formula \eqref{eq:rep form chi} to write down
\begin{equation*}
\begin{aligned}
\eta^{\psi}_m(\rho_0,u_0) = &\, \frac{1}{2\rho_0(\rho_0 - \rho_*)k'(\rho_0)}\int_{\rho_*}^{\rho_0} k'(\rho) d(\rho)  \eta^{\psi}_u(\rho,u_0+k(\rho_0)-k(\rho)) \, d\rho \\
&+ \frac{1}{2\rho_0(\rho_0 - \rho_*)k'(\rho_0)}\int_{\rho_*}^{\rho_0} k'(\rho) d(\rho) \eta^{\psi}_u(\rho,u_0-k(\rho_0)+k(\rho)) \, d\rho \\
& - \frac{1}{2\rho_0(\rho_0 - \rho_*)k'(\rho_0)} \int_{-(k(\rho_0)-k(\rho_*))}^{k(\rho_0)-k(\rho_*)} \eta^{\psi}_u(\rho_*,u_0-y) \, dy, \\
=: &\, I + II + III.
\end{aligned}
\end{equation*}
where $\eta^{\psi}_u$ is given by $\eta_u^\psi(\rho,u) = \int_\mathbb{R} \chi(\rho,u-s) \psi'(s) \, ds$. Hence, the mixed derivative $\eta^{\psi}_{m \rho}$ is given by
\begin{equation*}
    \eta^{\psi}_{m \rho}(\rho_0,u_0) = \partial_{\rho_0}I + \partial_{\rho_0}II + \partial_{\rho_0}III.
\end{equation*}
We begin by controlling the term $\partial_{\rho_0}I$.
\begin{equation*}
\begin{aligned}
    \partial_{\rho_0}I = & \,\partial_{\rho_0}\left( \frac{1}{2\rho_0(\rho_0-\rho_*)k'(\rho_0)} \right) \int_{\rho_*}^{\rho_0} k'(\rho) d(\rho) \eta^{\psi}_u(\rho,u_0+k(\rho_0)-k(\rho)) \, d\rho \\
    & + \frac{1}{2\rho_0(\rho_0-\rho_*)}d(\rho_0)\eta^{\psi}_u(\rho_0,u_0)  + \frac{1}{2\rho_0(\rho_0-\rho_*)} \int_{\rho_*}^{\rho_0} k'(\rho) d(\rho)  \eta^{\psi}_{uu}(\rho,u_0+k(\rho_0)-k(\rho)) \, d\rho, \\
    =: &\, I_1 + I_2 + I_3.
\end{aligned}
\end{equation*}
We expand the first term in big brackets,
\begin{equation*}
\begin{aligned}
    \partial_{\rho_0}\left( \frac{1}{2\rho_0(\rho_0-\rho_*)k'(\rho_0)} \right) = -\frac{1}{2\rho_0^2 (\rho_0-\rho_*) k'(\rho_0)} &- \frac{1}{\rho_0(\rho_0-\rho_*)^2 k'(\rho_0)} - \frac{k''(\rho_0)}{2\rho_0(\rho_0-\rho_*)k'(\rho_0)^2}.
\end{aligned}
\end{equation*}
Therefore, using Lemma \ref{lemma:p' below}, $\left| \partial_{\rho_0}\left( \frac{1}{2\rho_0(\rho_0-\rho_*)k'(\rho_0)} \right) \right| \leq \frac{M}{(\rho_0-\rho_*)^2}$. Thus, using \cite[Lemma 3.8]{SS1} and Lemma \ref{lemma:eta error u bounds and support},
\begin{equation*}
    \begin{aligned}
        |I_1|\leq \frac{M}{(\rho_0-\rho_*)^2}\int_{\rho_*}^{\rho_0} \, d\rho = \frac{M}{\rho_0-\rho_*}.
    \end{aligned}
\end{equation*}
For $I_2$, we have $\left| \frac{1}{2\rho_0(\rho_0-\rho_*)}d(\rho_0) \eta^{\psi}_u(\rho_0,u_0) \right| \leq \frac{M}{\rho_0-\rho_*}$. The last term in the expansion of $\partial_{\rho_0}I$ is controlled as
\begin{equation*}
    \begin{aligned}
    |I_3| &\leq \frac{M}{\rho_0(\rho_0-\rho_*)}\int_{\rho_*}^{\rho_0}  \, d\rho \leq \frac{M}{\rho_0-\rho_*}.
    \end{aligned}
    \end{equation*}
    Thus, for $\rho_0\geq \rho_*+1$,
    \begin{equation*}
    |\partial_{\rho_0} I| \leq \frac{M}{\rho_0-\rho_*}\leq \frac{M}{1+(\rho_0-\rho_*)}.
\end{equation*}
On the other hand, it is straightforward to check that $\eta^{\psi}_{m\rho}$ also remains bounded for $\rho_0\in(\rho_*,\rho_*+1)$. Hence, we conclude that
\begin{equation*}
    |\partial_{\rho_0} I| \leq  \frac{M}{1+(\rho_0-\rho_*)},
\end{equation*}
and the same bound holds for $|\partial_{\rho_0}II|$. We now bound the final term,
\begin{equation*}
\begin{aligned}
    \partial_{\rho_0}III = &-\partial_{\rho_0}\left( \frac{1}{2\rho_0(\rho_0-\rho_*)k'(\rho_0)} \right) \int_{-(k(\rho_0)-k(\rho_*))}^{k(\rho_0)-k(\rho_*)} \eta^{\psi}_u(\rho_*,u_0-y) \, dy \\
    &-\frac{1}{2\rho_0(\rho_0-\rho_*)} \big( \eta_u^{\psi}(\rho_*,u_0-k(\rho_0)+k(\rho_*)) + \eta_u^{\psi}(\rho_*,u_0+k(\rho_0)-k(\rho_*)) \big).
\end{aligned}
\end{equation*}
Hence, using \eqref{eq:prelim error cpct m derivs}, we get
\begin{equation*}
    |\partial_{\rho_0}III| \leq \frac{M\rho_*}{(\rho_0-\rho_*)^2}2(k(\rho_0)-k(\rho_*)) + \frac{M\rho_*}{\rho_0 (\rho_0-\rho_*)} \leq \frac{M}{\rho_0-\rho_*}.
\end{equation*}
Thus, checking again that no singularity arises at $\rho_0=\rho_*$,
\begin{equation*}
    |\partial_{\rho_0}III| \leq \frac{M}{1+(\rho_0-\rho_*)}.
\end{equation*}
Meanwhile, using Lemma \ref{lemma:p' below}, we have $  \sqrt{p'(\rho)} \rho^{-1} \geq  \sqrt{\frac{\kappa_2}{2}}\rho^{-1}$. The result follows at once.
\end{proof}

In fact, the same procedure can be followed so as to obtain the same estimates for $\eta^{*,\psi}_{m \rho}$, from which we see that $|\tilde{\eta}^{\psi}_{m\rho}(\rho_0,u_0)| \leq M\rho_0^{-1}$. In view of this, we have the following corollary.
\begin{cor}\label{corollary:eta rho m cpct test cor}
    There exists a positive constant $M_\psi$ such that
\begin{equation}
    \Vert\eta^{*,\psi}_{m \rho}(\rho,\cdot)\Vert_{L^\infty(\mathbb{R})} + \Vert\tilde{\eta}^{\psi}_{m \rho}(\rho,\cdot)\Vert_{L^\infty(\mathbb{R})} \leq M_\psi \frac{\sqrt{p'(\rho)}}{\rho} \qquad \text{for } \rho \geq \rho_*.
\end{equation}
\end{cor}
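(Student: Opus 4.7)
The strategy is to bound each of the two summands separately, so that the estimate on the perturbation $\tilde{\eta}^\psi_{m\rho}$ follows from a one-line triangle inequality.

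First I would repeat the proof of Lemma~\ref{lemma:eta rho m cpct test} verbatim for the principal part $\eta^{*,\psi}$, using the isothermal-kernel representation formula of Remark~\ref{remark:same rep form} (with $\chi^*$, $k_*$, $d_*$ replacing $\chi$, $k$, $d$) in place of \eqref{eq:rep form chi}. This produces an identical decomposition $\eta^{*,\psi}_{m\rho} = \partial_{\rho_0} I^* + \partial_{\rho_0} II^* + \partial_{\rho_0} III^*$ and each of the three pieces is estimated in exactly the same manner as in the proof of Lemma~\ref{lemma:eta rho m cpct test}. The ingredients needed are: the explicit formulas $k_*'(\rho_0) = \sqrt{\kappa_2}/\rho_0$ and $k_*''(\rho_0) = -\sqrt{\kappa_2}/\rho_0^2$, which directly control the $\rho_0$-derivative of $1/(2\rho_0(\rho_0-\rho_*)k_*'(\rho_0))$; the $L^\infty$-bounds on $\eta^{*,\psi}$ and its first two $u$-derivatives, which come from Proposition~\ref{lemma:chi iso rescaled properties} together with the compact-support estimates in Lemma~\ref{lemma:eta error u bounds and support}; and the uniform upper bound on $d_*(\rho)$ from the appendix. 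Collecting these will give
\[
\Vert \eta^{*,\psi}_{m\rho}(\rho_0,\cdot)\Vert_{L^\infty(\R)} \leq \frac{M_\psi}{1+(\rho_0-\rho_*)},
\]
which I then dominate by $M_\psi \sqrt{p'(\rho_0)}/\rho_0$ using the lower bound $\sqrt{p'(\rho_0)}/\rho_0 \geq \sqrt{\kappa_2/2}/\rho_0$ from Lemma~\ref{lemma:p' below} (splitting into the ranges $\rho_*\leq\rho_0\leq 2\rho_*$ and $\rho_0\geq 2\rho_*$ to compare the two expressions).

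Once the principal-part estimate is in hand, the perturbation estimate follows by writing $\tilde{\eta}^\psi = \eta^\psi - \eta^{*,\psi}$ and applying the triangle inequality with the new bound together with Lemma~\ref{lemma:eta rho m cpct test}, yielding
\[
\Vert \tilde{\eta}^\psi_{m\rho}(\rho_0,\cdot)\Vert_{L^\infty(\R)} \leq 2 M_\psi \frac{\sqrt{p'(\rho_0)}}{\rho_0}.
\]

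The main obstacle, such as it is, is bookkeeping: one must verify that each of the sub-terms $I$, $II$, $III$ (and their $\rho_0$-derivatives) appearing in the proof of Lemma~\ref{lemma:eta rho m cpct test} transfers cleanly to the isothermal setting. This is not substantive, because $\chi^*$ satisfies a wave equation of the same structure as $\chi$ on $(\rho_*,\infty)\times\R$ (with $k'(\rho)^2$ replaced by the cleaner $\kappa_2/\rho^2$), and all the auxiliary bounds on $k_*$, $d_*$, and $\chi^*$ that one needs are either explicit or already established earlier in the paper. No Gr\"onwall argument is required at this stage, since the representation formula expresses $\eta^{*,\psi}_{m\rho}$ directly in terms of $\eta^{*,\psi}$ and its $u$-derivatives rather than self-referentially in $\eta^{*,\psi}_{m\rho}$.
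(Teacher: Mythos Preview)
Your proposal is correct and matches the paper's approach essentially verbatim: the paper likewise says to repeat the procedure of Lemma~\ref{lemma:eta rho m cpct test} for $\eta^{*,\psi}$ and then deduce the bound on $\tilde{\eta}^\psi_{m\rho}$ by difference. One minor imprecision: the $L^\infty$ bounds on $\eta^{*,\psi}_u$ and $\eta^{*,\psi}_{uu}$ should come from \cite[Lemma 3.8]{SS1} (equivalently Proposition~\ref{lemma:chi iso rescaled properties} applied after convolving with $\psi'$, $\psi''$), not from Lemma~\ref{lemma:eta error u bounds and support}, which concerns only the perturbation $\tilde{\eta}^\psi$.
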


We have yet to inspect the entropy flux. To this end, we recall the equation for $\tilde{H}^{\psi}$, obtained by convolving \eqref{eq:h error wave equation} with $\psi \in C^2_c(\mathbb{R})$,
\begin{equation}\label{eq:H error psi wave equation}
\left\lbrace\begin{aligned}
    & \tilde{H}_{\rho \rho}^{\psi} - k'(\rho)^2 \tilde{H}_{uu}^{\psi} = f^\psi(\rho,u), \quad \text{for } (\rho,u) \in (\rho_*,\infty)\times \mathbb{R}, \\
    & \tilde{H}^{\psi}(\rho_*,u) = 0, \\
    & \tilde{H}_\rho^{\psi}(\rho_*,u) = 0,
\end{aligned}\right.
\end{equation}
where
\begin{equation}
    f^\psi(\rho,u) := \big( k'(\rho)^2 - \frac{1}{\rho^2} \big)H^{*,\psi}_{uu}(\rho,u) + \frac{p''(\rho)}{\rho}\eta^\psi_u(\rho,u) \qquad \text{for } (\rho,u) \in [\rho_*,\infty)\times\mathbb{R}.
\end{equation}
This equation allows us to make a similar representation formula and Gr\"{o}nwall argument to those above to obtain the following lemma, the proof of which we omit.

\begin{lemma}\label{lemma:H error psi}
    There exists a positive constant $M_\psi$ such that
    \begin{equation}
        \Vert \tilde{H}^{\psi}(\rho,\cdot)\Vert_{L^\infty(\mathbb{R})} \leq M_\psi k(\rho) \rho^{1-\alpha} \qquad \text{for }(\rho,u)\in[\rho_*,\infty)\times\mathbb{R}.
    \end{equation}
\end{lemma}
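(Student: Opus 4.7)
The plan is to mirror the derivation of the bound on $\tilde{H}$ in Lemma \ref{lemma:H error hat bound}, replacing the special test function $\hat{\psi}$ with the compactly supported $\psi$. First, applying the Fourier transform technique used in the proof of Lemma \ref{lemma:chi rep form} to the wave equation \eqref{eq:H error psi wave equation} (with trivial Cauchy data at $\rho=\rho_*$), I would derive the representation formula
\begin{equation*}
\begin{aligned}
2(\rho_0-\rho_*)k'(\rho_0)\tilde{H}^\psi(\rho_0,u_0) =\,&\int_{\rho_*}^{\rho_0} d(\rho) k'(\rho) \big(\tilde{H}^\psi(\rho,u_0{+}k(\rho_0){-}k(\rho)) + \tilde{H}^\psi(\rho,u_0{-}k(\rho_0){+}k(\rho))\big)\,d\rho \\
&+\int_{\rho_*}^{\rho_0}(\rho-\rho_*)\bigl(k'(\rho)^2-\tfrac{\kappa_2}{\rho^2}\bigr) H^{*,\psi}_u(\rho,u_0{+}k(\rho_0){-}k(\rho))\,d\rho \\
&-\int_{\rho_*}^{\rho_0}(\rho-\rho_*)\bigl(k'(\rho)^2-\tfrac{\kappa_2}{\rho^2}\bigr) H^{*,\psi}_u(\rho,u_0{-}k(\rho_0){+}k(\rho))\,d\rho \\
&+\int_{\rho_*}^{\rho_0}\frac{p''(\rho)}{\rho}(\rho-\rho_*)\bigl(\eta^\psi(\rho,u_0{+}k(\rho_0){-}k(\rho))-\eta^\psi(\rho,u_0{-}k(\rho_0){+}k(\rho))\bigr)d\rho,
\end{aligned}
\end{equation*}
exactly as in \eqref{eq:rep form h error} but with $\hat{H}^{*}, \hat{\eta}$ replaced by $H^{*,\psi},\eta^\psi$.

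The first line sets up a Grönwall estimate for $\|k'(\rho)\tilde{H}^\psi(\rho,\cdot)\|_{L^\infty}$ once we divide through by $2(\rho_0-\rho_*)$; by Lemma \ref{lemma:d and d star} the exponential factor produced by Grönwall will be bounded by $e^3$. It therefore remains to bound the four inhomogeneous terms. For the $H^{*,\psi}_u$ terms, I would first observe that, analogously to \eqref{eq:prelim error cpct m derivs} for $\eta^{*,\psi}$, the compact support of $\psi$ and the explicit formula for $h^*$ give $\|H^{*,\psi}_u(\rho,\cdot)\|_{L^\infty} \leq M_\psi \rho/\sqrt{k(\rho)}$ (by an integration-by-parts argument using $\|h^*(\rho,\cdot)\|_{L^\infty}\leq M\rho/\sqrt{k(\rho)}$ from Lemma \ref{lemma:h iso bounded by rho}). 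Combining this with $|k'(\rho)^2-\kappa_2/\rho^2|\leq M\rho^{-\alpha-2}$ from Corollary \ref{corollary:k' and k'' compare with rho powers}, these two integrals contribute (after dividing by $2(\rho_0-\rho_*)k'(\rho_0)$, which is $O(1)$ for $\rho_0$ large by Lemma \ref{lemma:p' below}) at most $M_\psi\int_{\rho_*}^{\rho_0}\rho^{-\alpha}/\sqrt{k(\rho)}\,d\rho \leq M_\psi\rho_0^{1-\alpha}$.

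For the fourth (forcing) term, I would use $|p''(\rho)/\rho|\leq M\rho^{-\alpha-2}$ from Lemma \ref{lemma:p' below} together with $\|\eta^\psi(\rho,\cdot)\|_{L^\infty}\leq M_\psi\rho/\sqrt{k(\rho)}$, obtained by combining \cite[Lemma~3.8]{SS1} for the principal part $\eta^{*,\psi}$ with Lemma \ref{lemma:eta error u bounds and support} for the perturbation $\tilde\eta^\psi$. This yields the same bound $M_\psi \rho_0^{1-\alpha}$ after the division. Putting everything together and applying Grönwall's inequality gives $\|k'(\rho_0)\tilde{H}^\psi(\rho_0,\cdot)\|_{L^\infty}\leq M_\psi \rho_0^{-\alpha}$ for large $\rho_0$, hence $\|\tilde{H}^\psi(\rho_0,\cdot)\|_{L^\infty}\leq M_\psi \rho_0^{1-\alpha}$, which implies the claimed bound since $k(\rho_0)\geq k(\rho_*)>0$. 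The main obstacle will be the bookkeeping to convert the $L^\infty$ bounds on $h^*$ and $\chi^*$ into the right $L^\infty$ bounds on $H^{*,\psi}_u$ and $\eta^\psi$ via the compact support of $\psi$; once in place, the Grönwall step is routine and parallel to Lemmas \ref{lemma:error bound abs}, \ref{lemma:eta error m bound}, and \ref{lemma:H error hat bound}.
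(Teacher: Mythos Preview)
Your proposal is correct and follows exactly the approach the paper indicates: it omits the proof but states explicitly that the wave equation \eqref{eq:H error psi wave equation} ``allows us to make a similar representation formula and Gr\"onwall argument to those above,'' i.e., paralleling Lemmas \ref{lemma:eta error m bound} and \ref{lemma:H error hat bound}. Your representation formula, the bounds $\|H^{*,\psi}_u(\rho,\cdot)\|_{L^\infty}\leq M_\psi\rho$ and $\|\eta^\psi(\rho,\cdot)\|_{L^\infty}\leq M_\psi\rho$ (obtained by moving the derivative onto the compactly supported $\psi$), and the Gr\"onwall step are all in line with the paper's template; you even obtain the slightly sharper estimate $\|\tilde{H}^\psi(\rho,\cdot)\|_{L^\infty}\leq M_\psi\rho^{1-\alpha}$, which of course implies the stated bound since $k(\rho)\geq k(\rho_*)>0$.
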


By recalling $\tilde{q}^\psi = u\tilde{\eta}^\psi + \tilde{H}^\psi$ and combining Corollary \ref{cor:u eta error psi} and Lemma \ref{lemma:H error psi} to bound the entropy flux, the proof of Lemma  \ref{lemma:cpctly supported chapter 2} is now complete.

\section{Uniform estimates and compactness of the entropy dissipation measures}\label{section:uniformestimates}
Finally, we prove the uniform estimates that are required for the compactness of the entropy dissipation measures. To begin with, we collect the assumptions required on the initial data for the desired energy estimates to hold and for the main theorem of Hoff in \cite{Hoff} to be valid.
\begin{defn}\label{definition:admissible}
We say that a family of smooth functions $\{(\rho_0^\varepsilon,u_0^\varepsilon)\}_{0<\varepsilon\leq\eps_0}$ for some $\eps_0>0$ is an \emph{admissible sequence of initial data} if the following assumptions hold:
\begin{itemize}
\item The total relative mechanical energies are uniformly bounded, i.e.,
\begin{equation*}
\sup_{\varepsilon\in(0,\eps_0]} E[\rho_0^\varepsilon,u_0^\varepsilon] \leq E_0 < \infty;
\end{equation*}
\item For the initial densities, there holds the weighted derivative uniform bound
\begin{equation*}
\sup_{\varepsilon\in(0,\eps_0]} \varepsilon^2 \int_\mathbb{R} \frac{|\rho_{0,x}^\varepsilon(x)|^2}{\rho_0^\varepsilon(x)^3} \, dx \leq E_1 < \infty;
\end{equation*}
\item The relative total initial momenta are uniformly bounded, i.e.,
\begin{equation*}
\sup_{\varepsilon\in(0,\eps_0]} \int_\mathbb{R} \rho_0^\varepsilon(x) |u_0^\varepsilon(x) - \bar{u}(x)| \, dx \leq M_0 <\infty;
\end{equation*}
\item The initial densities are bounded away from the vacuum, i.e.,
\begin{equation*}
\text{there exists } c_0^\varepsilon > 0 \text{ such that } \rho_0^\varepsilon(x) \geq c_0^\varepsilon>0 \text{ for all } x \in \mathbb{R}, \text{ for each } \eps>0.
\end{equation*}
\end{itemize}
\end{defn}

Following similar proofs to those of \cite[Lemmas 3.2--3.4]{SS1}, making use of the results of Lemma \ref{lemma:pointwise hat chapter 2}, we obtain the following result.
\begin{prop}[Uniform estimates]\label{lemma:unif estimates ch2}
    Assume that $\{(\rho_0^\varepsilon,u_0^\varepsilon)\}_{0<\varepsilon\leq\eps_0}$ is an admissible sequence of initial data, in the sense of Definition \ref{definition:admissible}, and let $\{ (\rho^\varepsilon,u^\varepsilon) \}_{0<\varepsilon\leq\eps_0}$ be the sequence of viscous solutions of \eqref{eq:ns} that it generates. Then, for any $T>0$, there exist constants $M_1,M_2>0$, independent of $\eps$, such that, for all $\eps\in(0,\eps_0]$,
    \begin{itemize}
        \Item[(i)] 
        \begin{equation}
            \sup_{t \in [0,T]} E[\rho^\varepsilon,u^\varepsilon](t) + \int_0^T \int_\mathbb{R} \varepsilon|u^\varepsilon_x|^2 \, dx \, dt \leq M_1.
        \end{equation}
        \Item[(ii)] 
        \begin{equation}
            \varepsilon^2 \int_\mathbb{R} \frac{|\rho^\varepsilon(T,x)|^2}{\rho^\varepsilon(T,x)^3} \, dx + \varepsilon \int_0^T \int_\mathbb{R} \frac{p'(\rho^\varepsilon)}{(\rho^\varepsilon)^2}|\rho^\varepsilon_x|^2 \, dx \, dt \leq M_2.
        \end{equation}
        \end{itemize}
Moreover, for any compact $K\subset\R$, there exist constants $M_3,M_4>0$, depending on $T$ and $K$, but independent of $\eps$, such that, for all $\eps\in(0,\eps_0]$,
\begin{itemize}
        \Item[(iii)]
        \begin{equation}
            \int_0^T \int_K \rho^\varepsilon p(\rho^\varepsilon) \, dx \, dt \leq M_3.
        \end{equation}
        \Item[(iv)]
        \begin{equation}
            \int_0^T \int_K \rho^\varepsilon |u^\varepsilon|^3 \, dx \, dt \leq M_4.
        \end{equation}
    \end{itemize}
\end{prop}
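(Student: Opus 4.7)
The proof follows the template of \cite[Lemmas 3.2--3.4]{SS1} and \cite{ChenPerep1}, adapted to the asymptotically isothermal pressure via the entropy bounds of Lemma \ref{lemma:pointwise hat chapter 2}. Estimate (i) is the classical relative entropy estimate: using the chain rule on $\overline{\eta^*}(\rho^\eps,\rho^\eps u^\eps)$ together with the Navier--Stokes system \eqref{eq:ns}, one obtains
\[
\partial_t \overline{\eta^*} + \partial_x \overline{q^*} = -\eps|u^\eps_x|^2 + \eps(u^\eps u^\eps_x)_x + R^\eps(t,x),
\]
where $R^\eps$ collects remainder terms supported in $[-L_0,L_0]$ coming from the non-constancy of $(\bar\rho,\bar u)$. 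Integrating in $x\in\R$ (the flux vanishes at $\pm\infty$ by the choice of end states) and in $t\in[0,T]$, then absorbing $R^\eps$ via Gr\"onwall and the admissibility assumptions on $(\rho_0^\eps,u_0^\eps)$, yields (i).

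Estimate (ii) is a Bresch--Desjardins-type entropy inequality. Introduce the effective velocity $v^\eps:=u^\eps+\eps(\log\rho^\eps)_x$. Differentiating the mass equation in $x$, dividing by $\rho^\eps$, and combining with the momentum equation yields after standard manipulation
\[
\tfrac{1}{2}\partial_t \int \rho^\eps |v^\eps - \bar{u}|^2\,dx + \eps\int\frac{p'(\rho^\eps)}{(\rho^\eps)^2}|\rho^\eps_x|^2\,dx = \text{l.o.t.},
\]
with the lower-order terms bounded via (i). Integrating in $t$ and using the initial data bound $\eps^2\int|\rho^\eps_{0,x}|^2/(\rho_0^\eps)^3\,dx\leq E_1$ produces (ii).

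For (iii) and (iv), I employ the special entropy pair $(\hat\eta,\hat q)$. Fix a nonnegative cut-off $\varphi\in C^2_c(\R)$ with $\varphi\equiv 1$ on $K$. By Lemma \ref{lemma:three derivs eta hat and eta iso hat}, $\hat\eta\in C^2$, so that
\[
\partial_t\hat\eta(\rho^\eps,\rho^\eps u^\eps) + \partial_x\hat q(\rho^\eps,\rho^\eps u^\eps) = \eps\,\hat\eta_m(\rho^\eps,\rho^\eps u^\eps)\,u^\eps_{xx}.
\]
Multiplying by $\varphi$, integrating over $[0,T]\times\R$ and integrating by parts gives
\[
\int\varphi\,\hat\eta\big|_0^T\,dx - \int_0^T\!\!\int\varphi_x\,\hat q\,dx\,dt = -\eps\int_0^T\!\!\int(\varphi\,\hat\eta_m)_x\,u^\eps_x\,dx\,dt.
\]
The time boundary terms are controlled by $|\hat\eta|\leq M\eta^*$ together with (i); the viscous right-hand side is expanded in $\hat\eta_{mu},\hat\eta_{m\rho}$ and bounded via the pointwise estimates \eqref{ineq:etahatests}--\eqref{ineq:etahatmixed} combined with Cauchy--Schwarz and (i)--(ii); the lower bound \eqref{ineq:qhatlower} on $\hat q$ then extracts (iv) after the correction terms are absorbed. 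For (iii), test the momentum equation against the primitive $\Phi(t,x):=\int_{-\infty}^x\varphi(y)(\rho^\eps(t,y)-\bar\rho(y))\,dy$, compactly supported in $x$ for appropriate $\varphi$ by mass conservation; integration by parts extracts $\int_0^T\int\varphi\,\rho^\eps p(\rho^\eps)\,dx\,dt$ on the left, while the right-hand side is controlled by (i), (ii), and (iv).

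\textbf{Main obstacle.} The principal new difficulty, absent in the approximately isothermal setting of \cite{SS1}, is the correction $\rho(\log(\rho/\rho_*))^4$ in the lower bound \eqref{ineq:qhatlower}: this quantity is not controlled by the relative energy, which only bounds $\int\rho\log\rho$ locally for large $\rho$. Its absorption requires interpolating (i) against the BD estimate (ii), using the fact that $\eps^{1/2}\rho^\eps_x/\rho^\eps$ is uniformly bounded in $L^2_{t,x}$ to trade viscosity for higher powers of $\log\rho^\eps$. A secondary subtlety is the mutual dependence between (iii) and (iv): deriving (iii) from the momentum equation produces a $\rho^2|u|^2$-type term that must itself be controlled by (iv), so the estimates are established in the order (i) $\to$ (ii) $\to$ (iv) $\to$ (iii).
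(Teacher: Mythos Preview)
Your overall architecture is right and matches the paper: (i) is the relative-energy estimate, (ii) is the Bresch--Desjardins identity, and (iii)--(iv) come from the special entropy pair $(\hat\eta,\hat q)$ together with a momentum-equation test, exactly as in \cite[Lemmas 3.2--3.4]{SS1}. The paper's own proof is simply a pointer to that argument, noting that Lemma~\ref{lemma:pointwise hat chapter 2} supplies the required pointwise bounds on $(\hat\eta,\hat q)$.

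However, your ``Main obstacle'' section contains two genuine errors. First, the correction $\rho(\log(\rho/\rho_*))^4$ in \eqref{ineq:qhatlower} is \emph{not} new here: by Corollary~\ref{cor:qhatlower} the principal part $\hat q^*$ already satisfies \eqref{ineq:qhatlower} as a consequence of \cite[Lemma 3.5]{SS1}, so this term was present and handled in the approximately isothermal case. Second, and more importantly, your proposed ordering (iv)$\to$(iii) and the accompanying interpolation argument are both unnecessary and based on a mistaken computation. When you test the momentum equation against $\Phi(t,x)=\int_{-\infty}^x\varphi(y)(\rho^\eps-\bar\rho)\,dy$, the time derivative produces $\int\Phi_t\,\rho^\eps u^\eps\,dx$ with $\Phi_t=-\varphi\rho^\eps u^\eps+\int_{-\infty}^x\varphi'\rho^\eps u^\eps\,dy$; the resulting $-\int\varphi(\rho^\eps)^2(u^\eps)^2$ cancels \emph{exactly} against the $\int\varphi(\rho^\eps-\bar\rho)\rho^\eps(u^\eps)^2$ coming from the flux, leaving only $\int\varphi\bar\rho\,\rho^\eps(u^\eps)^2$, which is controlled by (i). So (iii) follows from (i)--(ii) alone, with no $\rho^2 u^2$ obstruction.

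With (iii) in hand, the $\rho(\log(\rho/\rho_*))^4$ term in the derivation of (iv) is absorbed trivially: $(\log(\rho/\rho_*))^4=o(\rho)$ as $\rho\to\infty$, so $\rho(\log(\rho/\rho_*))^4\leq C(1+\rho^2)\leq C(1+\rho p(\rho))$ by \eqref{eq:p' below bound}, and this is exactly what (iii) bounds. No interpolation with the BD estimate is needed. The correct order is therefore (i)$\to$(ii)$\to$(iii)$\to$(iv).
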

The above key lemma, Lemma \ref{lemma:pointwise hat chapter 2}, is essential for the proof of the final higher integrability estimate of this Proposition (see \cite[Lemma 3.4]{SS1}).
Analogously, following the strategy of proof of \cite[Proposition 3.9]{SS1} while appealing to Lemma \ref{lemma:cpctly supported chapter 2}, we obtain the following compactness result.

\begin{prop}[Compactness of the entropy dissipation measures]\label{prop:cptness}
    Let $\psi \in C^2_c(\mathbb{R})$ and let $(\eta^\psi, q^\psi)$ be the associated entropy pair via \eqref{eq:generate entropy} and \eqref{eq:generate entropy flux}. Additionally, let $\{(\rho_0^\varepsilon,u_0^\varepsilon) \}_{\varepsilon>0}$ be a sequence of admissible initial data, in the sense of Definition \ref{definition:admissible}, and let $\{(\rho^\varepsilon,u^\varepsilon)\}_{\varepsilon>0}$ be the associated viscous solutions of \eqref{eq:ns}. Then, we have that the entropy dissipation measures
\begin{equation}
\eta^\psi(\rho^\varepsilon,\rho^\varepsilon u^\varepsilon)_t + q^\psi (\rho^\varepsilon,\rho^\varepsilon u^\varepsilon)_x
\end{equation}
are confined to a compact subset of $W^{-1,p}_{loc}(\mathbb{R}^2_+)$ for any $p \in (1,2)$.
\end{prop}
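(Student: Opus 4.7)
The plan is to adapt the standard compensated-compactness framework of DiPerna \cite{DiPerna2}, Chen--Perepelitsa \cite{ChenPerep1}, and the authors' previous work \cite{SS1}, now exploiting the new entropy estimates from Lemma \ref{lemma:cpctly supported chapter 2}. First, starting from the entropy-flux identities $q^\psi_\rho = \eta^\psi_m(p'(\rho)-u^2)$ and $q^\psi_m = \eta^\psi_\rho + 2u\eta^\psi_m$ (which follow directly from the definition of an entropy pair), and using the Navier--Stokes system \eqref{eq:ns} together with the chain rule applied to the smooth viscous solutions, one verifies by direct computation the entropy dissipation identity
\begin{equation*}
\eta^\psi(\rho^\varepsilon,\rho^\varepsilon u^\varepsilon)_t + q^\psi(\rho^\varepsilon,\rho^\varepsilon u^\varepsilon)_x = \varepsilon\, \eta^\psi_m(\rho^\varepsilon,\rho^\varepsilon u^\varepsilon)\, u^\varepsilon_{xx},
\end{equation*}
where subscripts on $\eta^\psi$ denote derivatives in the $(\rho,m)$ variables (the factor of $\eta^\psi_\rho m_x + \eta^\psi_m[(\rho u^2)_x + p_x - (p'-u^2)\rho_x]$ arising from the convection terms is identically $2u\eta^\psi_m m_x$ and cancels against the contribution from $q^\psi_m m_x$).

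Next, I would decompose the right-hand side as
\begin{equation*}
\varepsilon(\eta^\psi_m u^\varepsilon_x)_x - \varepsilon\, \eta^\psi_{mm}(\rho^\varepsilon u^\varepsilon)_x u^\varepsilon_x - \varepsilon\, \eta^\psi_{m\rho}\rho^\varepsilon_x u^\varepsilon_x.
\end{equation*}
For the first, distributional-derivative piece, the uniform pointwise bound $\Vert \eta^\psi_m\Vert_{L^\infty}\leq M_\psi$ from Lemma \ref{lemma:cpctly supported chapter 2}, combined with the $L^2$-control on $\sqrt{\varepsilon}u^\varepsilon_x$ from Proposition \ref{lemma:unif estimates ch2}(i), gives $\Vert \varepsilon \eta^\psi_m u^\varepsilon_x\Vert_{L^2_{loc}} \leq M_\psi\sqrt{\varepsilon}\to 0$, so this term is compact (indeed, strongly convergent to zero) in $W^{-1,2}_{loc}(\mathbb{R}^2_+)$.

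For the two bulk terms, I would convert the mixed-derivative estimates of Lemma \ref{lemma:cpctly supported chapter 2} into the $(\rho,m)$-convention and write $(\rho u)_x = u\rho_x + \rho u_x$. The product $\varepsilon\eta^\psi_{mm}\rho^\varepsilon(u^\varepsilon_x)^2$ is controlled directly by $M_\psi \varepsilon (u^\varepsilon_x)^2$ using $|\rho\eta^\psi_{mm}|\leq M_\psi$, hence is uniformly bounded in $L^1_{loc}$ by Proposition \ref{lemma:unif estimates ch2}(i). The remaining cross terms $\varepsilon\eta^\psi_{mm}u\rho^\varepsilon_x u^\varepsilon_x$ and $\varepsilon\eta^\psi_{m\rho}\rho^\varepsilon_x u^\varepsilon_x$ are estimated by Cauchy--Schwarz, writing each as the product of a factor $(\varepsilon\, p'(\rho^\varepsilon)|\rho^\varepsilon_x|^2/(\rho^\varepsilon)^2)^{1/2}$ and a factor essentially of the form $\sqrt{\varepsilon}|u^\varepsilon_x|$ weighted by a pointwise bounded quantity, where the new bound $|\eta^\psi_{m\rho}|\leq M_\psi\sqrt{p'(\rho)}/\rho$ together with the effective compact $u$-support of $\eta^\psi$ ensures the weight is integrable. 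Both factors lie in $L^2_{loc}$ uniformly by Proposition \ref{lemma:unif estimates ch2}(i)--(ii), so these terms are uniformly bounded in $L^1_{loc}\subset\mathcal{M}_{loc}(\mathbb{R}^2_+)$.

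Thus the entropy dissipation decomposes as a sum of a piece compact in $W^{-1,2}_{loc}$ and a piece uniformly bounded in $\mathcal{M}_{loc}$, to which Murat's lemma applies to yield compactness in $W^{-1,p}_{loc}(\mathbb{R}^2_+)$ for every $p\in(1,2)$. The main obstacle, and the reason the argument does not reduce verbatim to \cite[Proposition 3.9]{SS1}, is the cross term $\varepsilon \eta^\psi_{m\rho}\rho^\varepsilon_x u^\varepsilon_x$: in the absence of an explicit form for the pressure at large densities, it is precisely the refined representation-formula bound on $\eta^\psi_{m\rho}$ in Lemma \ref{lemma:cpctly supported chapter 2} (which in turn relies on the perturbation analysis of Sections \ref{section:rep formula chi error}--\ref{section:entropy pairs bounds}) that closes the Cauchy--Schwarz estimate against the weighted density-gradient bound of Proposition \ref{lemma:unif estimates ch2}(ii).
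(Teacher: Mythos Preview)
Your overall strategy---entropy dissipation identity, split into a divergence piece going to zero in $W^{-1,2}_{loc}$ and bulk pieces bounded in $L^1_{loc}$, then Murat---is exactly the approach the paper takes (it simply cites \cite[Proposition~3.9]{SS1} together with Lemma~\ref{lemma:cpctly supported chapter 2}). However, your decomposition contains a coordinate-convention slip that, as written, does not close.

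The bound $|\eta^\psi_{m\rho}|\leq M_\psi\sqrt{p'(\rho)}/\rho$ in Lemma~\ref{lemma:cpctly supported chapter 2} is explicitly for the mixed derivative $\partial_\rho\big(\eta^\psi_m(\rho,\rho u)\big)$ with $u$ held fixed, not $\rho$ held fixed with $m$. If you expand $\partial_x(\eta^\psi_m)$ in $(\rho,m)$ as you do, your $\eta^\psi_{m\rho}$ is the $(\rho,m)$-derivative, which differs from the quantity bounded in the lemma by $-u\,\eta^\psi_{mm}$. You then also generate the cross term $\varepsilon\,\eta^\psi_{mm}\,u\,\rho^\varepsilon_x u^\varepsilon_x$ from $(\rho u)_x=u\rho_x+\rho u_x$, and claim its Cauchy--Schwarz weight is pointwise bounded. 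It is not: using $|\rho\eta^\psi_{mm}|\leq M_\psi$ the weight against $\sqrt{\varepsilon p'(\rho)\rho^{-2}|\rho_x|^2}\cdot\sqrt{\varepsilon}|u_x|$ is $|u|/\sqrt{p'(\rho)}$, which blows up like $\rho^{-\theta}$ near vacuum and like $\sqrt{\log\rho}$ as $\rho\to\infty$ (the $u$-support of $\eta^\psi$ is $[a-k(\rho),b+k(\rho)]$, not uniformly compact).

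The fix is immediate: differentiate $\eta^\psi_m$ in the $(\rho,u)$ variables to match Lemma~\ref{lemma:cpctly supported chapter 2}, giving
\[
\varepsilon\,\eta^\psi_m u^\varepsilon_{xx}=\varepsilon\big(\eta^\psi_m u^\varepsilon_x\big)_x-\varepsilon\,\eta^\psi_{mu}\,(u^\varepsilon_x)^2-\varepsilon\,\eta^\psi_{m\rho}\,\rho^\varepsilon_x u^\varepsilon_x,
\]
where now $|\eta^\psi_{mu}|\leq M_\psi$ and $|\eta^\psi_{m\rho}|\leq M_\psi\sqrt{p'(\rho)}/\rho$ directly. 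The second bulk term is then bounded by $M_\psi\varepsilon(u^\varepsilon_x)^2$ and the third by $M_\psi\big(\varepsilon p'(\rho^\varepsilon)(\rho^\varepsilon)^{-2}|\rho^\varepsilon_x|^2\big)^{1/2}\big(\varepsilon|u^\varepsilon_x|^2\big)^{1/2}$, both in $L^1_{loc}$ by Proposition~\ref{lemma:unif estimates ch2}(i)--(ii). Equivalently, in your $(\rho,m)$ decomposition the two offending $u\,\eta^\psi_{mm}$ contributions cancel exactly once you correctly convert the mixed derivative; but they cannot be estimated separately as you propose.
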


\section{Singularities of the entropy kernel}\label{section:singularities ch2}
The main result of this section is Lemma \ref{lemma:lambda plus one deriv chi}, which is the analogue of \cite[Lemma 2.7]{SS1}, and is concerned with the fractional derivatives of the kernels $\chi$ and $\sigma$ (\textit{cf.}~\cite[Section 2.2]{SS1}). This result is indispensable for the proof of the reduction of the Young measure, which is contained in the next section. We recall that for a distribution $g(s)$, the fractional derivative of order $\la$ of $g$ is defined by the following formula, where $\Gamma$ is the gamma function, 
$$\partial^\la g(s)=\Gamma(-\la)g(s)*[s]_+^{-\la-1}.$$
We require sharp bounds on the fractional derivatives of $\chi$ and $\sigma$. We begin with the expansions of these kernels from \cite[Theorems 2.1--2.3]{ChenLeFloch}.
\begin{thm}\label{lemma:global bound vacuum expansion for chi}
    The entropy kernel admits globally the expansion
    \begin{equation}\label{eq:expansion at vacuum}
    \chi(\rho,u) = a_\sharp(\rho)G_\lambda(\rho,u) + a_\flat(\rho)G_{\lambda+1}(\rho,u) + g_1(\rho,u) \qquad \text{for }(\rho,u) \in \mathbb{R}^2_+,
\end{equation}
where $G_\lambda(\rho,u) = [k(\rho)^2 - u^2]_+^\lambda$, and $g_1(\rho,\cdot)$ and its fractional derivative $\partial^{\lambda+1}_u g_1(\rho,\cdot)$ are H\"{o}lder continuous. There exists a constant $M>0$ such that
\begin{equation}\label{eq:a sharp and a flat large rho bound}
    |a_\sharp(\rho)| + |a_\flat(\rho)| \leq M \sqrt{\rho} k(\rho)^{-\lambda}  \qquad \text{for } \rho \geq \rho_*,
\end{equation}
and
\begin{equation}\label{eq:lambda+1 g1 far}
    \Vert \partial^{\lambda+1}_u g_1(\rho,\cdot)\Vert_{L^\infty(\mathbb{R})} \leq M  \sqrt{\rho} k(\rho)^2 \qquad \text{for } \rho \geq \rho_*.
\end{equation}
\end{thm}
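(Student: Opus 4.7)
The existence of the decomposition \eqref{eq:expansion at vacuum}, together with the H\"{o}lder regularity of $g_1$ and of $\partial_u^{\lambda+1} g_1$, is furnished by \cite[Theorems 2.1--2.3]{ChenLeFloch} and follows from the vacuum assumption \eqref{eq:p asymp near 0 P} alone. The new content is therefore the sharp large-$\rho$ estimates \eqref{eq:a sharp and a flat large rho bound} and \eqref{eq:lambda+1 g1 far} for $\rho \geq \rho_*$, for which the splitting $\chi = \chi^{*} + \tilde{\chi}$ of Definition \ref{def:chi iso def} (via \eqref{eq:chi error def}) is tailor-made.

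My plan is to transfer the analogous bounds for $\chi^{*}$ from \cite[Lemma 2.7]{SS1} using the rescaling in Definition \ref{def:g sharp and g flat}. This yields an expansion
\begin{equation*}
    \chi^{*}(\rho,u) = a^{*}_\sharp(\rho) G^{*}_\lambda(\rho,u) + a^{*}_\flat(\rho) G^{*}_{\lambda+1}(\rho,u) + g^{*}_1(\rho,u),
\end{equation*}
with $G^{*}_\lambda(\rho,u) = [k_*(\rho)^2 - u^2]_+^\lambda$, $|a^{*}_\sharp| + |a^{*}_\flat| \leq M \sqrt{\rho}\,k_*(\rho)^{-\lambda}$, and $\Vert \partial_u^{\lambda+1} g^{*}_1(\rho,\cdot) \Vert_{L^\infty(\R)} \leq M\sqrt{\rho}\,k_*(\rho)^2$. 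Since $k$ and $k_*$ are comparable for $\rho \geq \rho_*$ by Lemma \ref{lemma:p' below}, these bounds are of the required magnitude after replacing $k_*$ with $k$. To reconcile the support mismatch between $\{|u|\leq k_*(\rho)\}$ and $\{|u|\leq k(\rho)\}$, I would rewrite
\begin{equation*}
    G^{*}_\lambda(\rho,u) = G_\lambda(\rho,u) + \big( [k_*(\rho)^2-u^2]_+^\lambda - [k(\rho)^2-u^2]_+^\lambda \big),
\end{equation*}
and similarly for $G^{*}_{\lambda+1}$. Using $|k-k_*| \leq M \rho_*^{-\alpha}$, the correction is supported in a narrow strip near $|u| = \max\{k(\rho),k_*(\rho)\}$; computing its $(\lambda+1)$-fractional derivative and bounding the resulting convolution with $[s]_+^{-\lambda-2}$ shows that it produces a contribution of order $\sqrt{\rho}\,k(\rho)^2 \rho_*^{-\alpha}$, which is absorbed harmlessly into the remainder $g_1$.

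For the perturbation $\tilde{\chi}$, Lemma \ref{lemma:error bound abs} already gives the improved bound $\Vert \tilde{\chi}(\rho,\cdot) \Vert_{L^\infty(\R)} \leq M\rho^{1-\alpha\tilde{\lambda}}$, which is below the expected growth of the principal singularity. I would then run a Gr\"{o}nwall argument analogous to the one in the proof of Lemma \ref{lemma:error bound abs}, but applied to a $(\lambda+1)$-fractionally-differentiated version of the representation formula \eqref{eq:rep form phi} (equivalently, derived by fractionally differentiating the wave equation \eqref{eq:chi error wave eqn}). The forcing term $(k'(\rho)^2 - \kappa_2/\rho^2)\chi^{*}_{uu}$ inherits H\"{o}lder regularity from the expansion of $\chi^{*}$ above, and its $(\lambda+1)$-fractional derivative is controlled by the bounds just established on $\partial_u^{\lambda+1} g^{*}_1$, so it remains integrable along characteristics. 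The main obstacle is matching the asymptotic profiles of $\chi$ and $\chi^{*}$ near the two distinct singular loci $|u| = k(\rho)$ and $|u| = k_*(\rho)$ carefully enough to ensure that the $\lambda$-type singularities of $\tilde{\chi}$ at both loci combine with those of $\chi^{*}$ to produce exactly the stated $a_\sharp G_\lambda + a_\flat G_{\lambda+1}$ part, leaving a remainder whose $(\lambda+1)$-fractional derivative satisfies the bound \eqref{eq:lambda+1 g1 far}. This delicate matching, which hinges crucially on the $O(\rho_*^{-\alpha})$ proximity of $k$ and $k_*$ at large densities, is the technical heart of the argument.
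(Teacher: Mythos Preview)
The paper takes a substantially more direct route than your proposal. For the coefficient bounds \eqref{eq:a sharp and a flat large rho bound}, it recalls from \cite{ChenLeFloch} the explicit formulas $a_\sharp(\rho) = M_\lambda k(\rho)^{-\lambda} k'(\rho)^{-1/2}$ and a corresponding integral formula for $a_\flat$, and bounds these directly using only $k'(\rho) \sim \rho^{-1}$ from Lemma \ref{lemma:p' below} and Corollary \ref{corollary:k' and k'' compare with rho powers}. For the remainder bound \eqref{eq:lambda+1 g1 far}, the paper exploits that $g_1$ itself satisfies a wave equation with explicit forcing $A(\rho)k(\rho)^{-1}f_{\lambda+1}(u/k(\rho))$, derives a representation formula for $\partial_u g_1$ along characteristics emanating from the vacuum $\rho=0$ (not from $\rho_*$), applies $\partial_u^\lambda$ to this representation, and closes via Gr\"onwall. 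The splitting $\chi = \chi^* + \tilde\chi$ is not invoked at all for this theorem.

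Your proposal routes everything through $\chi^*$ and $\tilde\chi$, which introduces the obstacle you yourself flag at the end: $\chi^*$ has its singularities on $\{|u|=k_*(\rho)\}$ while $\chi$ has its on $\{|u|=k(\rho)\}$, so $\tilde\chi = \chi - \chi^*$ carries $\lambda$-type singularities on \emph{both} loci. Consequently $\partial_u^{\lambda+1}\tilde\chi$ is not an $L^\infty$ function but a genuine distribution (Dirac masses and principal values at two distinct points), and a Gr\"onwall argument on a fractionally differentiated version of \eqref{eq:rep form phi} cannot be set up in $L^\infty$. Likewise, your claim that the correction $G^*_\lambda - G_\lambda$ contributes an $L^\infty$ term of order $\sqrt{\rho}\,k(\rho)^2\rho_*^{-\alpha}$ after applying $\partial_u^{\lambda+1}$ is not right: this difference has $\lambda$-type singularities at $|u|=k(\rho)$ and at $|u|=k_*(\rho)$, and its $(\lambda+1)$-fractional derivative contains Dirac masses, not a bounded function. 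Making your approach work would require first establishing a full singular expansion for $\tilde\chi$ with exactly the right coefficients to cancel those of $\chi^*$ at $|u|=k_*(\rho)$ and to produce those of $\chi$ at $|u|=k(\rho)$, which is at least as hard as the theorem itself. The paper's direct approach bypasses all of this by working with the explicit characterizations of $a_\sharp$, $a_\flat$, $g_1$ and never leaving the single locus $|u|=k(\rho)$.
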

\begin{proof}
Recall from \cite[Proposition 2.1]{ChenLeFloch} that
$a_\sharp(\rho) = \alpha_\sharp(\rho) k(\rho)^{-2\lambda-1}$, where
\begin{equation}
    \alpha_\sharp(\rho) = M_\lambda k(\rho)^{\lambda+1}k'(\rho)^{-1/2},
\end{equation}
while $a_\flat(\rho) = \alpha_\flat(\rho)k(\rho)^{-2\lambda-3}$, with
\begin{equation}
    \alpha_\flat(\rho) = -\frac{1}{4(\lambda+1)}k(\rho)^{\lambda+2}k'(\rho)^{-1/2} \int_0^\rho k(\tau)^{-(\lambda+1)}k'(\tau)^{-1/2} \alpha_\sharp''(\tau) \, d\tau.
\end{equation}
A calculation (\textit{cf.}~\cite[Lemma 3.1]{ChenLeFloch}) then shows that there exists a constant $M=M(\gamma,r)$ such that
\begin{equation}
    \rho^{-1}|\alpha_\sharp(\rho)| + |\alpha'_\sharp(\rho)| + |\alpha_\sharp''(\rho)| + \rho |\alpha_\sharp^{(3)}(\rho)| \leq M \qquad \text{for } \rho \in [0,r),
\end{equation}
where $r>0$ is as in Definition \ref{def:asymp isothermal}. From this, it follows by direct calculation that
\begin{equation}
    \rho^{-2}|\alpha_\flat(\rho)| + \rho^{-1}|\alpha_\flat'(\rho)| + |\alpha_\flat''(\rho)| \leq M \qquad \text{for } \rho \in [0,r).
\end{equation}
Moreover, calculating the derivatives explicitly and using the bounds on the derivatives of $k$ provided by \eqref{eq:k'' and higher, decay}, we obtain
\begin{equation}
    |\alpha_\sharp(\rho)| + \rho|\alpha_\sharp'(\rho)| + \rho^2|\alpha_\sharp''(\rho)| + \rho^3|\alpha_\sharp^{(3)}(\rho)|   \leq M \sqrt{\rho} k(\rho)^{\lambda+1}. \qquad \text{for } \rho \geq \rho_*,
\end{equation}
for some $M>0$ and also 
\begin{equation}
    |\alpha_\flat(\rho)| + \rho|\alpha_\flat'(\rho)| + \rho^2|\alpha_\flat''(\rho)|  \leq M \sqrt{\rho} k(\rho)^{\lambda+3} \qquad \text{for } \rho \geq  \rho_*,
\end{equation}
and therefore the bounds on $a_\sharp$ and $a_\flat$ in \eqref{eq:a sharp and a flat large rho bound} follow easily. 

For the remainder term, one may check  (\textit{cf.}~\cite[Proof of Theorems 2.1 and 2.2]{ChenLeFloch}) that
\begin{equation}
    \left\lbrace\begin{aligned}
        & g_{1,\rho \rho} - k'(\rho)^2 g_{1, u u} = A(\rho)k(\rho)^{-1} f_{\lambda+1}\big( \frac{u}{k(\rho)}\big), \\
        &g_1(0,\cdot) = 0, \\
        &g_{1,\rho}(0,\cdot) = 0,
    \end{aligned}\right.
\end{equation}
where $f_\lambda(y) = [1-y^2]^\lambda_+$ and
\begin{equation}
    A(\rho) = -\left(\alpha_\flat''(\rho) + \frac{2\lambda+3}{2(\lambda+1)}\alpha_\sharp''(\rho) \right) \qquad \text{for } \rho \geq 0,
\end{equation}
which is locally bounded. Thus there exists $M>0$ such that $|A(\rho)|\leq M$ for $\rho \in [0,\rho_*)$. Arguing as in the proof of Lemma \ref{lemma:second rep formula eta error}, we get the representation
\begin{equation*}
    \begin{aligned}
        k'(\rho_0) g_1 (\rho_0,u_0)  = \frac{1}{2\rho_0 } \int_0^{\rho_0} d(\rho) k'(\rho) \big( g_1(\rho,u+k(\rho_0)-k(\rho)) + g_1(\rho, u-k(\rho_0)+k(\rho)) \big) \, d\rho & \\
        + \frac{1}{2\rho_0}\int_0^{\rho_0} \rho A(\rho)k(\rho)^{-1} \bigg( \int_{u_0-k(\rho_0)+k(\rho)}^{u_0+k(\rho_0)-k(\rho)} f_{\lambda+1}\big( \frac{s}{k(\rho)} \big) \, ds \bigg) \, d\rho & .
    \end{aligned}
\end{equation*}
As $\rho A(\rho) k(\rho)^{-1}$ is bounded on $[0,\rho_*)$,  the second integral is well-defined. So, taking derivatives in $u_0$, we get
\begin{equation}\label{eq:rep formula partial u g1}
    \begin{aligned}
        k' (\rho_0) & \partial_u g_{1}(\rho_0,u_0)   \\ =&\,\frac{1}{2\rho_0 } \int_0^{\rho_0} d(\rho) k'(\rho) \big( \partial_u g_{1}(\rho,u_0+k(\rho_0)-k(\rho)) + \partial_u g_{1}(\rho, u_0-k(\rho_0)+k(\rho)) \big) \, d\rho \\
        &+ \frac{1}{2\rho_0}\int_0^{\rho_0} \rho A(\rho)k(\rho)^{-1}  \big( f_{\lambda+1}\big( \frac{u_0+k(\rho_0)-k(\rho)}{k(\rho)} \big) - f_{\lambda+1}\big( \frac{u_0-k(\rho_0)+k(\rho)}{k(\rho)} \big) \big) \, d\rho.
    \end{aligned}
\end{equation}
 By the fundamental theorem of calculus, we see that the fractional derivative $\partial^\la_s f_{\la+1}$ satisfies
\begin{equation}\label{eq:frac deriv ftc}
\begin{aligned}
    \partial^\lambda_s f_{\lambda+1}(s) - \partial^\lambda_s f_{\lambda+1}(-1) = \int^s_{-1} \partial^{\lambda+1}_y f_{\lambda+1}(y) \, dy.
\end{aligned}
\end{equation}
Note that $\supp f_{\lambda}=[-1,1]$ so that the second term on the left-hand side vanishes. Also, from \cite[Section 2.2]{SS1}, with $H$ the Heavyside function and $\ci$ the Cosine integral,
\begin{equation*}
    \partial^{\lambda+1}_s f_{\lambda+1}(s) = A_1^{\lambda+1} \big( H(s+1) + H(s-1) \big) + A_2^{\lambda+1} \big( \ci(s+1) - \ci(s-1) \big) + \tilde{r}^{\lambda+1}(s),
\end{equation*}
where $A^{\lambda+1}_1, A^{\lambda+1}_2 \in \mathbb{C}$ and $\tilde{r}^{\lambda+1}$ is a compactly supported H\"{o}lder continuous function. Hence, substituting this equality into \eqref{eq:frac deriv ftc}, we get 
\begin{equation}\label{eq:frac deriv lambda of f lambda+1}
\begin{aligned}
    \partial_s^\lambda f_{\lambda+1}(s) = A^{\lambda+1}_1 \big( [s+1]_+ + [s-1]_+ \big) &+ A^{\lambda+1}_2 \bigg( \int^s_{-1} \ci (y+1) \, dy - \int_{-1}^s \ci(y-1) \, dy \bigg) \\
    &+ \int_{-1}^s \tilde{r}^{\lambda+1}(y) \, dy.
    \end{aligned}
\end{equation}
Given the terms in \eqref{eq:frac deriv lambda of f lambda+1} and the support of $f_\lambda$, we see that $\partial_s^\lambda f_{\lambda+1}$ is a continuous function with compact support, and it is therefore uniformly bounded. In view of this, by applying the fractional derivative $\partial_{u_0}^{\lambda}$ to the representation formula \eqref{eq:rep formula partial u g1}, we get
\begin{equation}\label{eq:pre gronwall partial lambda plus one g}
\begin{aligned}
    \Vert k'(\rho_0) \partial^{\lambda+1}_u g_1(\rho_0,\cdot)\Vert_{L^\infty(\mathbb{R})} \leq \frac{1}{\rho_0}\int_0^{\rho_0} d(\rho)  \Vert k'(\rho) \partial^{\lambda+1}_u g_1 (\rho,\cdot)\Vert_{L^\infty(\mathbb{R})} \, d\rho & \\
    + \frac{M}{\rho_0}\int_0^{\rho_0} \rho A(\rho) k(\rho)^{-(\lambda+1)} \, d\rho &,
    \end{aligned}
\end{equation}
where we have taken into account the homogeneity of the fractional derivative in the final term. We now bound this final term. Suppose that $\rho_0 \in (0,r)$, then
\begin{equation*}
    \int_0^{\rho_0} |\rho A(\rho) k(\rho)^{-(\lambda+1)}| \, d\rho \leq M\int_0^{\rho_0} \rho^{\frac{1}{2}-\frac{\theta}{2}} \, d\rho \leq M \rho_0^{\frac{3}{2}-\frac{\theta}{2}},
\end{equation*}
and since $\frac{3}{2}-\frac{\theta}{2} > \frac{1}{2}$, the right-hand side is dominated by $M\sqrt{\rho_0}$. If $\rho_0 \in [r,\rho_*)$, only a bounded contribution is added, since the integrand is continuous on this interval. Finally, if $\rho_0 \geq \rho_*$,
\begin{equation*}
\begin{aligned}
    \int_0^{\rho_0} |\rho A(\rho) k(\rho)^{-(\lambda+1)}| \, d\rho &\leq M\int_0^{r} \rho^{\frac{1}{2}-\frac{\theta}{2}} \, d\rho + M \int_r^{\rho_*} \, d\rho + M \int_{\rho_*}^{\rho_0} k(\rho)^2 \rho^{-1/2} \, d\rho, \\
    &\leq M \big( 1 + \sqrt{\rho_0}k(\rho_0)^2\big).
\end{aligned}
\end{equation*}
Thus, applying Gr\"{o}nwall's lemma to \eqref{eq:pre gronwall partial lambda plus one g}, we obtain
\begin{equation}\label{eq:lambda+1 g1 near vac}
    \Vert k'(\rho_0) \partial^{\lambda+1}_u g_1(\rho_0,\cdot)\Vert_{L^\infty(\mathbb{R})} \leq M \rho_0^{\frac{1}{2}-\frac{\theta}{2}} \qquad \text{for } \rho_0 \in [0,r), 
\end{equation}
while
\begin{equation*}
    \Vert k'(\rho_0) \partial^{\lambda+1}_u  g_1(\rho_0,\cdot)\Vert_{L^\infty(\mathbb{R})} \leq \frac{M k(\rho_0)^2}{\sqrt{\rho_0}}  \qquad \text{for } \rho_0 \geq \rho_*,
\end{equation*}
from which the result is easily deduced using \eqref{eq:k'' close to rho squared}.
\end{proof}

A similar proof shows that we also have the equivalent results for the entropy flux kernel.

\begin{thm}\label{lemma:global bound vacuum expansion for h}
Recall from \cite[Theorem 2.3]{ChenLeFloch} that $h(\rho,u-s) = \sigma(\rho,u,s) - u\chi(\rho,u-s)$ admits globally the expansion
\begin{equation}\label{eq:expansion at vacuum flux}
    h(\rho,u) = -u\big( b_\sharp(\rho)G_\lambda(\rho,u) + b_\flat(\rho) G_{\lambda+1}(\rho,u) \big) + g_2(\rho,u) \qquad \text{for } (\rho,u) \in \mathbb{R}^2_+,
\end{equation}
where $G_\lambda(\rho,u) = [k(\rho)^2 - u^2]_+^\lambda$, and $g_2(\rho,\cdot)$ and its fractional derivative $\partial^{\lambda+1}_u g_2(\rho,\cdot)$ are H\"{o}lder continuous. There exists a constant $M>0$ such that
\begin{equation}
    |b_\sharp(\rho)| + |b_\flat(\rho)| \leq M\sqrt{\rho} k(\rho)^{-\lambda-1} \qquad \text{for } \rho \geq \rho_*,
\end{equation}
and
\begin{equation}
    \Vert \partial^{\lambda+1}_u g_2(\rho,\cdot) \Vert_{L^\infty(\mathbb{R})} \leq M\sqrt{\rho} k(\rho)^2 \qquad \text{for } \rho \geq \rho_*.
\end{equation}
\end{thm}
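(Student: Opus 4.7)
The plan is to mirror the strategy used in the proof of Theorem \ref{lemma:global bound vacuum expansion for chi}, taking into account two features specific to the entropy flux kernel: the extra factor of $-u$ in the leading singular terms, and the inhomogeneity $\frac{p''(\rho)}{\rho}\chi_u$ appearing in \eqref{eq:ent flux kernel}. All the existing machinery (the Chen--LeFloch expansions, the derivative bounds on $k$ from Corollary \ref{corollary:k' and k'' compare with rho powers}, the representation formula for inhomogeneous wave equations as in Lemma \ref{lemma:second rep formula eta error}, and the Cosine integral decomposition of fractional derivatives from \cite[Section 2.2]{SS1}) transfers essentially verbatim.

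First I would handle the coefficients $b_\sharp$ and $b_\flat$. From \cite[Proposition 2.1]{ChenLeFloch} one extracts explicit representations $b_\sharp(\rho) = \beta_\sharp(\rho) k(\rho)^{-2\lambda-1}$ and $b_\flat(\rho) = \beta_\flat(\rho) k(\rho)^{-2\lambda-3}$, with $\beta_\sharp, \beta_\flat$ given by integrals involving $k$, $k'$ and the functions $\alpha_\sharp, \alpha_\flat$ already appearing in Theorem \ref{lemma:global bound vacuum expansion for chi}. Combining the near-vacuum estimates on $\alpha_\sharp, \alpha_\flat$ (and their derivatives) with the large-$\rho$ bounds on derivatives of $k$ furnished by \eqref{eq:k'' and higher, decay}, a routine calculation analogous to the one performed for $\alpha_\sharp, \alpha_\flat$ yields $|\beta_\sharp(\rho)| + |\beta_\flat(\rho)| \leq M\sqrt{\rho}\,k(\rho)^{\lambda}$ for $\rho \geq \rho_*$, which upon recalling the definitions of $b_\sharp, b_\flat$ produces the advertised bound $M\sqrt{\rho}\,k(\rho)^{-\lambda-1}$.

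Next I would derive the wave equation for $g_2$. Substituting the ansatz \eqref{eq:expansion at vacuum flux} for $h$ together with the expansion \eqref{eq:expansion at vacuum} for $\chi$ into \eqref{eq:ent flux kernel}, the choices of $b_\sharp, b_\flat$ above ensure that all principal singular contributions cancel, leaving
\begin{equation*}
\left\{\begin{aligned}
& g_{2,\rho\rho} - k'(\rho)^2 g_{2,uu} = B(\rho)\,k(\rho)^{-1} f_{\lambda+1}\!\bigl(\tfrac{u}{k(\rho)}\bigr) + u\,\widetilde{B}(\rho)\,k(\rho)^{-2} \tilde{f}_{\lambda+1}\!\bigl(\tfrac{u}{k(\rho)}\bigr), \\
& g_2(0,\cdot) = 0, \quad g_{2,\rho}(0,\cdot) = 0,
\end{aligned}\right.
\end{equation*}
for locally bounded $B, \widetilde{B}$ and a profile $\tilde{f}_{\lambda+1}$ of the same qualitative form as $f_{\lambda+1}$. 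Applying the same Fourier/inverse Fourier manipulation used in Lemma \ref{lemma:chi rep form} (with vanishing initial data at $\rho = 0$) produces an integral representation for $k'(\rho_0)\,\partial_u g_2(\rho_0,u_0)$ analogous to \eqref{eq:rep formula partial u g1}. Then, applying $\partial^\lambda_{u_0}$ to this representation and using that $\partial^\lambda_s f_{\lambda+1}$ and $\partial^\lambda_s \tilde{f}_{\lambda+1}$ are compactly supported and uniformly bounded (from the Heaviside--Cosine integral decomposition of \cite[Section 2.2]{SS1}) yields
\begin{equation*}
\|k'(\rho_0)\,\partial^{\lambda+1}_u g_2(\rho_0,\cdot)\|_{L^\infty} \leq \frac{1}{\rho_0}\!\int_0^{\rho_0}\! d(\rho)\,\|k'(\rho)\,\partial^{\lambda+1}_u g_2(\rho,\cdot)\|_{L^\infty}\,d\rho + \frac{M}{\rho_0}\!\int_0^{\rho_0}\!\!\rho\bigl(|B(\rho)| + |\widetilde{B}(\rho)|\bigr) k(\rho)^{-\lambda-1}\,d\rho.
\end{equation*}

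Finally I would estimate the forcing integral on the three regimes $[0,r)$, $[r,\rho_*)$, and $[\rho_*,\infty)$ exactly as in the previous proof (noting again that $\frac{3}{2} - \frac{\theta}{2} > \frac{1}{2}$ controls the near-vacuum contribution, while at large $\rho$ one obtains a bound proportional to $\sqrt{\rho}\,k(\rho)^2$), and then apply Grönwall's lemma together with Lemma \ref{lemma:d and d star} to conclude
\begin{equation*}
\|\partial^{\lambda+1}_u g_2(\rho,\cdot)\|_{L^\infty} \leq M\sqrt{\rho}\,k(\rho)^2 \qquad \text{for } \rho \geq \rho_*.
\end{equation*}
The main obstacle is the algebraic bookkeeping in the derivation of the wave equation for $g_2$: one has to verify that the choices of $b_\sharp, b_\flat$ coming from Chen--LeFloch precisely cancel both the wave-operator contributions of the singular terms $-u\,b_\sharp G_\lambda - u\,b_\flat G_{\lambda+1}$ and the singular part of $\frac{p''(\rho)}{\rho}\chi_u$, leaving a forcing of the same regularity class as in the $g_1$ equation. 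Once this cancellation is established the Grönwall argument proceeds as above.
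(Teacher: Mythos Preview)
Your proposal is correct and takes essentially the same approach as the paper, which simply records that ``a similar proof shows that we also have the equivalent results for the entropy flux kernel'' and leaves the details to the reader. You have in fact written out considerably more than the paper does: the coefficient bounds via the explicit Chen--LeFloch formulae, the derivation of the inhomogeneous wave equation for $g_2$, the representation formula, and the three-regime Gr\"onwall estimate all proceed exactly as you describe.

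One small omission worth flagging: after the singular cancellations, the forcing for $g_2$ also retains the contribution $\frac{p''(\rho)}{\rho}\,\partial_u g_1(\rho,u)$ coming from the remainder part of $\chi_u$, which does not have the self-similar profile form you wrote down. This causes no trouble, since after applying $\partial^{\lambda}_{u_0}$ to the representation formula it produces a term controlled by $\rho\,\frac{|p''(\rho)|}{\rho}\,\|\partial^{\lambda+1}_u g_1(\rho,\cdot)\|_{L^\infty}$, and the bound \eqref{eq:lambda+1 g1 far} together with $|p''(\rho)|\leq M\rho^{-\alpha-1}$ makes this subdominant in the forcing integral. With that term included, the Gr\"onwall argument closes exactly as you indicate.
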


We recall the following lemma from \cite[Proposition 2.4]{ChenLeFloch} for future use.
\begin{lemma}\label{lemma:D prop}
The coefficients in the expansions \eqref{eq:expansion at vacuum} and \eqref{eq:expansion at vacuum flux} satisfy
\begin{equation}
D(\rho) := a_\sharp(\rho)b_\sharp(\rho) - k(\rho)^2 \big( a_\sharp(\rho) b_\flat(\rho) - a_\flat(\rho) b_\sharp(\rho) \big) > 0.
\end{equation}
\end{lemma}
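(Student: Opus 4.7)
The plan is straightforward since this lemma is recalled verbatim from \cite[Proposition 2.4]{ChenLeFloch}, so the real content of a proof is to verify that the computation in that reference transfers to our asymptotically isothermal setting. The main idea is to use the explicit closed-form expressions for the singular coefficients that were already employed in the proof of Theorem \ref{lemma:global bound vacuum expansion for chi}. Specifically, I would begin by recalling that $a_\sharp(\rho) = M_\lambda k(\rho)^{-2\lambda-1}\alpha_\sharp(\rho)$ with $\alpha_\sharp(\rho) = M_\lambda k(\rho)^{\lambda+1} k'(\rho)^{-1/2}$, together with the analogous formula for $b_\sharp$ from \cite[Theorem 2.3]{ChenLeFloch}. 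Plugging these into $a_\sharp b_\sharp$ produces a \emph{positive} leading quantity depending only on $k(\rho)$ (the $k'(\rho)^{\pm 1/2}$ factors cancel and the numerical constants combine into $M_\lambda^2$).

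The second step is to bound the correction $k(\rho)^2 (a_\sharp b_\flat - a_\flat b_\sharp)$. Here $a_\flat$ and $b_\flat$ are defined by integrals of $\alpha_\sharp''$ and the corresponding quantity for $b_\sharp$, as in Theorem \ref{lemma:global bound vacuum expansion for chi}. A direct computation, entirely analogous to \cite[Proposition 2.4]{ChenLeFloch}, shows that this subleading term cancels against part of $a_\sharp b_\sharp$ and leaves behind an expression that reduces to $M_\lambda^2 k(\rho)^{-2\lambda-1}$ (up to a fixed positive multiplicative constant). Since $k(\rho) > 0$ for $\rho > 0$ by \eqref{def:k(rho)} and \eqref{eq:st hyp gen non}, the strict positivity $D(\rho) > 0$ follows.

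The only point requiring a remark beyond \cite{ChenLeFloch} is that the Chen--LeFloch computation was performed under general strict hyperbolicity together with the near-vacuum expansion \eqref{eq:p asymp near 0 P}, both of which are assumed here. In the high-density regime $\rho \geq \rho_*$, the explicit formulas continue to hold since they depend only on $k(\rho), k'(\rho)$ and a pair of integrals; the required integrability and smoothness of the integrands in this regime follow from Lemma \ref{lemma:p' below} and Corollary \ref{corollary:k' and k'' compare with rho powers}. Thus no new argument is needed.

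The only potential obstacle I foresee is keeping track of algebraic cancellations in the $a_\sharp b_\flat - a_\flat b_\sharp$ term; this is the technical heart of \cite[Proposition 2.4]{ChenLeFloch} and is handled by carefully integrating by parts in the expressions defining $\alpha_\flat$ and its analogue. Since this is a qualitative positivity statement, no uniform-in-$\rho$ control is required, and the local continuity of $D(\rho)$ together with its explicit form suffices.
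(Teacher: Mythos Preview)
The paper does not give a proof of this lemma at all; it is stated as a direct recall of \cite[Proposition 2.4]{ChenLeFloch}, with no argument supplied. Your proposal---to observe that the coefficients $a_\sharp,a_\flat,b_\sharp,b_\flat$ are defined by the same explicit formulas in $k(\rho),k'(\rho)$ as in \cite{ChenLeFloch}, and that the structural hypotheses \eqref{eq:st hyp gen non}--\eqref{eq:p asymp near 0 P} match those under which Chen--LeFloch perform the computation---is therefore entirely consistent with (and more detailed than) what the paper does.
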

We now arrive at the main result of this section, which will be used in key steps of the Young measure reduction in Section \ref{section:reduction ch2}.

\begin{lemma}\label{lemma:lambda plus one deriv chi}
    For all values of $(\rho,u,s) \in \mathbb{R}^2_+ \times \mathbb{R}$, the fractional derivatives $\partial_u^{\lambda+1}\chi$ and $\partial^{\lambda+1}_u h$ admit the expansions:
    \begin{equation}\label{eq:structure frac deriv chi}
    \begin{aligned}
        \partial_s^{\lambda+1}\chi(\rho & ,u-s)  \\
       = \sum_{\pm} \bigg( & A_{1,\pm} (\rho) \delta(s-u \pm k(\rho)) + A_{2,\pm}(\rho) H(s-u \pm k(\rho)) \\
        &+ A_{3,\pm}(\rho) \pv (s-u \pm k(\rho)) + A_{4,\pm}(\rho) \ci (s-u \pm k(\rho)) \bigg) + r_\chi (\rho,u-s),
    \end{aligned}
    \end{equation}
    and
     \begin{equation}\label{eq:structure frac deriv h}
    \begin{aligned}
        \partial_s^{\lambda+1}h(\rho  ,u-s) & \\
     =   \sum_{\pm} (s-u)\bigg( & B_{1,\pm} (\rho) \delta(s-u \pm k(\rho)) + B_{2,\pm}(\rho) H(s-u \pm k(\rho)) \\
        &+ B_{3,\pm}(\rho) \pv (s-u \pm k(\rho)) + B_{4,\pm}(\rho) \ci (s-u \pm k(\rho)) \bigg) \\
        +\sum_\pm \bigg( B_{5,\pm}  (&\rho) H(s-u\pm k(\rho)) + B_{6,\pm}(\rho) \ci (s-u \pm k(\rho)) \bigg) + r_\sigma(\rho,u-s),
    \end{aligned}
    \end{equation}
    where $\de$, $\pv$, $H$ and $\ci$ denote the Dirac mass, Principal Value, Heaviside function and Cosine integral, respectively, and the remainder terms $r_\chi$ and $r_\sigma$ are H\"{o}lder continuous functions. Additionally, in the large density regime, there exists a positive constant $M$ such that 
    \begin{equation}\label{eq:coeff bounds frac deriv chi}
       \sum_{j=1,\pm}^{4} |A_{j,\pm}(\rho)| + \sum_{j=1,\pm}^6 |B_{j,\pm}(\rho)| \leq M\sqrt{\rho}\big( 1 + \log(\rho/\rho_*)\big) \qquad \text{for } \rho \geq \rho_*,
    \end{equation}
    and
     \begin{equation}\label{eq:coeff bounds frac deriv chi remainder}
        \Vert r_\chi(\rho,\cdot) \Vert_{L^\infty(\mathbb{R})} + \Vert r_\sigma(\rho,\cdot) \Vert_{L^\infty(\mathbb{R})} \leq M\sqrt{\rho}\big( 1 + (\log(\rho/\rho_*))^2\big) \qquad \text{for } \rho \geq \rho_*.
    \end{equation}
\end{lemma}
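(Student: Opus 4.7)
The plan is to obtain the distributional structure of $\partial_s^{\lambda+1}\chi(\rho,u-s)$ and $\partial_s^{\lambda+1}h(\rho,u-s)$ directly from the vacuum expansions \eqref{eq:expansion at vacuum} and \eqref{eq:expansion at vacuum flux}, exploiting the scale invariance of the universal profiles $f_\lambda(y):=[1-y^2]_+^\lambda$. Writing $\xi=u-s$ so that $\partial_s=-\partial_\xi$ (absorbing the resulting phase into the coefficients), one has the identity $G_\lambda(\rho,\xi)=k(\rho)^{2\lambda}f_\lambda(\xi/k(\rho))$, and homogeneity of the Riemann--Liouville derivative then yields
\[\partial_\xi^{\lambda+1}G_\lambda(\rho,\xi)=k(\rho)^{\lambda-1}(\partial_y^{\lambda+1}f_\lambda)(\xi/k(\rho)), \quad \partial_\xi^{\lambda+1}G_{\lambda+1}(\rho,\xi)=k(\rho)^{\lambda+1}(\partial_y^{\lambda+1}f_{\lambda+1})(\xi/k(\rho)).\]
Thus the whole analysis reduces to computing these two universal profile derivatives and tracking the scaling back into the $\xi$ (hence $s$) variable.

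For the profile derivatives, the argument that produced \eqref{eq:frac deriv lambda of f lambda+1} (and its companion for $f_\lambda$, both carried out in \cite[Section~2.2]{SS1}) gives
\[\partial_y^{\lambda+1}f_\lambda(y)=\sum_{\pm}\big(c_1^{\pm}\de(y\mp 1)+c_2^{\pm}H(y\mp 1)+c_3^{\pm}\pv(y\mp 1)+c_4^{\pm}\ci(y\mp 1)\big)+\tilde r_\lambda(y),\]
while $\partial_y^{\lambda+1}f_{\lambda+1}$ contains only the Heaviside and Cosine integral parts, together with a compactly supported H\"older remainder. Substituting into \eqref{eq:expansion at vacuum} and using that under $y=\xi/k(\rho)$ the Dirac rescales as $\de(y\mp 1)=k(\rho)\de(\xi\mp k(\rho))$, whereas $H$, $\pv$ and $\ci$ at $y\mp 1$ agree with their counterparts at $\xi\mp k(\rho)$ up to constants that can be absorbed into the H\"older remainder, immediately yields \eqref{eq:structure frac deriv chi}; the coefficients $A_{j,\pm}(\rho)$ are then explicit products of $a_\sharp(\rho)$ or $a_\flat(\rho)$ with an appropriate power of $k(\rho)$, and $r_\chi$ collects the scaled H\"older profile remainders together with $\partial_\xi^{\lambda+1}g_1$. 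For the flux kernel, the factor $-u$ in \eqref{eq:expansion at vacuum flux} is rewritten via $-u=(s-u)-s$ and the fractional Leibniz identity
\[\partial_s^{\lambda+1}\big[(s-u)F(s)\big]=(s-u)\,\partial_s^{\lambda+1}F(s)+(\lambda+1)\,\partial_s^{\lambda}F(s)\]
produces the $(s-u)$-weighted distributions together with a contribution one fractional order smoother, which accounts precisely for the additional $H(s-u\pm k(\rho))$ and $\ci(s-u\pm k(\rho))$ terms in \eqref{eq:structure frac deriv h}, while $\partial_\xi^{\lambda+1}g_2$ is absorbed into $r_\sigma$.

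The coefficient bound \eqref{eq:coeff bounds frac deriv chi} is then obtained by multiplying $|a_\sharp(\rho)|+|a_\flat(\rho)|\leq M\sqrt{\rho}\,k(\rho)^{-\lambda}$ and $|b_\sharp(\rho)|+|b_\flat(\rho)|\leq M\sqrt{\rho}\,k(\rho)^{-\lambda-1}$ from Theorems \ref{lemma:global bound vacuum expansion for chi}--\ref{lemma:global bound vacuum expansion for h} by the scaling factors $k(\rho)^{\lambda-1}$ (from $G_\lambda$) and $k(\rho)^{\lambda+1}$ (from $G_{\lambda+1}$), with an extra factor $k(\rho)$ inserted whenever the Dirac rescaling applies; the worst contribution is $|a_\flat(\rho)|\cdot k(\rho)^{\lambda+1}\lesssim \sqrt{\rho}\,k(\rho)$, which by Corollary \ref{corollary:k sim log} is dominated by $M\sqrt{\rho}(1+\log(\rho/\rho_*))$. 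The remainder bound \eqref{eq:coeff bounds frac deriv chi remainder} follows by combining the large-$\rho$ estimates $\|\partial_\xi^{\lambda+1}g_i(\rho,\cdot)\|_{L^\infty}\leq M\sqrt{\rho}\,k(\rho)^2$ from those same theorems with $L^\infty$-bounds on the rescaled H\"older profile remainders, all controlled by $M\sqrt{\rho}(1+(\log(\rho/\rho_*))^2)$. The main technical obstacle I anticipate is the careful, distributional bookkeeping of the fractional Leibniz rule and of the constants generated when rescaling $\de$ and $\pv$ under $y\mapsto\xi/k(\rho)$; beyond that, the proof is essentially a high-density variant of \cite[Lemma~2.7]{SS1} driven by the new large-$\rho$ expansions we have just established.
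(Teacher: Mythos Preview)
Your proposal is correct and follows essentially the same route as the paper: take the fractional derivative term by term in the Chen--LeFloch expansions \eqref{eq:expansion at vacuum}--\eqref{eq:expansion at vacuum flux}, identify the singular profile pieces, and read off the coefficient and remainder bounds from Theorems \ref{lemma:global bound vacuum expansion for chi}--\ref{lemma:global bound vacuum expansion for h} together with $k(\rho)\lesssim 1+\log(\rho/\rho_*)$. One small correction to your bookkeeping: under the dilation $y\mapsto \xi/k(\rho)$, the principal value distribution is homogeneous of degree $-1$ (like $\delta$), not degree $0$ as you group it with $H$ and $\ci$, so $\pv$ also picks up a factor $k(\rho)$; this matches the paper's explicit coefficients, where both $A_{1,\pm}$ and $A_{3,\pm}$ carry $a_\sharp(\rho)k(\rho)^{\lambda}$, and does not affect the final bound.
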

\begin{proof}
By taking fractional derivatives of \eqref{eq:expansion at vacuum}, we have, as in \cite[Section 2.2]{SS1},
\begin{equation*}
    \partial_s^{\lambda+1} \chi(\rho,u-s) = a_\sharp(\rho) \partial^{\lambda+1}_s G_\lambda (\rho,u-s) + a_\flat(\rho)\partial^{\lambda+1}_s G_{\lambda+1}(\rho,u-s) + \partial^{\lambda+1}_s g_1 (\rho,u-s),
\end{equation*}
where we know from \cite[Proposition 3.4]{LeFlochWestdickenberg} that
\begin{equation*}
\begin{aligned}
    \partial^\lambda_s G_\lambda(\rho,u-s) = & \,k(\rho)^\lambda  A_1^\lambda \big( H(s-u+k(\rho)) + H(s-u-k(\rho)) \big) \\
    &+ k(\rho)^\lambda A^\lambda_2 \big( \ci(s-u+k(\rho)) - \ci(s-u-k(\rho)) \big) +k(\rho)^\lambda \tilde{r}\big( \frac{s-u}{k(\rho)} \big),
\end{aligned}
\end{equation*}
and
\begin{equation*}
    \begin{aligned}
        \partial_s^{\lambda+1}G_\lambda(\rho,u-s) = &\,k(\rho)^\lambda \big[  A_1^\lambda \big( \delta(s-u+k(\rho)) + \delta(s-u-k(\rho)) \big) \\
       & \qquad+ A^\lambda_2 \big( \pv(s-u+k(\rho)) - \pv(s-u-k(\rho)) \big) \big] \\
        &+k(\rho)^{\lambda-1} \big[  A^\lambda_3 \big( H(s-u+k(\rho)) - H(s-u-k(\rho)) \big) \\
        & \qquad+ A^\lambda_4 \big( \ci(s-u+k(\rho)) - \ci(s-u-k(\rho)) \big) \big] \\
        &+k(\rho)^{\lambda-1} \big( -A^\lambda_4 (\log k(\rho))^2 + \tilde{q} \big( \frac{s-u}{k(\rho)} \big) \big),
    \end{aligned}
\end{equation*}
where $A^\lambda_i \in \mathbb{C}$ for $i \in \{1,\dots,4\}$ are $\lambda$-dependent constants, and $\tilde{r}$ and $\tilde{q}$ are uniformly bounded H\"{o}lder continuous functions. We therefore deduce the form for $\partial^{\lambda+1}_u \chi$ given in \eqref{eq:structure frac deriv chi}, with
\begin{equation*}
    \begin{aligned}
         &A_{1,\pm}(\rho) = a_\sharp(\rho)k(\rho)^\lambda A_1^\lambda , \qquad && A_{2,\pm}(\rho) = \pm a_\sharp(\rho)k(\rho)^{\lambda-1} A^\lambda_3 + a_\flat(\rho)k(\rho)^{\lambda+1}A_1^{\lambda+1}, \\
         &A_{3,\pm}(\rho) = \pm a_\sharp(\rho)k(\rho)^\lambda A^\lambda_2, \qquad && A_{4,\pm}(\rho) = \pm a_\sharp(\rho)k(\rho)^{\lambda-1} A^\lambda_4 \pm a_\flat(\rho)k(\rho)^{\lambda+1}A^{\lambda+1}_2,
         \end{aligned}
         \end{equation*}
         and
         \begin{equation*}
         \begin{aligned}
         r_\chi(\rho,u-s) = a_\sharp(\rho)k(\rho)^{\lambda-1} \tilde{q} \big( \frac{s-u}{k(\rho)} \big) &+ a_\flat(\rho)k(\rho)^{\lambda+1} \tilde{r}\big( \frac{s-u}{k(\rho)} \big) \\
         &- k(\rho)^{\lambda-1} A^\lambda_4 (\log k(\rho))^2 + \partial^{\lambda+1}_s g_1 (\rho,u-s).
    \end{aligned}
\end{equation*}
The conclusion of the lemma now follows easily from Lemma \ref{lemma:global bound vacuum expansion for chi}, using the fact that $k(\rho) \leq M (1 + \log (\rho/\rho_*))$ for $\rho \geq \rho_*$, by Corollary \ref{corollary:k sim log}. An identical procedure using the estimates of Lemma \ref{lemma:global bound vacuum expansion for h} yields the result for the entropy flux kernel.
\end{proof}

\section{The Young measure and proof of the main result}\label{section:reduction ch2}

We begin this section by constructing a probability measure which characterises the limiting behaviour of the viscous solutions of the Navier--Stokes equations \eqref{eq:ns}: the Young measure. We follow the approach in \cite[Section 2.3]{LeFlochWestdickenberg}, which is also common to \cite[Section 5]{ChenPerep1} and \cite[Section 4]{SS1}.

Define the upper half-plane $\mathbb{H} := \{ (\rho,u)\in\mathbb{R}^2 : \rho > 0 \}$, and the ring of continuous functions
\begin{equation}
\bar{C}(\mathbb{H}) := \left\lbrace \phi \in C(\bar{\mathbb{H}}) \left| \begin{aligned}
& \phi(0,\cdot) \text{ is a constant function and such that the} \\
&\text{mapping } (\rho,u)\mapsto \lim_{s\to \infty} \phi(s\rho,su) \text{ lies in } C(\mathbb{S}^1 \cap \bar{\mathbb{H}})
\end{aligned} \right. \right\rbrace,
\end{equation}
where $\mathbb{S}^1$ is the unit circle. This particular subset of functions is chosen to mitigate difficulties at the vacuum and in the limit of infinitely large densities.

Since $\bar{C}(\mathbb{H})$ is a complete subring of the continuous functions on $\mathbb{H}$ containing the constant functions, there exists a compactification $\overline{\mathcal{H}}$ of $\mathbb{H}$ such that $C(\overline{\mathcal{H}}) \cong \bar{C}(\mathbb{H})$, where $\cong$ means isometrically isomorphic to. The topology on $\overline{\mathcal{H}}$ is the weak-star topology induced by $C(\overline{\mathcal{H}})$, i.e., a sequence $(v_n)_{n\in\mathbb{N}} \subset \overline{\mathcal{H}}$ converges to $v \in \overline{\mathcal{H}}$ if and only if $|\phi(v_n) - \phi(v)| \to 0$ for all functionals $\phi \in C(\overline{\mathcal{H}})$. This topology is both separable and metrizable (see \cite{Roubicek} for details). Additionally, by defining $V$ to be the weak-star closure of the set $\{ \rho = 0 \}$, one can show that $\mathcal{H} = \mathbb{H} \cup V$. We remark that the topology on $\overline{\mathcal{H}}$ has the useful property that it does not distinguish points in the vacuum set.

By applying the fundamental theorem of Young measures for maps into compact metric spaces (\cite[Theorem 2.4]{AlbertiMueller}) we find that, for some subsequence of the viscous solutions $(\rho^{\varepsilon'},u^{\varepsilon'})$, there exists a measure $\nu_{t,x} \in \mathrm{Prob}(\overline{\mathcal{H}})$ for almost every $(t,x) \in \mathbb{R}^2_+$ with the property that, given any $\phi \in C(\overline{\mathcal{H}})$,
\begin{equation*}
\phi(\rho^{\varepsilon'},u^{\varepsilon'}) \overset{\ast}{\rightharpoonup} \int_{\overline{\mathcal{H}}} \phi(\rho,u) \, d\nu_{t,x}(\rho,u) \quad \text{in } L^\infty(\mathbb{R}^2_+) \text{ as } \varepsilon' \to 0.
\end{equation*}
For ease of notation, we relabel this subsequence as $(\rho^\varepsilon,u^\varepsilon)$. One can in fact show that the set of test functions $\phi$ for which such convergence holds is larger than simply $C(\overline{\mathcal{H}})$. By following analogous arguments to those in the proof of \cite[Proposition 4.1]{SS1}, making use of Proposition \ref{lemma:unif estimates ch2}, we arrive at the following result. 

\begin{lemma}
Let $\nu_{t,x}$ be a Young measure associated to a sequence of solutions to \eqref{eq:ns} generated by admissible initial data, in the sense of Definition \ref{definition:admissible}. Then $\nu_{t,x}$ has the following properties:
\begin{enumerate}
\item For almost every $(t,x)$, the measure $\nu_{t,x}$ is a probability measure on $\mathcal{H}$, i.e., $\int_{\mathcal{H}} \, d\nu_{t,x}=1$;
\item The mapping $(t,x) \mapsto \int_\mathbb{H} (\rho p(\rho) + \rho |u|^3) \, d\nu_{t,x}(\rho,u)$ belongs to $ L^1_{loc}(\mathbb{R}^2_+)$;
\item The space of admissible test functions may be extended in the following way:
Let $\phi \in C(\bar{\mathbb{H}})$ be such that $\phi|_{\partial\mathbb{H}} = 0$. Suppose that there exists $a>0$ such that $\supp \phi \subset \{ u+k(\rho) \geq -a , u-k(\rho) \leq a \}$. Provided that $\phi$ also satisfies the growth condition 
\begin{equation}
\lim_{\rho \to \infty} \rho^{-2}|\phi(\rho,u)| =0, \text{ uniformly for } u \in \mathbb{R},
\end{equation}
then $\phi$ is integrable with respect to $\nu_{t,x}$ for almost every $(t,x)$ and we have the convergence 
\begin{equation*}
\phi(\rho^\varepsilon,u^\varepsilon) \rightharpoonup \int_\mathbb{H} \phi(\rho,u) \, d\nu_{t,x}(\rho,u) \quad \text{in } L^1_{loc}(\mathbb{R}^2_+).
\end{equation*}
\end{enumerate}
\end{lemma}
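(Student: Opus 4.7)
The three properties of the Young measure all reduce, via a common truncation argument, to the uniform $L^1_{loc}$ bounds of Proposition \ref{lemma:unif estimates ch2} on $\rho p(\rho)$ and $\rho|u|^3$. The overarching idea is that these higher integrability estimates prevent $\nu_{t,x}$ from concentrating on the ideal boundary $\overline{\mathcal{H}}\setminus\mathcal{H}$ added by the compactification, and that every test function satisfying the growth condition in (3) is pointwise dominated by $\rho p(\rho)$ plus a bounded error.

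For property (1), I would show $\nu_{t,x}(\overline{\mathcal{H}}\setminus\mathcal{H})=0$ for almost every $(t,x)$. Choose a sequence of cutoffs $\phi_R\in C(\overline{\mathcal{H}})$, $0\leq\phi_R\leq 1$, each vanishing on a neighborhood of the ideal boundary and increasing pointwise to $1$ on $\mathcal{H}$. The complement $\{1-\phi_R>0\}\cap\mathbb{H}$ can be arranged to lie in the high-density/high-velocity regime where $\rho+|u|\geq R$, so that $(1-\phi_R)(\rho^\eps,u^\eps)$ is bounded pointwise by a Chebyshev-type multiple of $\rho^\eps p(\rho^\eps)+\rho^\eps|u^\eps|^3$ that tends to zero in $L^1([0,T]\times K)$ uniformly in $\eps$ as $R\to\infty$. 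Weak-$*$ convergence against the bounded continuous $1-\phi_R$, combined with monotone convergence in $R$, then yields (1).

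For property (2), I would apply the same truncation to $w(\rho,u):=\rho p(\rho)+\rho|u|^3$, setting $w_R:=w\phi_R\in C(\overline{\mathcal{H}})$. Weak-$*$ convergence against the bounded continuous function $w_R$, combined with the uniform $L^1_{loc}$ estimate of Proposition \ref{lemma:unif estimates ch2}, gives $\int w_R\,d\nu_{t,x}\in L^1_{loc}(\mathbb{R}^2_+)$ with norm controlled uniformly in $R$; monotone convergence in $R$ then yields (2). For property (3), I would combine the growth hypothesis $\phi(\rho,u)=o(\rho^2)$ uniformly in $u$ with Lemma \ref{lemma:p' below}, which delivers $\rho^2\leq\tfrac{4\kappa_2}{3}\rho p(\rho)$ for $\rho\geq\rho_*$, to obtain the pointwise bound $|\phi(\rho,u)|\leq\delta\,\rho p(\rho)+C_\delta$ on $\bar{\mathbb{H}}$ for every $\delta>0$. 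Together with the uniform $L^1_{loc}$ control of $\rho^\eps p(\rho^\eps)$, this yields uniform integrability of $\{\phi(\rho^\eps,u^\eps)\}_{\eps>0}$ on compact subsets of $\mathbb{R}^2_+$. Approximating $\phi$ by compactly supported cutoffs $\phi_R\in\bar C(\mathbb{H})$ (so that the already-established weak-$*$ convergence applies to each $\phi_R$) and invoking Vitali's convergence theorem then gives (3).

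The main obstacle is the precise construction of cutoffs belonging to $\bar C(\mathbb{H})\cong C(\overline{\mathcal{H}})$ that respect the constant-at-vacuum and sphere-at-infinity structure of the compactification, while confining their support to regions controllable by the uniform estimates. In particular, for (3) one must carefully exploit the fact that the imposed support condition on $\phi$ has $u$-width growing only like $k(\rho)=O(\log\rho)$ — a direct consequence of the asymptotically isothermal structure — ensuring that the bound by $\rho p(\rho)$ remains valid on the entire support of $\phi$. Once these cutoffs are in place, the remainder of the argument is a routine adaptation of the techniques of \cite[Proposition 4.1]{SS1} and \cite[Section 5]{ChenPerep1}, to which the statement of the lemma explicitly defers.
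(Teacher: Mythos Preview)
Your proposal is correct and follows essentially the same approach as the paper, which does not prove the lemma but simply defers to the analogous argument in \cite[Proposition~4.1]{SS1} driven by the uniform estimates of Proposition~\ref{lemma:unif estimates ch2}. One small caveat: in part~(3) you do not have a.e.\ convergence prior to the Young-measure reduction, so the final step is not Vitali's theorem \emph{per se} but rather the truncation plus uniform-integrability argument you already describe, which directly yields the weak $L^1_{loc}$ convergence.
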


Likewise, by arguments analogous to those in the proof of \cite[Proposition 4.2]{SS1}, which entail an application of the div-curl lemma for sequences with divergence and curl compact in $W^{-1,1}$ due to Conti--Dolzmann--M\"uller, \cite{CDM}, we use Proposition \ref{prop:cptness} in conjunction with Lemma \ref{lemma:cpctly supported chapter 2} to obtain the following commutation relation, in the style of Tartar--Murat (\textit{cf.}~\cite{Tartar}).
\begin{prop}\label{lemma:commutation ch2}
Let $\{(\rho_0^\varepsilon,u_0^\varepsilon)\}_{0<\varepsilon\leq\eps_0}$ be an admissible sequence of initial data in the sense of Definition \ref{definition:admissible} and let $\{ (\rho^\varepsilon , u^\varepsilon) \}_{0<\varepsilon\leq\eps_0}$ be the associated viscous solutions of \eqref{eq:ns}. Correspondingly, let $\nu_{t,x}$ be the Young measure generated by the family $\{ (\rho^\varepsilon , u^\varepsilon) \}_{0<\varepsilon\leq\eps_0}$. Then, we have the constraint equation
\begin{equation}
\overline{\chi(s_1) \sigma(s_2) - \chi(s_2)\sigma(s_1)} = \overline{\chi(s_1)} \, \overline{\sigma(s_2)} - \overline{\chi(s_2)} \, \overline{\sigma(s_1)},
\end{equation}
for all $s_1,s_2 \in \mathbb{R}$, where we use the notation $\overline{f} = \int f \, d\nu_{t,x}$ and $\chi(s_j)=\chi(\cdot,\cdot-s_j)$, $\sigma(s_j)=\sigma(\cdot,\cdot,s_j)$.
\end{prop}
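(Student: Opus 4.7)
The proof would follow the standard Tartar--Murat commutation scheme executed in two stages and paralleling \cite[Proposition 4.2]{SS1}. The first stage would establish a commutation identity for generic smooth, compactly supported test functions via the Conti--Dolzmann--M\"uller div-curl lemma, and the second would pass to the Dirac limit in these test functions to recover the statement in terms of the kernels $\chi$ and $\sigma$ themselves.

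For the first stage, I would fix arbitrary $\psi_1, \psi_2 \in C^2_c(\R)$ and consider the entropy pairs $(\eta^{\psi_j}, q^{\psi_j})$, $j=1,2$, generated via \eqref{eq:generate entropy} and \eqref{eq:generate entropy flux}. Lemma \ref{lemma:cpctly supported chapter 2} supplies the pointwise bound $|\eta^{\psi_j}(\rho, \rho u)| + |q^{\psi_j}(\rho, \rho u)| \leq M_\psi \rho$, while Proposition \ref{lemma:unif estimates ch2}(iii) yields a uniform $L^1_{loc}$ bound on $\rho^\eps p(\rho^\eps)\sim (\rho^\eps)^2$, giving equi-integrability of $\eta^{\psi_j}(\rho^\eps, \rho^\eps u^\eps)$, $q^{\psi_j}(\rho^\eps, \rho^\eps u^\eps)$, and their pairwise products in $L^1_{loc}(\R^2_+)$. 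Together with the $W^{-1,p}_{loc}$-compactness of the entropy dissipation measures from Proposition \ref{prop:cptness}, this verifies the hypotheses of the div-curl lemma of \cite{CDM}, which upon integrating against the Young measure $\nu_{t,x}$ yields
$$\overline{\eta^{\psi_1} q^{\psi_2} - \eta^{\psi_2} q^{\psi_1}} = \overline{\eta^{\psi_1}}\,\overline{q^{\psi_2}} - \overline{\eta^{\psi_2}}\,\overline{q^{\psi_1}} \qquad \text{a.e.\ on } \R^2_+.$$

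For the second stage, I would expand each factor above using $\eta^{\psi_j}(\rho, \rho u) = \int_\R \chi(\rho, u-s)\psi_j(s)\,ds$ and the analogous expression for $q^{\psi_j}$, and pull the $s_1, s_2$ integrals outside the Young-measure average via Fubini (justified by the equi-integrability above). Both sides of the identity then take the form $\int_\R\int_\R \psi_1(s_1)\psi_2(s_2)\,F(t,x,s_1,s_2)\,ds_1\,ds_2$, where $F$ is the difference of the two sides of the claimed equality. Since $\psi_1, \psi_2$ are arbitrary in $C^2_c(\R)$, $F$ must vanish for almost every $(s_1, s_2)$; continuity of $(s_1, s_2) \mapsto F(\cdot, \cdot, s_1, s_2)$, inherited from the H\"older continuity of $\chi$ and $\sigma$ from Theorem \ref{thm:expansion at vacuum}, then upgrades this to the claimed pointwise identity in $(s_1, s_2)$.

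The principal obstacle throughout is securing the $L^1_{loc}$-equi-integrability required both for the div-curl lemma and for Fubini: unlike the $\gamma$-law setting of \cite{ChenPerep1}, no $L^\infty$ bound on $\rho^\eps$ is available, and $\chi$ and $\sigma$ grow without bound as $\rho \to \infty$. This is precisely what the estimates of Sections \ref{section:rep formula chi error}--\ref{section:entropy pairs bounds} are tailored to address: the mild (at most logarithmic) growth of the kernels provided by Lemma \ref{lemma:cpctly supported chapter 2}, combined with the $L^2_{loc}$ control on $\rho^\eps$ obtained from Proposition \ref{lemma:unif estimates ch2}(iii), delivers the critical balance.
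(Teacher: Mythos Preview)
Your proposal is correct and follows essentially the same route as the paper, which defers the proof to \cite[Proposition 4.2]{SS1} and merely notes that the ingredients are the div--curl lemma of \cite{CDM}, Proposition \ref{prop:cptness}, and Lemma \ref{lemma:cpctly supported chapter 2}. Your two-stage outline (div--curl for $C^2_c$ test functions, then Fubini and density in $s_1,s_2$) is exactly the argument behind that reference, and you have correctly identified the equi-integrability issue as the crux and the uniform bound $\rho^\eps p(\rho^\eps)\in L^1_{loc}$ from Proposition \ref{lemma:unif estimates ch2}(iii) as its resolution.
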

The main theorem of this section is then the following reduction result.
\begin{thm}\label{thm:reduction ch2}
Let $\nu \in \mathrm{Prob}(\mathcal{H})$ be such that $\int_\mathcal{H} \rho^2 \, d\nu(\rho,u) < \infty$ and, for all $s_1,s_2 \in \mathbb{R}$,
\begin{equation}\label{eq:commutation relation ch2}
\overline{\chi(s_1)\sigma(s_2) - \chi(s_2)\sigma(s_1)} = \overline{\chi(s_1)}\,\overline{\sigma(s_2)} - \overline{\chi(s_2)}\,\overline{\sigma(s_1)}.
\end{equation}
Then either $\supp \nu \subset V$, or the support of $\nu$ is a singleton in $\mathbb{H}$.
\end{thm}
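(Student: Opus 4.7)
The plan is to adapt the DiPerna--Chen--LeFloch reduction scheme for Young measures in compensated compactness, closely following \cite[Section 5]{SS1}, with our refined Lemma \ref{lemma:lambda plus one deriv chi} playing the role of \cite[Lemma 2.7]{SS1}. Suppose $\supp \nu \not\subset V$. The first step is to identify the extremal characteristics on the support of $\nu$ by setting
\begin{equation*}
s_+ := \sup \{ u + k(\rho) : (\rho,u) \in \supp \nu \cap \H \}, \qquad s_- := \inf \{ u - k(\rho) : (\rho,u) \in \supp \nu \cap \H \}.
\end{equation*}
A preliminary argument using the second-moment bound $\int \rho^2 \, d\nu < \infty$, the sublogarithmic growth $k(\rho) = O(\log \rho)$ from Corollary \ref{corollary:k sim log}, and truncation of the commutation relation against compactly supported test entropies from Lemma \ref{lemma:cpctly supported chapter 2}, shows that $s_\pm$ are finite. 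If $s_- = s_+$, then $k(\rho) \equiv 0$ on $\supp \nu$, i.e., $\supp \nu \subset V$, contrary to hypothesis, so I may assume $s_- < s_+$.

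The heart of the proof is to apply $\partial^{\lambda+1}_{s_1}\partial^{\lambda+1}_{s_2}$ to the commutation identity \eqref{eq:commutation relation ch2} and exploit the explicit singular structure \eqref{eq:structure frac deriv chi}--\eqref{eq:structure frac deriv h}. At each $s_j$, the fractional derivatives $\partial^{\lambda+1}_{s_j}\chi$ and $\partial^{\lambda+1}_{s_j}\sigma$ are supported on the characteristic loci $u \pm k(\rho) = s_j$ and decompose into Dirac, principal value, Heaviside and Cosine integral components, with coefficients controlled by Lemma \ref{lemma:lambda plus one deriv chi}. Localising the resulting distributional identity in a neighbourhood of $(s_1, s_2) = (s_+, s_-)$, the Heaviside, principal value and Cosine integral contributions cancel due to the one-sided support of $\nu$ relative to $s_\pm$, while the Dirac contributions survive and restrict $\nu$ to the characteristic curves $u + k(\rho) = s_+$ and $u - k(\rho) = s_-$. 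The non-degeneracy $D(\rho) > 0$ from Lemma \ref{lemma:D prop} ensures the algebraic system extracted from the Dirac parts has a unique solution, forcing the trace of $\nu$ on each of these curves to be a single atom.

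This produces two candidate atoms $(\rho_+, u_+)$ with $u_+ + k(\rho_+) = s_+$ and $(\rho_-, u_-)$ with $u_- - k(\rho_-) = s_-$; a further application of the commutation relation at an interior value of $s$ identifies these two atoms, yielding $\supp \nu = \{(\rho_*, u_*)\}$ with $k(\rho_*) = (s_+ - s_-)/2$ and $u_* = (s_+ + s_-)/2$. The main obstacle lies in the careful distributional analysis of the Tartar relation after fractional differentiation: the non-local terms of \eqref{eq:structure frac deriv chi}--\eqref{eq:structure frac deriv h} only cancel after a delicate accounting of the one-sided support conditions, and extracting the correct combination of Dirac coefficients to apply Lemma \ref{lemma:D prop} is intricate. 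The new technical features relative to \cite{SS1} are the logarithmic growth of $k(\rho)$ and the contribution of the perturbation $\tilde{\chi}$; both are controlled by \eqref{eq:coeff bounds frac deriv chi}--\eqref{eq:coeff bounds frac deriv chi remainder} and remain subordinate to the leading-order Chen--LeFloch structure, so the reduction argument of \cite[Section 5]{SS1} carries through with only minor adaptation.
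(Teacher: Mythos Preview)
Your high-level plan---defer to the reduction argument of \cite[Section 5]{SS1}, with Lemma \ref{lemma:lambda plus one deriv chi} supplying the singularity structure and growth bounds---is exactly what the paper does: the paper states that Theorem \ref{thm:reduction ch2} follows from ``now standard arguments'' once Lemma \ref{lemma:technical1 ch2} is established, and then proves that lemma using the estimates \eqref{eq:coeff bounds frac deriv chi}--\eqref{eq:coeff bounds frac deriv chi remainder} together with $\int\rho^2\,d\nu<\infty$ for dominated convergence.

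However, your description of the mechanism in the middle paragraphs is not accurate and does not match either \cite{SS1} or the paper. You propose to apply $\partial^{\lambda+1}_{s_1}\partial^{\lambda+1}_{s_2}$ to the \emph{two}-variable relation \eqref{eq:commutation relation ch2} and localise near $(s_+,s_-)$, asserting that the Heaviside, principal-value and cosine-integral pieces ``cancel due to the one-sided support of $\nu$ relative to $s_\pm$''. No such cancellation occurs: for $(\rho,u)$ in the interior of $\supp\nu$ the non-Dirac singularities of $\partial^{\lambda+1}_s\chi$, $\partial^{\lambda+1}_s\sigma$ sit strictly inside $(s_-,s_+)$ and are unaffected by the extremal support condition. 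The actual argument (Lemma \ref{lemma:technical1 ch2}, following \cite[Lemmas 5.2--5.3]{SS1} and \cite[Lemma 4.3]{ChenLeFloch}) uses \emph{three} variables: one keeps $\chi(s_1)$ undifferentiated, applies $P_j=\partial^{\lambda+1}_{s_j}$ only for $j=2,3$, mollifies by $\phi_2^\delta,\phi_3^\delta$, evaluates at $s_2=s_3=s_1$, and sends $\delta\to0$. The non-Dirac contributions are controlled not by one-sided support but by the antisymmetry of $P_2\chi_2^\delta P_3\sigma_3^\delta-P_3\chi_3^\delta P_2\sigma_2^\delta$ under interchange of indices $2,3$, which produces the scalar $Y(\phi_2,\phi_3)$ and the weak-star limit \eqref{eq:bison}; the bounds of Lemma \ref{lemma:lambda plus one deriv chi} then justify passing the limit through $\int\,d\nu$. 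Only \emph{after} Lemma \ref{lemma:technical1 ch2} is established does one test with $\psi$ supported near $s_\pm$ and use the one-sided vanishing $\overline{\chi(s)}=0$ for $s\notin[s_-,s_+]$, together with $Z(\rho)>0$, to force the support reduction. If you intend simply to cite \cite[Section 5]{SS1}, that is fine and matches the paper; but the intermediate sketch you give would not produce a proof.
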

The proof of this theorem follows from now standard arguments (\textit{cf.}~\cite{ChenPerep1,DiPerna2} and see especially \cite[Theorem 5.1]{SS1}) once we have shown the next lemma (Lemma \ref{lemma:technical1 ch2}).

In what follows, we consider the fractional derivative operators $P_j := \partial_{s_j}^{\lambda+1}$, for $j = 2,3$, where it is understood that the distributions $\overline{P_j \chi(s_j)}$ act on test functions $\psi \in \mathcal{D}(\mathbb{R})$ via 
\begin{equation*}
\langle \overline{P_j \chi(s_j)}, \psi \rangle = - \int_\mathbb{R} \overline{\partial^\lambda_{s_j} \chi(s_j)} \psi'(s_j) \, ds_j \qquad \text{for } j = 2,3.
\end{equation*}
Additionally, we let $\phi_2,\phi_3 \in \mathcal{D}(\mathbb{R})$ be standard mollifiers, with $\int_\mathbb{R} \phi_j (s_j) \, ds_j = 1$ and $\phi_j \geq 0$ for $j=2,3$, chosen such that
\begin{equation}\label{eq:Y def}
Y(\phi_2,\phi_3) := \int_{-\infty}^\infty \int_{-\infty}^{s_2} \big( \phi_2(s_2) \phi_3(s_3) - \phi_2(s_3) \phi_3(s_2) \big) \, ds_3 \, ds_2 > 0.
\end{equation}
 As is standard, for $\delta > 0$, we define $\phi^\delta_j(s_j) := \delta^{-1} \phi_j(s_j/\delta)$, and, for $j=2,3$,
 \begin{equation*}
 \overline{P_j \chi_j^\delta} := \overline{P_j \chi_j} * \phi_j^\delta(s_1) = \int_\mathbb{R} \overline{\partial^\lambda_{s_j}\chi(s_j)} \delta^{-2} \phi_j' \big( \frac{s_1-s_j}{\delta} \big) \, ds_j.
 \end{equation*}
\begin{lemma}\label{lemma:technical1 ch2}
For any test function $\psi \in \mathcal{D}(\mathbb{R})$, writing also $\chi_1=\chi(s_1)$,
\begin{itemize}
\Item[(i)]
\begin{equation*}
\begin{aligned}
\lim_{\delta \to 0} \int_\mathbb{R} \overline{\chi(s_1)} \, & \overline{P_2 \chi_2^\delta P_3 \sigma_3^\delta - P_3 \chi_3^\delta P_2 \sigma_2^\delta}(s_1) \psi(s_1) \, ds_1 \\
&= \int_{\mathcal{H}} Y(\phi_2,\phi_3) Z(\rho) \sum_{\pm} (K^\pm)^2 \overline{\chi(u \pm k(\rho))} \psi(u \pm k(\rho)) \, d\nu (\rho,u),
\end{aligned}
\end{equation*}
where $Z(\rho) = (\lambda+1) M_\lambda^{-2} k(\rho)^{2\lambda} D(\rho) > 0$ for $\rho > 0$, and $D(\rho)$ is as in Lemma \ref{lemma:D prop}.
\Item[(ii)] \begin{equation*}
    \lim_{\delta \to 0} \int_\mathbb{R} \overline{P_3 \chi^\delta_3} \, \overline{P_2 \chi_2^\delta \sigma_1 - \chi_1 P_2 \sigma_2^\delta} \psi(s_1) \, ds_1 = \lim_{\delta \to 0} \int_\mathbb{R} \overline{P_2 \chi_2^\delta} \, \overline{P_3 \chi_3^\delta \sigma_1 - \chi_1 P_3 \sigma_3^\delta} \psi(s_1) \, ds_1.
\end{equation*}
\end{itemize}
\end{lemma}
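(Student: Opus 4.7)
\textbf{Plan for the proof of Lemma \ref{lemma:technical1 ch2}.} Both parts rest on the structural expansions of Lemma \ref{lemma:lambda plus one deriv chi} for $\partial_s^{\lambda+1}\chi$ and $\partial_s^{\lambda+1}h$, combined with the mollification definition of $P_j\chi_j^\delta$ and $P_j\sigma_j^\delta$. Recall that $\sigma = u\chi + h$, so $P_j\sigma_j = u P_j\chi_j + P_j h_j$, and the leading delta part of $P_j\sigma_j$ at $s_j = u\pm k(\rho)$ has coefficient $u A_{1,\pm}(\rho) \mp k(\rho)\,B_{1,\pm}(\rho)$. The overall strategy is to substitute these expansions into both sides and carry out a careful accounting of which distributional products survive in the limit $\delta\to 0$.

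For part (i), I would first convolve the expansions \eqref{eq:structure frac deriv chi} and \eqref{eq:structure frac deriv h} with $\phi_j^\delta$, so that every singular term $T(s_j-K^\pm)$, where $K^\pm(\rho,u)=u\pm k(\rho)$, is replaced by $(T*\phi_j^\delta)(s_1-K^\pm)$. Then I would expand the antisymmetric combination $P_2\chi_2^\delta P_3\sigma_3^\delta - P_3\chi_3^\delta P_2\sigma_2^\delta$ term by term. The dominant $\delta$-$\delta$ contributions at matching $\pm_1 = \pm_2$ cancel identically by antisymmetry; those at $\pm_1 \ne \pm_2$ involve mollifiers localised at $K^+$ and $K^-$ and hence vanish for small $\delta$ since $k(\rho)>0$. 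The surviving next-order contributions come from mixed products of a $\delta^\delta$-mollifier against the Heaviside (and Ci) pieces, which at $s_1 = K^\pm$ yield
\[
\phi_2^\delta(s_1-K^\pm)H(s_1-K^\pm)\phi_3^\delta(s_1-K^\pm) - (2\leftrightarrow 3) \rightharpoonup Y(\phi_2,\phi_3)\,\delta(s_1-K^\pm),
\]
after integrating in $s_1$, the exact form of $Y(\phi_2,\phi_3)$ emerging from the elementary identity $\int\phi_2(s)\Phi_3(s)ds - \int\phi_3(s)\Phi_2(s)ds$ with $\Phi_j(s)=\int_{-\infty}^s\phi_j$. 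The remaining PV, Ci, and H\"older remainder terms are either continuous or less singular and their contributions vanish in the limit against $\overline{\chi(s_1)}\psi(s_1)$. Finally, I would identify the coefficient with $Z(\rho)$: the explicit formulas for $a_\sharp,a_\flat,b_\sharp,b_\flat$ from the expansions \eqref{eq:expansion at vacuum}--\eqref{eq:expansion at vacuum flux}, together with the normalisation constants $M_\lambda$ and the combination defining $D(\rho)$ in Lemma \ref{lemma:D prop}, give $(\lambda+1)M_\lambda^{-2}k(\rho)^{2\lambda}D(\rho)$.

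For part (ii), the key is to exploit the commutation relation of Proposition \ref{lemma:commutation ch2}. Applying $P_2=\partial_{s_2}^{\lambda+1}$ to \eqref{eq:commutation relation ch2} in $s_2$ and then mollifying in $s_2$ via convolution with $\phi_2^\delta(s_1-s_2)$ yields
\[
\overline{P_2\chi_2^\delta \sigma_1 - \chi_1 P_2\sigma_2^\delta} = \overline{P_2\chi_2^\delta}\,\overline{\sigma_1} - \overline{\chi_1}\,\overline{P_2\sigma_2^\delta},
\]
and analogously with $2\mapsto 3$. Substituting these identities into both sides of (ii), the cross-terms involving $\overline{\sigma_1}$ cancel by commutativity of scalar products, so that
\[
\text{LHS}-\text{RHS} = \int_\mathbb{R} \overline{\chi_1}\,\psi(s_1)\bigl[\overline{P_2\chi_2^\delta}\,\overline{P_3\sigma_3^\delta} - \overline{P_3\chi_3^\delta}\,\overline{P_2\sigma_2^\delta}\bigr]\,ds_1.
\]
Since each factor $\overline{P_j\chi_j^\delta}(s_1) = \overline{\partial^{\lambda+1}_s\chi}*\phi_j^\delta(s_1)$ converges as $\delta\to 0$ to the $j$-independent limit $\overline{\partial^{\lambda+1}_{s_1}\chi(s_1)}$, and similarly for $\sigma$, the antisymmetric bracket tends formally to zero. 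To make this rigorous, I would use the expansions of Lemma \ref{lemma:lambda plus one deriv chi}: the Young-measure averages $\overline{\partial_s^{\lambda+1}\chi}$ and $\overline{\partial_s^{\lambda+1}\sigma}$ inherit from the pushforward of $\nu$ under $(\rho,u)\mapsto u\pm k(\rho)$ a controlled structure (locally integrable away from the vacuum, with bounded delta-type parts), so the commutator of mollifications can be controlled via standard convolution estimates.

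The main obstacle is the sharp bookkeeping in part (i): there are many ``mixed'' products of $\delta$, $H$, $\pv$ and $\ci$ distributions against the three mollifiers $\phi_2^\delta,\phi_3^\delta$, and extracting precisely which ones contribute $Y(\phi_2,\phi_3)\delta(s_1-K^\pm)$ and which vanish in the limit requires a careful parity argument term by term, together with use of the sign conventions in the $\pm$ expansions so that the combination $A_{1,\pm}B_{2,\pm}-A_{2,\pm}B_{1,\pm}$ --- which reproduces (up to $k(\rho)^{2\lambda}$) the determinant $D(\rho)$ of Lemma \ref{lemma:D prop} --- appears naturally. Part (ii), by contrast, is conceptually simple once the commutation identity is applied, and only the commutator-of-mollifiers step requires mild distributional care.
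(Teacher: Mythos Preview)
Your plan for part (i) misidentifies where the difficulty lies. The pointwise (in $(\rho,u)$) weak-star convergence
\[
P_2\chi_2^\delta P_3\sigma_3^\delta - P_3\chi_3^\delta P_2\sigma_2^\delta \rightharpoonup Y(\phi_2,\phi_3)\, Z(\rho)\sum_\pm (K^\pm)^2 \delta_{s_1=u\pm k(\rho)}
\]
in measures in $s_1$, uniformly on sets where $\rho$ is bounded, is already established in \cite[Lemma~4.3]{ChenLeFloch} and \cite[Proposition~5.5]{SS1}; the paper simply cites it. The $\delta$--$H$--$\pv$--$\ci$ bookkeeping you propose is part of that cited result, not the new content here. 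What the paper actually proves---and what your plan omits---is the passage of the limit $\delta\to 0$ through the Young-measure integration $\overline{\,\cdot\,}=\int_\mathcal{H}\cdot\,d\nu$. Since $\nu$ need not have bounded support in $\rho$, uniform convergence on bounded $\rho$-sets is insufficient; one needs the domination
\[
\Bigl|\int_\R \overline{\chi(s_1)}\bigl(P_2\chi_2^\delta P_3\sigma_3^\delta - P_3\chi_3^\delta P_2\sigma_2^\delta\bigr)\psi(s_1)\,ds_1\Bigr|\,\mathds{1}_{\rho>\rho_*}\leq C(\rho^2+1)
\]
uniformly in $\delta$, together with $\rho^2\in L^1(\mathcal{H},\nu)$, so that dominated convergence applies. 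It is precisely for this bound that the growth estimates \eqref{eq:coeff bounds frac deriv chi}--\eqref{eq:coeff bounds frac deriv chi remainder} of Lemma~\ref{lemma:lambda plus one deriv chi} are designed: each $A_{j,\pm}$, $B_{k,\pm}$ and the H\"older remainders grow at most like $\sqrt{\rho}$ times logarithmic factors, so their products are $O(\rho)$ up to logs, and the distributional estimates of \cite[Lemmas~3.8--3.9]{LeFlochWestdickenberg} then control the $s_1$-integral uniformly in $\delta$. Your plan uses Lemma~\ref{lemma:lambda plus one deriv chi} only for its structural form, never for its growth bounds, and so misses the essential step; your closing remark that ``the main obstacle is the sharp bookkeeping'' confirms this.

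For part (ii), your reduction via the commutation relation to the product $\overline{P_2\chi_2^\delta}\,\overline{P_3\sigma_3^\delta}-\overline{P_3\chi_3^\delta}\,\overline{P_2\sigma_2^\delta}$ is correct and is the standard route. But the final claim that this vanishes because both mollifications have the same limit is not soft: each factor converges only distributionally, with singular parts carried by the push-forward of $\nu$ under $(\rho,u)\mapsto u\pm k(\rho)$, and products of such approximations need not converge. The argument again requires the domination-by-$\rho^2$ machinery, now applied to the averaged kernels, together with the cancellation structure of the antisymmetric form; this is exactly the content of \cite[Lemma~5.3]{SS1} that the paper invokes.
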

The positive constants $M_\lambda$ and $K^\pm$ of the previous lemma were introduced in \cite[Section 2]{ChenLeFloch}.

Given the results of Lemma \ref{lemma:lambda plus one deriv chi} and the estimates of Proposition \ref{lemma:unif estimates ch2}, the proof of this lemma proceeds in the same manner as \cite[Lemmas 5.2--5.3]{SS1}. For the convenience of the reader, we give a sketch of the proof of the first part of the lemma below.
\begin{proof}[Sketch of Lemma \ref{lemma:technical1 ch2}(i)]
Let $\psi(s_1) \in \mathcal{D}(\mathbb{R})$. We recall that (see \cite[Lemma 4.3]{ChenLeFloch} and \cite[Proposition 5.5]{SS1}) on sets on which $\rho$ is bounded,
    \begin{equation}\label{eq:bison}
        P_2 \chi_2^\delta P_3 \sigma_3^\delta - P_3 \chi_3^\delta P_2 \sigma_2^\delta \rightharpoonup Y(\phi_2,\phi_3) Z(\rho) \sum_\pm (K^\pm)^2 \delta_{s_1 = u \pm k(\rho)}
    \end{equation}
    as $\delta \to 0$ weakly-star in measures in $s_1$ and uniformly in $(\rho,u)$ on sets where $\rho$ is bounded, where $Y$ was defined in \eqref{eq:Y def}. Thus, 
\begin{equation*}
\begin{aligned}
 \lim_{\delta \to 0} \int_{-\infty}^\infty \overline{\chi(s_1)} (P_2 \chi_2^\delta P_3\sigma_3^\delta &- P_3 \chi_3^\delta P_2 \sigma_2^\delta ) \psi(s_1) \, ds_1 \\
 &= Y(\phi_2 , \phi_3) Z(\rho) \sum_\pm (K^\pm)^2 \overline{\chi(u \pm k(\rho))} \psi(u \pm k(\rho))
\end{aligned}
\end{equation*}
 pointwise for all $(\rho,u)\in\mathcal{H}$. 
To interchange the pointwise convergence with the Young measure $\nu$, we observe that the function $\rho^2\in L^1(\mathcal{H},\nu)$. We therefore prove that 
\begin{equation}\label{ineq:claim}
    \bigg| \int_\R\overline{\chi(s_1)} (P_2 \chi_2^\delta P_3 \sigma_3^\delta - P_3 \chi_3^\delta P_2\sigma_2^\delta) \psi(s_1) \, ds_1 \mathds{1}_{\rho > \rho_*} \bigg| \leq C(\rho^2 + 1),
\end{equation}
for some $C>0$ independent of $\rho$, $u$ and $\delta$ and hence apply Lebesgue's Dominated Convergence Theorem (and the local uniform convergence of \eqref{eq:bison}) to conclude the proof of the lemma.

In order to prove claim \eqref{ineq:claim}, we exploit the decomposition $\sigma=u\chi+h$ by writing
\begin{equation*}
    P_2 \chi_2^\delta P_3 \sigma_3^\delta - P_3 \chi_3^\delta P_2\sigma_2^\delta = P_2 \chi_2^\delta P_3(\sigma_3^\delta - u\chi_3^\delta) - P_3 \chi_3^\delta P_2 (\sigma_2^\delta - u \chi_2^\delta)=P_2 \chi_2^\delta P_3 h_3^\delta - P_3 \chi_3^\delta P_2h_2^\delta.
\end{equation*}
Using Lemma \ref{lemma:lambda plus one deriv chi}, we see that this product consists of a sum of terms of the form
\begin{equation*}
    A_{i, \pm (\rho)} B_{j , \pm}(\rho) T_2(s_2 - u \pm k(\rho)) T_3(s_3 - u \pm k(\rho)),
\end{equation*}
and
\begin{equation*}
    A_{i, \pm (\rho)} B_{j , \pm}(\rho) (s_2-s_3)T_2(s_2 - u \pm k(\rho)) T_3(s_3 - u \pm k(\rho)),
\end{equation*}
where $T_2,T_3 \in \{ \delta , \pv , H , \ci \}$, or terms with the same structure but where $T_2 \in \{ \delta , H , \pv, \ci , r_\chi \}$ and $T_3 \in \{ H , \ci , r_\sigma \}$ and likewise with $s_2$ and $s_3$ reversed.

A simple estimate for distributions of these types (see \cite[Lemmas 3.8 and 3.9]{LeFlochWestdickenberg} for a proof)  yields, for any pair $T_2 , T_3 \in \{ \delta , \pv , H , \ci \}$,
\begin{equation}
\begin{aligned}
    \bigg|\int_{-\infty}^\infty \overline{\chi(s_1)} \psi(s_1) \big( (s_2-s_3) T_2(s_2 - u \pm k(\rho)) T_3(s_3 - u \pm k(\rho))  \big) & * \phi_2^\delta * \phi_3^\delta  (s_1) \, ds_1 \bigg| \\
    & \leq C\Vert \overline{\chi}\psi \Vert_{C^{0,\alpha}(\mathbb{R})},
    \end{aligned}
\end{equation}
where we have noted that $s \mapsto \overline{\chi(s)}$ is H\"{o}lder continuous. Likewise,
\begin{equation*}
\begin{aligned}
 \bigg|\int_{-\infty}^\infty \overline{\chi(s_1)} \psi(s_1) \big( T_2(s_2 - u \pm k(\rho)) & T_3(s_3 - u \pm k(\rho))  \big) * \phi_2^\delta * \phi_3^\delta (s_1) \, ds_1 \bigg| \\
 &\leq C \Vert \overline{\chi}\psi \Vert_{C^{0,\alpha}(\mathbb{R})} \big( 1 + \Vert r_\chi \Vert_{C^{0,\alpha}_{s_1}(\overline{B_R})} + \Vert r_\sigma \Vert_{C^{0,\alpha}_{s_1}(\overline{B_R})} \big),
\end{aligned}
\end{equation*}
for $T_2 \in \{ \delta , H , \pv , \ci , r_\chi \}$ and $T_3 \in \{ H , \ci , r_\sigma \}$. In this case, $R>0$ is such that $\supp \psi \subset B_{R-2}(0)$ and the terms involving $r_\chi$ and $r_\sigma$ are contributed when one of $T_2,T_3 \in \{ r_\chi , r_\sigma \}$. Hence, Lemma \ref{lemma:lambda plus one deriv chi} gives
\begin{equation*}
\begin{aligned}
     \bigg| \int_{-\infty}^\infty & \overline{\chi(s_1)}  (P_2 \chi_2^\delta P_3 \sigma_3^\delta - P_3 \chi_3^\delta P_2 \sigma_2^\delta) \psi(s_1) \, ds_1  \bigg| \\
     &\leq C \max_{j,k,\pm} \{ |A_{j,\pm}B_{k,\pm}| , |A_{j,\pm}| \cdot \Vert r_\chi \Vert_{C^{0,\alpha}_{s_1}(\overline{B_R})} , |B_{j,\pm}| \cdot \Vert r_\sigma \Vert_{C^{0,\alpha}_{s_1}(\overline{B_R})} \} \leq C(\rho^2 + 1).
\end{aligned}
\end{equation*}
Thus we have concluded the desired convergence.
\end{proof}

Finally, we conclude this section with a proof of the main theorem.

\begin{proof}[Proof of Theorem \ref{thm:main ch2}]
Let $\tilde{\rho}_0^\varepsilon(x) := \max \{ \rho_0(x),\sqrt{\varepsilon} \}$. Now mollify $\tilde{\rho}_0^\varepsilon$ and $u_0$ suitably, such that we obtain an admissible sequence of initial data $(\rho_0^\varepsilon,u_0^\varepsilon)$, in the sense of Definition \ref{definition:admissible}. The sequence of smooth solutions $(\rho^\varepsilon,u^\varepsilon)$ of \eqref{eq:ns} corresponding to this initial data then generates a Young measure $\nu_{t,x}$, constrained by the Tartar--Murat commutation relation, according to Proposition \ref{lemma:commutation ch2}. An application of Theorem \ref{thm:reduction ch2} then yields that $\nu_{t,x}$ is either a point mass or is supported in the vacuum set $V$. In the phase-space coordinates $(\rho,m)$, where $m=\rho u$, this measure is a Dirac mass. As such, we write $\nu_{t,x} = \delta_{(\rho(t,x),m(t,x))}$, where $\rho$ and $m$ are measurable functions. In view of this, we deduce that the convergence of the subsequence $(\rho^\varepsilon,\rho^\varepsilon u^\varepsilon) \to (\rho,m)$ occurs in measure and hence (up to subsequence) almost everywhere in $\mathbb{R}^2_+$. This proves that $(\rho,m)$ is a weak solution of \eqref{eq:euler}. It remains to check that it is an entropy solution, in the  sense of Definition \ref{def:entropy sol}.

Since the weak entropies are taken to be $C^2$ functions, the almost everywhere convergence guarantees that $\overline{\eta^*}(\rho^{\varepsilon},\rho^{\varepsilon}u^{\varepsilon}) \to \overline{\eta^*}(\rho,m) \text{ a.e. } (t,x) \in \mathbb{R}^2_+$. In turn, Fatou's lemma yields
\begin{equation*}
\int_\mathbb{R} \overline{\eta^*}(\rho,m) \, dx \leq \liminf_{\varepsilon} \int_\mathbb{R} \overline{\eta^*}(\rho^{\varepsilon},\rho^{\varepsilon}u^{\varepsilon}) \, dx \text{ for almost every } t \geq 0.
\end{equation*} 
In view of Proposition \ref{lemma:unif estimates ch2}, the right-hand side is bounded independently of $\varepsilon$. We thereby deduce that there exists $M=M(E_0,t)$, monotonically increasing such that
\begin{equation*}
\int_\mathbb{R} \overline{\eta^*}(\rho,m) \, dx \leq M(E_0,t) \text{ for almost every } t \geq 0.
\end{equation*}
The pair $(\rho,m)$ is therefore of relative finite-energy. The last part of the Definition \ref{def:entropy sol} can be verified by following the argument presented in the final portion of \cite[Section 6]{SS1}.
\end{proof}

\appendix
\section{Estimates on Elementary Quantities}\label{section:appendix}
We state and prove here various auxiliary estimates that we use throughout the paper to compare and estimate quantities relating to the pressure $p(\rho)$ and the function $k(\rho)$.
We begin with the proof of Corollary \ref{corollary:k' and k'' compare with rho powers}.

\begin{proof}[Proof of Corollary \ref{corollary:k' and k'' compare with rho powers}]
Since $k'(\rho) = \sqrt{p'(\rho)}/\rho$, we have
\begin{equation*}
    \begin{aligned}
    \left| k'(\rho)^2 - \frac{\kappa_2}{\rho^2} \right| = \left|\frac{p'(\rho) - \kappa_2}{\rho^2}\right| &\leq \frac{1}{\rho^2} \int_\rho^\infty \left|p''(y)\right| \, dy = \frac{3C_p}{\alpha}\rho^{-\alpha-2},
    \end{aligned}
\end{equation*}
while
\begin{equation*}
   \begin{aligned}
   \left|k''(\rho) + \frac{\sqrt{\kappa_2}}{\rho^2}\right| = \left|\frac{p''(\rho)}{2\rho\sqrt{p'(\rho)}} + \frac{\sqrt{\kappa_2}-\sqrt{p'(\rho)}}{\rho^2}\right| &\leq \frac{3 C_p \rho^{-\alpha-2}}{\sqrt{2 \kappa_2}} + \frac{1}{\rho^2} \int_\rho^\infty \frac{|p''(y)|}{2\sqrt{p'(y)}} \, dy \\
   &\leq \frac{3 C_p}{\sqrt{2\kappa_2}} \left( 1 + \alpha^{-1} \right)\rho^{-\alpha-2}.
   \end{aligned}
\end{equation*}
For the third derivative,
\begin{equation*}
    k^{(3)}(\rho) - \frac{2\sqrt{\kappa_2}}{\rho^3} = \frac{p'''(\rho)}{2\rho \sqrt{p'(\rho)}} - \frac{p''(\rho)}{\rho^2 \sqrt{p'(\rho)}} - \frac{p''(\rho)^2}{4\rho p'(\rho)^{3/2}} + \frac{2}{\rho^3}\big(\sqrt{p'(\rho)}-\sqrt{\kappa_2}\big),
\end{equation*}
from which we obtain
\begin{equation*}
    \left| k^{(3)}(\rho) - \frac{2\sqrt{\kappa_2}}{\rho^3} \right| \leq M\rho^{-\alpha-3} + \frac{1}{\rho^3}\int^\infty_\rho \frac{|p''(y)|}{\sqrt{p'(y)}} \, dy.
\end{equation*}
In a similar vein, we have the following, from which the result is easily deduced,
\begin{equation*}
    \left| k^{(4)}(\rho) + \frac{6\sqrt{\kappa_2}}{\rho^4} \right| \leq M\rho^{-\alpha-4} + \frac{3}{\rho^4}\int_\rho^{\infty}\frac{|p''(y)|}{\sqrt{p'(y)}} \, dy.
\end{equation*}
\end{proof}

We now state several other similar estimates, the proofs of which can be found in \cite[Chapter 3]{thesis}.

\begin{cor}\label{corollary:k sim log}
Assume that $\rho_* \geq \max\{ R , (4C_p/\kappa_2)^{1/\alpha} \}$. Then, there exists a positive $M=M(k(\rho_*),\kappa_2)$ such that
\begin{equation}\label{eq:k between log rho over rho star}
    M^{-1}\big( 1 + \log(\rho/\rho_*) \big) \leq k(\rho) \leq M \big( 1 + \log(\rho/\rho_*) \big) \qquad \text{for } \rho \geq \rho_*.
\end{equation}
\end{cor}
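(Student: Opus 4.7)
The plan is to use the definition of $k(\rho)$ and the pointwise two-sided bound on $\sqrt{p'(y)}$ provided by Lemma \ref{lemma:p' below} in the regime $y \geq \rho_*$. Since we are only asked for a bound for $\rho \geq \rho_*$, it suffices to split the integral defining $k(\rho)$ at $\rho_*$ and bound the tail directly, absorbing the $[0,\rho_*]$ part into the constant $k(\rho_*)$.

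Concretely, I would write, for $\rho \geq \rho_*$,
\begin{equation*}
k(\rho) = k(\rho_*) + \int_{\rho_*}^{\rho} \frac{\sqrt{p'(y)}}{y}\, dy,
\end{equation*}
and then apply the estimate $\sqrt{\kappa_2/2} \leq \sqrt{p'(y)} \leq \sqrt{3\kappa_2/2}$ from \eqref{eq:p' below bound} (which is valid on $[\rho_*,\infty)$ under the standing assumption $\rho_* \geq \max\{R,(4C_p/\kappa_2)^{1/\alpha}\}$). Integrating in $y$ yields
\begin{equation*}
k(\rho_*) + \sqrt{\tfrac{\kappa_2}{2}}\,\log(\rho/\rho_*) \;\leq\; k(\rho) \;\leq\; k(\rho_*) + \sqrt{\tfrac{3\kappa_2}{2}}\,\log(\rho/\rho_*).
\end{equation*}

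Finally, since both sides are of the form (positive constant depending only on $k(\rho_*)$ and $\kappa_2$) times $1+\log(\rho/\rho_*)$, up to adjusting the multiplicative constant, we obtain the desired two-sided bound with some $M=M(k(\rho_*),\kappa_2) > 0$. Explicitly one may take
\begin{equation*}
M := \max\!\left\{\, k(\rho_*) + \sqrt{\tfrac{3\kappa_2}{2}},\ \left(\min\!\big\{k(\rho_*),\sqrt{\tfrac{\kappa_2}{2}}\big\}\right)^{-1}\right\}.
\end{equation*}
No step here is an obstacle: the whole argument is a routine consequence of Lemma \ref{lemma:p' below}, so the proof is essentially a direct computation with no hidden difficulty.
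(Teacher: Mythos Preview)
Your argument is correct and is exactly the natural route: split $k(\rho)=k(\rho_*)+\int_{\rho_*}^\rho \sqrt{p'(y)}/y\,dy$, apply the two-sided bound on $\sqrt{p'}$ from \eqref{eq:p' below bound}, and absorb constants. The paper in fact omits the proof of this corollary entirely (referring to \cite[Chapter~3]{thesis}), but your derivation is precisely the routine computation that justifies it, and your explicit choice of $M$ works since $k(\rho_*)>0$.
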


\begin{lemma}\label{lemma:d and d star}
Assume that $\rho_* \geq \max\{ R, (4C_p/\kappa_2)^{1/\alpha} \}$. Then, 
\begin{equation}
   0 < d_*(\rho) -1 = \frac{\rho_*}{\rho} \qquad \text{and} \qquad |d(\rho)-d_*(\rho)| \leq M\rho^{-\alpha} \qquad \text{for all } \rho \geq \rho_*.
\end{equation}
for some positive constant $M=M(C_p,\kappa_2)$ independent of $\rho_*$. Additionally, it follows that
\begin{equation}\label{eq:bound on d}
    |d(\rho)-1| \leq 2 \qquad \text{for all } \rho \geq \rho_*.
\end{equation}
\end{lemma}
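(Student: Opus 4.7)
The plan is to compute $d_*(\rho)$ directly and exactly, and then estimate $d(\rho)-d_*(\rho)$ by comparing $k', k''$ to $k_*', k_*''$ using the bounds already established in Lemma \ref{lemma:p' below} and Corollary \ref{corollary:k' and k'' compare with rho powers}.

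First, since $k_*'(\rho) = \sqrt{\kappa_2}/\rho$ gives $k_*''(\rho) = -\sqrt{\kappa_2}/\rho^2$, the ratio $k_*''(\rho)/k_*'(\rho) = -1/\rho$, so
\[
d_*(\rho) = 2 + (\rho-\rho_*)\bigl(-\tfrac{1}{\rho}\bigr) = 1 + \tfrac{\rho_*}{\rho},
\]
which is the first identity and is manifestly positive since $\rho_*>0$.

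Next I would write
\[
d(\rho) - d_*(\rho) = (\rho-\rho_*)\left[\frac{k''(\rho)}{k'(\rho)} - \frac{k_*''(\rho)}{k_*'(\rho)}\right] = (\rho-\rho_*)\,\frac{k'(\rho)\bigl(k''(\rho)-k_*''(\rho)\bigr) + k''(\rho)\bigl(k_*'(\rho)-k'(\rho)\bigr)}{k'(\rho)\,k_*'(\rho)}.
\]
Each of the four factors has already been estimated. By Corollary \ref{corollary:k' and k'' compare with rho powers}, $|k''(\rho)-k_*''(\rho)| = |k''(\rho)+\sqrt{\kappa_2}/\rho^2|\leq M\rho^{-\alpha-2}$, and $|k''(\rho)|\leq M\rho^{-2}$. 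For the difference $k'-k_*'$, factor $k'(\rho)^2 - k_*'(\rho)^2 = (k'+k_*')(k'-k_*')$ and use the estimate $|k'(\rho)^2 - \kappa_2/\rho^2| \leq M\rho^{-\alpha-2}$ from Corollary \ref{corollary:k' and k'' compare with rho powers} together with the lower bound $k'(\rho)+k_*'(\rho)\geq \sqrt{\kappa_2/2}\,\rho^{-1}+\sqrt{\kappa_2}\,\rho^{-1}$ from Lemma \ref{lemma:p' below}, giving $|k'(\rho)-k_*'(\rho)|\leq M\rho^{-\alpha-1}$. The denominator is bounded below by $M^{-1}\rho^{-2}$ by the two-sided bound on $k'$ and the explicit value of $k_*'$. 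Combining, the bracketed quantity is $O(\rho^{-\alpha-1})$, and multiplying by $(\rho-\rho_*)\leq \rho$ yields $|d(\rho)-d_*(\rho)|\leq M\rho^{-\alpha}$ with $M=M(C_p,\kappa_2)$ independent of $\rho_*$.

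Finally, $|d(\rho)-1| \leq |d(\rho)-d_*(\rho)| + |d_*(\rho)-1| \leq M\rho^{-\alpha} + \rho_*/\rho$. Since $\rho\geq\rho_*$, the second term is at most $1$, and the first term is at most $M\rho_*^{-\alpha}$, which by the hypothesis $\rho_*\geq(4C_p/\kappa_2)^{1/\alpha}$ (enlarged if necessary by a harmless absolute factor depending only on $C_p,\kappa_2$) can be bounded by $1$, yielding $|d(\rho)-1|\leq 2$.

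The only mildly delicate step is ensuring the cancellation in the numerator $k'(k''-k_*'')+k''(k_*'-k')$ is exploited correctly: had one instead bounded $k''/k'$ and $k_*''/k_*'$ separately, one would get only $O(\rho^{-1})$ rather than $O(\rho^{-\alpha-1})$, which would fail to produce the $\rho^{-\alpha}$ decay after multiplying by $(\rho-\rho_*)$. The algebraic identity above is therefore essential, but once written down the estimates reduce to a direct application of the results of Section \ref{section:elementary quantities}.
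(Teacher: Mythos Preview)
The paper does not give its own proof of this lemma (it is deferred to the thesis), so there is nothing to compare approach against. Your argument is correct in substance, but it can be simplified considerably, and there is one loose end in the last step.

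The simplification: since $k'(\rho)=\sqrt{p'(\rho)}/\rho$, logarithmic differentiation gives $k''/k'=\tfrac{p''}{2p'}-\tfrac{1}{\rho}$, while $k_*''/k_*'=-\tfrac{1}{\rho}$. Hence
\[
d(\rho)-d_*(\rho)=(\rho-\rho_*)\,\frac{p''(\rho)}{2p'(\rho)}
\]
directly, without the add-and-subtract identity you set up. Using Lemma~\ref{lemma:p' below} ($|p''(\rho)|\le 3C_p\rho^{-\alpha-1}$ and $p'(\rho)\ge\kappa_2/2$) then gives $|d-d_*|\le M\rho^{-\alpha}$ with the \emph{explicit} constant $M=3C_p/\kappa_2$. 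Your decomposition is of course algebraically equivalent, but by bounding the four pieces separately you lose track of $M$.

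That matters for \eqref{eq:bound on d}. You write that $M\rho_*^{-\alpha}\le 1$ ``by the hypothesis $\rho_*\ge(4C_p/\kappa_2)^{1/\alpha}$ (enlarged if necessary\dots)'', but the hypothesis is fixed by the statement and cannot be enlarged after the fact. With the explicit $M=3C_p/\kappa_2$ above, one has $M\rho_*^{-\alpha}\le \tfrac{3C_p}{\kappa_2}\cdot\tfrac{\kappa_2}{4C_p}=\tfrac34$, so $|d(\rho)-1|\le \tfrac34+1<2$ under the stated assumption with no enlargement needed. This closes the gap.
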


\begin{lemma}\label{lemma:k' product bound}
We have the equality
\begin{equation}
    \frac{k'(\rho_0)}{k_*'(\rho_0)}  \frac{k'_*(\rho)}{k'(\rho)} = \sqrt{\frac{p'(\rho_0)}{p'(\rho)}} \qquad \text{for all } R\leq \rho \leq \rho_0.
\end{equation}
As such, provided $\rho_* \geq \max\{ R, (4C_p/\kappa_2)^{1/\alpha} \}$, there is a positive $M=M(\alpha,C_p,\kappa_2)$ such that
\begin{equation}
    \left| 1- \frac{k'(\rho_0)}{k_*'(\rho_0)}  \frac{k'_*(\rho)}{k'(\rho)} \right| \leq M\rho^{-\alpha} \qquad \text{for all } \rho_* \leq \rho \leq \rho_0.
\end{equation}
In turn, 
\begin{equation}
    0 < \frac{k'(\rho_0)}{k_*'(\rho_0)}  \frac{k'_*(\rho)}{k'(\rho)} \leq 2 \qquad \text{for all } \rho_* \leq \rho \leq \rho_0.
\end{equation}
\end{lemma}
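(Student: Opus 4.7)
The first equality is a matter of direct algebra: since $k'(\rho)=\sqrt{p'(\rho)}/\rho$ and $k_*'(\rho)=\sqrt{\kappa_2}/\rho$, we have $k'(\rho)/k_*'(\rho)=\sqrt{p'(\rho)/\kappa_2}$, and the factors of $\rho$ and $\kappa_2$ cancel when we form the product $\frac{k'(\rho_0)}{k_*'(\rho_0)}\cdot\frac{k_*'(\rho)}{k'(\rho)}$, leaving precisely $\sqrt{p'(\rho_0)/p'(\rho)}$.

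For the second assertion, I would write
\[
1-\sqrt{\frac{p'(\rho_0)}{p'(\rho)}} \;=\; \frac{\sqrt{p'(\rho)}-\sqrt{p'(\rho_0)}}{\sqrt{p'(\rho)}} \;=\; \frac{p'(\rho)-p'(\rho_0)}{\sqrt{p'(\rho)}\bigl(\sqrt{p'(\rho)}+\sqrt{p'(\rho_0)}\bigr)}
\]
after rationalising. The denominator is uniformly bounded below by $\kappa_2$ using \eqref{eq:p' below bound} from Lemma \ref{lemma:p' below}, which gives $\sqrt{p'(y)}\geq\sqrt{\kappa_2/2}$ for $y\geq\rho_*$. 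For the numerator, the triangle inequality and the bound $|p'(y)-\kappa_2|\leq 2C_p y^{-\alpha}$ from Lemma \ref{lemma:p' below} yield
\[
|p'(\rho)-p'(\rho_0)|\leq 2C_p(\rho^{-\alpha}+\rho_0^{-\alpha})\leq 4C_p\rho^{-\alpha},
\]
since $\rho\leq\rho_0$ implies $\rho_0^{-\alpha}\leq\rho^{-\alpha}$. Combining these two estimates produces the desired bound with $M=4C_p/\kappa_2$.

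Finally, the two-sided bound $0<\frac{k'(\rho_0)}{k_*'(\rho_0)}\frac{k_*'(\rho)}{k'(\rho)}\leq 2$ is immediate from the previous estimate once we observe that with the choice $\rho_*\geq(4C_p/\kappa_2)^{1/\alpha}$ we have $M\rho^{-\alpha}\leq M\rho_*^{-\alpha}\leq 1$ for every $\rho\geq\rho_*$; positivity is automatic from the definitions of $k'$ and $k_*'$. Since every ingredient is already available in Lemma \ref{lemma:p' below}, there is no real obstacle here—the proof is a brief algebraic manipulation followed by the application of two elementary inequalities, and I would present it in just a few lines without any technical detour.
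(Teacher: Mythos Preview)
Your proof is correct. The paper itself omits the proof of this lemma, referring instead to \cite{thesis}, so there is no argument to compare against; your direct algebraic verification of the first identity, followed by rationalisation and the triangle inequality using the bounds $|p'(y)-\kappa_2|\leq 2C_p y^{-\alpha}$ and $p'(y)\geq\kappa_2/2$ from Lemma~\ref{lemma:p' below}, is exactly the natural route and yields the explicit constant $M=4C_p/\kappa_2$, which then makes the final two-sided bound immediate from the hypothesis $\rho_*\geq(4C_p/\kappa_2)^{1/\alpha}$.
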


\begin{lemma}\label{lemma:k difference rho0 and rho}
Assume that $\rho_* \geq \max\{ R, (4C_p/\kappa_2)^{1/\alpha} \}$. Then, there exists a positive constant $M=M(\alpha,C_p,\kappa_2)$ such that
\begin{equation}
    \left| (k(\rho_0)-k(\rho)) - (k_*(\rho_0)-k_*(\rho)) \right| \leq M\rho^{-\alpha}\min\big(\frac{\rho_0-\rho}{\rho},1\big) \qquad \text{for } \rho_* \leq \rho \leq \rho_0.
\end{equation}
\end{lemma}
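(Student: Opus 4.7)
The plan is to start from the integral representations of $k$ and $k_*$. Recalling that $k(\rho) = \int_0^\rho \frac{\sqrt{p'(y)}}{y}\,dy$ and $k_*(\rho) = \sqrt{\kappa_2}\log(\rho/\rho_*) + k(\rho_*)$, I immediately get
\begin{equation*}
(k(\rho_0) - k(\rho)) - (k_*(\rho_0) - k_*(\rho)) = \int_\rho^{\rho_0} \frac{\sqrt{p'(y)} - \sqrt{\kappa_2}}{y}\, dy
\end{equation*}
for all $\rho_* \leq \rho \leq \rho_0$, since the reference value $k(\rho_*) = k_*(\rho_*)$ cancels out.

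Next, I would estimate the integrand pointwise. By Lemma~\ref{lemma:p' below}, for $y \geq \rho_*$ we have $|p'(y) - \kappa_2| \leq 2C_p y^{-\alpha}$ and $\sqrt{p'(y)} \geq \sqrt{\kappa_2/2}$. Conjugating,
\begin{equation*}
\left| \sqrt{p'(y)} - \sqrt{\kappa_2} \right| = \frac{|p'(y) - \kappa_2|}{\sqrt{p'(y)} + \sqrt{\kappa_2}} \leq \frac{2C_p}{(1 + 1/\sqrt{2})\sqrt{\kappa_2}}\, y^{-\alpha} =: M_1 y^{-\alpha},
\end{equation*}
with $M_1$ depending only on $\alpha,C_p,\kappa_2$. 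Substituting this in gives
\begin{equation*}
\left| (k(\rho_0) - k(\rho)) - (k_*(\rho_0) - k_*(\rho)) \right| \leq M_1 \int_\rho^{\rho_0} y^{-\alpha-1}\, dy.
\end{equation*}

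Finally, to recover the factor $\rho^{-\alpha} \min\{(\rho_0-\rho)/\rho,1\}$, I would estimate the remaining integral in two different ways. On the one hand, extending the domain of integration to infinity,
\begin{equation*}
\int_\rho^{\rho_0} y^{-\alpha-1}\, dy \leq \int_\rho^{\infty} y^{-\alpha-1}\, dy = \frac{1}{\alpha}\, \rho^{-\alpha}.
\end{equation*}
On the other hand, since $y \mapsto y^{-\alpha-1}$ is decreasing,
\begin{equation*}
\int_\rho^{\rho_0} y^{-\alpha-1}\, dy \leq (\rho_0-\rho)\, \rho^{-\alpha-1} = \rho^{-\alpha} \cdot \frac{\rho_0-\rho}{\rho}.
\end{equation*}
Taking the minimum of the two estimates and absorbing the constant $M_1/\alpha$ into a single $M = M(\alpha,C_p,\kappa_2)$ yields the claimed bound. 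There is no substantive obstacle here: the proof is essentially a conjugation trick to reduce to Lemma~\ref{lemma:p' below}, followed by two elementary bounds on a decreasing integrand.
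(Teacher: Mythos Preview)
Your proof is correct and is exactly the natural argument: the paper itself omits the proof (referring to \cite{thesis}), but your integral representation matches the remark \eqref{eq:k and k star diff} above, and the conjugation step together with the two elementary bounds on $\int_\rho^{\rho_0}y^{-\alpha-1}\,dy$ is precisely how one expects to obtain the $\min$ factor. Nothing is missing.
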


\begin{lemma}\label{lemma:k' k'star ratio}
Assume that $\rho_* \geq \max\{ R, (4C_p/\kappa_2)^{1/\alpha} \}$. Then, there exists a positive constant $M=M(\alpha,C_p,\kappa_2)$ such that
\begin{equation}
    \left| 1 - \frac{k'(\rho_0)}{k_*'(\rho_0)} \right| \leq M \rho_0^{-\alpha} \qquad \text{for } \rho_0 \geq \rho_*.
\end{equation}
\end{lemma}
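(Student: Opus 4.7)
The plan is to express the ratio $k'(\rho_0)/k_*'(\rho_0)$ in a form that makes the closeness to $1$ visible, then reduce the claim to the decay estimate $|p'(\rho_0) - \kappa_2| \leq 2 C_p \rho_0^{-\alpha}$ already established in Lemma \ref{lemma:p' below}. Since $k'(\rho) = \sqrt{p'(\rho)}/\rho$ and $k_*'(\rho) = \sqrt{\kappa_2}/\rho$, the $\rho$ factors cancel and one finds the clean identity
\begin{equation*}
    \frac{k'(\rho_0)}{k_*'(\rho_0)} = \sqrt{\frac{p'(\rho_0)}{\kappa_2}} \qquad \text{for all } \rho_0 \geq \rho_*,
\end{equation*}
so the problem becomes estimating $|1 - \sqrt{p'(\rho_0)/\kappa_2}|$.

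The next step is the standard conjugation trick: write
\begin{equation*}
    1 - \sqrt{\frac{p'(\rho_0)}{\kappa_2}} = \frac{1 - p'(\rho_0)/\kappa_2}{1 + \sqrt{p'(\rho_0)/\kappa_2}} = \frac{\kappa_2 - p'(\rho_0)}{\kappa_2\bigl(1 + \sqrt{p'(\rho_0)/\kappa_2}\bigr)}.
\end{equation*}
By Lemma \ref{lemma:p' below}, the choice of $\rho_*$ ensures $\kappa_2/2 \leq p'(\rho_0) \leq 3\kappa_2/2$ for $\rho_0 \geq \rho_*$, so the denominator is uniformly bounded below by $\kappa_2(1 + 1/\sqrt{2})$. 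The numerator is at most $2C_p \rho_0^{-\alpha}$ by the same lemma. Combining the two bounds yields
\begin{equation*}
    \left| 1 - \frac{k'(\rho_0)}{k_*'(\rho_0)} \right| \leq \frac{2C_p}{\kappa_2(1+1/\sqrt{2})} \rho_0^{-\alpha},
\end{equation*}
which gives the desired estimate with $M = M(\alpha, C_p, \kappa_2)$.

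There is essentially no obstacle here: the entire argument is the algebraic observation that $k'/k_*'$ is the square root of $p'/\kappa_2$, together with the quantitative decay and the two-sided bound on $p'(\rho)$ provided by Lemma \ref{lemma:p' below}. The only thing one must be careful about is ensuring that $p'(\rho_0)$ stays bounded away from $0$ uniformly in $\rho_0 \geq \rho_*$, which is precisely the purpose of the lower bound $\rho_* \geq (4C_p/\kappa_2)^{1/\alpha}$ assumed in the hypothesis.
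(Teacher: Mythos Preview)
Your proof is correct. The paper does not actually give its own proof of this lemma (it defers to \cite{thesis}), but your argument is the natural one: the identity $k'(\rho_0)/k_*'(\rho_0)=\sqrt{p'(\rho_0)/\kappa_2}$ is exactly the special case $\rho=\rho_0$ of the identity stated in Lemma~\ref{lemma:k' product bound}, and the conclusion then follows from the bounds $|p'(\rho_0)-\kappa_2|\leq 2C_p\rho_0^{-\alpha}$ and $p'(\rho_0)\geq \kappa_2/2$ of Lemma~\ref{lemma:p' below}, just as you do.
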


Next is a result concerning the size of the supports of the re-scaled isothermal kernel and the perturbation, which follows immediately from \eqref{eq:k star def} and \eqref{eq:k between log rho over rho star}.

\begin{lemma}\label{lemma:bound on max k and k star}
    There exists a positive constant $M=M(\alpha,\kappa_2,C_p,\rho_*)$ such that
    \begin{equation}
        \max\{ k(\rho),k_*(\rho) \} \leq M k(\rho)  \qquad \text{for all } \rho \geq \rho_*.
    \end{equation}
\end{lemma}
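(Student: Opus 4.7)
The plan is to observe that the result is an elementary consequence of the two-sided comparison of $k(\rho)$ with $1 + \log(\rho/\rho_*)$ established in Corollary \ref{corollary:k sim log}, together with the explicit formula for $k_*(\rho)$ in \eqref{eq:k star def}. The inequality $k(\rho) \leq \max\{k(\rho),k_*(\rho)\}$ is trivial, so the content lies in bounding $k_*(\rho)$ from above by a constant multiple of $k(\rho)$.

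First I would recall from \eqref{eq:k star def} that for $\rho \geq \rho_*$,
\begin{equation*}
k_*(\rho) = \sqrt{\kappa_2}\log(\rho/\rho_*) + k(\rho_*) \leq (\sqrt{\kappa_2} + k(\rho_*))\bigl(1 + \log(\rho/\rho_*)\bigr).
\end{equation*}
Next, applying the lower bound from \eqref{eq:k between log rho over rho star} in Corollary \ref{corollary:k sim log}, there is a constant $M_0 = M_0(k(\rho_*),\kappa_2) > 0$ such that $1 + \log(\rho/\rho_*) \leq M_0 k(\rho)$ for all $\rho \geq \rho_*$. Combining these two estimates gives
\begin{equation*}
k_*(\rho) \leq M_0(\sqrt{\kappa_2} + k(\rho_*))\, k(\rho) \qquad \text{for all } \rho \geq \rho_*,
\end{equation*}
which, together with $k(\rho) \leq k(\rho)$, produces the claimed inequality with $M = \max\{1, M_0(\sqrt{\kappa_2}+k(\rho_*))\}$, which depends only on $\alpha$, $\kappa_2$, $C_p$, and $\rho_*$ (since the constant in Corollary \ref{corollary:k sim log} does).

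There is no substantive obstacle here; the only small point to notice is that the constant $M$ is allowed to depend on $\rho_*$, since both $k(\rho_*)$ and the constant from Corollary \ref{corollary:k sim log} depend on $\rho_*$, so no uniformity in $\rho_*$ is claimed or needed.
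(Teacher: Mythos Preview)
Your proof is correct and follows exactly the approach the paper indicates: the paper states that the lemma ``follows immediately from \eqref{eq:k star def} and \eqref{eq:k between log rho over rho star},'' and you have simply spelled out those details, bounding $k_*(\rho)$ by a multiple of $1+\log(\rho/\rho_*)$ via \eqref{eq:k star def} and then invoking the lower bound in Corollary~\ref{corollary:k sim log}.
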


Finally, we state two technical lemmas regarding the relative internal energy.

\begin{lemma}\label{lemma: rho log rho bounded by e star}
    There exists a positive constant $M=M(\gamma, \kappa_1, \rho_*,k(\rho_*), \bar{\rho})$ such that
    \begin{equation}
       \rho + \rho \log (\rho/\rho_*) \leq M\big( 1 + e^*(\rho,\bar{\rho}) \big) \qquad \text{for } \rho \geq \rho_*.
    \end{equation}
\end{lemma}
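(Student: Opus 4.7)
The plan is to exploit the asymptotically isothermal behavior of the pressure: since $p(\rho)/\rho \to \kappa_2$ as $\rho \to \infty$, one expects $e(\rho) \sim \kappa_2 \log(\rho/\rho_*)$ for large $\rho$, so that $\rho e(\rho)$ dominates the linear-in-$\rho$ correction subtracted off in the definition of $e^*(\rho,\bar\rho)$. The argument reduces to pinning down this asymptotic and then splitting into a large-$\rho$ regime, where we get a genuine $\rho\log(\rho/\rho_*)$ lower bound on $e^*$, and a bounded intermediate regime, where both sides are controlled by constants.

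First, I would derive an asymptotic formula for $e(\rho)$. Writing
\begin{equation*}
e(\rho) = e(\rho_*) + \int_{\rho_*}^{\rho} \frac{1}{y}\cdot\frac{p(y)}{y}\,dy \qquad \text{for }\rho\geq\rho_*,
\end{equation*}
and invoking assumption \eqref{eq:p derivs} with $j=0$ to bound $|p(y)/y-\kappa_2|\leq C_p y^{-\alpha}$, yields
\begin{equation*}
\big|e(\rho)-e(\rho_*)-\kappa_2\log(\rho/\rho_*)\big|\leq \frac{C_p}{\alpha}\rho_*^{-\alpha} \qquad \text{for all }\rho\geq \rho_*.
\end{equation*}
Here $e(\rho_*)$ is finite thanks to assumption \eqref{eq:p asymp near 0 P} together with $\gamma>1$, which renders $\int_0^{\rho_*} p(y)/y^2\,dy$ convergent (and dependent only on $\gamma$, $\kappa_1$, $\rho_*$ and the function $P$). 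Consequently there is a constant $C_1=C_1(\gamma,\kappa_1,\rho_*)$ with $e(\rho)\geq \kappa_2\log(\rho/\rho_*)+C_1$ for $\rho\geq\rho_*$.

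Second, a short computation using $e'(\rho)=p(\rho)/\rho^2$ simplifies the definition of $e^*$ to
\begin{equation*}
e^*(\rho,\bar\rho)=\rho e(\rho)-K\rho+p(\bar\rho), \qquad K:=\bar\rho e'(\bar\rho)+e(\bar\rho),
\end{equation*}
so that the bound above gives
\begin{equation*}
e^*(\rho,\bar\rho)\geq \kappa_2\,\rho\log(\rho/\rho_*)+(C_1-K)\rho+p(\bar\rho) \qquad \text{for }\rho\geq\rho_*.
\end{equation*}
Choose $R'\geq \rho_*$ large enough that $\log(R'/\rho_*)\geq \max\{1,\,2|C_1-K|/\kappa_2\}$ (note $R'$ depends only on $\gamma,\kappa_1,\rho_*,\bar\rho$). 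For $\rho\geq R'$, the linear error is absorbed and $e^*(\rho,\bar\rho)\geq \tfrac{\kappa_2}{2}\rho\log(\rho/\rho_*)+p(\bar\rho)$; since $\rho\leq \rho\log(\rho/\rho_*)$ in this range, one obtains $\rho+\rho\log(\rho/\rho_*)\leq (4/\kappa_2)\,e^*(\rho,\bar\rho)$. For $\rho\in[\rho_*,R']$, the left-hand side is bounded above by a constant depending only on $R'$ and $\rho_*$, and since $e^*(\rho,\bar\rho)\geq 0$, this constant dominates trivially. Combining both regimes yields the claimed inequality with $M$ depending on the stated parameters.

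The proof has no genuine obstacle beyond careful bookkeeping: the real content is the asymptotic $e(\rho)=\kappa_2\log(\rho/\rho_*)+O(1)$, which is a direct consequence of \eqref{eq:p derivs}, and the observation that $\rho\log(\rho/\rho_*)$ eventually swamps any linear-in-$\rho$ perturbation. One should simply be vigilant that every intermediate constant depends only on $(\gamma,\kappa_1,\rho_*,k(\rho_*),\bar\rho)$ and no variable quantity, and that the integrability of $p(y)/y^2$ at $y=0$ is used once (through $\gamma>1$) to render $e(\rho_*)$ finite.
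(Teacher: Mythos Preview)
Your proof is correct; the paper itself omits the argument and defers to \cite[Chapter 3]{thesis}, but your route---deriving $e(\rho)=\kappa_2\log(\rho/\rho_*)+O(1)$ from assumption \eqref{eq:p derivs} and then absorbing the linear-in-$\rho$ terms in $e^*$ for large $\rho$---is the natural one and matches the strategy the paper indicates. One small bookkeeping remark: your constant $C_1$ (via $e(\rho_*)$) and threshold $R'$ also depend on $\kappa_2$, $C_p$, $\alpha$ and the behaviour of $p$ on $[0,\rho_*]$, not literally on $(\gamma,\kappa_1,\rho_*)$ alone; the paper's own list of parameters is similarly schematic (with $k(\rho_*)$ serving as a proxy for such data), so this does not affect the validity of the argument.
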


\begin{lemma}\label{lemma:e star control chapter 2}
There exists a positive constant $M= M(\gamma, \kappa_1, \rho_* , k(\rho_*) , \bar{\rho})$, such that 
\begin{equation}\label{eq:e star control ch2}
0 \leq \rho + p(\rho) \leq M \big( 1 + e^*(\rho,\bar{\rho}) \big) \qquad \text{for } \rho \geq 0.
\end{equation}
\end{lemma}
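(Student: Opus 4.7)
The non-negativity $0 \leq \rho + p(\rho)$ is immediate from $\rho \geq 0$ and $p(\rho) \geq 0$. For the upper bound, my plan is to split the density range into $[0,\rho_*]$ and $[\rho_*,\infty)$ and treat each separately, leveraging the already-established Lemma \ref{lemma: rho log rho bounded by e star} in the large-density regime.

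On the compact interval $[0,\rho_*]$, the function $p \in C^1([0,\infty))$ is continuous, so both $\rho$ and $p(\rho)$ are uniformly bounded by some constant $C=C(\rho_*,p)$. Since $e^*(\rho,\bar\rho)\geq 0$ for all $\rho\geq 0$ (which follows from the convexity of $\rho\mapsto \rho e(\rho)$ under assumption \eqref{eq:st hyp gen non}), we trivially obtain
\[
\rho + p(\rho) \leq C \leq C\bigl(1 + e^*(\rho,\bar\rho)\bigr) \qquad \text{for } \rho \in [0,\rho_*].
\]

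For $\rho \geq \rho_*$, I would first use the asymptotically isothermal assumption \eqref{eq:p derivs} (recall $\rho_* \geq R$) to control $p(\rho)$ linearly in $\rho$: since $|p(\rho)/\rho - \kappa_2| \leq C_p\rho^{-\alpha} \leq C_p\rho_*^{-\alpha}$, we have $p(\rho) \leq (\kappa_2 + C_p\rho_*^{-\alpha})\rho =: C_1\rho$. Therefore $\rho + p(\rho) \leq (1+C_1)\rho$. Then Lemma \ref{lemma: rho log rho bounded by e star} directly yields $\rho \leq \rho + \rho\log(\rho/\rho_*) \leq M(1+e^*(\rho,\bar\rho))$ (using that $\log(\rho/\rho_*) \geq 0$ on this range), which combines to give $\rho + p(\rho) \leq (1+C_1)M(1+e^*(\rho,\bar\rho))$.

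Taking the maximum of the two constants produces the desired $M$, completing the proof. No step is particularly subtle here — the entire content is isolated in the already-proven Lemma \ref{lemma: rho log rho bounded by e star} (which captures the genuinely delicate asymptotic behaviour $\rho e(\rho) \sim \kappa_2\rho\log\rho$ coming from the isothermal-like tail of the pressure), together with the elementary observation that in the asymptotically isothermal setting $p(\rho)$ is dominated by $\rho$ for large densities.
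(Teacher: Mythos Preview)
Your proof is correct. The paper does not give a detailed argument here, stating only that Lemmas~\ref{lemma: rho log rho bounded by e star} and~\ref{lemma:e star control chapter 2} share a common strategy (with full details deferred to \cite{thesis}); your approach of splitting at $\rho_*$, handling the compact piece trivially, and then invoking Lemma~\ref{lemma: rho log rho bounded by e star} together with the linear bound $p(\rho)\leq C_1\rho$ for $\rho\geq\rho_*$ is a clean and legitimate way to carry this out --- the only minor difference is that you use Lemma~\ref{lemma: rho log rho bounded by e star} as a black box rather than reproducing its internal $e^*$ analysis, which is perfectly acceptable.
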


The proofs of Lemmas \ref{lemma: rho log rho bounded by e star} and \ref{lemma:e star control chapter 2} have a common strategy, and one verifies \eqref{eq:eta m hat difference squared bounded by e star} using similar techniques. Complete proofs may be found in \cite[Chapter 3]{thesis}.

\textbf{Acknowledgement.} The authors wish to thank Gui-Qiang Chen for useful discussions. The second author was supported by Engineering and Physical Sciences Research Council [EP/L015811/1].


\begin{thebibliography}{99}

\bibitem{AlbertiMueller} \textsc{Alberti, G. and M\"uller, S.}, A new approach to variational problems with multiple scales, \it Comm. Pure Appl. Math. \rm \textbf{54} (2001), 761--825. 

\bibitem{Chen1} \textsc{Chen, G.-Q.}, Convergence of the Lax-Friedrichs scheme for isentropic gas dynamics (III), \it Acta Math. Sci. \rm \textbf{8} (1988), 243--276.

\bibitem{ChenLeFloch} \textsc{Chen, G.-Q. and LeFloch, P.\,G.}, Compressible Euler equations with general pressure law, \it Arch. Ration. Mech. Anal. \rm \textbf{153} (2000), 221--259.

\bibitem{ChenLeFloch2} \textsc{Chen, G.-Q. and LeFloch, P.\,G.}, Existence theory for the isentropic Euler equations, \it Arch. Ration. Mech. Anal. \rm \textbf{166} (2003), 81--98.

\bibitem{ChenPerep1} \textsc{Chen, G.-Q. and Perepelitsa, M.}, Vanishing viscosity limit of the Navier-Stokes equations to the Euler equations for compressible fluid flow,
\it Comm. Pure Appl. Math. \rm  \textbf{63} (2010), 1469--1504.

\bibitem{CDM} \textsc{Conti, S., Dolzmann, G. and M\"uller, S.}, The div-curl lemma for sequences whose divergence and curl are compact in $W^{-1,1}$, \it C. R. Math. Acad. Sci. Paris \rm \textbf{349} (2011), 175--178.

\bibitem{DingChenLuo} \textsc{Ding, X., Chen, G.-Q. and Luo, P.}, Convergence of the Lax-Friedrichs scheme for isentropic gas dynamics (I)-(II), \it Acta Math. Sci. \rm \textbf{5} (1985), 483--500, 501--540 (English); and \textbf{7} (1987), 467--480; \textbf{8} (1989), 61--94 (Chinese).

\bibitem{DiPerna2} \textsc{DiPerna, R.\,J.}, Convergence of the viscosity method for isentropic gas dynamics, \it Comm. Math. Phys. \rm \textbf{91} (1983), 1--30.

\bibitem{Hoff} \textsc{Hoff, D.}, Global solutions of the equations of one-dimensional, compressible flow with large data and forces, and with differing end states, \emph{Z. Angew. Math. Phys.} \textbf{49} (1998), 774--785.

\bibitem{HZ} \textsc{Huang, F. and Wang, Z.}, Convergence of viscosity solutions for isothermal gas dynamics, \emph{SIAM J. Math. Anal.} \textbf{34} (2002), 595--610.

\bibitem{LeFlochWestdickenberg} \textsc{LeFloch, P.\,G. and Westdickenberg, M.}, Finite energy solutions to the isentropic Euler equations with
geometric effects, \it J. Math. Pures Appl. \rm \textbf{88} (2007), 389--429.

\bibitem{LionsPerthameSouganidis} \textsc{Lions, P.-L., Perthame, B. and Souganidis, P.\,E.}, Existence and stability of entropy solutions for the hyperbolic systems of isentropic gas dynamics in Eulerian and Lagrangian coordinates, \it Comm. Pure Appl. Math. \rm \textbf{49} (1996), 599--638.

\bibitem{LionsPerthameTadmor} \textsc{Lions, P.-L., Perthame, B. and Tadmor, E.}, Kinetic formulation for the isentropic gas dynamics and p-system, \it Comm. Math. Phys. \rm \textbf{163} (1994), 415--431.

\bibitem{Roubicek} \textsc{Roubi\v{c}ek, T.}, \it Relaxation in Optimization Theory and Variational Calculus\rm, De Gruyter Series in Nonlinear Analysis and Applications, vol. 4, Walter de Gruyter \& Co., Berlin (1997). 

\bibitem{SS1} \textsc{Schrecker, M.\,R.\,I. and Schulz, S.}, Vanishing viscosity limit of the compressible Navier--Stokes equations with general pressure law, \it SIAM J. Math. Anal. \rm \textbf{51} (2019) 2168--2205.

\bibitem{thesis} \textsc{Schulz, S.}, \it Compensated compactness methods in the study of compressible fluid flow, \rm PhD thesis, University of Oxford, Oxford (2020).

\bibitem{Stokes} \textsc{Stokes, G.G.}, On a difficulty in the theory of sound, \it Philos. Mag. \rm \textbf{33} (1948) 349--356.

\bibitem{Tartar} \textsc{Tartar, L.}, Compensated compactness and applications to partial differential equations, in \it Research Notes in Mathematics, Nonlinear Analysis and Mechanics, Herriot-Watt Symposium, Vol. IV\rm,  136--212, Res. Notes in Math. \textbf{39}, Pitman, Boston, Mass.-London (1979). 

\bibitem{Thorne} \textsc{Thorne, K.\,S.}, Gravitational collapse and the death of a star, \it Science \rm \textbf{150} (1965), 1671--1679.

\end{thebibliography}
\end{document}